\newtheorem{thm}[subsubsection]{\bf Theorem}
\newtheorem{prop}[subsubsection]{\bf Proposition}
\newtheorem{lem}[subsubsection]{\bf Lemma}
\newtheorem{cor}[subsubsection]{\bf Corollary}
\theoremstyle{definition}
\newtheorem{defn}[subsubsection]{\bf Definition}
\newtheorem{rem}[subsubsection]{\bf Remark}
\newtheorem{exam}[subsubsection]{\bf Example}
\newtheorem{exams}[subsubsection]{\bf Examples}
\newtheorem{const}[subsubsection]{\bf Construction}
\newtheorem{qn}[subsubsection]{\bf Question}
\newtheorem{conv}[subsubsection]{\bf Convention}
\newtheorem{app-thm}[subsection]{\bf Theorem}
\newtheorem{app-prop}[subsection]{\bf Proposition}
\newtheorem{app-lem}[subsection]{\bf Lemma}
\newtheorem{app-cor}[subsection]{\bf Corollary}
\newtheorem{app-scholium}[subsection]{\bf Scholium}
\theoremstyle{definition}
\newtheorem{app-defn}[subsection]{\bf Definition}
\newtheorem{app-rem}[subsection]{\bf Remark}
\newtheorem{app-rems}[subsection]{\bf Remarks}
\newtheorem{app-exam}[subsection]{\bf Example}
\newtheorem{app-exams}[subsection]{\bf Examples}
\newtheorem{app-const}[subsection]{\bf Construction}
\newtheorem{app-qn}[subsection]{\bf Question}
\newtheorem{app-conv}[subsection]{\bf Convention}
\newcommand{\tn}[1]{\textnormal{#1}}
\newcommand{\tnb}[1]{\textnormal{\bf #1}}
\newcommand{\tensor}{\otimes}
\newcommand{\N}{\mathbb{N}}
\newcommand{\comp}{\circ}
\newcommand{\id}{\tn{id}}
\newcommand{\ca}[1]{\mathcal{#1}}
\newcommand{\ladj}{\dashv}
\newcommand{\iso}{\cong}
\newcommand{\catequiv}{\simeq}
\newcommand{\Set}{\tnb{Set}}
\newcommand{\Cat}{\tnb{Cat}}
\newcommand{\Prof}{\tnb{Prof}}
\newcommand{\CAT}{\tnb{CAT}}
\renewcommand{\implies}{\Rightarrow}
\renewcommand{\impliedby}{\Leftarrow}
\newcommand{\op}{\tn{op}}
\newcommand{\co}{\tn{co}}
\newcommand{\TwoCAT}{\mathbf{2}{\tnb{-CAT}}}
\newcommand{\TwoCat}{\mathbf{2}{\tnb{-Cat}}}
\newcommand{\TwoProf}{\mathbf{2}{\tnb{-Prof}}}
\newcommand{\Polyc}[1]{{\tnb{Poly}_{#1}}}
\newcommand{\PFun}[1]{{\tnb{P}_{#1}}}
\newcommand{\PsAlg}[1]{\tn{Ps-}{#1}\tn{-Alg}}
\newcommand{\PsAlgs}[1]{\tn{Ps-}{#1}\tn{-Alg}_{\tn{s}}}
\newcommand{\PsAlgl}[1]{\tn{Ps-}{#1}\tn{-Alg}_{\tn{l}}}
\newcommand{\PsAlgc}[1]{\tn{Ps-}{#1}\tn{-Alg}_{\tn{c}}}
\newcommand{\Algl}[1]{{#1}\tn{-Alg}_{\tn{l}}}
\newcommand{\Algc}[1]{{#1}\tn{-Alg}_{\tn{c}}}
\newcommand{\Alg}[1]{{#1}\tn{-Alg}}
\newcommand{\Algs}[1]{{#1}\tn{-Alg}_{\tn{s}}}
\newcommand{\MND}{\tn{MND}}
\newcommand{\Enrich}[1]{#1{{\tnb{-}}{\tnb{Cat}}}}
\newcommand{\Mod}[1]{#1{{\tnb{-}}{\tnb{Mod}}}}
\newcommand{\CoDesc}{\tn{CoDesc}}
\newcommand{\CrIntCat}[1]{\tn{CrIntCat}(#1)}
\newcommand{\Cnr}{\tn{Cnr}}
\newcommand{\dbslash}{\backslash\backslash}
\newcommand{\PolyMnd}[1]{\tnb{PolyMnd}_{#1}}
\renewcommand{\P}{\mathbb{P}}
\newcommand{\B}{\mathbb{B}}
\renewcommand{\S}{\mathbb{S}}
\newcommand{\SMCMnd}{\tnb{S}}
\newcommand{\BMCMnd}{\tnb{B}}
\newcommand{\MCMnd}{\tnb{M}}
\newcommand{\FPMnd}{\tnb{P}_{\tn{fin}}}
\newcommand{\Com}{\mathsf{Com}}
\begin{document}

\title{Algebraic Kan extensions along morphisms of internal algebra classifiers}

\author{Mark Weber}
\address{Department of Mathematics,
Macquarie University}
\email{mark.weber.math@gmail.com}

\subjclass{18D10, 18D20, 18D50, 55P48}
%
\begin{abstract}
An ``algebraic left Kan extension'' is a left Kan extension which interacts well with the algebraic structure present in the given situation, and these appear in various subjects such as the homotopy theory of operads and in the study of conformal field theories. In the most interesting examples, the functor along which we left Kan extend goes between categories that enjoy universal properties which express the meaning of the calculation we are trying to understand. These universal properties say that the categories in question are universal examples of some categorical structure possessing some kind of internal structure, and so fall within the theory of ``internal algebra classifiers'' described in earlier work of the author. In this article conditions of a monad-theoretic nature are identified which give rise to morphisms between such universal objects, which satisfy the key condition of Guitart-exactness, which guarantees the algebraicness of left Kan extending along them. The resulting setting explains the algebraicness of the left Kan extensions arising in operad theory, for instance from the theory of ``Feynman categories'' of Kaufmann and Ward, generalisations thereof, and also includes the situations considered by Batanin and Berger in their work on the homotopy theory of algebras of polynomial monads.
\end{abstract}
\maketitle

\section{Introduction}
\label{sec:intro}

Categorical issues from the homotopy theory of operads and the study of conformal field theories, can involve the interaction between certain types of possibly quite complicated colimit calculation on the one hand, and algebraic structure on the other, with the difficulties contained in the compatibility between these two aspects in given situations. Many such situations can be organised as the study of taking left Kan extensions along a certain given functor $f : A \to B$, and studying what properties and structure on $f$ ensures that the process of left Kan extending along $f$ is compatible with further structure. For example if $A$ and $B$ are symmetric monoidal categories, one might want to know when the left Kan extension of a symmetric strong monoidal functor along $f$ ends up inheriting the structure of a symmetric strong monoidal functor.

The general theory of ``algebraic Kan extensions'' is concerned with such questions. In the various ways of making this subject precise, one begins with a structure within which one has some formal notion of left Kan extension. Given an instance $\ca K$ of such a structure, and an appropriate type of monad $T$ on $\ca K$, one can consider also left Kan extensions within the structure $\ca A$ formed by the algebras of $T$. Then the basic result is that given morphisms
\[ \xygraph{
{A}="p0" [r(2)] {B}="p1" [dl] {X}="p2"
"p0" (:"p1"^-{f},:"p2"_-{g})} \]
algebras, there are natural conditions on $f$ and $X$ which ensure that left Kan extending $g$ along $f$ down in $\ca K$, becomes a left Kan extension in $\ca A$. The condition on $X$ is that such left Kan extensions exist in $\ca K$, and its monad algebra structure is compatible with them. For instance in the context of symmetric monoidal categories mentioned above, the condition of being cocomplete and having the tensor product commute with colimits in each variable, would be sufficient for all situations, but often one can get away with a lot less.

As for the condition on $f : A \to B$, this is formalised in terms of the notion, or the appropriate analogue of it within $\ca K$, of \emph{exact square} introduced by Ren\'e Guitart in \cite{Guitart-ExactSquares}. Since $f$ is some kind of monad algebra morphism, there is a square which expresses it, which could either commute on the nose, or be the boundary of some coherence cell which could be invertible or not, and it is this algebra square which one requires to be exact.

Versions of the theory of algebraic Kan extensions have been given, the first of these being the unpublished article \cite{MelliesTabareau-TAlgTheoriesKan} of Melli\`es and Tabareau, in which the formal setting is that of a proarrow equipment in the sense of Wood \cite{Wood-Proarrows-I, Wood-Proarrows-II}. More recently this subject was part of the PhD thesis \cite{Koudenburg-Thesis} of Roald Koudenburg, and the resulting publication \cite{Koudenburg-AlgKan}. In both cases the formal settings chosen involve quite a bit of metastructure, particularly from the point of view of the non-category theorist. Thus in our exposition of this theory in Section \ref{sec:AlgKan}, we choose the minimalistic setting of a 2-category $\ca K$ with comma objects together with a general 2-monad on it, and take Ross Street's notion of pointwise left Kan extension from \cite{Street-FibrationIn2cats}. While this is at a cost of some generality, it leads us most efficiently to the formulation of our main result in Section \ref{sec:formulating-main-result}, while being completely adequate for a wide range of interesting examples.

From this general theory, the main issue to understand when studying left Kan extensions along an algebra morphism $f : A \to B$, is whether or not $f$'s algebra square is exact. In particular examples this can be as difficult as $f$ and the given monad are complicated. Thus it is of interest to have general results which guarantee $f$'s exactness. As explained in the introduction to \cite{BataninKockWeber-Feynman}, there is a similar very related issue with Getzler's notion of ``regular pattern'', in that the corresponding key condition that ensures well-behaved left Kan extensions can be difficult to check in some examples. Kaufmann and Ward's context of a Feynman category \cite{KaufmannWard-FeynmanCats} is one in which such issues have been understood. The recent article \cite{BataninKockWeber-Feynman} makes a direct connection between the settings of Getzler and Kaufmann-Ward, with the notion of Guitart exactness.

These developments refer to the important case when the ambient structure required to be compatible with colimits is that of a symmetric monoidal category. However from the work of Batanin and Berger \cite{BataninBerger-HtyThyOfAlgOfPolyMnd}, there are many interesting situations in which the ambient structure can be otherwise. Moreover, in all the interesting examples of morphisms $f : A \to B$ along which we wish to left Kan extend, $A$ and $B$ enjoy fundamental universal properties, intricately linked to the meaning of the calculation one is trying to understand. Indeed, the underlying philosophy of \cite{BataninBerger-HtyThyOfAlgOfPolyMnd}, which goes back to the Batanin's seminal paper \cite{Batanin-EckmannHilton}, is that organising one's calculations conceptually via these universal properties is fundamentally useful.

In the language of \cite{Weber-CodescCrIntCat} $A$ and $B$ are ``internal algebra classifiers'', meaning that they are universal examples of some kind of categorical structure possessing some kind of internal structure. Each of these structures are expressed by 2-monads. The ability to speak of one type of structure being internal to the other, follows when one has an adjunction of 2-monads between them in the sense defined in \cite{Weber-CodescCrIntCat}. Unfortunately, since the setting of \cite{BataninBerger-HtyThyOfAlgOfPolyMnd} involves polynomial monads defined over $\Set$, one cannot apply this universal perspective directly to the symmetric monoidal category monad. This is because, as seen in \cite{Weber-PolynomialFunctors}, this monad comes from a polynomial defined over $\Cat$, and so the computation of the corresponding internal algebra classifiers is more involved. However in \cite{Weber-CodescCrIntCat} the computation of such internal algebra classifiers was understood.

In this article we provide a monad-theoretic setting which extends that of Batanin and Berger so that it does include the symmetric monoidal category monad. In the main result, Theorem \ref{thm:main}, one has a general situation involving two related types of internal structure expressable within a type of ambient structure. Following \cite{Weber-CodescCrIntCat} this is formalised in terms of three 2-monads and adjunctions of 2-monads between them, with the resulting setting giving rise to forgetful functors between the categories of different types of internal structure. In the example of the modular envelope construction discussed in Example \ref{exam:modular-envelope}, these are the forgetful functors, for each symmetric monoidal category $\ca V$, from the category of modular operads in $\ca V$ to the category of cyclic operads in $\ca V$. The left adjoint to these forgetful functors, when it exists, is the modular envelope construction. In this context $\ca V$'s symmetric monoidal category structure is expressed as a pseudo algebra structure for the symmetric monoidal category 2-monad.

Denoting by $R$ and $S$ the 2-monads which describe the two types of internal structure, and by $T$ the 2-monad which describes the ambient structure, one has the internal algebra classifiers $T^R$ and $T^S$ for internal $R$ and $S$ algebras respectively. From \cite{Weber-CodescCrIntCat} the meaning of these objects is that $R$-algebras internal to a pseudo $T$-algebra $A$ may be identified with pseudomorphisms $T^R \to A$, and similarly for internal $S$-algebras. Moreover our setting gives rise to a strict $T$-algebra morphism $T^R \to T^S$, and the forgetful functors described above correspond to the process of precomposition with it. Theorem \ref{thm:main} gives conditions on $R$, $S$ and $T$ ensuring that $T^R \to T^S$ is exact, so that the left adjoints to the above forgetful functors are obtained by algebraic left extension along $T^R \to T^S$. In \cite{Costello-AInftyOpAndModuliSpaceCurves} the modular envelope construction was defined by directly specifying a symmetric monoidal functor $f : A \to B$ along which to left Kan extend. Applied to this example, Theorem \ref{thm:main} together with the developments of \cite{BataninBerger-HtyThyOfAlgOfPolyMnd, Weber-CodescCrIntCat, Weber-OpPoly2Mnd}, clarify why the categories $A$ and $B$ and the strict monoidal functor $f$ are what they are (their direct definition being somewhat combinatorially involved), why $A$ and $B$ enjoy the expected universal properties, and why left Kan extending along $f$ produces the modular envelope construction.

{\bf Organisation of this article}. Section \ref{sec:AlgKan} gives a self-contained account of the theory of algebraic Kan extensions sufficient for our purposes. Most of Section \ref{sec:formulating-main-result} is devoted to giving the precise formulation of Theorem \ref{thm:main}, which in addition to the aspects discussed above, also involves notions from the theory of polynomial 2-functors defined over $\Cat$ \cite{Weber-PolynomialFunctors}. Thus if one was just interested in a precise formulation of the main result, then it would suffice to read just until the end of Section \ref{ssec:MainTheorem}.

In Section \ref{ssec:operadic-examples} we discuss applications to operad theory, and extend what is known about algebraic Kan extensions arising from coloured symmetric operads, to the non-symmetric and braided cases. Moreover the results of this paper apply to establishing all the algebraic left extensions that are used in \cite{BataninBerger-HtyThyOfAlgOfPolyMnd} for the homotopy theory of operads. In a subsequent article the technology of this article will be used to understand the construction of colimits in categories of internal algebras. In particular this will bring Batanin and Berger's insights on the calculation of semifree coproducts and semifree pushouts into a setting which includes the symmetric monoidal category monad. Further applications to homotopy theory are thus anticipated.

The correct notion of exact square in any 2-categorical context, is determined by what the notion of pointwise left Kan extension is in that context. In Section \ref{sec:ExactSquares} we describe the general theory of exact squares corresponding to Ross Street's notion of pointwise left Kan extension in \cite{Street-FibrationIn2cats}. The main result here is Proposition \ref{prop:exact-pullbacks}, which explains conditions under which pullbacks and bipullbacks are exact. In fact this result is sufficient to deal with all the algebraic Kan extensions of \cite{BataninBerger-HtyThyOfAlgOfPolyMnd}. Later on in this article, we isolate this special case in Theorem \ref{thm:exact-internalisations-easy}, which does not require any of the developments of \cite{Weber-CodescCrIntCat} and Sections \ref{ssec:codesc-crossed-dblcat}-\ref{ssec:exactness-of-internalisations} of this article.

In the remainder of Section \ref{sec:ExactSquares} we describe further results of general interest which do not bear directly on the proof of the central result. In Section \ref{ssec:general-exactness-in-colaxidem-case} we give a result which also appears in \cite{Koudenburg-AlgKan}, that all algebra morphisms are exact when $T$ is colax idempotent. In Section \ref{ssec:exact-colax-monoidal-functors} we obtain explicit characterisations of exact monoidal, braided monoidal and symmetric monoidal functors, and give some natural non-examples to contrast with the colax idempotent case. In Section \ref{ssec:exact-nat} we observe that for many of our examples, the unit and multiplication of the 2-monads we consider have naturality squares which exact in both possible senses, and that this gives rise to the ability to transfer algebraic cocompleteness across an adjunction of 2-monads.

Section \ref{sec:exact-via-codescent} is concerned with the deeper interactions between the codescent calculations of internal algebra classifiers understood in \cite{Weber-CodescCrIntCat}, and exact squares. The key technical result in this regard is Theorem \ref{thm:hard-exactness} whose proof occupies Sections \ref{ssec:codesc-crossed-dblcat}-\ref{ssec:exactness-of-internalisations}. This result says that applying the functor whose object map takes codescent objects of crossed double categories in the sense of \cite{Weber-CodescCrIntCat} to a square, which satisfies a double categorical mixture of the hypotheses of Proposition \ref{prop:exact-pullbacks}, produces an exact square in $\Cat$. Then in Sections \ref{ssec:int-alg-class-codesc} and \ref{ssec:alg-square-before-codescent} it is explained how to apply this result to our monad theoretic context giving the proof of Theorem \ref{thm:main}.

{\bf Acknowledgments.} My interest in this subject began with the very inspiring work of Batanin \cite{Batanin-EckmannHilton} in which internal algebras were used to shed light on configuration spaces. I am heavily indebted to Michael Batanin for so generously sharing his insights. While working in Paris I was introduced to exact squares and algebraic Kan extensions by Paul-Andr\'e Melli\`es, long before beginning to think seriously about this project. More recently, illuminating discussions with Ross Street helped me to navigate through the world of lax coends, which appear in Section \ref{sec:exact-via-codescent}. There are variants of the main result of this paper, and it was in discussions with Joachim Kock that it became clear to me that for expository purposes, the variant based on polynomial monads which appears here is probably the most illuminating. Finally, discussions with Roald Koudenburg helped me to optimise some parts of Section \ref{sec:AlgKan}. I am grateful also for the support of the Australian Research Council grant No. DP130101172.

\section{Algebraic Kan extensions}
\label{sec:AlgKan}

In this section we reformulate some of the theory of algebraic left Kan extensions so that it applies for a 2-monad $(\ca K,T)$. As mentioned above, the basic ideas and results of this section are not new. Indeed in the double categorical setting of \cite{Koudenburg-AlgKan} one has versions of our Theorem \ref{thm:AlgKan}, Corollary \ref{cor:AlgKan} and also of Proposition \ref{prop:exactness-coKZ} in Section \ref{sec:ExactSquares}.

\subsection{2-monads}
\label{ssec:2-monad-theory}

A \emph{2-monad} on a 2-category $\ca K$ is just the $\Cat$-enriched version of a monad. As such, a 2-monad consists of a 2-functor $T :\ca K \to \ca K$, and 2-natural transformations $\eta : 1_{\ca K} \to T$ (the ``unit'') and $\mu : T^2 \to T$ (the ``multiplication''), satisfying the usual axioms.  In our 2-categorical context, there are weaker notions of monad involving the usual axioms holding up to coherent isomorphism, but these will not concern us here. We will often denote a 2-monad as a pair $(\ca K,T)$ leaving the unit and multiplication implicit. We will always use the symbols $\eta$ and $\mu$ for the unit and multiplication of a 2-monad, and when there is more than one 2-monad present in a given context, $T$'s unit and multiplication will be denoted as $\eta^T$ and $\mu^T$.

By contrast with ordinary category theory, 2-monads can have different types of algebras: lax, colax, pseudo and strict; and different types of morphisms of algebras. Let $(\ca K,T)$ be a 2-monad. Recall that for $A \in \ca K$, a \emph{pseudo $T$-algebra structure} on $A$ consists of an arrow $a:TA \to A$, invertible coherence 2-cells $a_0:1_A \to a\eta_A$ and $a_2:aT(a) \to a\mu_A$, satisfying the following axioms:
\[ \xygraph{!{0;(4.5,0):}
{\xybox{\xygraph{!{0;(2,0):(0,.5)::} {a}="l" [r] {a\eta_Aa}="m" [d] {a}="r" "l":"m"^-{a_0a}:"r"^-{a_2\eta_{TA}}:@{<-}"l"^-{\id} "l" [d(.35)r(.7)] {\scriptsize{=}}}}}
[r]
{\xybox{\xygraph{!{0;(2.5,0):(0,.4)::} {aT(a)T^(a)}="tl" [r] {a\mu_AT^2(a)}="tr" [d] {a\mu_A\mu_{TA}}="br" [l] {aT(a)T(\mu_A)}="bl" "tl":"tr"^-{a_2T^2(a)}:"br"^-{a_2\mu_{TA}}:@{<-}"bl"^-{a_2T(\mu_A)}:@{<-}"tl"^-{aT(a_2)} "tl" [d(.5)r(.5)] {\scriptsize{=}}}}}
[r]
{\xybox{\xygraph{!{0;(2,0):(0,.5)::} {a}="l" [l] {aT(a)T(\eta_A)}="m" [d] {a}="r" "l":"m"_-{aT(a_0)}:"r"_-{a_2T(\eta_A)}:@{<-}"l"_-{\id} "l" [d(.35)l(.7)] {\scriptsize{=}}}}}} \]
which we shall call the \emph{left unit axiom}, the \emph{associativity axiom} and the \emph{right unit axiom} respectively. We denote a pseudo $T$-algebra as a pair $(A,a)$ leaving the data $a_0$ and $a_2$ implicit, and we will sometimes speak of the pseudo algebra $A$ when we wish $a$ to be implicit also. When $a_0$ and $a_2$ are identities, $(A,a)$ is said to be a \emph{strict} $T$-algebra.

A \emph{lax morphism} $(A,a) \to (B,b)$ between pseudo $T$-algebras is a pair $(f,\overline{f})$, where $f:A \to B$ and $\overline{f}:bT(f) \to fa$, satisfying the following axioms:
\[ \xygraph{
{\xybox{\xygraph{{f}="l" [dl] {bT(f)\eta_A}="m" [r(2)] {fa\eta_A}="r" "l":"m"_-{b_0f}:"r"_-{\overline{f}\eta_A}:@{<-}"l"_-{fa_0} "l" [d(.6)] {\scriptsize{=}}}}} [r(6)]
{\xybox{\xygraph{!{0;(1.5,0):(0,.6667)::} {bT(b)T^2(f)}="p1" [r(2)] {b\mu_BT^2(f)}="p2" [d] {fa\mu_A}="p3" [dl] {faT(a)}="p4" [ul] {bT(fa)}="p5" "p1":"p2"^-{b_2T^2(f)}:"p3"^-{\overline{f}\mu_A}:@{<-}"p4"^-{fa_2}:@{<-}"p5"^-{\overline{f}T(a)}:@{<-}"p1"^-{bT(\overline{f})} "p1" [d(.8)r] {\scriptsize{=}}}}}} \]
which we shall call the \emph{unit} and \emph{structure} axioms respectively. A \emph{colax morphism} is defined the same way, except that the coherence cell is reversed $\overline{f} : fa \to bT(f)$. When $\overline{f}$ is an isomorphism, $f$ is said to be a pseudomorphism, and when $\overline{f}$ is an identity, $f$ is said to be a strict morphism of algebras. Given lax $T$-algebra morphisms $f$ and $g:(A,a) \to (B,b)$, a $T$-algebra 2-cell $f \to g$ is a 2-cell $\phi:f \to g$ in $\ca K$ such that $\overline{g}(bT(\phi))=(\phi a)\overline{f}$, 2-cells of colax morphisms are defined similarly, and these notions agree in the pseudo case.

Given the various notions of algebras and algebra morphisms, there are various 2-categories of $T$-algebras. Those used in this article are described in the following table.

\begin{table}[h]
\centering
\begin{tabular}{|l|l|l|}
\hline
Name & Objects & Arrows \\ \hline \hline
$\PsAlgl{T}$ & pseudo $T$-algebras & lax morphisms \\ \hline
$\PsAlgc{T}$ & pseudo $T$-algebras & colax morphisms \\ \hline
$\PsAlg{T}$ & pseudo $T$-algebras & pseudomorphisms \\ \hline
$\PsAlgs{T}$ & pseudo $T$-algebras & strict morphisms \\ \hline
$\Algl{T}$ & strict $T$-algebras & lax morphisms \\ \hline
$\Algc{T}$ & strict $T$-algebras & colax morphisms \\ \hline
$\Alg{T}$ & strict $T$-algebras & pseudomorphisms \\ \hline
$\Algs{T}$ & strict $T$-algebras & strict morphisms \\ \hline
\end{tabular}
\end{table}
\noindent In each case, the 2-cells are just the $T$-algebra 2-cells between the appropriate $T$-algebra morphisms.

The basic examples of 2-monads to keep in mind in this article are
\begin{enumerate}
\item $\MCMnd$ for monoidal categories,
\item $\SMCMnd$ for symmetric monoidal categories,
\item $\BMCMnd$ for braided monoidal categories, and
\item $\FPMnd$ for categories with finite products
\end{enumerate}
all on $\ca K = \Cat$, and are discussed at length in Section 5 of \cite{Weber-PolynomialFunctors}. In particular they are all examples of \emph{polynomial 2-monads}. See also Section 2 of \cite{Weber-CodescCrIntCat} for an exposition.

\subsection{Algebraic left extensions}
\label{ssec:alg-left-ext}
In a 2-category with comma objects one has the notion of a pointwise left Kan extension \cite{Street-FibrationIn2cats}. Moreover, given a 2-monad $T$ on a 2-category $\ca K$ with comma objects, comma objects in $\PsAlg T$ are computed as in $\ca K$, and the projections are strict morphisms \cite{BWellKellyPower-2DMndThy, LackShul-Enhanced}. Thus in particular one can speak of pointwise left Kan extensions in $\PsAlg T$.
\begin{defn}\label{def:alg-left-extension}
Let $(\ca K,T)$ be a 2-monad and suppose that $\ca K$ has comma objects. Given pseudomorphisms $(g,\overline{g})$ and $(f,\overline{f})$ between pseudo $T$-algebras as on the left
\[ \xygraph{{\xybox{\xygraph{{(I,i)}="p0" [r(2)] {(J,j)}="p1" [dl] {(A,a)}="p2" "p0":"p1"^-{(f,\overline{f})}:"p2"^-{(h,\overline{h})}:@{<-}"p0"^-{(g,\overline{g})} "p0" [d(.5)r(.85)] :@{=>}[r(.3)]^-{\psi}}}}
[r(4)]
{\xybox{\xygraph{{I}="p0" [r(2)] {J}="p1" [dl] {A}="p2" "p0":"p1"^-{f}:"p2"^-{h}:@{<-}"p0"^-{g} "p0" [d(.5)r(.85)] :@{=>}[r(.3)]^-{\psi}}}}} \]
we say that $(g,\overline{g})$ \emph{admits algebraic left extension along} $(f,\overline{f})$ when
\begin{enumerate}
\item The pointwise left Kan extension of $g$ along $f$ exists in $\ca K$.
\label{ale-cond-1}
\item For any such pointwise left Kan extension $(h,\psi)$ in $\ca K$ as on the right in the previous display, there exists a unique isomorphism $\overline{h} : aT(h) \to hj$ making $(h,\overline{h})$ a pseudomorphism, and $\psi$ an algebra 2-cell which exhibits $(h,\overline{h})$ as the pointwise left Kan extension of $(g,\overline{g})$ along $(f,\overline{f})$ in $\PsAlg T$.
\label{ale-cond-2}
\end{enumerate}
\end{defn}
In less formal terms, $(g,\overline{g})$ admits algebraic left extension along $(f,\overline{f})$, when the pointwise left extension of $(g,\overline{g})$ along $(f,\overline{f})$ in $\PsAlg T$ exists, and is computed as in $\ca K$. The theory of algebraic Kan extensions addresses the following
\begin{qn}\label{qn:alg-kan-ext}
What conditions on $(f,\overline{f})$ and $(A,a)$ ensure that \emph{every} pseudo morphism $(g,\overline{g})$ admits algebraic left extension along $f$?
\end{qn}
\begin{exam}\label{exam:P-fin-alg-left-ext}
Let $I$, $J$ and $A$ be categories with finite products, with $I$ and $J$ small and $A$ cocomplete cartesian closed. When $T = \FPMnd$, the 2-monad on $\Cat$ for the categories with finite products, by having finite products, $I$, $J$ and $A$ are pseudo $\FPMnd$-algebras. A pseudo morphism in this context is a finite product preserving functor. In the context of Definition \ref{def:alg-left-extension}, the cocompleteness of $A$ ensures that (\ref{ale-cond-1}) holds, and the classical fact \cite{Lawvere-FunctorialSemantics}: ``the pointwise left Kan extension of a finite product preserving functor $g : I \to A$ is finite product preserving''; ends up implying condition (\ref{ale-cond-2}). 
\end{exam}

\subsection{Algebraic cocompleteness}
\label{ssec:alg-cocomp}
The reason that the classical fact recalled in Example \ref{exam:P-fin-alg-left-ext} is true, is that cartesian closedness ensures that $A$'s colimits are compatible with its pseudo $\FPMnd$-algebra structure, in the sense that $(-) \times X : A \to A$ is colimit preserving for all $X \in A$. In the general situation, such compatibility of colimits with algebraic structure is given by
\begin{defn}\label{def:algebraic-cocompleteness}
Let $T$ be a 2-monad on a 2-category $\ca K$ with comma objects and $f : I \to J$ be an arrow of $\ca K$. Then a pseudo $T$-algebra $(A,a)$ is \emph{algebraically cocomplete relative to} $f$ when
\begin{enumerate}
\item For all $g:I \to A$, the pointwise left extension of $g$ along $f$ exists in $\ca K$.
\item If $\psi$ exhibits $h$ as a pointwise left Kan extension of $g$ along $f$ in $\ca K$
\[ \xygraph{{\xybox{\xygraph{{I}="l" [r(2)] {J}="r" [dl] {A}="b" "l":"r"^-{f}:"b"^-{h}:@{<-}"l"^-{g} [d(.5)r(.85)] :@{=>}[r(.3)]^-{\psi}}}} [r(3.5)]
{\xybox{\xygraph{{TI}="l" [r(2)] {TJ}="r" [dl] {TA}="b" [d] {A}="bb" "l":"r"^-{Tf}:"b"^-{Th}:@{<-}"l"^-{Tg} "b":"bb"^{a} "l" [d(.5)r(.85)] :@{=>}[r(.3)]^-{T\psi}}}}} \]
then $aT(\psi)$ exhibits $aT(h)$ as a pointwise left Kan extension of $aT(g)$ along $T(f)$.
\end{enumerate}
\end{defn}
Propositions \ref{prop:algcocomp-M}, \ref{prop:algcocomp-Sm-or-Bm} and \ref{prop:algcocomp-Fp} explain how, in the cases $T = \MCMnd$, $\SMCMnd$, $\BMCMnd$ and $\FPMnd$, Definition \ref{def:algebraic-cocompleteness} captures the usual idea of the categorical structure encoded by the 2-monad being compatible with colimits. In the proofs of these results we use the well-known fact that if $\psi$ as on the left
\[ \xygraph{{\xybox{\xygraph{{I}="p0" [r(2)] {J}="p1" [dl] {\ca V}="p2" "p0":"p1"^-{f}:"p2"^-{h}:@{<-}"p0"^-{g} "p0" [d(.5)r(.85)] :@{=>}[r(.3)]^-{\psi}}}}
[r(4)]
{\xybox{\xygraph{{I}="p0" [r(2)] {J}="p1" [dl] {\ca V}="p2" "p0" [u] {P}="p3" [r(2)] {K}="p4"
"p0":"p1"^-{f}:"p2"^-{h}:@{<-}"p0"^-{g} "p3" (:"p0"_-{p},:"p4"^-{q}:"p1"^-{k})
"p0" [d(.5)r(.85)] :@{=>}[r(.3)]^-{\psi} "p3":@{}"p1"|-{\tn{pb}}}}}} \]
exhibits $h$ as a pointwise left Kan extension of $g$ along $f$, and $f$ is an opfibration, then for any $k : K \to J$, the composite on the right exhibits $hk$ as a left Kan extension of $gp$ along $q$ \cite{Street-FibrationIn2cats}. Moreover we use the fact that $\MCMnd$, $\BMCMnd$, $\SMCMnd$ and $\FPMnd$ preserve opfibrations \cite{Weber-Fam2fun, Weber-PolynomialFunctors}.
\begin{prop}\label{prop:algcocomp-M}
Let $\ca V$ be a monoidal category.
\begin{enumerate}
\item $\ca V$ is algebraically cocomplete relative to all functors between small categories as a pseudo $\MCMnd$-algebra iff $\ca V$ is cocomplete and its tensor product preserves colimits in each variable.\label{pcase:monoidally-cocomplete}
\item $\ca V$ is algebraically cocomplete relative to all functors between small discrete categories as a pseudo $\MCMnd$-algebra iff $\ca V$ has coproducts and its tensor product preserves coproducts in each variable.\label{pcase:dist-monoidal}
\end{enumerate}
\end{prop}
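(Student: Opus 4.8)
The plan is to unwind Definition~\ref{def:algebraic-cocompleteness} in the case $T=\MCMnd$, using the explicit description of $\MCMnd$ from Section~5 of \cite{Weber-PolynomialFunctors}: the underlying $2$-functor of $\MCMnd$ is $I\mapsto\coprod_{n\in\N}I^{n}$, with $\MCMnd f=\coprod_{n}f^{n}$ (here $f^{n}\colon I^{n}\to J^{n}$ is the $n$-fold product of $f$ with itself), and for a monoidal category $\ca V$ viewed as a pseudo $\MCMnd$-algebra with action $a$, and any $g\colon I\to\ca V$, the functor $aT(g)$ restricted to the $n$-th summand $I^{n}$ is $g^{\otimes n}\colon(x_{1},\dots,x_{n})\mapsto gx_{1}\otimes\dots\otimes gx_{n}$, up to the coherence isomorphisms of $\ca V$; similarly $aT(h)$ restricts to $h^{\otimes n}$ and $aT(\psi)$ to the whiskering of $(\psi,\dots,\psi)$ by the $n$-fold tensor $\ca V^{n}\to\ca V$. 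With this, condition~(1) of Definition~\ref{def:algebraic-cocompleteness} for all functors between small categories (resp.\ between small discrete categories) is equivalent to $\ca V$ being cocomplete (resp.\ having small coproducts): one implication follows by specialising to $f\colon I\to 1$, for which $\lan_{f}g$ is the colimit of $g$; the other from the pointwise formula $(\lan_{f}g)(j)=\tn{colim}_{f/j}\,g\pi$, the comma category $f/j$ being small. So in both parts the content lies entirely in condition~(2).

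Since $\MCMnd f=\coprod_{n}f^{n}$, and since the comma category over an object of $\coprod_{n}J^{n}$ lying in the $n$-th summand is just the corresponding comma category for $f^{n}$, a $2$-cell exhibits $aT(h)$ as the pointwise left extension of $aT(g)$ along $\MCMnd f$ if and only if, for every $n$, its $n$-th component exhibits $h^{\otimes n}$ as the pointwise left extension of $g^{\otimes n}$ along $f^{n}\colon I^{n}\to J^{n}$. The cases $n=0$ and $n=1$ are automatic ($f^{0}=\id_{1}$; the $1$-fold tensor is $\id_{\ca V}$ up to coherence), so the real content is $n\ge 2$. Here the key is the following \emph{peeling lemma}: if, for every $X\in\ca V$, both $X\otimes(-)$ and $(-)\otimes X$ preserve colimits of the relevant shapes, and $\psi\colon g\Rightarrow hf$ exhibits $h$ as the pointwise left Kan extension of $g$ along $f$, then for every $n$ the whiskering of $(\psi,\dots,\psi)$ by the $n$-fold tensor functor exhibits $h^{\otimes n}$ as the pointwise left Kan extension of $g^{\otimes n}$ along $f^{n}$. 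One proves it by writing $\tn{colim}_{f^{n}/(j_{1},\dots,j_{n})}=\tn{colim}_{\prod_{k}(f/j_{k})}$ as an iterated colimit (Fubini for colimits) and peeling off one variable $\xi_{k}$ at a time, using at the $k$-th step that freezing all but the $k$-th argument of the $n$-fold tensor $\ca V^{n}\to\ca V$ gives a functor $\ca V\to\ca V$ which is a composite of functors of the form $Y\otimes(-)$ and $(-)\otimes Y$, hence colimit-preserving, and then replacing $\tn{colim}_{f/j_{k}}g\pi$ by $hj_{k}$ via $\psi$. For part~(2) the same argument runs with ``colimit'' replaced throughout by ``coproduct'' and all test categories taken discrete, Fubini becoming the trivial rearrangement of disjoint unions.

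Granting the peeling lemma, the implication $(\Leftarrow)$ in~(1) is immediate: cocompleteness gives condition~(1), and the peeling lemma applied to the given cell $\psi$ gives condition~(2). For $(\Rightarrow)$ in~(1), assume $\ca V$ is algebraically cocomplete relative to all functors between small categories; cocompleteness is condition~(1). To see that $X\otimes(-)$ preserves the colimit of an arbitrary small diagram $d\colon D\to\ca V$, take $J$ to be the discrete category on two objects $0,1$, take $I$ to be $D$ with a disjoint point $\ast$ adjoined, take $f\colon I\to J$ sending all of $D$ to $1$ and $\ast$ to $0$, and take $g\colon I\to\ca V$ equal to $d$ on $D$ and to $X$ at $\ast$; then $\lan_{f}g$ takes the value $X$ at $0$ and $\tn{colim}_{D}d$ at $1$, via the colimiting cocone. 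Condition~(2), read off in its $n=2$ component at the object $(0,1)\in\{0,1\}^{2}$ --- where $f^{2}/(0,1)\iso(f/0)\times(f/1)\iso 1\times D\catequiv D$ --- says exactly that the canonical comparison map from $\tn{colim}_{D}(X\otimes d(-))$ to $X\otimes\tn{colim}_{D}d$ is invertible. Reading it off at $(1,0)$ instead gives that $(-)\otimes X$ preserves $\tn{colim}_{D}d$. Since $X$, $D$ and $d$ are arbitrary, $\otimes$ preserves colimits in each variable. Part~(2) is proved word for word the same way with every category taken discrete.

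The only place demanding genuine care is the peeling lemma, and for two reasons: one must check that the comparison $2$-cell produced really is the canonical one, not merely that source and target agree objectwise; and, since $\MCMnd$ is not symmetric, the peeling must be arranged so that \emph{both} one-sided colimit-preservation hypotheses on $\otimes$ are used (which is also why, in the converse, the test object $(0,1)$ alone does not suffice --- one needs $(1,0)$ as well). The coherence isomorphisms of the pseudo $\MCMnd$-algebra structure, as opposed to a strict one, are harmless throughout, since invertible $2$-cells never affect whether a square is a pointwise left extension. Alternatively, one can route the peeling lemma through the fact recalled just before this Proposition --- that pointwise left extensions pull back along opfibrations --- together with the preservation of opfibrations by $\MCMnd$, which is presumably the shape of the argument the surrounding text is setting up.
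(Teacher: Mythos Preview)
Your proof is correct and takes essentially the same approach as the paper. The paper's $(\Leftarrow)$ also identifies $\MCMnd(f)\downarrow(j_1,\dots,j_n)\cong\prod_k(f/j_k)$ and then invokes the resulting cocone being a colimit ``in each variable'', which is exactly your peeling/Fubini step stated more tersely; its $(\Rightarrow)$ uses the same test diagram (generalised to $I=\coprod_k I_k\to n$ and then specialised to all but one $I_k$ trivial) and cites the opfibration fact you mention at the end to pass from comma to pullback over $(1,\dots,n)\in\MCMnd(n)$, though since the target is discrete this coincides with your direct comma computation.
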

\begin{proof}
(\ref{pcase:monoidally-cocomplete})($\implies$): Suppose that small categories $I_k$ for $1 \leq k \leq n$, and colimit cocones as on the left in
\[ \xygraph{{\xybox{\xygraph{{I_k}="p0" [r(2)] {1}="p1" [dl] {\ca V}="p2" "p0":"p1"^-{}:"p2"^-{h_k}:@{<-}"p0"^-{g_k} "p0" [d(.5)r(.85)] :@{=>}[r(.3)]^-{\psi_k}}}}
[r(4)u(.05)]
{\xybox{\xygraph{{I}="p0" [r(2)] {n}="p1" [dl] {\ca V}="p2" "p0":"p1"^-{f}:"p2"^-{h}:@{<-}"p0"^-{g} "p0" [d(.5)r(.85)] :@{=>}[r(.3)]^-{\psi}}}}} \]
are given. Define $I = \coprod_{k=1}^n I_k$, denote by $n$ also the discrete category $\{1,...,n\}$, define $f : I \to n$ as the functor which sends $I_k$ to $k \in n$, and then define $g$, $h$ and $\psi$ so that $\psi_i = (\psi_{k})_i$ for $i \in I_k$. Then $\psi$ is easily verified to exhibit $h$ as a pointwise left Kan extension of $g$ along $f$. Since $f$ is an opfibration, so is $\MCMnd(f)$, and so since $\ca V$ is algebraically cocomplete the composite
\[ \xygraph{!{0;(0,-.75):(0,-3)::} {\MCMnd(I)}="l" [r(2)] {\MCMnd(n)}="r" [dl] {\MCMnd(\ca V)}="b" :[d(.75)] {\ca V}^{\bigotimes} "l":"r"_-{\MCMnd(f)}:"b"_-{\MCMnd(h)}:@{<-}"l"_-{\MCMnd(g)} [d(.5)r(.85)] :@{=>}[r(.3)]_-{\MCMnd(\psi)}
"l" [u] {\prod\limits_{k=1}^n I_k}="tl" [r(2)] {1}="tr" "tl"(:"tr":"r"_{(1,...,n)},:"l") "tl":@{}"r"|{\tn{pb}}} \]
is a colimit cocone. In the case where, for a given $1 \leq k \leq n$, $I_l = 1$ and $\psi_l = \id$ for $l \neq k$, this says that $\bigotimes : \ca V^n \to \ca V$ preserves colimits in the $k$-th variable.

(\ref{pcase:dist-monoidal})($\implies$): same as (\ref{pcase:monoidally-cocomplete})($\implies$) but with the $I_k$ assumed discrete.

(\ref{pcase:monoidally-cocomplete})($\impliedby$): given the hypotheses on $\ca V$ and a functor $f : I \to J$ between small categories, we must verify that for any sequence $j = (j_1,...,j_n)$ of objects of $J$, that the composite
\begin{equation}\label{eq:colimcocone-algcocomp-proof}
\xygraph{!{0;(2.5,0):(0,.24)::} {\MCMnd(f) \downarrow j}="p0" [r] {\MCMnd(I)}="p1" [dr] {\MCMnd(\ca V)}="p2" ([dl] {\MCMnd(J)}="p3" [l] {1}="p4", [r(.75)] {\ca V}="p5") "p0":"p1"^-{p}:"p2"^-{\MCMnd(g)}:@{<-}"p3"^-{\MCMnd(h)}:@{<-}"p4"^-{(j_1,...,j_n)}:@{<-}"p0"^-{}
"p1":"p3"_-{\MCMnd(f)} "p2":"p5"^-{\bigotimes}
"p0" [d(.8)r(.5)] :@{=>}[d(.4)]^-{\lambda} "p1" [d(.8)r(.4)] :@{=>}[d(.4)]_-{\MCMnd(\psi)}}
\end{equation}
is a colimit cocone, for any natural transformation $\psi$ which exhibits $h$ as a pointwise left Kan extension of $g$ along $f$. Note that $\MCMnd(f) \downarrow j \iso \prod_{k=1}^n f \downarrow j_k$ and that $\bigotimes \MCMnd(g) p$ is the composite functor
\[ \xygraph{!{0;(1.5,0):(0,1)::} {\prod_{k=1}^n f \downarrow j_k}="p0" [r(1.5)] {I^n}="p1" [r] {\ca V^n}="p2" [r] {\ca V}="p3" "p0":"p1"^-{\prod_k p_k}:"p2"^-{g^n}:"p3"^-{\bigotimes}} \]
where $p_k$ is the comma projection $p_k : f \downarrow j_k \to I$. The component of (\ref{eq:colimcocone-algcocomp-proof}) at $(\alpha_k : fi_k \to j_k)_{1{\leq}k{\leq}n}$ is the composite
\[ \xygraph{!{0;(3,0):(0,1)::} {\bigotimes_{k=1}^n gi_k}="p0" [r] {\bigotimes_{k=1}^n hfi_k}="p1" [r] {\bigotimes_{k=1}^n hj_k}="p2" "p0":"p1"^-{\bigotimes_k \psi_{i_k}}:"p2"^-{\bigotimes_k \alpha_k}} \]
which in each variable is a colimit cocone since $\psi$ is a pointwise left Kan extension.

(\ref{pcase:dist-monoidal})($\impliedby$): same argument as for (\ref{pcase:monoidally-cocomplete})($\impliedby$), except that now $I$ and $J$ are discrete, and so since $\MCMnd(f)$ is then an opfibration, one may replace the comma object in (\ref{eq:colimcocone-algcocomp-proof}) by pullback square.
\end{proof}
\begin{prop}\label{prop:algcocomp-Sm-or-Bm}
Let $\ca V$ be a symmetric (resp. braided) monoidal category.
\begin{enumerate}
\item $\ca V$ is algebraically cocomplete relative to all functors between small categories as a pseudo $\SMCMnd$-algebra (resp. as a pseudo $\BMCMnd$-algebra) iff $\ca V$ is cocomplete and its tensor product preserves colimits in each variable.\label{pcase:s-or-b-monoidally-cocomplete}
\item $\ca V$ is algebraically cocomplete relative to all functors between small discrete categories as a pseudo $\SMCMnd$-algebra (resp. as a pseudo $\BMCMnd$-algebra) iff $\ca V$ has coproducts and its tensor product preserves coproducts in each variable.\label{pcase:dist-s-or-b-monoidal}
\end{enumerate}
\end{prop}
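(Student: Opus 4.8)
The plan is to adapt the proof of Proposition~\ref{prop:algcocomp-M}, replacing $\MCMnd$ by $\SMCMnd$ (resp.\ by $\BMCMnd$) throughout, and to supply the two combinatorial facts about free symmetric (resp.\ braided) monoidal categories that this substitution needs. Throughout I would use that $\SMCMnd$ and $\BMCMnd$ preserve opfibrations, and the fact, recalled above, that a pointwise left Kan extension along an opfibration restricts along pullbacks.

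For the two forward implications I would repeat the construction in the proof of Proposition~\ref{prop:algcocomp-M}(\ref{pcase:monoidally-cocomplete})($\implies$) verbatim: from colimit cocones $\psi_k$ over the $I_k$, form $I=\coprod_k I_k$ and $f:I\to n$, assemble a pointwise left Kan extension $\psi$ of $g$ along $f$, and exploit that $f$, hence $\SMCMnd(f)$ (resp.\ $\BMCMnd(f)$), is an opfibration. The one thing requiring comment is that the strict fibre of $\SMCMnd(f)$ over the object $(1,\dots,n)$ of $\SMCMnd(n)$ is still $\prod_{k=1}^n I_k$: since $n$ is discrete, $(1,\dots,n)$ has no non-identity endomorphisms in $\SMCMnd(n)$ --- a non-trivial permutation would force a non-identity morphism between distinct objects of $n$ --- so any morphism of $\SMCMnd(I)$ lying over $\id_{(1,\dots,n)}$ has trivial underlying permutation and components lying over identities, hence is exactly a morphism of $\prod_k I_k$, and the same holds with $\BMCMnd$. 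With this observation, and its specialisation to the case $I_l=1$, $\psi_l=\id$ for $l\neq k$, the forward implications follow exactly as before.

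The substantive new point is for the reverse implications, namely the description of the comma category $\SMCMnd(f)\downarrow j$ (resp.\ $\BMCMnd(f)\downarrow j$) for an object $j=(j_1,\dots,j_n)$. I would show that the inclusion $\prod_{k=1}^n (f\downarrow j_k)\hookrightarrow\SMCMnd(f)\downarrow j$ of the full subcategory on those objects whose structure morphism to $j$ has identity underlying permutation is an equivalence of categories (and likewise with $\BMCMnd$ and ``identity underlying braid''). It is fully faithful because, from the description of hom-sets in the free symmetric (resp.\ braided) monoidal category, a morphism of $\SMCMnd(f)\downarrow j$ between two such objects is forced to have identity underlying permutation, hence is exactly an $n$-tuple of morphisms of the categories $f\downarrow j_k$. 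It is essentially surjective because hom-sets of $\SMCMnd(J)$ between sequences of unequal length are empty, so every object of $\SMCMnd(f)\downarrow j$ has underlying sequence of length $n$ and structure morphism of the form $(\sigma,(\alpha_k))$ with $\sigma$ a permutation, and permuting the underlying sequence by $\sigma$ provides an isomorphism in $\SMCMnd(f)\downarrow j$ to an object with identity underlying permutation; the braided case absorbs the underlying braid, which is invertible, in the same way. Establishing this equivalence, together with the bookkeeping needed to confirm that the symmetry (resp.\ braiding) coherence isomorphisms of $\ca V$'s pseudo algebra structure are precisely what identify the whiskered cocone, restricted along the inclusion, with the $n$-fold tensor of the colimit cocones exhibiting each $h(j_k)$ --- this is the part I expect to require the most care.

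Granting the equivalence, the proof of Proposition~\ref{prop:algcocomp-M}(\ref{pcase:monoidally-cocomplete})($\impliedby$) transfers: under the inclusion, the functor $\bigotimes\circ\SMCMnd(g)\circ p$ restricts to $(c_1,\dots,c_n)\mapsto\bigotimes_k g(p_k c_k)$ and the whiskered cocone restricts to the $n$-fold tensor of the colimit cocones exhibiting $h(j_k)$ as a pointwise left Kan extension of $g p_k$; since $\bigotimes$ preserves colimits in each variable this tensor is a colimit cocone with vertex $\bigotimes_k h(j_k)=\bigotimes\SMCMnd(h)(j)$, and since an equivalence of categories carries colimit cocones to colimit cocones in both directions, the original whiskered cocone over $\SMCMnd(f)\downarrow j$ is colimiting, which is exactly what algebraic cocompleteness relative to $f$ requires (Definition~\ref{def:algebraic-cocompleteness}). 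Finally, each reverse implication in part~(\ref{pcase:dist-s-or-b-monoidal}) is the same argument with $I$ and $J$ discrete, so that each $f\downarrow j_k$ collapses to the discrete fibre $f^{-1}(j_k)$ and one needs only that $\bigotimes$ preserves coproducts in each variable.
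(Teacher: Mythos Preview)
Your proposal is correct and follows essentially the same route as the paper: the paper also reduces the reverse implication of (\ref{pcase:s-or-b-monoidally-cocomplete}) to that of Proposition~\ref{prop:algcocomp-M} by exhibiting the inclusion $\prod_{k=1}^n (f\downarrow j_k)\hookrightarrow\SMCMnd(f)\downarrow j$ as an equivalence (hence final) via the isomorphism $(\id,\rho^{-1})$, and handles the forward implications and part (\ref{pcase:dist-s-or-b-monoidal}) by the same transfer from the $\MCMnd$ case. Your extra care about full faithfulness and about the strict fibre over $(1,\dots,n)$ is correct but left implicit in the paper; conversely, the ``bookkeeping'' you anticipate with symmetry coherences is not actually needed, since on the image of the inclusion all permutations are identities and the restricted cocone coincides literally with the $\MCMnd$ one.
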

\begin{proof}
One verifies (\ref{pcase:s-or-b-monoidally-cocomplete})($\implies$) and (\ref{pcase:dist-s-or-b-monoidal}) by the same arguments as for Proposition \ref{prop:algcocomp-M}. We give here the proof of (\ref{pcase:s-or-b-monoidally-cocomplete})($\impliedby$) in the symmetric case. The braided case is similar. Given $\psi$ as in the proof of Proposition \ref{prop:algcocomp-M}(\ref{pcase:monoidally-cocomplete})($\impliedby$), we must verify that
\[ \xygraph{!{0;(2,0):(0,.3)::} {\SMCMnd(f) \downarrow j}="p0" [r] {\SMCMnd(I)}="p1" [dr] {\SMCMnd(\ca V)}="p2" ([dl] {\SMCMnd(J)}="p3" [l] {1}="p4", [r(.75)] {\ca V}="p5") "p0":"p1"^-{p}:"p2"^-{\SMCMnd(g)}:@{<-}"p3"^-{\SMCMnd(h)}:@{<-}"p4"^-{(j_1,...,j_n)}:@{<-}"p0"^-{}
"p1":"p3"_-{\SMCMnd(f)} "p2":"p5"^-{\bigotimes}
"p0" [d(.8)r(.45)] :@{=>}[d(.4)]^-{\lambda} "p1" [d(.8)r(.45)] :@{=>}[d(.4)]_-{\SMCMnd(\psi)}} \]
is a colimit cocone. An object of $\SMCMnd(f) \downarrow j$ is a triple $(i,\rho,\alpha)$ as on the left
\[ \xygraph{{\xybox{\xygraph{{n}="p0" [r] {n}="p1" [d] {J}="p2" [l] {I}="p3" "p0":"p1"^-{\rho}:"p2"^-{j}:@{<-}"p3"^-{f}:@{<-}"p0"^-{i} "p0" [d(.55)r(.35)] :@{=>}[r(.3)]^-{\alpha}}}}
[r(3)u(.1)]
{\xybox{\xygraph{!{0;(.5,0):(0,2)::} {n}="p0" [r(2)] {n}="p1" [dl] {I}="p2" "p0":"p1"^-{\rho''}:"p2"^-{i'}:@{<-}"p0"^-{i} "p0" [d(.45)r(.75)] :@{=>}[r(.5)]^-{\beta}}}}
[r(4)d(.075)]
{\xybox{\xygraph{{\xybox{\xygraph{
!{0;(.5,0):(0,2)::}
{n}="p0" [r(2)] {n}="p1" [r(2)] {n}="p2" [dl] {J}="p3" [l(2)] {I}="p4"
"p0":"p1"^-{\rho''}:"p2"^-{\rho'}:"p3"^-{j}:@{<-}"p4"^-{f}:@{<-}"p0"^-{i} "p1":"p4"^-{i'} "p0" [d(.5)r(.8)] :@{=>}[r(.4)]^-{\beta} "p1" [d(.5)r(.5)] :@{=>}[r(.4)]^-{\alpha'}}}}
[r(1.35)] {=} [r(.95)u(.04)]
{\xybox{\xygraph{!{0;(.5,0):(0,2)::} {n}="p0" [r(2)] {n}="p1" [dl] {J}="p2" "p0":"p1"^-{\rho}:"p2"^-{j}:@{<-}"p0"^-{fi} "p0" [d(.45)r(.75)] :@{=>}[r(.5)]^-{\alpha}}}}}}}} \]
where $\rho \in \Sigma_n$. An arrow $(i,\rho,\alpha) \to (i',\rho',\alpha')$ is a pair $(\beta,\rho'')$ as in the middle, with $\rho'\rho'' = \rho$ in $\Sigma_n$, and moreover satisfying the equation on the right in the previous display. The inclusion of objects of the form $(i,1_n,\alpha)$ can be regarded as a functor
\[ F : \prod_{k=1}^n f \downarrow j_k \longrightarrow \SMCMnd(f) \downarrow j \]
and for $(i,\rho,\alpha)$ as above, the isomorphism $(\id,\rho^{-1}) : (i\rho^{-1},1_n,\alpha\rho^{-1}) \to (i,\rho,\alpha)$ exhibits $F$ as essentially surjective on objects, thus an equivalence, and thus final, so that the result follows from the proof of Proposition \ref{prop:algcocomp-M}(\ref{pcase:monoidally-cocomplete})($\impliedby$).
\end{proof}
\begin{prop}\label{prop:algcocomp-Fp}
Let $\ca V$ be a category with finite products.
\begin{enumerate}
\item $\ca V$ is algebraically cocomplete relative to all functors between small categories as a pseudo $\FPMnd$-algebra iff $\ca V$ is cocomplete and its cartesian product preserves colimits in each variable.\label{pcase:cart-monoidally-cocomplete}
\item $\ca V$ is algebraically cocomplete relative to all functors between small discrete categories as a pseudo $\FPMnd$-algebra iff $\ca V$ has coproducts and its cartesian product preserves coproducts in each variable.\label{pcase:cart-dist-monoidal}
\end{enumerate}
\end{prop}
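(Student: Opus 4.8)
The plan is to follow the template of the proofs of Propositions \ref{prop:algcocomp-M} and \ref{prop:algcocomp-Sm-or-Bm}, using the two facts recalled just before Proposition \ref{prop:algcocomp-M}: that $\FPMnd$ preserves opfibrations, and that pointwise left Kan extensions along opfibrations are stable under pullback. The two $(\implies)$ directions run verbatim as in the proofs of Proposition \ref{prop:algcocomp-M}(\ref{pcase:monoidally-cocomplete})($\implies$) and (\ref{pcase:dist-monoidal}), with $\MCMnd$ replaced by $\FPMnd$ and the monoidal product replaced by the $n$-ary cartesian product functor $\ca V^n \to \ca V$: given small $I_k$ for $1\leq k\leq n$ with chosen colimit cocones, one sets $I = \coprod_{k=1}^n I_k$ and takes $f : I \to n$ to send $I_k$ to $k$ (writing $n$ also for the discrete category on $n$ objects, as in the proof of Proposition \ref{prop:algcocomp-M}), observes that $\FPMnd(f)$ is an opfibration whose fibre over $(1,\dots,n) \in \FPMnd(n)$ is $\prod_{k=1}^n I_k$, and base-changes the pointwise left extension obtained by applying the cartesian product functor to $\FPMnd(\psi)$ along $1 \to \FPMnd(n)$; taking all but one $I_k$ terminal then gives that the cartesian product preserves colimits in the remaining variable. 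That $\ca V$ is moreover cocomplete (resp.\ has coproducts) follows immediately from condition (1) of Definition \ref{def:algebraic-cocompleteness} applied to the functors $I \to 1$ with $I$ small (resp.\ small and discrete). So the content is in the two $(\impliedby)$ directions.

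For (\ref{pcase:cart-monoidally-cocomplete})($\impliedby$): suppose $\ca V$ is cocomplete with cartesian product preserving colimits in each variable, let $f : I \to J$ be a functor between small categories, and let $\psi$ exhibit $h$ as a pointwise left Kan extension of $g$ along $f$. As in the proof of Proposition \ref{prop:algcocomp-M}(\ref{pcase:monoidally-cocomplete})($\impliedby$), it suffices to show that for each object $(j_1,\dots,j_n)$ of $\FPMnd(J)$ the cocone induced by applying the cartesian product functor to $\FPMnd(\psi)$ over the comma category $\FPMnd(f) \downarrow (j_1,\dots,j_n)$ is a colimit cocone. An object of this comma category consists of a sequence $(i_1,\dots,i_m)$ in $\FPMnd(I)$, a function $\phi : \{1,\dots,n\} \to \{1,\dots,m\}$, and morphisms $\alpha_k : f i_{\phi(k)} \to j_k$ for $1\leq k\leq n$; the full subcategory $D$ on those objects with $m = n$ and $\phi = \id$ is canonically isomorphic to $\prod_{k=1}^n (f \downarrow j_k)$, and exactly as in the proof of Proposition \ref{prop:algcocomp-M}(\ref{pcase:monoidally-cocomplete})($\impliedby$) the underlying functor restricted to $D$ sends $(i_k,\alpha_k)_k$ to $\prod_{k=1}^n g(i_k)$, with cocone component the $n$-fold cartesian product of the maps $h(\alpha_k) \circ \psi_{i_k} : g(i_k) \to h(j_k)$. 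Since each $\psi$ restricts to a colimit cocone over $f \downarrow j_k$ and the $n$-ary cartesian product, being iterated binary product, preserves colimits in each variable, this restricted cocone is a colimit cocone; so it is enough to show that $D \hookrightarrow \FPMnd(f) \downarrow (j_1,\dots,j_n)$ is final. For this I would check that each comma category $e \downarrow D$, where $e = \big((i_1,\dots,i_m),\phi,(\alpha_k)\big)$, has an initial object, namely $\big((i_{\phi(1)},\dots,i_{\phi(n)}),\id,(\alpha_k)\big)$ equipped with the morphism out of $e$ whose underlying function is $\phi$ and whose component maps are identities: a short calculation shows that any morphism from $e$ to an object $\big((i'_k),\id,(\alpha'_k)\big)$ of $D$ is forced to have underlying function $\phi$, and that its component maps then give the unique factorisation through this candidate. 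For (\ref{pcase:cart-dist-monoidal})($\impliedby$) one repeats the argument with $I$ and $J$ discrete; as $\FPMnd(f)$ is then an opfibration, the comma object $\FPMnd(f) \downarrow (j_1,\dots,j_n)$ may be replaced by the fibre of $\FPMnd(f)$ over $(j_1,\dots,j_n)$, in which the role of $D$ is played by the final subcategory $\prod_{k=1}^n f^{-1}(j_k)$, and the remainder is unchanged.

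The main obstacle is precisely the finality of $D \hookrightarrow \FPMnd(f) \downarrow (j_1,\dots,j_n)$, which is the one genuinely new point compared with Propositions \ref{prop:algcocomp-M} and \ref{prop:algcocomp-Sm-or-Bm}: in the monoidal case the analogue of $D$ is the whole comma category, and in the braided and symmetric cases the analogous inclusion is an equivalence because every object there is isomorphic to a diagonal one, whereas here --- because $\FPMnd(J)$ has morphisms given by arbitrary reindexing functions rather than just bijections --- $D$ is a proper subcategory that is not equivalent to the ambient comma category, and only finality is available. Everything else is bookkeeping parallel to the proof of Proposition \ref{prop:algcocomp-M}.
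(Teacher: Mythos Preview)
Your proposal is correct and takes essentially the same approach as the paper. The paper likewise reduces to exhibiting the inclusion $\prod_{k=1}^n (f\downarrow j_k) \hookrightarrow \FPMnd(f)\downarrow(j_1,\dots,j_n)$ as final, and does so by observing that the assignment $((i_1,\dots,i_m),\phi,(\alpha_k)) \mapsto ((i_{\phi(1)},\dots,i_{\phi(n)}),\id,(\alpha_k))$ is a left adjoint to this inclusion; your verification that each $e\downarrow D$ has the specified initial object is exactly the construction of the unit of this adjunction, so the two arguments coincide.
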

\begin{proof}
Once again it is only necessary to modify the argument for (\ref{pcase:cart-monoidally-cocomplete})($\impliedby$). Proceeding analogously to the proof of Proposition \ref{prop:algcocomp-Sm-or-Bm} our task is to exhibit a final functor
\[ F : \prod_{k=1}^n f \downarrow j_k \longrightarrow \FPMnd(f) \downarrow j. \]
An object of $\FPMnd(f) \downarrow j$ is a triple $(i,\alpha,\beta)$ as on the left in
\[ \xygraph{{\xybox{\xygraph{{m}="p0" [r] {n}="p1" [d] {J}="p2" [l] {I}="p3" "p0":@{<-}"p1"^-{\alpha}:"p2"^-{j}:@{<-}"p3"^-{f}:@{<-}"p0"^-{i} "p0" [d(.55)r(.35)] :@{=>}[r(.3)]^-{\beta}}}}
[r(3)u(.1)]
{\xybox{\xygraph{!{0;(.5,0):(0,2)::} {m}="p0" [r(2)] {m'}="p1" [dl] {I}="p2" "p0":@{<-}"p1"^-{\alpha''}:"p2"^-{i'}:@{<-}"p0"^-{i} "p0" [d(.45)r(.75)] :@{=>}[r(.5)]^-{\gamma}}}}
[r(4)d(.075)]
{\xybox{\xygraph{{\xybox{\xygraph{
!{0;(.5,0):(0,2)::}
{m}="p0" [r(2)] {m'}="p1" [r(2)] {n}="p2" [dl] {J}="p3" [l(2)] {I}="p4"
"p0":@{<-}"p1"^-{\alpha''}:@{<-}"p2"^-{\alpha'}:"p3"^-{j}:@{<-}"p4"^-{f}:@{<-}"p0"^-{i} "p1":"p4"^-{i'} "p0" [d(.5)r(.8)] :@{=>}[r(.4)]^-{\gamma} "p1" [d(.5)r(.5)] :@{=>}[r(.4)]^-{\beta'}}}}
[r(1.35)] {=} [r(.95)u(.04)]
{\xybox{\xygraph{!{0;(.5,0):(0,2)::} {m}="p0" [r(2)] {n}="p1" [dl] {J}="p2" "p0":@{<-}"p1"^-{\alpha}:"p2"^-{j}:@{<-}"p0"^-{fi} "p0" [d(.45)r(.75)] :@{=>}[r(.5)]^-{\beta}}}}}}}} \]
and an arrow $(i,\alpha,\beta) \to (i',\alpha',\beta')$ is a pair $(\alpha'',\gamma)$ such that $\alpha'\alpha'' = \alpha$ and the equation on the right in the previous display holds. As in Proposition \ref{prop:algcocomp-Sm-or-Bm} we take $F$ to be the full inclusion of objects of the form $(i,1_n,\beta)$, and the assignations $(i,\alpha,\beta) \mapsto (i\alpha,1_n,\beta)$ describe the effect on objects of a left adjoint to $F$. Thus $F$, as a right adjoint, is a indeed final functor.
\end{proof}
Algebraic cocompleteness in the sense of Definition \ref{def:algebraic-cocompleteness} arises also for 2-monads on 2-categories other than $\Cat$. In particular one has
\begin{exam}\label{ex:distmonglob}
A monoidal globular category in the sense of \cite{Batanin-MonGlobCats} is a pseudo algebra for the 2-monad denoted $D_s$ in \cite{Batanin-MonGlobCats}, and a monoidal globular category conforming to Definition 5.3 of \cite{Batanin-MonGlobCats} is in particular, algebraically cocomplete relative to all morphisms of small discrete globular categories.
\end{exam}

\subsection{Existence of algebraic left extensions}
\label{ssec:ans-gen-qn-alg-Kan-ext}
We now give the sufficient conditions on $(f,\overline{f})$ and $(A,a)$ as in Question \ref{qn:alg-kan-ext}, so that every pseudomorphism $(g,\overline{g}) : I \to A$ admits algebraic left extension along $f$. The conditions we shall require on $(A,a)$ are that it be algebraically cocomplete relative to $f$ in the sense of Definition \ref{def:algebraic-cocompleteness}. We now turn to a discussion of the required conditions on $(f,\overline{f})$. These involve a generalisation of Guitart's notion of ``exact square'' to the setting of a 2-category $\ca K$ with comma objects.
\begin{defn}\label{def:ExactSquare}
A lax square as on the left
\[ \xygraph{{\xybox{\xygraph{{P}="p0" [r] {B}="p1" [d] {C}="p2" [l] {A}="p3" "p0":"p1"^-{q}:"p2"^-{g}:@{<-}"p3"^-{f}:@{<-}"p0"^-{p} "p0" [d(.55)r(.35)] :@{=>}[r(.3)]^-{\phi}}}}
[r(4)]
{\xybox{\xygraph{{P}="tl" [r(2)] {B}="tr" [l(2)d] {A}="l" [r(2)] {C}="r" [dl] {V}="b" "l":"r"^-{f}:"b"^-{l}:@{<-}"l"^-{h} [d(.5)r(.85)] :@{=>}[r(.3)]^-{\psi}
"tl"(:"tr"^-{q}:"r"^-{g},:"l"_-{p}) "tl" [d(.5)r(.85)] :@{=>}[r(.3)]^-{\phi}}}}} \]
in a 2-category $\ca K$ with comma objects is \emph{exact} when for all $\psi$ which exhibit $l$ as a pointwise left Kan extension of $h$ along $f$, the composite 2-cell on the right exhibits $lg$ as a pointwise left Kan extension of $hp$ along $q$.
\end{defn}
\begin{exam}\label{exam:exact-commas}
The proof of Proposition 24 \cite{Street-FibrationIn2cats} requires only comma objects in the ambient 2-category, and thus comma squares are exact in general. We shall revisit this in Proposition \ref{prop:commas-are-exact}.
\end{exam}
\begin{defn}\label{def:exact-T-morphism}
Let $(\ca K,T)$ be a 2-monad, suppose that $\ca K$ has comma objects and let $(f,\overline{f}) : (A,a) \to (B,b)$ be a colax morphism of pseudo $T$-algebras. Then $(f,\overline{f})$ is \emph{exact} when the square
\[ \xygraph{!{0;(1.5,0):(0,.6667)::} {TA}="p0" [r] {TB}="p1" [d] {B}="p2" [l] {A}="p3" "p0":"p1"^-{Tf}:"p2"^-{b}:@{<-}"p3"^-{f}:@{<-}"p0"^-{a} "p0" [d(.55)r(.4)] :@{=>}[r(.2)]^-{\overline{f}}} \]
is exact in the sense of Definition \ref{def:ExactSquare}.
\end{defn}
\begin{thm}\label{thm:AlgKan}
Suppose that $(\ca K,T)$ is a 2-monad, $\ca K$ has comma objects, $(f,\overline{f}) : (I,i) \to (J,j)$ is a colax morphism of pseudo $T$-algebras, and $(g,\overline{g}) : (I,i) \to (A,a)$ is a lax morphism of pseudo $T$-algebras.
\begin{enumerate}
\item If $(A,a)$ is algebraically cocomplete relative to $(f,\overline{f})$, then if $\psi$
\[ \xygraph{{I}="p0" [r(2)] {J}="p1" [dl] {A}="p2"
"p0":"p1"^-{f}:"p2"^-{h}:@{<-}"p0"^-{g}
"p0" [d(.5)r(.85)] :@{=>}[r(.3)]^-{\psi}} \]
exhibits $h$ as a pointwise left extension of $g$ along $f$ in $\ca K$, then the unique 2-cell $\overline{h}$ satisfying
\begin{equation}\label{eq:def-hbar} 
\xygraph{!{0;(1.5,0):(0,.6667)::} 
{TI}="q0" [r] {TJ}="q1" [r] {J}="q2" [d] {A}="q3" [l] {TA}="q4"
"q0":"q1"^-{Tf}:"q2"^-{j}:"q3"^-{h}:@{<-}"q4"^-{a}:@/^{1pc}/@{<-}"q0"^-{Tg} "q1":"q4"^-{Th}
"q1" [d(.5)l(.5)] :@{=>}[r(.2)]^-{T\psi}
"q1" [d(.5)r(.5)] :@{=>}[r(.2)]^-{\overline{h}}
"q2" [d(.5)r] {=} [r]
{TI}="r0" [ur] {TJ}="r1" [dr] {J}="r2" [dl] {A}="r3" [l] {TA}="r4" [ur] {I}="r5"
"r0":"r1"^-{Tf}:"r2"^-{j}:@/^{1pc}/"r3"^-{h}:@{<-}"r4"^-{a}:@{<-}"r0"^-{Tg}
"r5" (:@{<-}"r0"_-{i}, :"r2"^-{f}, :"r3"_-{g})
"r0" [d(.5)r(.4)] :@{=>}[r(.2)]^-{\overline{g}}
"r5" [d(.5)r(.35)] :@{=>}[r(.2)]^-{\psi}
"r5" [u(.3)] :@{=>}[r(.1)u(.2)]^-{\overline{f}}}
\end{equation}
endows $h$ with the structure of a lax morphism.
\label{thmcase:AK-lax}
\item If in the situation of (\ref{thmcase:AK-lax}) $(f,\overline{f})$ is exact and $(g,\overline{g})$ is a pseudomorphism, then $(h,\overline{h})$ is a pseudomorphism, and $\overline{h}$ is unique making $\psi$ a 2-cell in $\PsAlgc T$.
\label{thmcase:AK-pseudo}
\item If in the situation of (\ref{thmcase:AK-pseudo}) $(f,\overline{f})$ is a pseudomorphism, then $\overline{h}$ is unique making $\psi$ a 2-cell in $\PsAlg T$ which exhibits $(h,\overline{h})$ as a pointwise left extension of $(g,\overline{g})$ along $(f,\overline{f})$ in $\PsAlg T$.
\label{thmcase:AK-AlgLeftExt}
\end{enumerate}
\end{thm}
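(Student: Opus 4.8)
The plan is to construct the coherence 2-cell $\overline{h}$ once, purely from a universal property, and then to upgrade its properties as the hypotheses of (\ref{thmcase:AK-lax})--(\ref{thmcase:AK-AlgLeftExt}) are successively strengthened. For (\ref{thmcase:AK-lax}): since $(A,a)$ is algebraically cocomplete relative to $f$, the second condition of Definition \ref{def:algebraic-cocompleteness} tells us that $aT(\psi)$ exhibits $aT(h)$ as a pointwise left Kan extension of $aT(g)$ along $T(f)$; in particular it enjoys the one-dimensional universal property, so the 2-cell $(h\overline{f})(\psi i)(\overline{g}) : aT(g) \Rightarrow hjT(f)$ appearing on the right of (\ref{eq:def-hbar}) factors uniquely as $(\overline{h}T(f))(aT\psi)$, yielding a unique $\overline{h} : aT(h) \to hj$ subject to (\ref{eq:def-hbar}). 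It then remains to check the lax morphism axioms. The unit axiom is an equation of 2-cells $h \rightrightarrows hj\eta_J$, so by the one-dimensional universal property of $\psi$ it suffices to verify it after whiskering by $f$ and composing with $\psi$; whiskering (\ref{eq:def-hbar}) by $\eta_I$, using 2-naturality of $\eta$ to identify $(aT\psi)\eta_I$ with $(a\eta_A)\psi$, and then substituting the unit axiom for $\overline{g}$ and the colax unit axiom for $\overline{f}$, collapses both sides to $(hj_0f)\psi$, where $j_0$ denotes the unit coherence of $j$.

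The structure axiom for $\overline{h}$ is an equation of 2-cells over $T^2 J$, and this is the step I expect to be the main obstacle, because it cannot be probed against $\psi$ directly. Here I would use that $aT(\psi)$ is not merely one-dimensionally but \emph{pointwise} a left extension along $T(f)$: pasting it with the comma square of $T(f)$ over $\mu_J$ produces a pointwise left extension along the projection $T(f)\downarrow\mu_J \to T^2 J$, and since the coherence cell $a_2$ is invertible this universal cell detects equalities of 2-cells out of $aT(a)T^2(h) \cong a\mu_A T^2(h)$. The two sides of the structure axiom are then reconciled by a longer computation of the same character, carried out at the level of $T(f)\downarrow\mu_J$ and invoking (\ref{eq:def-hbar}), the associativity axioms of the pseudo $T$-algebras $(I,i)$, $(J,j)$, $(A,a)$, and the structure axioms for $\overline{f}$ and $\overline{g}$.

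For (\ref{thmcase:AK-pseudo}): now $\overline{g}$ is invertible, so the composite $(aT\psi)(\overline{g}^{-1})$ still exhibits $aT(h)$ as a pointwise left extension of $gi$ along $T(f)$. Exactness of $(f,\overline{f})$ — unwinding Definitions \ref{def:exact-T-morphism} and \ref{def:ExactSquare} — says precisely that pasting $\overline{f}$ onto the pointwise left extension $\psi$ exhibits $hj$ as a pointwise left extension of $gi$ along $T(f)$, i.e.\ that $(h\overline{f})(\psi i)$ is such. Thus $aT(h)$ and $hj$ are two pointwise left extensions of the same functor $gi$ along the same arrow $T(f)$, and the rewritten form $(\overline{h}T(f))(aT\psi)(\overline{g}^{-1}) = (h\overline{f})(\psi i)$ of (\ref{eq:def-hbar}) identifies $\overline{h}$ with the canonical comparison between them; hence $\overline{h}$ is invertible and $(h,\overline{h})$ is a pseudomorphism. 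That $\psi$ is then a 2-cell in $\PsAlgc{T}$ is a transcription of (\ref{eq:def-hbar}) using invertibility of $\overline{g}$, and any invertible $\overline{h}$ with that property satisfies (\ref{eq:def-hbar}), hence is unique by (\ref{thmcase:AK-lax}).

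For (\ref{thmcase:AK-AlgLeftExt}): once $(f,\overline{f})$ is a pseudomorphism, the composites $(k,\overline{k})(f,\overline{f})$ and the relevant $\PsAlg{T}$-2-cells are all defined, and the 2-cell $\psi$ of (\ref{thmcase:AK-pseudo}) becomes a 2-cell in $\PsAlg{T}$. To show it exhibits $(h,\overline{h})$ as a pointwise left extension there, I would first establish the one-dimensional property: given a pseudomorphism $(k,\overline{k}) : (J,j)\to(A,a)$ and a $\PsAlg{T}$-2-cell $\gamma : (g,\overline{g}) \Rightarrow (k,\overline{k})(f,\overline{f})$, the one-dimensional property of $\psi$ in $\ca K$ produces a unique $\delta : h \to k$ with $(\delta f)\psi = \gamma$, and one checks $\overline{k}\,(aT\delta) = (\delta j)\overline{h}$ — after whiskering by $T(f)$ and composing with the left extension $aT(\psi)$ — using $(\delta f)\psi = \gamma$, the 2-cell axiom for $\gamma$, the defining equation (\ref{eq:def-hbar}), the invertibility of $\overline{f}$ and $\overline{g}$, and the interchange law. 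Pointwiseness in $\PsAlg{T}$ then follows from that of $\psi$ in $\ca K$ together with the fact (recalled in Section \ref{ssec:alg-left-ext}) that comma objects in $\PsAlg{T}$ are computed as in $\ca K$ with strict projections, so the one-dimensional conditions obtained after whiskering by the comma projections are further instances of the verification just described.
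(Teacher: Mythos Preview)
Your treatment of the construction of $\overline{h}$, the unit axiom in (\ref{thmcase:AK-lax}), and all of (\ref{thmcase:AK-pseudo}) matches the paper. Two points diverge.

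\emph{Structure axiom in (\ref{thmcase:AK-lax}).} The paper does not introduce the comma object $Tf\downarrow\mu_J$. It simply asserts that algebraic cocompleteness makes $aT(a)T^2(\psi)$ itself a left extension along $T^2f$ (an iteration of condition~(2) of Definition~\ref{def:algebraic-cocompleteness}), and checks the multiplicative axiom after precomposition with that 2-cell, by a pasting calculation parallel to the unit case using 2-naturality of $\mu$, the definition (\ref{eq:def-hbar}), and the structure axioms for $\overline{f}$ and $\overline{g}$. Your route via the comma over $\mu_J$ is not wrong, and it has the virtue of using only the pointwiseness of $aT(\psi)$, which follows from a single application of Definition~\ref{def:algebraic-cocompleteness}; but it is more indirect and the computation you allude to is less transparent than the paper's.

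\emph{Part (\ref{thmcase:AK-AlgLeftExt}).} Here your reduction is too quick. The one-dimensional verification you sketch concerns test 2-cells $g\Rightarrow kf$ with $(k,\overline{k}):J\to A$; pointwiseness requires, for each $(r,\overline{r}):K\to J$ with comma $f\downarrow r$ and strict projections $p,q$, a factorisation through $hr$ of test 2-cells $gp\Rightarrow sq$. These are not literally ``further instances'' of the former: the arrow being extended along is now $q$, not $f$, and the required check that the induced $\tau$ is an algebra 2-cell cannot be probed against $aT(\psi)$ alone. The paper goes straight to the pointwise statement, produces the unique $\tau$ in $\ca K$, and then exhibits a specific composite --- built from $T\lambda$, $T\psi$, and the inverse coherences $\overline{g}$, $\overline{h}^{-1}$, $\overline{r}^{-1}$ --- which is a left extension of $gp\pi$ along $Tq$ (using algebraic cocompleteness, Example~\ref{exam:exact-commas}, and the invertibility hypotheses). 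Precomposing with \emph{this} left extension is what reduces the algebra-2-cell axiom for $\tau$ to a pasting computation invoking the corresponding axioms for $\sigma$, $\lambda$ and $\psi$. Your outline is missing exactly this detection cell; without it the step ``one checks $\overline{k}(aT\delta)=(\delta j)\overline{h}$'' does not generalise to the comma-whiskered situation.
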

\begin{proof}
(\ref{thmcase:AK-lax}): The algebraic cocompleteness of $(A,a)$ ensures that $aT(\psi)$ is a left extension, and so one has does have $\overline{h}$ uniquely determined by (\ref{eq:def-hbar}). Since $\psi$ is a left extension, the unit axiom for $\overline{h}$ is equivalent to the equation
\[ \xygraph{
{I}="p0" [r(2)] {J}="p1" [r(1.5)u] {TJ}="p2" [r(1.5)d] {J}="p3" [d] {A}="p4" [l(3)] {A}="p5" [r(1.5)u] {TA}="p6"
"p0":"p1"^-{f}:"p2"^-{\eta_J}:"p3"^-{j}:"p4"^-{h}:@{<-}"p5"^-{1_A}:@/^{1pc}/@{<-}"p0"^-{g}
"p1":"p5"_-{h} "p6" (:@{<-}"p2"|-{Th}, :"p4"|-{a}, :@{<-}"p5"|-{\eta_A})
"p0" [d(.5)r] :@{=>}[r(.3)]^-{\psi}
"p1" [r(.75)] {=}
"p6" [r(.6)] :@{=>}[r(.3)]^-{\overline{h}}
"p6" [d(.75)l(.1)] :@{=>}[u(.2)r(.2)]^-{a_0}
"p3" [r] {=} [r]
{I}="q0" [r(2)] {J}="q1" [ur] {TJ}="q2" [dr] {J}="q3" [d] {A}="q4" [l(2)] {A}="q5"
"q0":"q1"^-{f}:"q2"^-{\eta_J}:"q3"^-{j}:"q4"^-{h}:@{<-}"q5"^-{1_A}:@/^{1pc}/@{<-}"q0"^-{g}
"q1":"q5"_-{h} "q1":"q3"_-{1_J}
"q0" [d(.5)r] :@{=>}[r(.3)]^-{\psi}
"q2" [d(.8)l(.1)] :@{=>}[u(.2)r(.2)]^-{j_0}
"q1" [d(.6)r] {=}} \]
and this follows from the calculation
\[ \xygraph{!{0;(.6,0):(0,1.2)::}
{\bullet}="p0" [r] {\bullet}="p1" [ur] {\bullet}="p2" [dr] {\bullet}="p3" [d] {\bullet}="p4" [l(2)] {\bullet}="p5" [ur] {\bullet}="p6"
"p0":"p1"^-{f}:"p2"^-{\eta}:"p3"^-{j}:"p4"^-{h}:@{<-}"p5"^-{1}:@/^{1pc}/@{<-}"p0"^-{g}
"p6" (:@{<-}"p2",:"p4",:@{<-}"p5") "p1":"p5"
"p0" [d(.4)r(.5)] {\scriptstyle{\psi}} "p1" [r(.5)] {\scriptstyle{=}} "p6" ([r(.5)] {\scriptstyle{\overline{h}}}, [d(.6)] {\scriptstyle{a_0}})
"p3" [r] {=}="eq1" [r]
{\bullet}="q0" [ur] {\bullet}="q1" [dr] {\bullet}="q2" [r] {\bullet}="q3" [d] {\bullet}="q4" [l(2)] {\bullet}="q5"
"q0" [r] {\bullet}="q6" "q5" [r] {\bullet}="q7"
"q0":"q1"^-{f}:"q2"^-{\eta}:"q3"^-{j}:"q4"^-{h}:@/^{1.5pc}/@{<-}"q5"^-{1}:@/^{.5pc}/@{<-}"q0"^-{g} "q6" (:@{<-}"q0",:"q2",:@/_{.5pc}/"q7") "q7" (:@{<-}"q2",:"q4",:@{<-}"q5")
"q6" ([u(.4)] {\scriptstyle{=}}, [d(.5)l(.15)] {\scriptstyle{=}}, [d(.4)r(.55)] {\scriptstyle{T\psi}}) "q2" [d(.5)r(.5)] {\scriptstyle{\overline{h}}} "q7" [d(.3)] {\scriptstyle{a_0}}
"q3" [r] {=}="eq2" [r]
{\bullet}="r0" [ur] {\bullet}="r1" [r] {\bullet}="r2" [dr] {\bullet}="r3" [dl] {\bullet}="r4" [l(2)] {\bullet}="r5"
"r0" [r] {\bullet}="r6" [r] {\bullet}="r7" "r5" [r] {\bullet}="r8"
"r0":"r1"^-{f}:"r2"^-{\eta_J}:"r3"^-{j}:@/^{.5pc}/"r4"^(.4){h}:@/^{1.5pc}/@{<-}"r5"^-{1_A}:@{<-}"r0"^-{g}
"r6" (:@{<-}"r0",:"r2",:"r7",:"r8") "r7" (:"r3",:"r4") "r8" (:@{<-}"r5",:"r4")
"r0" ([d(.5)r(.5)] {\scriptstyle{=}},[u(.5)r] {\scriptstyle{=}})
"r2" [d(.5)] {\scriptstyle{\overline{f}}}
"r7" ([d(.5)l(.5)] {\scriptstyle{\overline{g}}},[d(.4)r(.4)] {\scriptstyle{\psi}})
"r8" [d(.3)] {\scriptstyle{a_0}}
"r3" [r] {=}="eq3" [r]
{\bullet}="s0" [ur] {\bullet}="s1" [r] {\bullet}="s2" [dr] {\bullet}="s3" [dl] {\bullet}="s4" [l(2)] {\bullet}="s5"
"s0" [r] {\bullet}="s6" [r] {\bullet}="s7"
"s0":"s1"^-{f}:"s2"^-{\eta_J}:"s3"^-{j}:@/^{.5pc}/"s4"^(.4){h}:@/^{1.5pc}/@{<-}"s5"^-{1_A}:@{<-}"s0"^-{g}
"s6" (:@{<-}"s0",:"s2",:"s7") "s7" (:"s3",:"s4")
"s0":@/_{1.5pc}/"s7"
"s0" [u(.5)r] {\scriptstyle{=}} "s2" [d(.5)] {\scriptstyle{\overline{f}}}
"s7" [d(.4)r(.4)] {\scriptstyle{\psi}} "s5" [r] {\scriptstyle{=}}
"s6" [d(.3)] {\scriptstyle{i_0}}
"s3" [r]{=}="eq4" [r(1.1)]
{\bullet}="t0" [u] {\bullet}="t1" [r(1.2)] {\bullet}="t2" [d] {\bullet}="t3" [l(1.2)d] {\bullet}="t4"
"t0":"t1"^-{f}:"t2"^-{\eta_J}:"t3"^-{j}:@/^{.5pc}/"t4"^(.4){h}:@{<-}"t0"^-{g}
"t0":"t3" "t1":"t3"
"t0" ([d(.4)r(.5)] {\scriptstyle{\psi}},[r(.35)u(.3)] {\scriptstyle{=}})
"t2" [d(.35)l(.25)] {\scriptstyle{j_0}}
"eq1" [u(.25)] {\scriptstyle{\tn{nat.}\,\eta}}
"eq2" [u(.3)] {\scriptstyle{\tn{def.}\,\overline{h}}}
"eq3" [u(.3)] {\scriptstyle{\tn{unit}\,\overline{g}}}
"eq4" [u(.3)] {\scriptstyle{\tn{unit}\,\overline{f}}}} \]
The algebraic cocompleteness of $(A,a)$ ensures that $aT(a)T^2(\psi)$ is a left extension, and so the multiplicative axiom for $\overline{h}$ is equivalent to
\[ \xygraph{!{0;(1.5,0):(0,.6667)::} 
{T^2I}="p0" [ur] {TI}="p1" [r] {TJ}="p2" [dr] {J}="p3" [d] {A}="p4" [l] {TA}="p5" [l] {T^2A}="p6" [u] {T^2J}="p7" [r] {TJ}="p8"
"p0":"p1"^-{\mu}:"p2"^-{Tf}:"p3"^-{j}:"p4"^-{h}:@{<-}"p5"^-{a}:@{<-}"p6"^-{Ta}:@{<-}"p0"^-{T^2g}
"p7" (:@{<-}"p0"_-{T^2f},:"p2"^-{\mu},:"p8"^-{Tj},:"p6"^-{T^2h})
"p8" (:"p3"^-{j},:"p5"^-{Th})
"p1" [d(.5)] {=}
"p0" [d(.5)r(.65)] :@{=>}[r(.2)]^-{T^2\psi}
"p7" [d(.55)r(.5)] :@{=>}[r(.2)]^-{T\overline{h}}
"p8" [d(.55)r(.5)] :@{=>}[r(.2)]^-{\overline{h}}
"p8" [u(.3)l(.05)] :@{=>}[r(.1)u(.2)]^-{j_2}
"p3" [r(.7)] {=} [r(.7)]
{T^2I}="q0" [ur] {TI}="q1" [r] {TJ}="q2" [dr] {J}="q3" [d] {A}="q4" [l] {TA}="q5" [l] {T^2A}="q6" [u] {T^2J}="q7" [r] {TA}="q8"
"q0":"q1"^-{\mu}:"q2"^-{Tf}:"q3"^-{j}:"q4"^-{h}:@{<-}"q5"^-{a}:@{<-}"q6"^-{Ta}:@{<-}"q0"^-{T^2g}
"q7" (:@{<-}"q0"_-{T^2f},:"q2"^-{\mu},:"q6"^(.4){T^2h})
"q8" (:@{<-}"q2"_-{Th},:"q4"^-{a},:@{<-}"q6"^-{\mu})
"q1" [d(.5)] {=} "q7" [r(.5)] {=}
"q0" [d(.5)r(.65)] :@{=>}[r(.2)]^-{T^2\psi}
"q8" [r(.4)] :@{=>}[r(.2)]^-{\overline{h}}
"q5" [u(.3)l(.05)] :@{=>}[r(.1)u(.2)]^-{a_2}} \]
which follows from a similar calculation using the 2-naturality of $\mu$, the definition of $\overline{h}$ and the multiplicative axioms for $\overline{f}$ and $\overline{g}$.

(\ref{thmcase:AK-pseudo}): The exactness of $\overline{f}$ ensures that its composite with $\psi$ exhibits $hj$ as a pointwise left extension of $gi$ along $Tf$. Since $\overline{g}$ is an isomorphism, the right hand side of (\ref{eq:def-hbar}) exhibits $hj$ as a left extension of $aT(g)$ along $Tf$, and so $\overline{h}$ is invertible. Moreover the equation (\ref{eq:def-hbar}), reinterpretted using $\overline{g}^{-1}$ and $\overline{h}^{-1}$ instead of $\overline{g}$ and $\overline{h}$, is exactly the condition that $\psi$ be a 2-cell in $\PsAlgc T$.

(\ref{thmcase:AK-AlgLeftExt}): We must verify that $\psi$ is a pointwise left Kan extension in $\PsAlg T$. Thus given $(r,\overline{r})$, $(s,\overline{s})$ and $\sigma$ in $\PsAlg T$ as in
\begin{equation}\label{eq:psi-ple}
\xygraph{!{0;(2.5,0):(0,1)::} 
{\xybox{\xygraph{{(f \downarrow r,\pi)}="p0" [r(2)] {(K,k)}="p1" [d] {(J,j)}="p2" [dl] {(A,a)}="p3" [ul] {(I,i)}="p4" "p0":"p1"^-{(q,\id)}:"p2"^-{(r,\overline{r})}:"p3"^(.3){(h,\overline{h})}:@{<-}"p4"^(.7){(g,\overline{g})}:@{<-}"p0"^-{(p,\id)} "p4":"p2"^-{(f,\overline{f})} "p1":@`{"p1"+(2,-0.5),"p3"+(3,0.5)}"p3"^-{(s,\overline{s})}
"p0" [d(.5)r(.85)] :@{=>}[r(.3)]^-{\lambda} "p4" [d(.5)r(.85)] :@{=>}[r(.3)]^-{\psi} "p2" [r(.6)] :@{=>}[r(.3)]^-{\tau}}}}
[r(1.3)] {=} [r]
{\xybox{\xygraph{{(f \downarrow r,\pi)}="p0" [r(2)] {(K,k)}="p1" [d] {}="p2" [dl] {(A,a)}="p3" [ul] {(I,i)}="p4" "p0":"p1"^-{(q,\id)}:@{}"p2"^-{}:@{}"p3"^{}:@{<-}"p4"^(.7){(g,\overline{g})}:@{<-}"p0"^-{(p,\id)} "p4":@{}"p2"^-{} "p1":"p3"^-{(s,\overline{s})}
"p4" [r(.7)] :@{=>}[r(.3)]^-{\sigma}}}}}
\end{equation}
in which $\lambda$ is the comma object, we must exhibit a unique 2-cell $\tau$ in $\PsAlg T$ satisfying (\ref{eq:psi-ple}). Recall that comma objects in $\PsAlg T$ are computed as in $\ca K$, and that the projections may be taken to be strict. Above we have denoted by $\pi : T(f \downarrow r) \to f \downarrow r$ the 1-dimensional part of $(f \downarrow r)$'s pseudo $T$-algebra structure. Forgetting the pseudo algebra and pseudomorphism structures, one does have a unique 2-cell $\tau$ in $\ca K$ satisfying (\ref{eq:psi-ple}), so our task is to show that this 2-cell $\tau$ is an algebra 2-cell, which is to say that $\overline{s}(\tau k) = T(\tau)(\overline{h}T(r))(h\overline{r})$.

Since $(h\lambda)(\psi p)$ is an algebra 2-cell, we have
\begin{equation}\label{eq:leftext-for-psi-alg2cell-proof} 
\xygraph{!{0;(2.5,0):(0,1)::} 
{\xybox{\xygraph{{T(f \downarrow r)}="p0" [r(2)] {TK}="p1" [d] {K}="p2" [d] {J}="p3" [dl] {A}="p4" [ul] {I}="p5" [u] {f \downarrow r}="p6" "p0":"p1"^-{Tq}:"p2"^-{k}:"p3"^-{r}:"p4"^-{h}:@{<-}"p5"^-{g}:@{<-}"p6"^-{p}:@{<-}"p0"^-{\pi} "p6":"p2"^-{q} "p5":"p3"^-{f}
"p0" [d(.5)r] {=} "p6" [d(.5)r(.85)] :@{=>}[r(.3)]^-{\lambda} "p5" [d(.5)r(.85)] :@{=>}[r(.3)]^-{\psi}}}}
[r] {=} [r(1.5)]
{\xybox{\xygraph{{T(f \downarrow r)}="p0" [r(2)] {TK}="p1" [r(1.5)] {K}="p2" [d(2)l(.5)] {J}="p3" [l(2)d] {A}="p4" [l(2)u] {I}="p5" [u(2)l(.5)] {f \downarrow r}="p6" [r(1.5)d] {TI}="p7" [r(2)] {TJ}="p8" [dl] {TA}="p9"
"p0":"p1"^-{Tq}:"p2"^-{k}:"p3"^-{r}:"p4"^-{h}:@{<-}"p5"^-{g}:@{<-}"p6"^-{p}:@{<-}"p0"^-{\pi} "p7" (:@{<-}"p0"^-{Tp},:"p8"^-{Tf},:"p9"_-{Tg},:"p5"^-{i}) "p8" (:@{<-}"p1"_-{Tr},:"p3"^-{j},:"p9"^-{Th}:"p4"^-{a})
"p0" [d(.5)r(.85)] :@{=>}[r(.3)]^-{T\lambda}
"p7" [d(.5)r(.85)] :@{=>}[r(.3)]^-{T\psi}
"p9" [l(1.15)d(.1)] :@{=>}[r(.3)]^-{\overline{g}}
"p9" [r(.85)d(.1)] :@{=>}[r(.3)]^-{\overline{h}^{-1}}
"p8" [r(.6)] :@{=>}[r(.3)]^-{\overline{r}^{-1}}
"p7" [l(.75)] {=}}}}}
\end{equation}
and by the algebraic cocompleteness of $A$, the invertibility of $\overline{g}$, $\overline{h}^{-1}$ and $\overline{r}^{-1}$, and Example \ref{exam:exact-commas}, these composites exhibit $hrk$ as a left Kan extension of $gp\pi$ along $Tq$. Thus it suffices to verify the algebra 2-cell axiom only after precomposition with the composite on the left hand side of (\ref{eq:leftext-for-psi-alg2cell-proof}). This is done in the calculation
\[ \xygraph{!{0;(1.75,0):(0,1)::}
{\xybox{\xygraph{!{0;(1,0):(0,.7)::} 
{\bullet}="p0" [ur] {\bullet}="p1" [dr] {\bullet}="p2" [d(2)] {\bullet}="p3" [l] {\bullet}="p4" [l] {\bullet}="p5"
"p0":@{<-}"p1"^-{\pi}:"p2"^-{Tq}:"p3"^-{Ts}:"p4"^-{a}:@{<-}"p5"^-{g}:@{<-}"p0"^-{p}
"p0" [r] {\bullet}="q0" "p4" [l(.5)u] {\bullet}="q1"
"q0" (:@{<-}"p0",:@{<-}"p2",:@/^{.75pc}/"p4",:"q1")
"q1" (:"p4",:@{<-}"p5")
"p0" [d(.7)r(.25)] :@{=>}[r(.2)u(.2)]^-{\lambda}
"q1" [d(.8)l(.1)] :@{=>}[r(.2)u(.2)] [u(.15)l(.15)] {\scriptstyle{\psi}}
"q1" [r(.3)] :@{=>}[r(.25)]^-{\tau}
"q1" [r] :@{=>}[r(.25)]^-{\overline{s}}
"q0" [u(.4)] {\scriptstyle{=}}}}}
[r(.9)] {=} [r(.7)]
{\xybox{\xygraph{!{0;(.7,0):(0,1)::} 
{\bullet}="p0" [ur] {\bullet}="p1" [dr] {\bullet}="p2" [d(2)] {\bullet}="p3" [l] {\bullet}="p4" [l] {\bullet}="p5"
"p0":@{<-}"p1"^-{\pi}:"p2"^-{Tq}:"p3"^-{Ts}:"p4"^-{a}:@{<-}"p5"^-{g}:@{<-}"p0"^-{p}
"p0" [r] {\bullet}="q0"
"q0" (:@{<-}"p0",:@{<-}"p2",:"p4")
"p0" [r(.375)d] :@{=>}[r(.25)]^-{\sigma}
"q0" [r(.375)d] :@{=>}[r(.25)]^-{\overline{s}}
"q0" [u(.4)] {\scriptstyle{=}}}}}
[r(.7)] {=} [r(.8)]
{\xybox{\xygraph{!{0;(.7,0):(0,1)::} 
{\bullet}="p0" [ur] {\bullet}="p1" [dr] {\bullet}="p2" [d(2)] {\bullet}="p3" [l] {\bullet}="p4" [l] {\bullet}="p5"
"p0":@{<-}"p1"^-{\pi}:"p2"^-{Tq}:"p3"^-{Ts}:"p4"^-{a}:@{<-}"p5"^-{g}:@{<-}"p0"^-{p}
"p1" [d(1.5)] {\bullet}="q0"
"q0" (:@{<-}"p1",:"p3",:"p5")
"q0" [d(.9)l(.125)] :@{=>}[r(.25)]^-{\overline{g}}
"q0" [r(.4)] :@{=>}[r(.25)]^-{T\sigma}}}}
[r(.7)] {=} [r(.9)]
{\xybox{\xygraph{!{0;(1,0):(0,.7)::} 
{\bullet}="p0" [ur] {\bullet}="p1" [dr] {\bullet}="p2" [d(2)] {\bullet}="p3" [l] {\bullet}="p4" [l] {\bullet}="p5"
"p0":@{<-}"p1"^-{\pi}:"p2"^-{Tq}:"p3"^-{Ts}:"p4"^-{a}:@{<-}"p5"^-{g}:@{<-}"p0"^-{p}
"p0" [d(.75)r(.6)] {\bullet}="q0" "p0" [d(.75)r(1.4)] {\bullet}="q1"
"q0" (:@{<-}"p1",:"q1",:@/_{.5pc}/"p3",:"p5")
"q1" (:@{<-}"p2",:@/_{.2pc}/"p3")
"q0" [d(.8)l(.125)] :@{=>}[r(.25)]^-{\overline{g}}
"q0" [d(.8)r(.8)] :@{=>}[r(.1)u(.25)]^-{T\psi}
"q0" [d(.5)r(1.2)] :@{=>}[r(.1)u(.25)] [u(.1)l(.2)] {\scriptstyle{T\tau}}
"q1" [u(.7)l(.25)] :@{=>}[r(.25)]^-{T\lambda}}}}
[r(.75)] {=} [r(.9)]
{\xybox{\xygraph{!{0;(1,0):(0,.7)::} 
{\bullet}="p0" [ur] {\bullet}="p1" [dr] {\bullet}="p2" [d(2)] {\bullet}="p3" [l] {\bullet}="p4" [l] {\bullet}="p5"
"p0":@{<-}"p1"^-{\pi}:"p2"^-{Tq}:"p3"^-{Ts}:"p4"^-{a}:@{<-}"p5"^-{g}:@{<-}"p0"^-{p}
"p0" [r] {\bullet}="q0" "p5" [r(.6)u] {\bullet}="q1" "p3" [l(.6)u] {\bullet}="q2"
"q0" (:@{<-}"p0",:@{<-}"p2",:"q1")
"q1" (:@{<-}"q2",:"p4",:@{<-}"p5")
"q2" (:@{<-}"p2",:"p3")
"p0" [d(.6)r(.25)] :@{=>}[r(.25)]^-{\lambda}
"q0" [d(.55)r(.1)] :@{=>}[r(.25)]^-{\overline{r}}
"q1" [d(.55)r(.5)] :@{=>}[r(.25)]^-{\overline{h}}
"q2" [r(.25)d(.1)] :@{=>}[r(.25)]^-{T\tau}
"q1" [d(.8)l(.1)] :@{=>}[r(.2)u(.2)] [u(.15)l(.15)] {\scriptstyle{\psi}}
"q0" [u(.4)] {\scriptstyle{=}}}}}} \]
which uses the definition of $\tau$ and the algebra 2-cell axioms for $\sigma$, $\lambda$ and $\psi$.
\end{proof}
The condition on $(A,a)$ in Theorem \ref{thm:AlgKan} of being algebraically cocomplete relative to $(f,\overline{f})$ ensures in particular that the pointwise left extension $\psi$ exists, and so in the language of Definition \ref{def:alg-left-extension}, Theorem \ref{thm:AlgKan}(\ref{thmcase:AK-AlgLeftExt}) says the following.
\begin{cor}\label{cor:AlgKan}
Suppose $(\ca K,T)$ is a 2-monad, $\ca K$ has comma objects, $(f,\overline{f}) : (I,i) \to (J,j)$ is an exact pseudomorphism of pseudo $T$-algebras, and $(A,a)$ is algebraically cocomplete. Then any pseudomorphism $(g,\overline{g}) : (I,i) \to (A,a)$ admits algebraic left extension along $(f,\overline{f})$.
\end{cor}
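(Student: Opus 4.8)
The plan is to read the Corollary directly off Theorem~\ref{thm:AlgKan}, the only extra observation being that the stated hypotheses already secure condition~(\ref{ale-cond-1}) of Definition~\ref{def:alg-left-extension}. First I would unwind what has to be checked: fixing a pseudomorphism $(g,\overline{g}):(I,i)\to(A,a)$, Definition~\ref{def:alg-left-extension} asks (a) that the pointwise left Kan extension of $g$ along $f$ exist in $\ca K$, and (b) that for \emph{any} such pointwise left extension $(h,\psi)$ in $\ca K$ there be a unique isomorphism $\overline{h}:aT(h)\to hj$ with $(h,\overline{h})$ a pseudomorphism and $\psi$ an algebra $2$-cell exhibiting $(h,\overline{h})$ as the pointwise left extension of $(g,\overline{g})$ along $(f,\overline{f})$ in $\PsAlg T$. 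For (a): since $(A,a)$ is algebraically cocomplete relative to $f$, clause~(1) of Definition~\ref{def:algebraic-cocompleteness} applied to $g$ gives precisely the existence of this pointwise left extension in $\ca K$.

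For (b) I would invoke the three parts of Theorem~\ref{thm:AlgKan} in turn. Since a pseudomorphism is in particular both a lax and a colax morphism, $(f,\overline{f})$ and $(g,\overline{g})$ satisfy the standing hypotheses of that theorem, and algebraic cocompleteness of $(A,a)$ relative to $(f,\overline{f})$ is exactly the hypothesis of part~(\ref{thmcase:AK-lax}). Given a pointwise left extension $(h,\psi)$ in $\ca K$, part~(\ref{thmcase:AK-lax}) produces the $2$-cell $\overline{h}$ determined by~(\ref{eq:def-hbar}) and makes $h$ a lax morphism; since $(f,\overline{f})$ is exact and $(g,\overline{g})$ is a pseudomorphism, part~(\ref{thmcase:AK-pseudo}) upgrades $\overline{h}$ to an isomorphism and identifies it as the unique $2$-cell making $\psi$ a $2$-cell in $\PsAlgc T$; and since $(f,\overline{f})$ is itself a pseudomorphism, part~(\ref{thmcase:AK-AlgLeftExt}) then shows $\psi$ is a $2$-cell in $\PsAlg T$ which exhibits $(h,\overline{h})$ as the pointwise left extension of $(g,\overline{g})$ along $(f,\overline{f})$ in $\PsAlg T$. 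Assembling these, $(g,\overline{g})$ admits algebraic left extension along $(f,\overline{f})$ in the sense of Definition~\ref{def:alg-left-extension}.

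The only point needing a moment's care — and the closest thing to an obstacle — is the uniqueness clause in Definition~\ref{def:alg-left-extension}(\ref{ale-cond-2}): one must see that \emph{every} isomorphism $\overline{h}$ rendering $(h,\overline{h})$ a pseudomorphism and $\psi$ an algebra $2$-cell is the one produced above. This is immediate from the proof of Theorem~\ref{thm:AlgKan}(\ref{thmcase:AK-pseudo}): such an $\overline{h}$, via the $\PsAlgc T$-reinterpretation of~(\ref{eq:def-hbar}) recorded there, necessarily satisfies~(\ref{eq:def-hbar}), and~(\ref{eq:def-hbar}) has a unique solution because $aT(\psi)$ is a pointwise left extension by algebraic cocompleteness. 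Thus the whole argument is bookkeeping over Theorem~\ref{thm:AlgKan}, with no new computation required.
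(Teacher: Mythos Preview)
Your proposal is correct and follows essentially the same approach as the paper: the paper simply observes that algebraic cocompleteness of $(A,a)$ guarantees condition~(\ref{ale-cond-1}) of Definition~\ref{def:alg-left-extension}, and then Theorem~\ref{thm:AlgKan}(\ref{thmcase:AK-AlgLeftExt}) delivers condition~(\ref{ale-cond-2}). Your write-up is more explicit in unwinding the three parts of Theorem~\ref{thm:AlgKan} and in addressing the uniqueness clause, but this is just a more detailed rendering of the same argument.
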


\section{The Main Theorem and applications}
\label{sec:formulating-main-result}

In the situations in which we wish to apply Corollary \ref{cor:AlgKan}, $(f,\overline{f})$ itself comes from a particular monad theoretic context. This context comes from the theory of internal algebras as described in \cite{Weber-CodescCrIntCat}. One has three 2-monads $(\ca M,R)$, $(\ca L,S)$ and $(\ca K,T)$ participating in this context, with
\begin{enumerate}
\item $T$ describing the type of ambient structure,
\item $S$ describing the one type of structure that can be considered as internal to any pseudo $T$-algebra $A$,
\item $R$ describing another type of structure that can be considered as internal to any pseudo $T$-algebra $A$, and
\item one has forgetful functors $U^G_A : \Alg S(A) \to \Alg R(A)$ definable from the context, where $\Alg S(A)$ (resp. $\Alg R(A)$) is the category of $S$-algebras (resp. $T$-algebras) internal to $A$.
\end{enumerate}
This context is established in Section \ref{ssec:internal-algebras} and in particular, $U^G_A$ is given in Definition \ref{defn:forgetful-between-cats-of-int-algebras}. The point of this article is to understand the systematic computation of the left adjoint to $U^G_A$. In Section \ref{ssec:alg-left-ext-int-alg-class} we explain that under the right conditions this is obtained via a process of algebraic left extension. Then in Section \ref{ssec:MainTheorem} we describe the main theorem of this article, which roughly speaking says that for contexts which arise from polynomial monads in $\Cat$, one always has the right conditions. From \cite{Weber-CodescCrIntCat, Weber-OpPoly2Mnd} this result covers many situations arising from operads, and we give examples in Section \ref{ssec:operadic-examples}.

\subsection{Forgetful functors between categories of internal algebras}
\label{ssec:internal-algebras}
Recall from \cite{Weber-CodescCrIntCat} that given 2-monads $(\ca L,S)$ and $(\ca K,T)$, an adjunction $F : (\ca L,S) \to (\ca K,T)$ between them consists of (1) a 2-functor $F_! : \ca L \to \ca K$, (2) a 2-natural transformation $F^c : F_!S \to TF_!$ providing the coherence of a colax monad morphism, and (3) a right adjoint $F^* : \ca K \to \ca L$ of $F_!$. We denote by $F^l : SF^* \to F^*T$ the mate of $F^c$, which endows $F^*$ with the structure of a lax monad morphism. By virtue of this structure the 2-functor $F^*$ lifts to any of the 2-categories of algebras of $T$ and $S$ compatibly with the inclusions amongst them, we denote by $\overline{F}$ such liftings.
\begin{defn}\label{defn:internal-algebra}
(\cite{Weber-CodescCrIntCat} Definition 3.1.4)
Let $F : (\ca L,S) \to (\ca K,T)$ be an adjunction of 2-monads, suppose that $\ca L$ has a terminal object $1$, and let $A$ be a pseudo $T$-algebra. An \emph{$S$-algebra internal to $A$} (relative to $F$) is a lax morphism $1 \to \overline{F}A$ of $S$-algebras. The category of $S$-algebras internal to $A$ is defined to be $\PsAlgl S(1,\overline{F}A)$ and is denoted as $\Alg S(A)$.
\end{defn}
One may regard an operad $T$ with set of colours $I$ as a 2-monad on $\Cat/I$ following \cite{Weber-OpPoly2Mnd}, and as explained in Example 3.2.1 of \cite{Weber-CodescCrIntCat}
\begin{enumerate}
\item One has an adjunction of 2-monads $(\Cat/I,T) \to (\Cat,\SMCMnd)$.
\item A $T$-algebra in a pseudo $\SMCMnd$-algebra $\ca V$ in the sense of Definition \ref{defn:internal-algebra}, is an algebra for the operad $T$ in (the symmetric monoidal category) $\ca V$ in the usual sense.
\end{enumerate}
Given a morphism $F : S \to T$ of operads, one has a forgetful functor
\begin{equation}\label{eq:forgetful-operadic-context}
\Alg T(\ca V) \longrightarrow \Alg S(\ca V)
\end{equation}
and we now give the general monad-theoretic context giving rise to such forgetful functors.
\begin{defn}\label{def:monad-adjunction-over-base}
Let $H : (\ca M,R) \to (\ca K,T)$ and $F : (\ca L,S) \to (\ca K,T)$ be adjunctions of 2-monads. Then an \emph{adjunction of 2-monads over $T$} is an adjunction $G : (\ca M,R) \to (\ca L,S)$ of 2-monads such that $(F_!,F^c)(G_!,G^c) = (H_!,H^c)$, as colax morphisms of 2-monads.
\end{defn}
\begin{rem}\label{rem:monad-adjunction-over-base}
The condition that $(F_!,F^c)(G_!,G^c) = (H_!,H^c)$ as colax morphisms of 2-monads says that $F_!G_! = H_!$ at the level of 2-functors, and that $H^c$ is the composite
\[ \xygraph{{\xybox{\xygraph{!{0;(1.5,0):(0,1)::} {F_!G_!R}="p0" [r] {F_!SG_!}="p1" [r] {TF_!G_!}="p2"
"p0":"p1"^-{F_!G^c}:"p2"^-{F^cG_!}}}}
[r(5)]
{\xybox{\xygraph{{\Algs T}="tl" [r(2)] {\Algs S}="tr" [dl] {\Algs R}="tb" "tl":"tr"^-{\overline{F}}:"tb"^-{\overline{G}}:@{<-}"tl"^-{\overline{H}}
"tl" [d(2)l(.3)] {\ca K}="l" [r(2)] {\ca L}="r" [dl] {\ca M}="b" "l":"r"^(.45){F^*}|(.575)*=<3pt>{}:"b"^-{G^*}:@{<-}"l"^-{H^*}
"tl":"l"_{U^T} "tr":"r"^{U^S} "tb":"b"^(.3){U^R}
"tl" [d(.5)r] {\iso} [u(.2)] {\scriptstyle{\overline{\gamma}}}
"l" [d(.4)r(.9)] {\iso} [u(.2)] {\scriptstyle{\gamma}}}}}} \]
on the left. Taking right adjoints of $F_!G_! = H_!$ gives an isomorphism $\gamma : G^*F^* \iso H^*$ compatible with the lax monad morphism coherences, that is, this isomorphism is a 2-cell in $\MND(\TwoCAT)$ in the sense of \cite{Street-FTM}. From the formal theory of monads, this compatibility gives a lifting of $\gamma$ to $\overline{\gamma}$ making the prism on the right in the previous display commute. From the explicit description of $\overline{F}$, $\overline{G}$, $\overline{H}$ and $\overline{\gamma}$, see Remark 3.1.3 of \cite{Weber-CodescCrIntCat} for an indication, one may easily verify directly that $\gamma$ lifts to any of the other types of 2-categories of algebras of $R$, $S$ and $T$.
\end{rem}
Since $F^*$, $G^*$ and $H^*$ are right adjoints they preserve the terminal object $1$, and moreover since $U^R$, $U^S$ and $U^T$ are monadic they create all limits, and so $\overline{F}$, $\overline{G}$ and $\overline{H}$ also preserve $1$. For the sake of convenience, we assume terminal objects are chosen so that they are preserved strictly by these 2-functors, so for example, $\overline{G}(1) = 1$.
\begin{defn}\label{defn:forgetful-between-cats-of-int-algebras}
In the context of Definition \ref{def:monad-adjunction-over-base} in which $\ca L$ and $\ca M$ have terminal objects, and given $A \in \PsAlg T$, one has a functor
\[ \begin{array}{lccr} {U^G_A : \Alg S(A) \longrightarrow \Alg R(A)} &&& {a \mapsto \overline{\gamma}_A\overline{G}(a)} \end{array} \]
with object map as indicated.
\end{defn}
\begin{exams}\label{exams:forgetfuls-from-operad-morphisms}
As in Notation 3.3.1 of \cite{Weber-CodescCrIntCat} we denote by
\[ F : (\Cat/I,S) \longrightarrow (\Cat/J,T) \]
the adjunction of 2-monads arising, as in Examples 3.2.2 of \cite{Weber-CodescCrIntCat}, from a morphism of operads $F : S \to T$ with underlying object map $f : I \to J$. Since the process which regards operads as polynomial 2-functors is functorial, indeed it is the functor $\overline{\ca N}$ of Proposition 3.2 of \cite{Weber-OpPoly2Mnd}, the above adjunction is over $\SMCMnd$. Thus we are in the context of Definition \ref{defn:forgetful-between-cats-of-int-algebras}, and for a given pseudo $\SMCMnd$-algebra $\ca V$, the forgetful functor $U^F_{\ca V}$ is exactly (\ref{eq:forgetful-operadic-context}).
\end{exams}

\subsection{Algebraic left extension between internal algebra classifiers}
\label{ssec:alg-left-ext-int-alg-class}
In the general situation of Definition \ref{defn:forgetful-between-cats-of-int-algebras}, it is of interest in examples to understand how to compute the left adjoint to $U^G_A$. From \cite{Weber-CodescCrIntCat} we know that under some conditions on an adjunction $F : (\ca L,S) \to (\ca K,T)$ of 2-monads, one has a universal strict $T$-algebra $T^S$ containing an internal $S$-algebra, called the \emph{internal algebra classifier} with respect to $F$.

In this section we will see that if this is so for all the adjunctions of 2-monads participating in a given instance of Definition \ref{defn:forgetful-between-cats-of-int-algebras}, then one has a strict morphism $T^G : T^R \to T^S$ of $T$-algebras between the corresponding internal algebra classifiers, and $U^G_A$ can then be regarded as the process of precomposing with $T^G$. Thus under the right conditions, the left adjoint to $U^G_A$ will be computed via algebraic left extension along $T^G$.

Let us recall some of the theory of internal algebra classifiers from \cite{Weber-CodescCrIntCat}. Given an adjunction of 2-monads $F$ as above, one has the liftings $\overline{F}$ of $F^*$ to the other 2-categories of algebras, as recalled in Section \ref{ssec:internal-algebras}, and from these one may exhibit a canonical 2-functor
\[ J_F : \Algs T \longrightarrow \Algl S. \]
When $F = 1_{(\ca K,T)}$, this is just the inclusion $J_T : \Algs T \hookrightarrow \Algl T$. See \cite{Weber-CodescCrIntCat} Remark 4.1.1 for more detail. The left adjoint to $J_F$ when it exists is denoted $(-)^{\dagger}_F$, and when $\ca L$ and thus $\Algl S$ has a terminal object $1$, the strict $T$-algebra $1^{\dagger}_F$ is $(T^S,a^S)$, the internal $S$-algebra classifier of \cite{Weber-CodescCrIntCat} Definition 4.1.2.

The circumstances under which $T^S$ exists and is well-behaved are codified in
\begin{defn}\label{def:internalisable-monad-adjunction}
An adjunction $F : (\ca L,S) \to (\ca K,T)$ of 2-monads is \emph{internalisable} when
\begin{enumerate}
\item $\ca K$ and $\ca L$ have all limits and colimits,
\item $\ca K$ is of the form $\tn{Cat}(\ca E)$ for some category $\ca E$ with pullbacks,
\item $S$ and $T$ have rank{\footnotemark{\footnotetext{This means that the underlying endofunctors of $S$ and $T$ preserve $\lambda$-filtered colimits for some regular cardinal $\lambda$.}}}, and
\item $T$ preserves internal functors whose object maps are invertible.
\end{enumerate}
\end{defn}
In other words $F$ is internalisable precisely when it satisfies all the hypotheses of Proposition 4.1.4 of \cite{Weber-CodescCrIntCat}, a mild variant of which we recall now.
\begin{prop}\label{prop:good-int-alg}
\cite{Weber-CodescCrIntCat}
If $F:(\ca L,S) \to (\ca K, T)$ is an internalisable adjunction of 2-monads, then $(-)^{\dagger}_F$ and hence $T^S$ exist, and one has equivalences
\[ \PsAlg{T}(T^S,A) \catequiv \PsAlgl{S}(1,\overline{F}A) \]
pseudonaturally in $A \in \PsAlg{T}$.
\end{prop}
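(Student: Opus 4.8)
The plan is to exploit that $T^S$ is by definition $1^{\dagger}_F$, so the statement splits into two parts: the existence of the left adjoint $(-)^{\dagger}_F$ to $J_F : \Algs T \to \Algl S$, and the pseudonatural equivalence $\PsAlg T(T^S,A) \catequiv \PsAlgl S(1,\overline{F}A)$. For the first part one argues exactly as in \cite{Weber-CodescCrIntCat}, Proposition 4.1.4, via the adjoint functor theorem: internalisability hypotheses (1) and (3) make $\Algs T$ and $\Algl S$ locally presentable 2-categories — $\ca K$ and $\ca L$ being complete and cocomplete, $S$ and $T$ having rank, and lax-morphism colimits of algebras of bounded rank being computed on underlying objects — and $J_F$, assembled from the lifting $\overline{F}$ of the right adjoint $F^*$ together with the comparison into lax morphisms of \cite{Weber-CodescCrIntCat} Remark 4.1.1, preserves limits and sufficiently filtered colimits, hence is a right adjoint. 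Since $\ca L$, and therefore $\Algl S$, has a terminal object $1$ by hypothesis (1), one forms $T^S := 1^{\dagger}_F$. Hypotheses (2) and (4) play no role here; they belong to the internalisability package only because they are what makes available the explicit codescent description of $1^{\dagger}_F$ from \cite{Weber-CodescCrIntCat}, which is needed for the second part.

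For the equivalence, start from the 2-natural isomorphism $\Algs T(T^S,A) \iso \Algl S(1,J_F(A))$ coming from the adjunction; by the construction of $J_F$ in \cite{Weber-CodescCrIntCat} Remark 4.1.1 the right-hand side is equivalent to $\PsAlgl S(1,\overline{F}A) = \Alg S(A)$ when $A$ is a strict $T$-algebra. The unit of $(-)^{\dagger}_F \dashv J_F$ selects a universal internal $S$-algebra $u : 1 \to \overline{F}(T^S)$, and ``apply $\overline{F}$ and precompose with $u$'' defines a transformation $\PsAlg T(T^S,-) \to \PsAlgl S(1,\overline{F}(-))$ between 2-functors $\PsAlg T \to \Cat$, which is pseudonatural in $A$ by functoriality of $\overline{F}$. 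Tracing the unit shows that its component at a strict $A$ agrees up to isomorphism with the composite $\PsAlg T(T^S,A) \catequiv \Algs T(T^S,A) \iso \Algl S(1,J_F(A)) \catequiv \PsAlgl S(1,\overline{F}A)$, where the first equivalence uses that $T^S$ is \emph{flexible}: by the codescent construction of \cite{Weber-CodescCrIntCat}, $T^S = 1^{\dagger}_F$ is a codescent object, hence a flexible colimit, of free $T$-algebras, and flexible colimits of flexible algebras are flexible \cite{BWellKellyPower-2DMndThy, LackShul-Enhanced}, so $\Algs T(T^S,B) \hookrightarrow \PsAlg T(T^S,B)$ is an equivalence for every strict $T$-algebra $B$. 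Thus the transformation is a pointwise equivalence on strict $T$-algebras. Finally both $\PsAlg T(T^S,-)$ and $\PsAlgl S(1,\overline{F}(-))$ are 2-functors, so preserve equivalences, and by the coherence theorem for 2-monads of rank (hypothesis (3)) every $A \in \PsAlg T$ is equivalent to a strict $T$-algebra $A'$; comparing the pseudonaturality square at such an equivalence $A' \to A$, whose vertical legs are then equivalences and whose top leg is an equivalence, forces the component at $A$ to be an equivalence, which is the claim, with pseudonaturality inherited from the canonical transformation.

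The main obstacle is the flexibility of $T^S$. It is genuinely what converts an adjunction between categories of \emph{strict} algebra morphisms into a statement about \emph{pseudo}morphisms, and it is not formal: a bare appeal to the adjoint functor theorem produces $1^{\dagger}_F$ opaquely, whereas flexibility needs the explicit fact that $1^{\dagger}_F$ arises as a flexible (codescent) colimit of free algebras, which is exactly why the construction of \cite{Weber-CodescCrIntCat} must be invoked rather than just quoted. A secondary, bookkeeping-heavy point is ensuring that the strictification of $A$, the flexibility equivalence, and the adjunction isomorphism assemble into a \emph{pseudo}natural rather than merely pointwise comparison; this is sidestepped by defining the comparison from the outset as ``$\overline{F}(-)$ composed with $u$'', so that pseudonaturality is automatic and only the pointwise equivalence needs to be verified.
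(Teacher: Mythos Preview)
Your approach to the equivalence matches the paper's: both use Power's coherence to reduce to strict $A$, the adjunction $(-)^{\dagger}_F \dashv J_F$ for the strict case, and flexibility of $T^S$ to bridge strict and pseudo morphisms out of it. Indeed, the description of $\varphi'^F_A$ given later in the proof of Proposition~\ref{prop:precomp-T^G-as-forgetful} is exactly the composite through the strictification $A'$, with the inclusion $i_{T^S,A'} : \Algs T(T^S,A') \hookrightarrow \Alg T(T^S,A')$ --- whose being an equivalence is precisely flexibility --- appearing as one of the factors. Your packaging via the intrinsic comparison $h \mapsto \overline{F}(h) \circ u$ is a nice touch, since (pseudo)naturality is then automatic rather than needing separate verification. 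The flexibility argument (codescent of free algebras, codescent being built from coinserters and coequifiers and hence a flexible colimit) is correct.

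There is, however, a gap in your existence paragraph. The adjoint-functor-theorem route requires $\Algl S$ to be locally presentable, but hypothesis (1) gives only completeness and cocompleteness of $\ca L$, and your parenthetical about lax-morphism colimits does not establish local presentability. The cited paper does not argue via an adjoint functor theorem: as recalled in Section~\ref{ssec:int-alg-class-codesc} (citing Proposition 4.3.3 of \cite{Weber-CodescCrIntCat}), one constructs $(-)^{\dagger}_F$ directly as $X^{\dagger}_F = \CoDesc(\ca R_F X)$, needing only that $\Algs T$ has codescent objects, which is immediate from (1) and (3). Since you already invoke this codescent description for flexibility, simply drop the AFT detour and take the codescent formula as the construction of the adjoint. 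Relatedly, your allocation of hypotheses (2) and (4) is slightly off: the codescent description of $T^S$ in $\Algs T$ does not use them either. Rather, they ensure (via the identification of codescent morphisms in $\Cat(\ca E)$ with bijective-on-objects internal functors) that $T$ preserves codescent objects, so that $U^T T^S$ can itself be computed as a codescent in $\ca K$ --- a fact used later in Section~\ref{ssec:alg-square-before-codescent} rather than in the present proposition.
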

Thus given an internalisable adjunction $F : (\ca L,S) \to (\ca K,T)$ of 2-monads, one has a strict $T$-algebra $T^S$ determined up to isomorphism by the 2-natural isomorphisms
\[ \begin{array}{lccr} {\varphi^F_A : \Algs T(T^S,A) \iso \Algl S(1,\overline{F}A)} &&&
{\varphi'^F_A : \PsAlg T(T^S,A) \catequiv \PsAlgl S(1,\overline{F}A)} \end{array} \]
as on the left, which moreover enjoys a bicategorical universal property determining it up to equivalence amongst all pseudo $T$-algebras, as encoded by the pseudo natural equivalences on the right. In particular by Definition \ref{defn:internal-algebra} one has
\[ \PsAlg T(T^S,A) \catequiv \Alg S(A) \]
for any pseudo $T$-algebra $A$.
\begin{const}\label{const:T^G}
Let $F : (\ca L,S) \to (\ca K,T)$ and $H : (\ca M,R) \to (\ca K,T)$ be adjunctions of 2-monads, and $G : (\ca M,R) \to (\ca L,S)$ be an adjunction of 2-monads over $T$ in the sense of Definition \ref{def:monad-adjunction-over-base}. Suppose that $F$ and $H$ are internalisable in the sense of Definition \ref{def:internalisable-monad-adjunction}. We now construct the strict $T$-algebra morphism
\[ T^G : T^R \longrightarrow T^S. \]
For a general adjunction $G$ of 2-monads over $T$, the forgetful functor $U^G_A$ is
\[ \overline{\gamma}_A \comp \overline{G}(-) : \PsAlgl S(1,\overline{F}A) \longrightarrow \PsAlgl R(1,\overline{H}A)  \]
by Definition \ref{defn:forgetful-between-cats-of-int-algebras}, and this is 2-natural in $A \in \PsAlg T$. Restricting just to strict $T$-algebras $A$, this restricts to $\overline{\gamma}_A \comp \overline{G}(-) : \Algl S(1,\overline{F}A) \to \Algl R(1,\overline{H}A)$. If $G$ is internalisable then by the Yoneda Lemma, one has a unique strict $T$-algebra morphism $T^G : T^R \to T^S$ such that
\[ \xygraph{!{0;(3,0):(0,.3333)::}
{\Algs T(T^S,A)}="p0" [r] {\Algs T(T^R,A)}="p1" [d] {\Algl R(1,\overline{H}A)}="p2" [l] {\Algl S(1,\overline{F}A)}="p3""p0":"p1"^-{(-) \comp T^G}:"p2"^-{\varphi^H_A}:@{<-}"p3"^-{\overline{\gamma}_A \comp \overline{G}(-)}:@{<-}"p0"^-{\varphi^F_A}} \]
commutes for all $A \in \Algs T$.
\end{const}
\begin{rem}\label{rem:T^G-via-units}
In \cite{Weber-CodescCrIntCat} the component at $X$ of the unit of $(-)^{\dagger}_F \ladj J_F$ was denoted $g^F_X : X \to \overline{F}X^{\dagger}_F$, and one defines $g^S_T := g^F_1$. Since $\varphi^F_{T^S}(1_{T^S}) = g^S_T$ and $g^R_T$ is described similarly, $T^G$ could equally-well be defined as the unique strict $T$-algebra morphism making
\[ \xygraph{!{0;(2,0):(0,.5)::} {1}="p0" [r] {\overline{H}T^R}="p1" [d] {\overline{H}T^S}="p2" [l] {\overline{G}\overline{F}T^S}="p3" "p0":"p1"^-{g^R_T}:"p2"^-{\overline{H}T^G}:@{<-}"p3"^-{\overline{\gamma}_{T^S}}:@{<-}"p0"^-{\overline{G}g^S_T}} \]
commute in $\Algl R$. Using this point of view, it is straight forward to show directly that $T^FT^G = T^H$.
\end{rem}
We now explain why $U^G_A$ of Definition \ref{defn:forgetful-between-cats-of-int-algebras}, can be identified as the process of precomposition with $T^G$ when $F$ and $H$ are internalisable. To formulate this precisely, for a 2-category $\ca X$, we denote by $\tn{Psd}(\ca X,\Cat)$ the 2-category of 2-functors $\ca X \to \Cat$, pseudonatural transformations and modifications. In the context of Construction \ref{const:T^G}, $\PsAlgl S(1,\overline{F}(-))$, $\PsAlgl R (1,\overline{H}(-))$, $\PsAlg T(T^S,-)$ and $\PsAlg T(T^R,-)$ are objects of $\tn{Psd}(\PsAlg T,\Cat)$, and $\varphi'^F$ and $\varphi'^H$ are equivalences.
\begin{prop}\label{prop:precomp-T^G-as-forgetful}
In the context of Construction \ref{const:T^G} one has
\[ \xygraph{!{0;(4,0):(0,.25)::} {\PsAlg T(T^S,-)}="p0" [r] {\PsAlg T(T^R,-)}="p1" [d] {\PsAlgl R(1,\overline{H}(-))}="p2" [l] {\PsAlgl S(1,\overline{F}(-))}="p3" "p0":"p1"^-{(-) \comp T^G}:"p2"^-{\varphi'^H}:@{<-}"p3"^-{\overline{\gamma} \comp \overline{G}(-)}:@{<-}"p0"^-{\varphi'^F}:@{}"p2"|-*{\iso}} \]
in $\tn{Psd}(\PsAlg T,\Cat)$.
\end{prop}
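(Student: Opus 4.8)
The plan is to deduce this pseudo-level statement from the strict commutativity already established in Construction \ref{const:T^G}, by means of a bicategorical Yoneda argument based at the single object $T^S$, together with the reformulation of $T^G$ in terms of units from Remark \ref{rem:T^G-via-units}.

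First I would record the compatibility between the strict isomorphisms $\varphi^F$, $\varphi^H$ and the pseudonatural equivalences $\varphi'^F$, $\varphi'^H$: for a strict $T$-algebra $A$, the squares relating $\varphi^F_A$ to $\varphi'^F_A$ (and $\varphi^H_A$ to $\varphi'^H_A$) along the evident inclusions of strict into pseudo hom-categories commute up to a canonical isomorphism, naturally in $A$. This is built into the construction of these (equivalences of) hom-categories in \cite{Weber-CodescCrIntCat}: in both the strict and pseudo cases the functor out of the hom-category amounts to transporting the generic internal algebra $g^S_T : 1 \to \overline{F}T^S$ along a morphism out of $T^S$. In particular $\varphi'^F_{T^S}(1_{T^S}) \iso g^S_T$ and $\varphi'^H_{T^R}(1_{T^R}) \iso g^R_T$, and for a strict $f : T^R \to T^S$ one has $\varphi'^H_{T^S}(f) \iso \overline{H}(f) \comp g^R_T$ by pseudonaturality of $\varphi'^H$.

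Next I would apply the Yoneda lemma for $2$-functors $\PsAlg T \to \Cat$ in the form that, writing $P := \PsAlgl R(1,\overline{H}(-))$, evaluation at the identity is an equivalence
\[ \tn{Psd}(\PsAlg T,\Cat)\bigl(\PsAlg T(T^S,-),\,P\bigr) \;\catequiv\; P(T^S). \]
Both $1$-cells of $\tn{Psd}(\PsAlg T,\Cat)$ appearing in the square, namely $\varphi'^H \comp ((-)\comp T^G)$ and $(\overline{\gamma}\comp\overline{G}(-))\comp\varphi'^F$, are then classified by objects of $P(T^S) = \PsAlgl R(1,\overline{H}T^S)$, and constructing an invertible modification between them reduces to constructing an isomorphism between these two classifying objects. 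By the previous step the first is classified by $\varphi'^H_{T^S}(T^G) \iso \overline{H}(T^G)\comp g^R_T$, and the second by $\overline{\gamma}_{T^S}\comp\overline{G}(\varphi'^F_{T^S}(1_{T^S})) \iso \overline{\gamma}_{T^S}\comp\overline{G}(g^S_T)$; the commuting square in Remark \ref{rem:T^G-via-units} is precisely the assertion that $\overline{H}(T^G)\comp g^R_T = \overline{\gamma}_{T^S}\comp\overline{G}(g^S_T)$, so the two classifying objects are canonically isomorphic and Yoneda delivers the required modification.

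I expect the main obstacle to be the coherence bookkeeping in the first step: pinning down how $\varphi'^F$, $\varphi'^H$ restrict to $\varphi^F$, $\varphi^H$ on strict algebras, and tracking the coherence isomorphisms of $\varphi'$, of $\overline{\gamma}$, and of $\overline{G}$ on non-strict morphisms, so that the comparisons above are genuinely natural. A more hands-on alternative to the Yoneda step is to compute both legs of the square directly on a pseudomorphism $u : T^S \to A$, where they yield $\overline{H}(u\comp T^G)\comp g^R_T$ and $\overline{\gamma}_A\comp\overline{G}\overline{F}(u)\comp\overline{G}(g^S_T)$ respectively, and to compare them using pseudonaturality of $\overline{\gamma}$ at $u$ together with Remark \ref{rem:T^G-via-units} — this is the pseudomorphism analogue of the calculation already carried out for strict morphisms in Construction \ref{const:T^G} — but that route additionally requires verifying the modification axiom against the pseudonaturality cells of $\varphi'^F$, $\varphi'^H$ and $(-)\comp T^G$, which is exactly the bookkeeping the Yoneda argument confines to the single pair $(T^S,1_{T^S})$.
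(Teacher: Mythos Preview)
Your approach is correct and takes a genuinely different route from the paper's. You invoke the bicategorical Yoneda lemma to reduce the pseudonatural comparison to a single verification at $1_{T^S}$, and then appeal to Remark \ref{rem:T^G-via-units}. The paper instead unpacks the pseudonatural equivalences $\varphi'^F$, $\varphi'^H$ explicitly: following the proof of Proposition 4.1.4 in \cite{Weber-CodescCrIntCat}, each component $\varphi'^F_A$ is built via Power's coherence theorem by choosing a strict replacement $s_A : A \to A'$ with adjoint pseudo-inverse $t_A$, and then bracketing the strict isomorphism $\varphi^F_{A'}$ with pre- and postcomposition by $\overline{F}(s_A)$, the inclusion $i_{T^S,A'}$, and $t_A \comp (-)$. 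The square of the statement is thereby stacked as four horizontal squares; the top three commute on the nose (the second by Construction \ref{const:T^G}), while the bottom one commutes up to the mate of an identity, whose $2$-naturality in $A$ follows from the functoriality of mates \cite{KellyStreet-ElementsOf2Cats}.

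Your route is more conceptual and avoids the explicit strictification machinery, but the ``bookkeeping'' you correctly flag as the main obstacle --- pinning down how $\varphi'^F$, $\varphi'^H$ restrict to $\varphi^F$, $\varphi^H$ on strict data --- is precisely where the paper's concrete description of $\varphi'$ via Power's coherence does its work. In effect the paper internalises that compatibility into the body of the proof, while you externalise it as a fact to be cited from \cite{Weber-CodescCrIntCat}. Both arguments are sound; the paper's is more self-contained, yours localises the verification to a single object.
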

\begin{proof}
By Power's coherence theorem \cite{Lack-Codescent, Power-GeneralCoherenceResult} one has, for each pseudo $T$-algebra $A$, a strict $T$-algebra $A'$ and an equivalence $s_A : A \to A'$ in $\PsAlg T$, this being 2-natural in $A$. For each $A$, fix a choice of adjoint pseudo inverse $t_A : A' \to A$ in $\PsAlg T$. The components of the pseudonatural equivalences $\varphi'^F$ and $\varphi'^H$, were described in the proof of Proposition 4.1.4 of \cite{Weber-CodescCrIntCat}, and are the vertical composites in the diagram
\[ \xygraph{!{0;(5,0):(0,.2)::}
{\PsAlgl S(1,\overline{F}A)}="l0" [r] {\PsAlgl R(1,\overline{H}A)}="r0" "l0":"r0"^-{\overline{\gamma}_A\overline{G}(-)} "l0" [d] 
{\Algl S(1,\overline{F}A')}="l1" [r] {\Algl R(1,\overline{H}A')}="r1" "l1":"r1"^-{\overline{\gamma}_{A'}\overline{G}(-)} "l1" [d] 
{\Algs T(T^S,A')}="l2" [r] {\Algs T(T^R,A')}="r2" "l2":"r2"^-{(-) \comp T^G} "l2" [d] 
{\Alg T(T^S,A')}="l3" [r] {\Alg T(T^R,A')}="r3" "l3":"r3"^-{(-) \comp T^G} "l3" [d] 
{\PsAlg T(T^S,A)}="l4" [r] {\PsAlg T(T^R,A)}="r4" "l4":"r4"_-{(-) \comp T^G}
"l0":"l1"_-{\overline{F}(s_A) \comp (-)}:"l2"_-{\varphi^F_{A'}}:"l3"_-{i_{T^S,A'}}:"l4"_-{t_A \comp (-)}
"r0":"r1"^-{\overline{H}(s_A) \comp (-)}:"r2"^-{\varphi^H_{A'}}:"r3"^-{i_{T^R,A'}}:"r4"^-{t_A \comp (-)}
"l3":@{}"r4"|-*{\iso}} \]
whose unlabelled regions commute on the nose, where $i_{T^S,A'}$ and $i_{T^R,A'}$ are the inclusions. The isomorphism in the bottom square is the mate of the identity
\[ ((-) \comp T^G)(s_A \comp (-)) = (s_A \comp (-))((-) \comp T^G). \]
Thus the bottom isomorphism is 2-natural in $A$, by the functoriality, described in \cite{KellyStreet-ElementsOf2Cats}, of the process of taking mates.
\end{proof}
From Corollary \ref{cor:AlgKan} and Proposition \ref{prop:precomp-T^G-as-forgetful} we obtain the following immediate
\begin{cor}\label{cor:alg-left-ext-is-left-adjoint-to-forgetful}
Let $F : (\ca L,S) \to (\ca K,T)$ and $H : (\ca M,R) \to (\ca K,T)$ be adjunctions of 2-monads, and $G : (\ca M,R) \to (\ca L,S)$ be an adjunction of 2-monads over $T$ in the sense of Definition \ref{def:monad-adjunction-over-base}. Suppose that $F$ and $H$ are internalisable in the sense of Definition \ref{def:internalisable-monad-adjunction}. Let $A$ be a pseudo $T$-algebra. If
\begin{enumerate}
\item $T^G$ is exact, and
\label{cor-hyp:exactness-T^G}
\item $A$ is algebraically cocomplete relative to $U^T(T^G)$,
\label{cor-hyp:alg-cocomp-A}
\end{enumerate}
then the left adjoint of $U^G_A$ is computed by algebraic left extension along $T^G$.
\end{cor}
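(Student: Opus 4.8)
The plan is to read the result off from Corollary~\ref{cor:AlgKan} and Proposition~\ref{prop:precomp-T^G-as-forgetful}, the only genuinely non-formal point being the passage from object-wise algebraic left extensions to an adjunction. First I would observe that $T^G : T^R \to T^S$ is a \emph{strict} morphism of $T$-algebras by Construction~\ref{const:T^G}, hence in particular a pseudomorphism of pseudo $T$-algebras, and that $\ca K$ --- and therefore also $\PsAlg T$, with comma objects computed as in $\ca K$ --- has comma objects, since $F$ and $H$ being internalisable forces $\ca K$ to be complete (Definition~\ref{def:internalisable-monad-adjunction}(1)). By hypothesis~(\ref{cor-hyp:exactness-T^G}) this pseudomorphism is exact, while hypothesis~(\ref{cor-hyp:alg-cocomp-A}) says precisely that $A$ is algebraically cocomplete relative to the underlying $\ca K$-morphism $U^T(T^G)$ of $T^G$, which is the cocompleteness condition demanded by Corollary~\ref{cor:AlgKan}. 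That corollary therefore applies, and tells us that every pseudomorphism $(g,\overline g) : T^R \to A$ admits algebraic left extension along $T^G$; by Definition~\ref{def:alg-left-extension} this means that for each such $(g,\overline g)$ the pointwise left Kan extension of $(g,\overline g)$ along $T^G$ exists in $\PsAlg T$ and is computed as in $\ca K$.

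Next I would upgrade ``exists for every object'' to ``is a left adjoint''. A pointwise left Kan extension is in particular a left Kan extension, so left Kan extensions along the fixed $1$-cell $T^G$ exist for every object of the hom-category $\PsAlg T(T^R,A)$; by the standard argument these organise into a functor $L : \PsAlg T(T^R,A) \to \PsAlg T(T^S,A)$ which is left adjoint to the precomposition functor $(-) \comp T^G : \PsAlg T(T^S,A) \to \PsAlg T(T^R,A)$, with unit at $(g,\overline g)$ given by the left extension $2$-cell.

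It then remains to transport this adjunction across Proposition~\ref{prop:precomp-T^G-as-forgetful}. Evaluating the pseudonatural isomorphism there at $A$ produces a square
\[ \xygraph{!{0;(4,0):(0,.25)::} {\PsAlg T(T^S,A)}="p0" [r] {\PsAlg T(T^R,A)}="p1" [d] {\Alg R(A)}="p2" [l] {\Alg S(A)}="p3" "p0":"p1"^-{(-) \comp T^G}:"p2"^-{\varphi'^H_A}:@{<-}"p3"^-{U^G_A}:@{<-}"p0"^-{\varphi'^F_A}:@{}"p2"|-*{\iso}} \]
commuting up to natural isomorphism, in which $\varphi'^F_A$ and $\varphi'^H_A$ are equivalences of categories by Proposition~\ref{prop:good-int-alg}. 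Choosing a pseudo-inverse $(\varphi'^F_A)^{-1}$ I obtain $U^G_A \iso \varphi'^H_A \comp ((-)\comp T^G) \comp (\varphi'^F_A)^{-1}$, and since equivalences of categories preserve and reflect adjunctions, $U^G_A$ admits a left adjoint, namely $L$ conjugated by $\varphi'^F_A$ and $\varphi'^H_A$. Tracing through this conjugation, the value of the left adjoint of $U^G_A$ is, up to the above equivalences, obtained by forming the algebraic left extension along $T^G$, which is the claim.

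I do not expect any real obstacle --- the paper itself flags the corollary as immediate --- and the only step that is more than bookkeeping is the second paragraph, namely knowing that the object-wise algebraic left extensions assemble into a left adjoint. This is just the familiar fact that (pointwise, hence in particular ordinary) left Kan extensions along a fixed $1$-cell, when they exist for all $1$-cells out of its source, are the components of a left adjoint to restriction, with the Kan cells serving as the unit. Everything else is formal manipulation with the equivalences $\varphi'^F$ and $\varphi'^H$ already constructed in Propositions~\ref{prop:good-int-alg} and~\ref{prop:precomp-T^G-as-forgetful}.
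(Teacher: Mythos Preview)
Your proposal is correct and is exactly the approach the paper intends: it declares the corollary ``immediate'' from Corollary~\ref{cor:AlgKan} and Proposition~\ref{prop:precomp-T^G-as-forgetful}, and what you have written is simply a faithful unpacking of that immediacy. The only elaboration you add beyond the paper's one-line justification is the (standard) passage from object-wise pointwise left extensions to a genuine left adjoint and the conjugation by the equivalences $\varphi'^F_A$, $\varphi'^H_A$, both of which are routine and correctly handled.
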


\subsection{Formulating the main theorem}
\label{ssec:MainTheorem}
We understood the algebraic cocompleteness of $A$ in the key examples where $T$ is $\MCMnd$, $\BMCMnd$, $\SMCMnd$ or $\FPMnd$, as commonly-encountered conditions. If one was faced with a different $T$, then a similar analysis as in the proofs of Propositions \ref{prop:algcocomp-M}, \ref{prop:algcocomp-Sm-or-Bm} and \ref{prop:algcocomp-Fp} would express the corresponding condition on $A$ in explicit terms. Thus it remains to be understood when hypothesis (\ref{cor-hyp:exactness-T^G}) of Corollary \ref{cor:alg-left-ext-is-left-adjoint-to-forgetful} are satisfied. The main theorem of this article identifies general conditions on $G$, which arise from polynomial monads over $\Cat$, which guarantee this hypothesis.

We begin by recalling some of the background on polynomial 2-monads from \cite{Weber-CodescCrIntCat, Weber-OpPoly2Mnd, Weber-PolynomialFunctors}. A \emph{polynomial from $I$ to $J$} in $\Cat$ \cite{Weber-PolynomialFunctors} is a diagram
\[ \xygraph{{I}="p0" [r] {E}="p1" [r] {B}="p2" [r] {J}="p3" "p0":@{<-}"p1"^-{s}:"p2"^-{p}:"p3"^-{t}} \]
in which $p$ is an exponentiable functor. A \emph{polynomial 2-functor} is a 2-functor $T : \Cat/I \to \Cat/J$ such that $T \iso \Sigma_t\Pi_p\Delta_s$ for some polynomial $(s,p,t)$ as above, where $\Sigma_t$ is the process of composition with $t$, $\Delta_s$ is the process of pulling back along $s$, and $\Pi_p$ is right adjoint to pulling back along $p$. Such polynomials form a 2-bicategory $\Polyc {\Cat}$, in which the objects are categories and a morphism $I \to J$ is a polynomial as above. The assignment of a polynomial $(s,p,t)$ to its associated polynomial 2-functor $\Sigma_t\Pi_p\Delta_s$ is the effect on arrows of a homomorphism
\[ \PFun {\Cat} : \Polyc{\Cat} \longrightarrow \TwoCAT. \]
A 2-cell $(f_1,f_2):(s_1,p_1,t_1) \to (s_2,p_2,t_2)$ in $\Polyc {\Cat}$ is a diagram
\[ \xygraph{!{0;(1.5,0):(0,.5)::} {I}="p0" [ur] {E_1}="p1" [r] {B_1}="p2" [dr] {J}="p3" [dl] {B_2}="p4" [l] {E_2}="p5" "p0":@{<-}"p1"^-{s_1}:"p2"^-{p_1}:"p3"^-{t_1}:@{<-}"p4"^-{t_2}:@{<-}"p5"^-{p_2}:"p0"^-{s_2}
"p1":"p5"_-{f_2} "p2":"p4"^-{f_1}
"p1":@{}"p4"|-{\tn{pb}} "p0" [r(.5)] {\scriptstyle{=}} "p3" [l(.5)] {\scriptstyle{=}}} \]
and for $S,T : \Cat/I \to \Cat/J$, a 2-natural transformation $\phi : S \to T$ is \emph{polynomial} when it can be factored as $\phi = \tau\PFun{\Cat}(f_1,f_2)\sigma$ where $f_1$ and $f_2$ are as above, $\sigma : S \iso \PFun{\Cat}(s_1,p_1,t_1)$ witnesses $S$ as a polynomial 2-functor, and $\tau : \PFun{\Cat}(s_2,p_2,t_2) \iso T$ witnesses $T$ as a polynomial 2-functor. In this way one can speak of \emph{polynomial 2-monads}, and \emph{polynomial adjunctions of 2-monads}, as those in the image $\PFun{\Cat}$, modulo isomorphisms witnessing the participating 2-functors as polynomial.

Given a monad $(I,T)$ in $\Polyc{\Cat}$, we denote also by $T$ the 2-monad on $\Cat/I$ obtained by applying $\PFun{\Cat}$. Our basic examples, discussed in more detail in Section 5 of \cite{Weber-PolynomialFunctors}, are
\[ \xygraph{!{0;(5,0):(0,.15)::} 
{\xybox{\xygraph{{\MCMnd \, :} [r(2)]
*!(0,-.07){\xybox{\xygraph{{1}="p0" [r] {\N_*}="p1" [r] {\N}="p2" [r] {1}="p3" "p0":@{<-}"p1"^-{}:"p2"^-{U^{\N}}:"p3"^-{}}}}}}}
[r]
{\xybox{\xygraph{{\BMCMnd \, :} [r(2)]
*!(0,-.07){\xybox{\xygraph{{1}="p0" [r] {\B_*}="p1" [r] {\B}="p2" [r] {1}="p3" "p0":@{<-}"p1"^-{}:"p2"^-{U^{\N}}:"p3"^-{}}}}}}}
[d]
{\xybox{\xygraph{{\FPMnd \, :} [r(2.25)]
*!(0,-.07){\xybox{\xygraph{{1}="p0" [r] {\S_*^{\op}}="p1" [r(1.25)] {\S^{\op}}="p2" [r] {1}="p3" "p0":@{<-}"p1"^-{}:"p2"^-{(U^{\S})^{\op}}:"p3"^-{}}}}}}}
[l]
{\xybox{\xygraph{{\SMCMnd \, :} [r(2)]
*!(0,-.07){\xybox{\xygraph{{1}="p0" [r] {\P_*}="p1" [r] {\P}="p2" [r] {1}="p3" "p0":@{<-}"p1"^-{}:"p2"^-{U^{\P}}:"p3"^-{}}}}}}}} \]
the polynomial 2-monads for monoidal categories, braided monoidal cateories, symmetric monoidal categories, and categories with finite products. For a general polynomial 2-monad $(\Cat/I,T)$, we denote its underlying endo-polynomial as
\[ \xygraph{{I}="p0" [r] {E_T}="p1" [r] {B_T}="p2" [r] {I.}="p3" "p0":@{<-}"p1"^-{s_T}:"p2"^-{p_T}:"p3"^-{t_T}} \]
So in the case where $T$ arises from an operad as in Section 3 of \cite{Weber-OpPoly2Mnd}, $I$ is the set of colours of the operad, and $B_T$ is a groupoid whose objects are the operations of the operad and morphisms are given by the symmetric group actions.

Monads in $\Polyc{\Cat}$ are the objects of a category $\PolyMnd{\Cat}$, in which a morphism $(f,F) : (I,S) \to (J,T)$ is a commutative diagram
\[ \xygraph{!{0;(1.5,0):(0,.6667)::} {I}="p0" [r] {E_S}="p1" [r] {B_S}="p2" [r] {I}="p3" [d] {J}="p4" [l] {B_T}="p5" [l] {E_T}="p6" [l] {J}="p7" "p0":@{<-}"p1"^-{s_S}:"p2"^-{p_S}:"p3"^-{t_S}:"p4"^-{f}:@{<-}"p5"^-{t_T}:@{<-}"p6"^-{p_T}:"p7"^-{s_T}:@{<-}"p0"^-{f} "p1":"p6"_-{F_2} "p2":"p5"^-{F_1} "p1":@{}"p5"|-{\tn{pb}}} \]
compatible with the monad structures on $S$ and $T$. As explained in \cite{Weber-OpPoly2Mnd} such a morphism is an adjunction of monads in $\Polyc{\Cat}$, and the corresponding adjunction of 2-monads obtained by applying $\PFun{\Cat}$ to this is denoted as
\[ F : (\Cat/I,S) \longrightarrow (\Cat/J,T). \]
The adjunction $F_! \ladj F^*$ in this case is $\Sigma_f \ladj \Delta_f$. The following result explains why most of the polynomial adjunctions of 2-monads we encounter in this way are internalisable in the sense of Definition \ref{def:internalisable-monad-adjunction}. Recall, that this means that the associated internal algebra classifier exists, giving it a strict universal property with respect to all strict algebras, and moreover, it also enjoys a bicategorical universal property with respect to all pseudo algebras.
\begin{prop}\label{prop:internalisable-poly-adjunctions}
A polynomial adjunction of 2-monads
\[ F : (\Cat/I,S) \longrightarrow (\Cat/J,T) \]
such that $p_T$ is a discrete fibration or opfibration with finite fibres and $J$ is discrete, is internalisable.
\end{prop}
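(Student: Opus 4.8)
The plan is to check, one by one, the four clauses of Definition \ref{def:internalisable-monad-adjunction} for the adjunction $F : (\Cat/I,S) \to (\Cat/J,T)$; Proposition \ref{prop:good-int-alg} then delivers the conclusion. Clauses (1) and (2) are essentially formal. For (1), $\Cat$ is complete and cocomplete, so the slices $\Cat/I$ and $\Cat/J$ are too, with colimits computed as in $\Cat$. For (2), since $J$ is discrete an internal category in $\Set/J$ unwinds to exactly a small category equipped with a functor to $J$: the internal source and target form a parallel pair over $J$, which forces the source and target of a morphism to carry the same label, and for the discrete category $J$ this is precisely the condition that the labelling extend to a functor; hence $\Cat/J \iso \tn{Cat}(\Set/J)$, and $\Set/J$, being a slice of $\Set$, has pullbacks. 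I would dispatch these in a few lines.

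The content is in clauses (3) and (4), and both come down to controlling the dependent-product functor $\Pi_{p_T}$ in the factorisation $T \iso \Sigma_{t_T}\Pi_{p_T}\Delta_{s_T}$, using the hypothesis that $p_T$ is a discrete fibration or opfibration with finite fibres. For (3) I would show that $T$ preserves filtered colimits, so has rank $\aleph_0$: $\Sigma_{t_T}$ is a left adjoint; $\Delta_{s_T}$ is a pullback functor, and pullbacks preserve filtered colimits in $\Cat$ since there both finite limits and filtered colimits are computed componentwise from $\Set$, where they commute; and $\Pi_{p_T}$ preserves filtered colimits because the fibres of $p_T$ are finite discrete categories, so fibrewise over $B_T$ the functor $\Pi_{p_T}$ is a finite product, which commutes with filtered colimits (this is recorded for polynomials with finite fibres in \cite{Weber-PolynomialFunctors}). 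For $S$, I would observe that the commutative diagram defining the morphism $(f,F) : (I,S) \to (J,T)$ in $\PolyMnd{\Cat}$ exhibits $p_S$ as the pullback of $p_T$ along $F_1 : B_S \to B_T$; a pullback of a discrete fibration (resp.\ opfibration) is again one, and the fibre of $p_S$ over $b$ is isomorphic to the fibre of $p_T$ over $F_1(b)$, hence finite, so the same argument gives that $S$ has rank. Note this step does not use that $I$ is discrete.

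For clause (4), that $T$ preserves internal functors with invertible object map — equivalently, functors over $J$ that are bijective on objects — I would again run through the factors of $T \iso \Sigma_{t_T}\Pi_{p_T}\Delta_{s_T}$: $\Sigma_{t_T}$ does not alter the underlying internal category; $\Delta_{s_T}$ is a pullback whose object part is a pullback in $\Set$ of a bijection, hence a bijection; and for $\Pi_{p_T}$, the fibres of $p_T$ being finite discrete sets, the object part of $\Pi_{p_T}(X \to E_T)$ over $b$ is the finite product of the object-sets of the fibres $X_e$ for $e \in p_T^{-1}(b)$, so a fibrewise bijection induces a bijection. Composing, $T$ sends bijective-on-objects functors over $J$ to bijective-on-objects functors over $J$.

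The step I expect to be the main obstacle is making the pointwise description of $\Pi_{p_T}$, for a discrete fibration respectively opfibration with finite fibres, precise enough to license both ``$\Pi_{p_T}$ preserves filtered colimits'' and ``$\Pi_{p_T}$ preserves bijective-on-objects maps'', and to treat the fibration and opfibration cases uniformly; in particular I want to be careful that the fibres of a discrete (op)fibration are genuine discrete finite sets, so that the ``section category'' computing $\Pi_{p_T}(X)$ over a point of $B_T$ collapses to an honest finite product. If the required formula is already available in \cite{Weber-PolynomialFunctors}, I would cite it directly; otherwise a short preliminary lemma establishing it would precede the verification of (3) and (4), after which the rest is routine.
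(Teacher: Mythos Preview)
Your proposal is correct and follows essentially the same route as the paper for clauses (1), (2), and (3): slices of $\Cat$ are (co)complete; discreteness of $J$ gives $\Cat/J \cong \tn{Cat}(\Set/J)$; $p_S$ is a pullback of $p_T$ and hence inherits its properties; and the finite-fibre hypothesis yields accessibility via \cite{Weber-PolynomialFunctors}. The paper in fact cites the stronger conclusion of Theorem~4.5.1 there, that $S$ and $T$ preserve \emph{sifted} colimits, not merely filtered ones, and this extra strength is what drives its different argument for clause~(4).

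For clause~(4) the paper takes a more conceptual route than your direct factorwise analysis. Rather than compute $\Pi_{p_T}$ on objects, it invokes the result from \cite{Bourke-Thesis} that in any 2-category of the form $\tn{Cat}(\ca E)$, the codescent morphisms are exactly the internal functors that are bijections on objects. Since codescent objects are sifted colimits and $T$ preserves sifted colimits, $T$ preserves codescent morphisms, hence bijective-on-objects functors. This dispatches (4) in a single stroke, reusing the work already done for (3), and avoids entirely the explicit description of $\Pi_{p_T}$ that you correctly identify as the delicate point of your approach. Your argument, by contrast, is more elementary and self-contained: once one accepts that the fibres of $\Pi_{p_T}(X)$ over $b \in B_T$ are the finite products $\prod_{e \in p_T^{-1}(b)} X_e$ (which does follow from the discreteness of those fibres), bijectivity on objects is immediate. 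The paper's approach buys economy and uniformity; yours buys independence from Bourke's characterisation at the cost of the preliminary lemma you anticipate needing.
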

\begin{proof}
Clearly $\Cat/I$ and $\Cat/J$ have all limits and colimits. Since $p_S$ is obtained by pulling back $p_T$, it enjoys the same properties as $p_T$. By Theorem 4.5.1 of \cite{Weber-PolynomialFunctors}, $S$ and $T$ preserve sifted colimits, and so in particular are finitary. Since $J$ is discrete, $\Cat/J = \tn{Cat}(\Set/J)$. Recall from \cite{Bourke-Thesis} that in any 2-category of the form $\tn{Cat}(\ca E)$ for $\ca E$ a category with pullbacks, codescent morphisms are exactly those internal functors which are bijections on objects. Since codescent objects are examples of sifted colimits, and so $T$ preserves them, it follows that $T$ preserves internal functors which are bijections on objects.
\end{proof}
In particular for any polynomial adjunction of 2-monads
\[ F : (\Cat/I,S) \longrightarrow (\Cat/J,T) \]
in which $T$ is $\MCMnd$, $\BMCMnd$, $\SMCMnd$ or $\FPMnd$, Proposition \ref{prop:internalisable-poly-adjunctions} applies. We can now state our main theorem.
\begin{thm}\label{thm:main}
Let $F : (\Cat/J,S) \to (\Cat/K,T)$ and $H : (\Cat/I,R) \to (\Cat/K,T)$ be adjunctions of 2-monads, and $G : (\Cat/I,R) \to (\Cat/J,S)$ be an adjunction of 2-monads over $T$. Suppose that $F$, $G$ and $H$ are polynomial adjunctions of 2-monads. If $I$, $J$ and $K$ are discrete and $p_T$ is a discrete opfibration with finite fibres, then $T^G$ is exact.
\end{thm}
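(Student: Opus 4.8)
The plan is to realise both internal algebra classifiers $T^R$ and $T^S$ via the codescent calculation of \cite{Weber-CodescCrIntCat}, and then to recognise the algebra square of $T^G$ --- before codescent objects are formed --- as a square of crossed double categories to which the key technical result Theorem \ref{thm:hard-exactness} applies. First I would observe that the numerical hypotheses of the theorem (discreteness of $I$, $J$, $K$ and $p_T$ a discrete opfibration with finite fibres) are exactly those of Proposition \ref{prop:internalisable-poly-adjunctions}, so that $F$ and $H$ are internalisable and $T^R = 1^{\dagger}_H$ and $T^S = 1^{\dagger}_F$ genuinely exist. From \cite{Weber-CodescCrIntCat} these are realised as codescent objects $\CoDesc(\mathbb{X}_H)$ and $\CoDesc(\mathbb{X}_F)$ of crossed double categories $\mathbb{X}_H$, $\mathbb{X}_F$ built functorially from the polynomial data of $H$ and $F$. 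Since $G$ is a polynomial adjunction of $2$-monads over $T$ --- so that $F_!G_! = H_!$ and the square relating the polynomial data of $R$ to that of $S$ is a pullback --- $G$ induces a morphism $\mathbb{X}_G\colon \mathbb{X}_H \to \mathbb{X}_F$ of crossed double categories with $\CoDesc(\mathbb{X}_G) = T^G$; this is the description of $T^G$ that Section \ref{ssec:int-alg-class-codesc} extracts from Construction \ref{const:T^G}.

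Next I would carry out the bookkeeping of Section \ref{ssec:alg-square-before-codescent}: each $T$-action $a^R$ on $\CoDesc(\mathbb{X}_H)$ and $a^S$ on $\CoDesc(\mathbb{X}_F)$ is itself the $\CoDesc$-image of a map of crossed double categories, and combining these with $\mathbb{X}_G$ yields a strictly commuting square $\square_G$ of crossed double categories whose image under $\CoDesc$ is the algebra square of $T^G$ --- with top $T(T^R) \xrightarrow{T(T^G)} T(T^S)$, bottom $T^R \xrightarrow{T^G} T^S$, and vertical legs $a^R$ and $a^S$. Theorem \ref{thm:hard-exactness} then reduces the exactness of this square in $\Cat$ to a diagrammatic condition on $\square_G$: spot by spot in the crossed double category, the constituent square of categories must be a pullback (or bipullback) in which the leg corresponding to ``the functor one left Kan extends along'' is an opfibration --- the double-categorical mixture of the hypotheses of Proposition \ref{prop:exact-pullbacks}. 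Here the hypotheses bite. Because $p_S$ and $p_R$, being pullbacks of $p_T$, are again discrete opfibrations with finite fibres, and because $I$, $J$, $K$ are discrete so that the categories serving as bases of these polynomials are discrete, the data defining $\mathbb{X}_H$, $\mathbb{X}_F$ and $\square_G$ simplify to the point where the squares that must be pullbacks are pullbacks and the legs that must be opfibrations are (discrete) opfibrations. Checking these conditions across the finitely many spots of $\square_G$ verifies the hypothesis of Theorem \ref{thm:hard-exactness}, giving exactness in $\Cat$; and since $K$ is discrete, $\Cat/K \catequiv \prod_K \Cat$ and the whole construction decomposes over the objects of $K$, so this upgrades to exactness in $\Cat/K$. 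Finally, $T^G$ being a strict $T$-algebra morphism by Construction \ref{const:T^G}, exactness of the image of $\square_G$ is by definition the exactness of $T^G$ in the sense of Definition \ref{def:exact-T-morphism}.

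The main obstacle is the diagrammatic verification in the second step: one must pin down $\square_G$ explicitly and then check, at each spot, that the relevant square is a genuine pullback with the correct leg an opfibration. The two delicate points are (i) propagating the ``discrete opfibration with finite fibres'' condition on $p_T$ through the composite $F_!G_! = H_!$ and through the iterated pullbacks and right adjoints $\Pi_{(-)}$ used to build the crossed double categories, so that every square required to be a pullback really is one; and (ii) confirming the compatibility between the squares arising from the $T$-action (horizontal) direction of $\square_G$ and those arising from the $G$ (vertical) direction --- precisely the coherence the crossed-double-category formalism of \cite{Weber-CodescCrIntCat} is engineered to control. By contrast, the passage from $\Cat$ to $\Cat/K$ is routine, being immediate from discreteness of $K$. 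When $\mathbb{X}_G$ is simple enough that $\square_G$ collapses to a single pullback square of categories, the argument reduces to a direct application of Proposition \ref{prop:exact-pullbacks}, which is the content of Theorem \ref{thm:exact-internalisations-easy}.
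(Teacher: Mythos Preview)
Your overall strategy is exactly the paper's: realise the algebra square of $T^G$ as $\CoDesc$ applied to a square $\ca S$ in $\CrIntCat{\Cat/K}$ (this is Proposition~\ref{prop:int-algsquare-via-codescent}), verify the hypotheses of Theorem~\ref{thm:hard-exactness}, and pass from $\Cat$ to $\Cat/K$ fibrewise via discreteness of $K$.

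However, your description of what Theorem~\ref{thm:hard-exactness} demands is imprecise, and this matters for the verification. The hypotheses are \emph{not} a uniform ``spot by spot'' condition of pullback-with-an-opfibration-leg. Rather they are three distinct, asymmetric requirements: (a) $\ca S$ is a pullback in $\CrIntCat{\Cat/K}$; (b) the leg $\sigma_T\ca R_F1$ (the one playing the role of $g$) is a \emph{discrete fibration} in the sense of Definition~\ref{defn:extra-conditions-TG-alg-square}(1); and (c) the leg $U^T(\overline{\ca R}_G)_1$ (the one playing the role of $f$) is an \emph{objectwise opfibration}, i.e.\ its $0$-component is an opfibration. Your phrase ``the legs that must be opfibrations are (discrete) opfibrations'' conflates (b) and (c), which are different conditions on different legs.

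The paper's verification is also more specific than your account suggests. Condition (a) has two parts: first showing $\ca S$ even lives in $\CrIntCat{\Cat/K}$ --- a nontrivial check that all four simplicial objects are crossed internal categories and all four morphisms are crossed internal functors, carried out using the \emph{opfamiliality} of $T$, $F_!$, $H_!$ and of $\mu^T$ (via Theorem~4.4.5 of \cite{Weber-PolynomialFunctors}), together with the discreteness of $G_!1 \in \Cat/J$; second, that the square is a pullback, which follows from the \emph{cartesianness} of $\mu^T$ (not from propagating the discrete-opfibration condition on $p_T$ through $\Pi$'s as you suggest). Condition (b) is again immediate from cartesianness of $\mu^T$. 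Condition (c) reduces to the fact that $(U^T(\overline{\ca R}_G)_1)_0 = TF_!(t_{G_!1})$, and since $G_!1$ is discrete in $\Cat/J$ (because $I$ is discrete), $t_{G_!1}$ is trivially an opfibration, preserved by the opfamilial $TF_!$. So the ``delicate points'' you flag are real, but the mechanism is opfamiliality plus cartesianness of $\mu^T$, not iterated pullback-chasing through $\Pi$'s.
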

The proof of this theorem will be obtained in Section \ref{ssec:alg-square-before-codescent}.

\subsection{Examples}
\label{ssec:operadic-examples}
We use the term \emph{operad} to refer to what are commonly known as ``coloured symmetric operads'', and also as ``symmetric multicategories''. As was explained in \cite{Weber-OpPoly2Mnd}, an operad $T$ with set of colours $I$ can be identified as a morphism
\[ \xygraph{!{0;(1.5,0):(0,.6667)::} {I}="p0" [r] {E}="p1" [r] {B}="p2" [r] {I}="p3" [d] {1}="p4" [l] {\P}="p5" [l] *!(0,.025){\xybox{\xygraph{{\P_*}}}}="p6" [l] {1}="p7" "p0":@{<-}"p1"^-{}:"p2"^-{}:"p3"^-{}:"p4"^-{}:@{<-}"p5"^-{}:@{<-}"p6"^-{}:"p7"^-{}:@{<-}"p0"^-{} "p1":"p6"_-{} "p2":"p5"^-{b} "p1":@{}"p5"|-{\tn{pb}}} \]
of polynomial monads in which $b$ is a discrete fibration, and the objects of $B$ are the operations of $T$. By applying $\PFun {\Cat}$ one thus obtains an adjunction of 2-monads $(\Cat/I,T) \to (\Cat,\SMCMnd)$, which by Proposition \ref{prop:internalisable-poly-adjunctions} is internalisable since $p_{\SMCMnd} : \P_* \to \P$ is a discrete fibration with finite fibres.  Moreover, one can recover $\Cat$-operads as such polynomial monad morphisms in which $b$ has the structure of a split fibration, and clubs in the sense of Kelly \cite{Kelly-ClubsDoctrines} are recovered by such polynomial monad morphisms in which $I=1$. Similarly for morphisms of operads and their variants, and so the categories of operads, $\Cat$-operads and clubs are all canonically identifiable subcategories of $\PolyMnd{\Cat}/\SMCMnd$, and thus after applying $\PFun {\Cat}$, as categories of internalisable adjunctions of 2-monads over $(\Cat,\SMCMnd)$.

A symmetric monoidal category $\ca V$ can be regarded as an operad, with colours the objects of $\ca V$, and operations $(A_1,...,A_n) \to B$ given by morphisms $A_1 \tensor ... \tensor A_n \to B$. This process is the effect on objects of forgetful 2-functors
\[ \begin{array}{lccr} {\ca U_{\tn{s}} : \Algs {\tnb{S}} \longrightarrow \tnb{Opd}} &&& {\ca U_{\tn{ps}} : \PsAlg {\tnb{S}} \longrightarrow \tnb{Opd}} \end{array} \]
into the 2-category $\tnb{Opd}$ of operads, from the 2-category $\PsAlg {\SMCMnd}$ (resp. $\Algs {\SMCMnd}$) of symmetric monoidal categories and strong monoidal functors (resp. of symmetric strict monoidal categories and strict monoidal functors). As explained in Section 6.4 of \cite{Weber-CodescCrIntCat}, the process{\footnotemark{\footnotetext{As in \cite{Weber-OpPoly2Mnd} we denote by $T$ also the associated 2-monad on $\Cat/I$. When $T$ is the terminal operad $\Com$, this associated 2-monad (on $\Cat$) is just $\SMCMnd$, and so given these conventions, its is also consistent to write $\Com^T$ instead of $\SMCMnd^T$, as was done in \cite{Weber-CodescCrIntCat}.}}} $T \mapsto \SMCMnd^T$ is the effect on objects of a left 2-adjoint to $\ca U_{\tn{s}}$ and a left biadjoint to $\ca U_{\tn{ps}}$.

We denote by $\ca F$ this left 2-adjoint $\tnb{Opd} \to \Algs {\SMCMnd}$. Given a morphism of operads $G : R \to T$, since ${\SMCMnd}^G : {\SMCMnd}^R \to {\SMCMnd}^T$ and $\ca F(G)$ are both obtained from the universal property of ${\SMCMnd}^R = \ca F(R)$, one has ${\SMCMnd}^G = \ca F(G)$. Denoting $I$ and $J$ for the set colours of $R$ and $T$ respectively, the adjunctions of 2-monads
\[ \begin{array}{lcccr} {G : (\Cat/I,R) \longrightarrow (\Cat/J,T)} && {(\Cat/I,R) \longrightarrow (\Cat,\SMCMnd)} && {(\Cat/J,T) \longrightarrow (\Cat,\SMCMnd)} \end{array} \]
in this situation conform to the hypotheses of Theorem \ref{thm:main}. Thus by applying Proposition \ref{prop:algcocomp-Sm-or-Bm}, Corollary \ref{cor:AlgKan} and Theorem \ref{thm:main}, one obtains the following result.
\begin{cor}\label{cor:operad-exactness}
Let $G : R \to T$ be a morphism of operads.
\begin{enumerate}
\item $\ca F(G) : \ca F(R) \to \ca F(T)$ is an exact symmetric monoidal functor.
\item If $\ca V$ is symmetric monoidal closed and cococomplete, and $H : \ca F(R) \to \ca V$ is a symmetric strong monoidal functor, then the left Kan extension $\ca F(T) \to \ca V$ of $H$ along $\ca F(G)$ is a symmetric strong monoidal functor.
\end{enumerate}
\end{cor}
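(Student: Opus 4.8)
The strategy is to recognise the situation as an instance of Theorem~\ref{thm:main} for the first assertion, and then to invoke Corollary~\ref{cor:AlgKan} for the second.

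For (1), I would first spell out the relevant adjunctions of $2$-monads. Writing $I$ and $J$ for the discrete categories of colours of $R$ and $T$, the morphism of operads $G : R \to T$ is a morphism of polynomial monads over the terminal operad $\Com$; applying $\PFun{\Cat}$ produces polynomial adjunctions of $2$-monads $G : (\Cat/I,R) \to (\Cat/J,T)$, $(\Cat/I,R) \to (\Cat,\SMCMnd)$ and $(\Cat/J,T) \to (\Cat,\SMCMnd)$, forming an adjunction of $2$-monads over $\SMCMnd$ in the sense of Definition~\ref{def:monad-adjunction-over-base}. Here the ambient index category is $K = 1$, which together with $I$ and $J$ is discrete, and $p_{\SMCMnd} : \P_{*} \to \P$ has finite fibres; since $\P$ is a groupoid, being a discrete fibration over $\P$ coincides with being a discrete opfibration over $\P$, so the hypothesis of Theorem~\ref{thm:main} on $p_T$ is met. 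Theorem~\ref{thm:main} then gives that $T^G = \SMCMnd^G$ is exact, and since $\SMCMnd^G = \ca F(G)$ is a strict (hence strong) morphism of symmetric monoidal categories by Construction~\ref{const:T^G}, this is precisely the statement that $\ca F(G)$ is an exact symmetric monoidal functor.

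For (2), I would verify the two hypotheses of Corollary~\ref{cor:AlgKan}. Exactness of the pseudomorphism $\ca F(G)$ is assertion (1). For algebraic cocompleteness of $\ca V$: since $\ca V$ is symmetric monoidal closed, each functor $(-) \tensor X$ has a right adjoint and hence preserves colimits, so by Proposition~\ref{prop:algcocomp-Sm-or-Bm}(\ref{pcase:s-or-b-monoidally-cocomplete}) the cocomplete symmetric monoidal category $\ca V$ is algebraically cocomplete relative to every functor between small categories as a pseudo $\SMCMnd$-algebra, in particular relative to the underlying functor of $\ca F(G)$; here one uses that $\ca F(R)$ and $\ca F(T)$ are small, which holds since $R$ and $T$ are small operads. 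A symmetric strong monoidal functor is exactly a pseudomorphism of pseudo $\SMCMnd$-algebras, so $H : \ca F(R) \to \ca V$ is such a pseudomorphism, and Corollary~\ref{cor:AlgKan} applies to show that $H$ admits algebraic left extension along $\ca F(G)$. In particular the pointwise left Kan extension of $H$ along $\ca F(G)$ --- which exists in $\Cat$ because $\ca F(R)$ is small and $\ca V$ is cocomplete --- carries a canonical pseudomorphism structure, that is, it is a symmetric strong monoidal functor.

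As the substance of the argument is contained in Theorem~\ref{thm:main} and Corollary~\ref{cor:AlgKan}, the only points needing care are bookkeeping ones: matching the description of $p_{\SMCMnd}$ as a discrete fibration with finite fibres against the discrete-opfibration wording of Theorem~\ref{thm:main} --- harmless, since $\P$ is a groupoid --- and keeping the smallness hypotheses explicit so that Proposition~\ref{prop:algcocomp-Sm-or-Bm} genuinely applies. No new computation should be required.
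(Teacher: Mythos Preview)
Your proposal is correct and follows essentially the same route as the paper: the text immediately preceding the corollary sets up the triangle of polynomial adjunctions of $2$-monads over $\SMCMnd$, observes that the hypotheses of Theorem~\ref{thm:main} are met, and then declares the corollary to follow by combining Theorem~\ref{thm:main}, Proposition~\ref{prop:algcocomp-Sm-or-Bm}, and Corollary~\ref{cor:AlgKan}. Your write-up is in fact more explicit than the paper's on two points --- the reconciliation of ``discrete fibration'' (as stated for $p_{\SMCMnd}$ near Proposition~\ref{prop:internalisable-poly-adjunctions}) with the ``discrete opfibration'' hypothesis of Theorem~\ref{thm:main} via $\P$ being a groupoid, and the smallness of $\ca F(R)$ and $\ca F(T)$ needed to invoke Proposition~\ref{prop:algcocomp-Sm-or-Bm} --- both of which are left tacit in the paper.
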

In Part 3 of \cite{BataninBerger-HtyThyOfAlgOfPolyMnd} various contemporary operadic notions: cyclic operads, modular operads and various notions properad; are exhibited as algebras of polynomial monads defined over $\Set$, in which the middle map of the underlying polynomial has finite fibres. As explained in \cite{KockJ-PolyFunTrees, SzawielZawadowski-TheoriesOfAnalyticMonads} such polynomial monads may be identified with ordinary (coloured symmetric) operads whose symmetric group actions are $\Sigma$-free. Thus via \cite{Weber-OpPoly2Mnd} these operads may also be regarded as polynomial monads now over $\Cat$. Hence for a $\Sigma$-free operad $T$ with set of colours $I$, one has \emph{two} associated polynomial monads. Denoting the associated categorical polynomial monad of \cite{Weber-OpPoly2Mnd} as on the left
\[ \xygraph{!{0;(1.5,0):(0,1)::} 
{I}="p0" [r] {E_T}="p1" [r] {B_T}="p2" [r] {I}="p3"
"p0":@{<-}"p1"^-{s}:"p2"^-{p}:"p3"^-{t}
"p3" [r]
{I}="q0" [r] {\pi_0E_T}="q1" [r] {\pi_0B_T}="q2" [r] {I}="q3"
"q0":@{<-}"q1"^-{\pi_0s}:"q2"^-{\pi_0p}:"q3"^-{\pi_0t}} \]
the corresponding $\Set$-based polynomial used in \cite{BataninBerger-HtyThyOfAlgOfPolyMnd} is given on the right, where $\pi_0 : \Cat \to \Set$ is the process of taking the connected components of a category. Recall, $B_T$ is the groupoid whose objects are the operations of $T$, and morphisms are obtained from the symmetric group actions. The property of $\Sigma$-freeness is equivalent to saying that $B_T$ is equivalent to a discrete category. Thus $B_T$ is a groupoid in which there is at most one morphism between any two objects, and the functor $q_{B_T} : B_T \to \pi_0B_T$ which sends objects of $B_T$ to their connected components, is an equivalence. The ``operations'' of the $\Set$-based polynomial monad on the right, are the operations of $T$ \emph{modulo} the symmetric group actions.

Consistently with \cite{Weber-OpPoly2Mnd}, we denote by $T$ also the 2-monad on $\Cat/I$ obtained by applying $\PFun {\Cat}$ to $(s,p,t)$, and by $T/\Sigma$ the 2-monad on $\Cat/I$ induced similarly by $(\pi_0s,\pi_0p,\pi_0t)$. In fact one can define the 2-monad $T/\Sigma$ even when the operad $T$ is not $\Sigma$-free, but then $T/\Sigma$ will not necessarily be a polynomial monad{\footnotemark{\footnotetext{For instance when the operad $T$ is $\Com$, $T$ is $\SMCMnd$ whereas $T/\Sigma$ is the 2-monad for commutative monoids, which is not cartesian, and thus not polynomial.}}}. A substantial part of \cite{Weber-OpPoly2Mnd} is devoted to understanding how $T$ and $T/\Sigma$ are related. By the general definition of $T/\Sigma$ one always has a morphism $q_T : T \to T/\Sigma$ of 2-monads, and thus an induced 2-functor $\overline{q}_T : \Algs T \to \Algs {T/\Sigma}$. Strict algebras of $T/\Sigma$ are the $\Cat$-valued algebras of the original operad $T$, the strict algebras can be identified as ``weakly-equivariant'' $\Cat$-valued algebras of the original operad $T$, and in these terms $\overline{q}_T$ is the inclusion. When the original operad $T$ is $\Sigma$-free various nice things happen. To begin with, $T/\Sigma$ is a polynomial monad and $q_T$ is a morphism of polynomial monads by Theorem  6.6 of \cite{Weber-OpPoly2Mnd}. Moreover, $\overline{q}_T$ is now part of a Quillen equivalence with respect to the Lack model structures \cite{Lack-HomotopyAspects2Monads} on $\Algs T$ and $\Algs {(T/\Sigma)}$, by Theorem 7.7 of \cite{Weber-OpPoly2Mnd}.

Even in the $\Sigma$-free case, there is an advantage to using $T$ over $T/\Sigma$, despite the fact that the polynomial which generates $T/\Sigma$ is simpler. This advantage is that for $T$, there is as explained above, a polynomial adjunction of 2-monads $(\Cat/I,T) \to (\Cat,\SMCMnd)$, and the notion of internal algebra thus arising coincides with the notion of algebra for the operad $T$ internal to a symmetric monoidal category, as explained at the end of Section 4 of \cite{Weber-OpPoly2Mnd}. This is not so for $T/\Sigma$ as the following example shows.
\begin{exam}\label{exam:NonSym}
There is a $\Sigma$-free operad $\mathsf{NSOp}$ for non-symmetric operads with one colour described as a $\Set$-based polynomial monad in Section 9.2 of \cite{BataninBerger-HtyThyOfAlgOfPolyMnd}. One may identify the set of colours of $\mathsf{NSOp}$ as $\N$. Suppose there is a polynomial monad morphism as on the left
\[ \xygraph{!{0;(1.5,0):(0,.6667)::}
{\N}="p0" [r] {\pi_0E_{\mathsf{NSOp}}}="p1" [r(1.25)] {\pi_0B_{\mathsf{NSOp}}}="p2" [r] {\N}="p3" [d] {1}="p4" [l] {\P}="p5" [l(1.25)] *!(0,.025){\xybox{\xygraph{{\P_*}}}}="p6" [l] {1}="p7"
"p0":@{<-}"p1"^-{}:"p2"^-{}:"p3"^-{}:"p4"^-{}:@{<-}"p5"^-{}:@{<-}"p6"^-{}:"p7"^-{}:@{<-}"p0"^-{} "p1":"p6"_-{} "p2":"p5"^-{} "p1":@{}"p5"|-{\tn{pb}}
"p3" [r(1.5)]
{\N}="q0" [r] {\pi_0E_{\mathsf{NSOp}}}="q1" [r(1.25)] {\pi_0B_{\mathsf{NSOp}}}="q2" [r] {\N}="q3" [d] {1}="q4" [l] {\N}="q5" [l(1.25)] *!(0,.025){\xybox{\xygraph{{\N_*}}}}="q6" [l] {1}="q7"
"q0":@{<-}"q1"^-{}:"q2"^-{}:"q3"^-{}:"q4"^-{}:@{<-}"q5"^-{}:@{<-}"q6"^-{}:"q7"^-{}:@{<-}"q0"^-{} "q1":"q6"_-{} "q2":"q5"^-{} "q1":@{}"q5"|-{\tn{pb}}} \]
giving an adjunction of 2-monads $(\Cat/\N,\mathsf{NSOp}/\Sigma) \to (\Cat,\SMCMnd)$, for which the corresponding notion of internal algebra agrees with that of non-symmetric operads within a symmetric monoidal category. Then since the polynomial of $\mathsf{NSOp}/\Sigma$ is componentwise discrete, it would factor through the polynomial underlying the monoidal category 2-monad $\MCMnd$, as on the right in the previous display. The internal structure arising from the resulting adjunction of 2-monads $(\Cat/\N,\mathsf{NSOp}/\Sigma) \to (\Cat,\MCMnd)$ is thus a type of structure expressable within any monoidal category $\ca V$, which when $\ca V$ happens to be symmetric, coincides with the notion of non-symmetric operad within $\ca V$. In other words one would conclude that the notion of non-symmetric operad makes sense in any monoidal category. However, to express the associative law of substitution for an operad one requires at least a braiding.
\end{exam}
\begin{exam}\label{exam:modular-envelope}
The $\Sigma$-free operads $\mathsf{CycOp}$ and $\mathsf{ModOp}$ for cyclic and modular operads respectively, are described as $\Set$-based polynomial monads in Sections 9.6 and 10.1 of \cite{BataninBerger-HtyThyOfAlgOfPolyMnd}, and one has an evident inclusion $J : \mathsf{CycOp} \hookrightarrow \mathsf{ModOp}$ witnessed at the level of polynomials. Thus $\SMCMnd^{\mathsf{ModOp}}$ has the universal property that for all symmetric monoidal categories $\ca V$, modular operads in $\ca V$ may be regarded as symmetric strong monoidal functors $\SMCMnd^{\mathsf{ModOp}} \to \ca V$. Similarly cyclic operads in $\ca V$ correspond to symmetric strong monoidal functors $\SMCMnd^{\mathsf{CycOp}} \to \ca V$, and composition with $\SMCMnd^J : \SMCMnd^{\mathsf{CycOp}} \to \SMCMnd^{\mathsf{ModOp}}$ gives the forgetful functor
\[ U^J_{\ca V} : \mathsf{ModOp}(\ca V) \longrightarrow \mathsf{CycOp}(\ca V) \]
between categories of modular and cyclic operads in $\ca V$. By Corollary \ref{cor:operad-exactness}, the left adjoint to $U^J_{\ca V}$, which is known as the \emph{modular envelope construction}, is given by left Kan extending along $\SMCMnd^J$, when $\ca V$ is symmetric monoidal closed and cocomplete.
\end{exam}
\begin{exams}\label{exams:monad-from-operad-in-general}
Let $T$ be an operad with set of colours $I$ and take $G : R \to T$ of Corollary \ref{cor:operad-exactness} to be the inclusion of $I$ as a discrete category. Then the 2-monad $R$ in this case is just the identity on $\Cat/I$, and the category of $R$-algebras internal to a symmetric monoidal category $\ca V$ is just the category $\ca V^I$ of $I$-indexed families in $\ca V$. When $\ca V$ is symmetric monoidal closed cocomplete, left Kan extension and restriction along $\ca F(G)$ gives the monad on $\ca V^I$ whose algebras are algebras of the operad $T$ in $\ca V$ by Corollary \ref{cor:operad-exactness}, and since the adjunction $\tn{Lan}_{\ca F(G)} \ladj U^G_{\ca V}$ in this case is monadic. In particular when $\ca V = \Set$, one recovers $T/\Sigma$ (just seen as acting on $\Set/I$).
\end{exams}
\begin{rem}\label{rem:nonsym-and-braided-analogues}
As explained in Section 2.3 of \cite{Weber-CodescCrIntCat}, non-symmetric (coloured) operads may be regarded as adjunctions of 2-monads into $(\Cat,\MCMnd)$. For braided operads one instead works over $(\Cat,\BMCMnd)$. Thus in the same way as with $(\Cat,\SMCMnd)$ above, one obtains non-symmetric and braided analogues of Corollary \ref{cor:operad-exactness}. Moreover for each of these variants, Theorem \ref{ssec:MainTheorem} may be applied to give a version of Corollary \ref{cor:operad-exactness} in which $G : R \to T$ is a morphism of $\Cat$-operads.
\end{rem}
\begin{exam}\label{exam:Dubuc}
The non-symmetric operad for pointed sets includes into that for monoids. By applying the non-symmetric analogue of Corollary \ref{cor:operad-exactness} in this case, one recovers the construction of Dubuc \cite{Dubuc-FreeMonoids} of a free monoid on a pointed object as the process of left Kan extending along $\ca F(G)$. This example is considered in more detail in \cite{MelliesTabareau-TAlgTheoriesKan}.
\end{exam}

\section{Exact squares}
\label{sec:ExactSquares}

In this section we study exact squares in various contexts. In Section \ref{sec:exact-in-Cat} we recall exact squares in $\Cat$ as originally defined by Guitart \cite{Guitart-ExactSquares}. In Section \ref{ssec:commas-are-exact} we show that comma squares are exact in general, and then in Section \ref{ssec:exact-pb} we explain when pullbacks and bipullbacks are exact. A first application of these results we show in Section \ref{ssec:general-exactness-in-colaxidem-case}, that if a 2-monad $(\ca K,T)$ is colax-idempotent, then all colax morphisms of $T$-algebras are exact. By contrast there are many examples of non-exact morhisms of $T$-algebras when $T$ is $\MCMnd$, $\SMCMnd$ or $\BMCMnd$, as we see in Section \ref{ssec:exact-colax-monoidal-functors}, where exactness in these cases is characterised in combinatorial terms. Finally in Section \ref{ssec:exact-nat} we explain why, in the context of Theorem \ref{thm:main}, $\mu^T$ is diexact.

\subsection{Exact squares in $\Cat$}
\label{sec:exact-in-Cat}
In this section we recall what it means for a lax square
\begin{equation}\label{eq:lax-square-in-Cat}
\xygraph{{P}="p0" [r] {B}="p1" [d] {C}="p2" [l] {A}="p3" "p0":"p1"^-{q}:"p2"^-{g}:@{<-}"p3"^-{f}:@{<-}"p0"^-{p} "p0" [d(.55)r(.35)] :@{=>}[r(.3)]^-{\phi}}
\end{equation}
in $\Cat$ to be exact in the sense of Guitart \cite{Guitart-ExactSquares}. This has various formulations recalled in Theorem \ref{thm:exact-Cat-alternative-charns}. We give a few other equivalent conditions in Lemmas \ref{lem:coend-for-exactness-in-Cat}, \ref{lem:combinatorial-exactness-Cat} and \ref{lem:combinatorial-exactness-Cat-fact-version} which bring out the combinatorics inherent in the notion. Lemma \ref{lem:combinatorial-exactness-Cat} will be generalised to the setting of lax squares of 2-categories in Section \ref{sec:exact-via-codescent}.

Recall that a \emph{profunctor} from a small category $A$ to a small category $B$ is a functor $F:A^{\op} \times B \to \Set$, and that the composite of $F:A \to B$ and $G:B \to C$ is given in terms of coends by the formula
\[ (G \comp F)(a,c) = \int^{b \in B} G(b,c) \times F(a,b). \]
In particular from a functor $f:A \to B$, one has profunctors $B(f,1):A \to B$ and $B(1,f):B \to A$ defined on objects by
\[ \begin{array}{lccr} {B(f,1)(a,b) = B(fa,b)} &&& {B(1,f)(b,a) = B(a,fb).} \end{array} \]
One has a bicategory $\Prof$ of small categories and profunctors between them, with horizontal composition given by the above coend formula, and the assignments $f \mapsto B(f,1)$ and $f \mapsto B(1,f)$ define the effect on morphisms of identity on objects locally fully faithful homomorphisms
\[ \begin{array}{lccr} {\Cat^{\co} \longrightarrow \Prof} &&& {\Cat^{\op} \longrightarrow \Prof.} \end{array} \]
For any functor $f$ one has an adjunction $B(f,1) \ladj B(1,f)$. Note in particular that to give a lax square (\ref{eq:lax-square-in-Cat}) is to give a 2-cell
\[ \tilde{\phi} : B(q,1) \comp A(1,p) \longrightarrow C(f,g)  \]
in $\Prof$.
\begin{thm}\label{thm:exact-Cat-alternative-charns}
\cite{Guitart-ExactSquares}
For a lax square (\ref{eq:lax-square-in-Cat}) in $\Cat$ the following are equivalent:
\begin{enumerate}
\item $\tilde{\phi}$ is an isomorphism.\label{thmcase:exact-prof}
\item $\forall a \in A$, the functor $a \downarrow p \to fa \downarrow g$ induced by $\phi$ is initial.\label{thmcase:exact-initial}
\item $\forall b \in B$, the functor $q \downarrow b \to f \downarrow gb$ induced by $\phi$ is final.\label{thmcase:exact-final}
\item For any natural transformation $\psi$ which exhibits $l$ as a pointwise left Kan extension of $h$ along $f$, the composite on the left exhibits $lg$ as a pointwise left Kan extension of $hp$ along $q$.\label{thmcase:exact-left-extension}
\[ \xygraph{{\xybox{\xygraph{{P}="tl" [r(2)] {B}="tr" [l(2)d] {A}="l" [r(2)] {C}="r" [dl] {V}="b" "l":"r"^-{f}:"b"^-{l}:@{<-}"l"^-{h} [d(.5)r(.85)] :@{=>}[r(.3)]^-{\psi}
"tl"(:"tr"^-{q}:"r"^-{g},:"l"_-{p}) "tl" [d(.5)r(.85)] :@{=>}[r(.3)]^-{\phi}}}}
[r(4)]
{\xybox{\xygraph{{P}="tl" [r(2)] {A}="tr" [l(2)d] {B}="l" [r(2)] {C}="r" [dl] {V}="b" "l":"r"^-{g}:"b"^-{r}:@{<-}"l"^-{k} [d(.5)r(.85)] :@{<=}[r(.3)]^-{\rho}
"tl"(:"tr"^-{p}:"r"^-{f},:"l"_-{q}) "tl" [d(.5)r(.85)] :@{<=}[r(.3)]^-{\phi}}}}} \]
\item For any natural transformation $\rho$ which exhibits $r$ as a pointwise right Kan extension of $k$ along $g$, the composite on the right exhibits $rf$ as a pointwise right Kan extension of $kq$ along $p$.\label{thmcase:exact-right-extension}
\end{enumerate}
\end{thm}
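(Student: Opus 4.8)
The plan is to take condition (\ref{thmcase:exact-prof}), that $\tilde\phi$ is invertible, as the hub, and to prove (\ref{thmcase:exact-prof})$\iff$(\ref{thmcase:exact-initial}), (\ref{thmcase:exact-prof})$\iff$(\ref{thmcase:exact-final}), (\ref{thmcase:exact-prof})$\iff$(\ref{thmcase:exact-left-extension}) and (\ref{thmcase:exact-prof})$\iff$(\ref{thmcase:exact-right-extension}) separately, exploiting the self-duality of the situation under $(-)^{\op}$ to deduce the ``initial/right-extension'' equivalences from the ``final/left-extension'' ones. First I would unwind the composite profunctor. Writing $\phi_x : fpx \to gqx$ for the components of the lax square (this is the direction forced by $\tilde\phi : B(q,1) \comp A(1,p) \to C(f,g)$), the component of $\tilde\phi$ at $(a,b)$ is the map
\[ \int^{x \in P} A(a,px) \times B(qx,b) \longrightarrow C(fa,gb) \]
sending the class of a pair $(\alpha : a \to px,\, \beta : qx \to b)$ to $g\beta \comp \phi_x \comp f\alpha$. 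The essential computation, via the co-Yoneda lemma, is that fixing $b$ identifies the left-hand side with $\mathrm{colim}_{(x,\beta) \in q \downarrow b} A(-,px)$ and the right-hand side with the canonical presentation $\mathrm{colim}_{(a',\gamma) \in f \downarrow gb} A(-,a')$ of the presheaf $C(f-,gb) \in \PSh{A}$; symmetrically, fixing $a$ presents both sides as copresheaves on $B$ indexed by $a \downarrow p$ and $fa \downarrow g$ respectively.

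The technical heart is then the standard density lemma: a cocone under a diagram of representables $A(-,c-) : D \to \PSh{A}$ exhibits a presheaf $F$ as its colimit if and only if the induced functor $D \to \mathrm{el}(F)$ into the category of elements of $F$ is final. Applying this with $D = q \downarrow b$, $F = C(f-,gb)$ and $\mathrm{el}(F) = f \downarrow gb$, the induced functor is precisely the comparison functor $q \downarrow b \to f \downarrow gb$ of (\ref{thmcase:exact-final}); hence $\tilde\phi_{-,b}$ is invertible at every $a$ exactly when that functor is final. Since $\tilde\phi$ is invertible iff it is invertible at every $(a,b)$, this yields (\ref{thmcase:exact-prof})$\iff$(\ref{thmcase:exact-final}). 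The dual of this lemma, for copresheaves (where colimit presentations correspond instead to \emph{initial} functors into the category of elements, via the identification of copresheaves on $B$ with $\PSh{B^{\op}}$ and the interchange of final and initial under $(-)^{\op}$), applied with $D = a \downarrow p$ and $F = C(fa,g-)$, gives (\ref{thmcase:exact-prof})$\iff$(\ref{thmcase:exact-initial}); it is exactly this variance flip that explains why (\ref{thmcase:exact-initial}) asks for initiality while (\ref{thmcase:exact-final}) asks for finality.

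For (\ref{thmcase:exact-prof})$\Rightarrow$(\ref{thmcase:exact-left-extension}) I would use the already-established (\ref{thmcase:exact-final}) together with the pointwise-colimit formula for left Kan extensions. If $\psi$ exhibits $l$ as a pointwise left Kan extension of $h$ along $f$, then $l(gb) = \mathrm{colim}_{f \downarrow gb} h\sigma$, where $\sigma : f \downarrow gb \to A$ is the comma projection; finality of $q \downarrow b \to f \downarrow gb$ re-indexes this colimit over $q \downarrow b$, where the restricted diagram is $hp$ composed with the comma projection, and one checks the re-indexed colimit cocone is the pasted $2$-cell of (\ref{thmcase:exact-left-extension}) — so $lg$ is a pointwise left Kan extension of $hp$ along $q$. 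For the converse (\ref{thmcase:exact-left-extension})$\Rightarrow$(\ref{thmcase:exact-prof}) I would instantiate (\ref{thmcase:exact-left-extension}) at the universal example $V = \PSh{A}$ with $h$ the Yoneda embedding, for which $l = \mathrm{Lan}_f h$ exists pointwise with $lc = C(f-,c)$; the conclusion of (\ref{thmcase:exact-left-extension}) then reads $C(f-,gb) \iso \mathrm{colim}_{(x,\beta) \in q\downarrow b} A(-,px)$, which is precisely invertibility of $\tilde\phi_{-,b}$ for all $b$. Finally (\ref{thmcase:exact-prof})$\iff$(\ref{thmcase:exact-right-extension}) follows by applying the equivalence (\ref{thmcase:exact-prof})$\iff$(\ref{thmcase:exact-left-extension}) to the square obtained by passing to opposite categories, under which invertibility of $\tilde\phi$ is preserved and pointwise left Kan extensions turn into pointwise right Kan extensions.

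I expect the main obstacle to be the careful bookkeeping in the density lemma and its copresheaf dual — in particular keeping the finality-versus-initiality distinction and all the variances straight, and verifying that the comparison functors produced abstractly by the lemma coincide, on the nose, with the functors ``induced by $\phi$'' appearing in (\ref{thmcase:exact-initial}) and (\ref{thmcase:exact-final}). Everything downstream of that — the Kan-extension equivalences — is then a routine re-indexing of colimits together with a cocone-matching verification.
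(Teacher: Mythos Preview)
Your proposal is correct. Note that the paper does not give a self-contained proof of this theorem --- it cites Guitart and, after establishing the combinatorial reformulations in Lemmas~\ref{lem:combinatorial-exactness-Cat} and~\ref{lem:combinatorial-exactness-Cat-fact-version}, says only that the proof ``is easily obtained'' from the latter. The paper's implied route is thus: compute the coend explicitly as $\pi_0((q\downarrow b)\times_P(a\downarrow p))$, recast invertibility of $\tilde\phi$ as connectedness of the factorization categories $\tn{Fact}_\phi(a,\gamma,b)$, and leave the remaining equivalences as an exercise. Your route via the density lemma is the same argument in presheaf-theoretic dress --- the comma category witnessing finality of $q\downarrow b \to f\downarrow gb$ at an object $(a,\gamma)$ \emph{is} precisely $\tn{Fact}_\phi(a,\gamma,b)$ --- but your packaging gives a cleaner path to the Kan-extension equivalences: in particular your (\ref{thmcase:exact-left-extension})$\Rightarrow$(\ref{thmcase:exact-prof}) via the universal instance $V=\PSh A$, $h=$ Yoneda, is a step the paper does not spell out at all. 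The only point worth flagging is the standard size caveat for that instance (one needs $A$ small, or a universe convention), which is harmless here.
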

\begin{defn}\label{def:Guitart-exactness-in-Cat}
\cite{Guitart-ExactSquares}
A lax square (\ref{eq:lax-square-in-Cat}) in $\Cat$ is \emph{exact} when it satisfies the equivalent conditions of Theorem \ref{thm:exact-Cat-alternative-charns}.
\end{defn}
In \cite{Guitart-ExactSquares} Guitart took formulation (\ref{thmcase:exact-prof}) of Theorem \ref{thm:exact-Cat-alternative-charns} as the definition of exactness, and this clearly generalises directly to the enriched setting, and further still to the setting of proarrow equipments \cite{Wood-Proarrows-I, Wood-Proarrows-II}. This line of generalisation is pursued in \cite{Koudenburg-Thesis, MelliesTabareau-TAlgTheoriesKan}. We will also use this point of view below in Section \ref{ssec:pi0-exactness} when discussing exact squares of 2-categories. However, in Definition \ref{def:ExactSquare} above, we instead generalised formulation (\ref{thmcase:exact-left-extension}) in the obvious way.

Let us unpack formulation (\ref{thmcase:exact-prof}) directly. For $a \in A$, $b \in B$ and $x \in P$, one has functions
\[ \begin{array}{lccr} {B(qx,b) \times A(a,px) \to C(fa,gb)} &&&
{(\beta, \alpha) \mapsto g\beta \comp \phi_x \comp f\alpha} \end{array} \]
which are dinatural in $x$, and the corresponding component of $\tilde{\phi}$ is the induced function
\[ \tilde{\phi}_{a,b} : \int^{x \in P} B(qx,b) \times A(a,px) \longrightarrow C(fa,gb).  \]
To proceed further one must compute the above coend. The comma category $q \downarrow b$, whose objects are pairs $(x,\beta)$ where $x \in P$ and $\beta:qx \to b$, comes with a functor $q \downarrow b \to P$ given on objects by $(x,\beta) \mapsto x$. Similarly the comma category $a \downarrow p$ comes with a functor $a \downarrow p \to P$, and so one can pull these back to produce the category $(q \downarrow b) \times_P (a \downarrow p)$. A typical object of this category is a triple $(x,\beta,\alpha)$ where $x \in P$, $\beta:qx \to b$ and $\alpha:a \to px$, and for each $x \in P$ one has a function
\[ \kappa_x : B(qx,b) \times A(a,px) \longrightarrow \pi_0((q \downarrow b) \times_P (a \downarrow p)) \]
which sends $(\beta,\alpha)$ to the connected component which contains $(x,\beta,\alpha)$.
\begin{lem}\label{lem:coend-for-exactness-in-Cat}
The family of functions $(\kappa_x : x \in P)$ is the universal dinatural family which exhibits
\[ \int^{x \in P} B(qx,b) \times A(a,px) = \pi_0((q \downarrow b) \times_P (a \downarrow p)).  \]
\end{lem}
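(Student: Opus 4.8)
The plan is to recognize the coend $\int^{x \in P} B(qx,b) \times A(a,px)$ as a colimit of a $\Set$-valued functor and to identify that colimit explicitly with the set of connected components of $(q \downarrow b) \times_P (a \downarrow p)$. Recall the standard fact that for a functor $F : X^{\op} \times X \to \Set$, the coend $\int^{x} F(x,x)$ is the coequalizer of the two maps $\coprod_{u : x \to x'} F(x',x) \rightrightarrows \coprod_{x} F(x,x)$, i.e.\ it is the set of equivalence classes of pairs $(x, \xi)$ with $\xi \in F(x,x)$, under the relation generated by $(x', F(u,1)(\xi)) \sim (x, F(1,u)(\xi))$ for $u : x \to x'$. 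In our situation $F(x',x) = B(qx',b) \times A(a,px)$, so an element of $\coprod_x F(x,x)$ is a triple $(x,\beta,\alpha)$ with $\beta : qx \to b$ and $\alpha : a \to px$ — exactly an object of $(q \downarrow b) \times_P (a \downarrow p)$ — and the generating relation says $(x',\beta,\alpha) \sim (x, \beta \comp qu, pu \comp \alpha)$ whenever $u : x \to x'$ in $P$. But this is precisely the statement that $(x,\beta,\alpha)$ and $(x',\beta',\alpha')$ are connected by a morphism of $(q \downarrow b) \times_P (a \downarrow p)$: such a morphism is exactly an arrow $u : x \to x'$ in $P$ with $\beta' \comp qu = \beta$ and $\alpha' = pu \comp \alpha$. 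Hence the coend, as the quotient by the equivalence relation \emph{generated} by these basic relations, is $\pi_0((q \downarrow b) \times_P (a \downarrow p))$.

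First I would spell out, with the explicit colimit/coequalizer description of a $\Set$-valued coend, what the underlying set and the defining equivalence relation are; this is where the bulk of the (light) bookkeeping lies. Second I would observe that the dinatural family exhibiting the coend is, by construction of the coequalizer, the family of quotient maps $F(x,x) \to \int^x F(x,x)$, $(\beta,\alpha) \mapsto [(x,\beta,\alpha)]$; under the identification of the quotient set with $\pi_0$ of the pullback category this is exactly the family $\kappa_x$ of the statement. Third I would check dinaturality of $(\kappa_x)$ directly — for $u : x \to x'$ the two composites send $(\beta,\alpha)$ to $(x,\beta \comp qu, \alpha)$ and $(x', \beta, pu \comp \alpha)$ respectively (up to the obvious variance), and these lie in the same connected component via $u$ — and its universality, which amounts to: any dinatural family $(\lambda_x : B(qx,b) \times A(a,px) \to Z)$ must send $(\beta,\alpha)$ and $(\beta \comp qu, pu \comp \alpha)$ to the same element, hence factors through the connected-components quotient, and this factorization is forced to be unique since the $\kappa_x$ are jointly surjective.

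The one genuinely substantive point — really the only place where care is needed — is matching the equivalence relation produced by the coend (the one \emph{generated} by the basic dinaturality relations, i.e.\ its transitive–symmetric–reflexive closure) with the zig-zag relation defining $\pi_0$ of a category: a connected component of $(q \downarrow b) \times_P (a \downarrow p)$ is a zig-zag of morphisms there, and a morphism there is precisely one of the basic relations (in one direction or the other). Once one notes that a basic relation corresponds to a single morphism $u$ in $P$ of the pullback category and that $\pi_0$ is by definition the closure of ``connected by a single morphism'', the two relations coincide on the nose. I expect no real obstacle; the proof is essentially an unwinding of the definition of a $\Set$-coend as a quotient, combined with the elementary description of morphisms in the pullback of comma categories. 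I would present it in this order: coend-as-coequalizer, identification of the quotient set, identification of the universal dinatural family with $(\kappa_x)$.
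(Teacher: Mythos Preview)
Your proposal is correct and is precisely the direct verification the paper has in mind: the paper does not give a proof but explicitly says ``we leave the elementary task of exhibiting a direct proof of this result to the reader, which consists of verifying that the $\kappa_x$ are dinatural in $x$, and that $\kappa_x$ satisfies the appropriate universal property.'' Your coequalizer description of the $\Set$-coend and the matching of its generating relation with the zig-zag relation for $\pi_0$ is exactly this verification, cleanly organised.

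It is worth noting that the paper also points to a second, indirect route: the result is recovered in Remark~\ref{rem:set-case-consequence} as a degenerate case of the general machinery of lax coends developed in Sections~\ref{ssec:lax-coends}--\ref{ssec:lax-wedges}. There one computes $\tn{col}(H^{\dagger}_L, S \times T) = \pi_{0*}((1 \dbslash S) \times_P (1//T))$ for $2$-functors $S,T$, and specialising to the locally discrete case with $S = B(q-,b)$ and $T = A(a,p-)$ collapses $\pi_{0*}$ and the lax-to-strict comparison, yielding the lemma. Your direct argument is of course far more economical for this particular statement; the indirect route only pays off because the paper needs the $2$-categorical generalisation anyway.
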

\noindent We leave the elementary task of exhibiting a direct proof of this result to the reader, which consists of verifying that the $\kappa_x$ are dinatural in $x$, and that $\kappa_x$ satisfies the appropriate universal property. Note however that we do recover this result below from a much more general result in Remark \ref{rem:set-case-consequence}.

Regarding $C(fa,gb)$ as a discrete category, one has a functor
\[ \overline{\phi}_{a,b} : (q \downarrow b) \times_P (a \downarrow p) \longrightarrow C(fa,gb) \]
given on objects by $(x,\beta,\alpha) \mapsto g\beta \comp \phi_x \comp f\alpha$. A morphism $(x_1,\beta_1,\alpha_1) \to (x_2,\beta_2,\alpha_2)$ is by definition a morphism $\gamma:x_1 \to x_2$ of $P$ such that $p(\gamma)\alpha_1 = \alpha_2$ and $\beta_1 = \beta_2q(\gamma)$, and given such a $\gamma$, $g(\beta_1)\phi_{x_1}f(\alpha_1) = g(\beta_2)\phi_{x_2}f(\alpha_2)$ because of the naturality of $\phi$. Since $\tilde{\phi}_{a,b} = \pi_0(\overline{\phi}_{a,b})$ an explicit combinatorial characterisation of what it means for $\phi$ to be exact is given as follows.
\begin{lem}\label{lem:combinatorial-exactness-Cat}
A lax square
\[ \xygraph{{P}="p0" [r] {B}="p1" [d] {C}="p2" [l] {A}="p3" "p0":"p1"^-{q}:"p2"^-{g}:@{<-}"p3"^-{f}:@{<-}"p0"^-{p} "p0" [d(.55)r(.35)] :@{=>}[r(.3)]^-{\phi}} \]
in $\Cat$ is exact iff for all $a \in A$ and $b \in B$, the functor $\pi_0:\Cat \to \Set$ which on objects sends a category to its set of connected components, inverts the functor $\overline{\phi}_{a,b}$ defined above.
\end{lem}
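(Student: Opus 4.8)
The plan is to obtain the statement by unwinding the profunctor characterisation of exactness. By Theorem \ref{thm:exact-Cat-alternative-charns}(\ref{thmcase:exact-prof}), the lax square is exact precisely when $\tilde{\phi} : B(q,1) \comp A(1,p) \to C(f,g)$ is an isomorphism in $\Prof$. Since both sides are profunctors $A \to B$, i.e.\ functors $A^{\op} \times B \to \Set$, and isomorphisms of such are detected pointwise, this happens iff each component $\tilde{\phi}_{a,b}$ is a bijection of sets. So the first step is simply to reduce exactness to the bijectivity of all the $\tilde{\phi}_{a,b}$.

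Next I would identify $\tilde{\phi}_{a,b}$ with $\pi_0(\overline{\phi}_{a,b})$. By Lemma \ref{lem:coend-for-exactness-in-Cat} the dinatural family $(\kappa_x : x \in P)$ exhibits the domain $\int^{x \in P} B(qx,b) \times A(a,px)$ of $\tilde{\phi}_{a,b}$ as $\pi_0((q \downarrow b) \times_P (a \downarrow p))$, with $\kappa_x$ sending $(\beta,\alpha)$ to the connected component of $(x,\beta,\alpha)$. On the other hand, $\overline{\phi}_{a,b}$ is by construction the functor with object map $(x,\beta,\alpha) \mapsto g\beta \comp \phi_x \comp f\alpha$; applying $\pi_0$ and using that $C(fa,gb)$ is discrete (so $\pi_0 C(fa,gb) = C(fa,gb)$) produces a function $\pi_0((q \downarrow b) \times_P (a \downarrow p)) \to C(fa,gb)$ whose composite with each $\kappa_x$ is exactly the dinatural family $(\beta,\alpha) \mapsto g\beta \comp \phi_x \comp f\alpha$ that defines $\tilde{\phi}_{a,b}$. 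By the universal property of the coend, $\tilde{\phi}_{a,b} = \pi_0(\overline{\phi}_{a,b})$ under this identification.

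Combining the two steps, the lax square is exact iff $\pi_0(\overline{\phi}_{a,b})$ is a bijection for all $a \in A$ and $b \in B$; since $C(fa,gb)$ is discrete this is the same as saying that $\pi_0$ inverts $\overline{\phi}_{a,b}$, which is the assertion. I do not expect any real obstacle here, as all the content is already assembled in the discussion preceding the statement; the only point needing a line of care is checking, in the second step, that the canonical comparison out of the coend, transported along Lemma \ref{lem:coend-for-exactness-in-Cat}, is literally $\pi_0$ applied to $\overline{\phi}_{a,b}$ — and this reduces to the observation that the two relevant dinatural families agree, which is immediate from the formula for $\kappa_x$.
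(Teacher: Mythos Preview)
Your proposal is correct and follows essentially the same approach as the paper: the lemma is stated as an immediate consequence of the discussion preceding it, where the paper observes that $\tilde{\phi}_{a,b} = \pi_0(\overline{\phi}_{a,b})$ via Lemma \ref{lem:coend-for-exactness-in-Cat} and the definition of $\overline{\phi}_{a,b}$, so that exactness (i.e.\ invertibility of each $\tilde{\phi}_{a,b}$ by Theorem \ref{thm:exact-Cat-alternative-charns}(\ref{thmcase:exact-prof})) amounts to $\pi_0$ inverting each $\overline{\phi}_{a,b}$.
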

In the context of Lemma \ref{lem:combinatorial-exactness-Cat}, given $a \in A$, $b \in B$ and $\gamma : fa \to gb$ we denote by $\tn{Fact}_{\phi}(a,\gamma,b)$ the following category. Its objects are triples $(\alpha,x,\beta)$ where $x \in P$, $\alpha : a \to px$ and $\beta : qx \to b$, such that $g(\beta)\phi_xf(\alpha) = \gamma$. An arrow $(\alpha_1,x_1,\beta_1) \to (\alpha_2,x_2,\beta_2)$ is an arrow $\delta : x_1 \to x_2$ such that $p(\delta)\alpha_1 = \alpha_2$ and $\beta_1 = \beta_2q(\delta)$. Identities and compositions inherited from $P$.

Since $\pi_0(\overline{\phi}_{a,b})$ is bijective iff the fibres of $\overline{\phi}_{a,b}$ are connected, and clearly $\overline{\phi}_{a,b}^{-1}\{\gamma\} = \tn{Fact}_{\phi}(a,\gamma,b)$, Lemma \ref{lem:combinatorial-exactness-Cat} reformulates to the following often-convenient combinatorial characterisation of exactness, from which the proof of Theorem \ref{thm:exact-Cat-alternative-charns} is easily obtained.
\begin{lem}\label{lem:combinatorial-exactness-Cat-fact-version}
A lax square
\[ \xygraph{{P}="p0" [r] {B}="p1" [d] {C}="p2" [l] {A}="p3" "p0":"p1"^-{q}:"p2"^-{g}:@{<-}"p3"^-{f}:@{<-}"p0"^-{p} "p0" [d(.55)r(.35)] :@{=>}[r(.3)]^-{\phi}} \]
in $\Cat$ is exact iff for all $a \in A$, $b \in B$, and $\gamma : fa \to gb$, the category $\tn{Fact}_{\phi}(a,\gamma,b)$ defined above is connected.
\end{lem}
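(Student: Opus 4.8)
The plan is to read this off directly from Lemma \ref{lem:combinatorial-exactness-Cat}, which has already reduced exactness of the given lax square to the condition that, for all $a \in A$ and $b \in B$, the functor $\pi_0 : \Cat \to \Set$ invert the functor $\overline{\phi}_{a,b} : (q \downarrow b) \times_P (a \downarrow p) \to C(fa,gb)$ constructed there. So the only remaining work is to analyse combinatorially when $\pi_0$ inverts a functor whose codomain is discrete.

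First I would record the elementary fact that for any functor $F : \ca X \to D$ with $D$ discrete, the map $\pi_0(F) : \pi_0(\ca X) \to D$ is a bijection if and only if, for every $d \in D$, the fibre $F^{-1}\{d\}$ is connected (a connected category being in particular nonempty). The point is that, $D$ being discrete, $F$ sends every morphism of $\ca X$ to an identity, so each connected component of $\ca X$ lies entirely over a single object of $D$; consequently the connected components of the fibre $F^{-1}\{d\}$ are exactly those connected components of $\ca X$ that map to $d$. Surjectivity of $\pi_0(F)$ then says every fibre is nonempty, and, granting surjectivity, injectivity says every fibre has at most one component; together these amount precisely to connectedness of every fibre.

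Next I would apply this fact with $\ca X = (q \downarrow b) \times_P (a \downarrow p)$, $D = C(fa,gb)$ and $F = \overline{\phi}_{a,b}$, and then identify the fibres. By the definition of $\overline{\phi}_{a,b}$ recalled in the excerpt, an object of $\overline{\phi}_{a,b}^{-1}\{\gamma\}$ is a triple $(x,\beta,\alpha)$ with $x \in P$, $\alpha : a \to px$, $\beta : qx \to b$ and $g(\beta)\phi_x f(\alpha) = \gamma$, while a morphism $(x_1,\beta_1,\alpha_1) \to (x_2,\beta_2,\alpha_2)$ is an arrow $\delta : x_1 \to x_2$ of $P$ with $p(\delta)\alpha_1 = \alpha_2$ and $\beta_1 = \beta_2 q(\delta)$; this is exactly the category $\tn{Fact}_{\phi}(a,\gamma,b)$, after the trivial reordering of the data of an object as $(\alpha,x,\beta)$. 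Combining the identification $\overline{\phi}_{a,b}^{-1}\{\gamma\} = \tn{Fact}_{\phi}(a,\gamma,b)$ with Lemma \ref{lem:combinatorial-exactness-Cat} and the displayed fact yields the statement.

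I do not expect a genuine obstacle here; the only point that wants a moment's care is the discreteness observation in the second paragraph. It is what upgrades ``every $\tn{Fact}_{\phi}(a,\gamma,b)$ connected'' from an evidently necessary condition to an equivalent one: because $C(fa,gb)$ is discrete, the connected components of the total category $(q \downarrow b) \times_P (a \downarrow p)$ are partitioned among the fibres of $\overline{\phi}_{a,b}$, so controlling the fibres controls $\pi_0$ of the whole.
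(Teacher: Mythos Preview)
Your proposal is correct and matches the paper's own argument essentially line for line: the paper also deduces the lemma from Lemma \ref{lem:combinatorial-exactness-Cat} by observing that $\pi_0(\overline{\phi}_{a,b})$ is bijective iff every fibre of $\overline{\phi}_{a,b}$ is connected, and that $\overline{\phi}_{a,b}^{-1}\{\gamma\} = \tn{Fact}_{\phi}(a,\gamma,b)$. Your extra care in spelling out why discreteness of $C(fa,gb)$ makes the fibre condition equivalent (not merely necessary) is a welcome elaboration of a point the paper leaves implicit.
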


\subsection{Comma squares}
\label{ssec:commas-are-exact}
In this section we recall that in our setting, comma squares are exact. This result, given as Proposition \ref{prop:commas-are-exact} below, is really due to Ross Street, and as mentioned already, appears as Proposition 24 of \cite{Street-FibrationIn2cats}. We go over this result again carefully here because doing so leads to other general results not in the literature in Section \ref{ssec:exact-pb}.

Recall that the left Kan extension $\psi$, which exhibits $l$ as a (pointwise) left extension of $h$ along $f$
\[ \xygraph{{A}="p0" [r(2)] {C}="p1" [dl] {V}="p2" "p0":"p1"^-{f}:"p2"^-{l}:@{<-}"p0"^-{h} "p0" [d(.5)r(.85)] :@{=>}[r(.3)]^-{\psi}} \]
is \emph{preserved} by $k : V \to W$ when $k\psi$ exhibits $kl$ as a (pointwise) left Kan extension of $kh$ along $f$, and $\psi$ is said to be an \emph{absolute} (pointwise) left Kan extension when it is preserved by all arrows out of $V$. It is true in general that left adjoints preserve (pointwise) left Kan extensions, and that the unit $\eta:1_A \to uf$ of an adjunction $f \ladj u : C \to A$ exhibits $u$ as an absolute left Kan extension of $1_A$ along $f$. In fact by Proposition 20 of \cite{Street-FibrationIn2cats} they are also absolute pointwise left Kan extensions.

Other basic well known facts we shall routinely use below concern the ``composability'' and ``cancellability'' of left Kan extensions \cite{StreetWalters-YonedaStructures}. That is, suppose that one has
\[ \xygraph{!{0;(2,0):(0,.5)::} {I}="p0" [r] {J}="p1" [r] {K}="p2" [dl] {A}="p3" "p0":"p1"^-{f}:"p2"^-{g}:"p3"^-{k}:@{<-}"p0"^-{i} "p1":"p3"^-{j}
"p0" [d(.5)r(.675)] :@{=>}[r(.15)]^-{\psi_1} "p1" [d(.5)r(.175)] :@{=>}[r(.15)]^-{\psi_2}} \]
in which $\psi_1$ exhibits $j$ as a left Kan extension of $i$ along $f$. Then the \emph{composability} of left Kan extensions says that if $\psi_2$ exhibits $k$ as a left Kan extension of $j$ along $k$, then the composite exhibits $k$ as a left Kan extension of $i$ along $gf$, and the \emph{cancellability} of left Kan extensions says the converse.
\begin{lem}\label{lem:vert-comp-comma}
In a 2-category with comma objects, the vertical composite of comma squares as on the left, can be factored as
\[ \xygraph{{\xybox{\xygraph{!{0;(1.5,0):(0,.6667)::} {q \downarrow h}="ttl" [r] {D}="ttr" [dl] {f \downarrow g}="tl" [r] {B}="tr" [d] {C}="br" [l] {A}="bl"
"tl":"tr"^-{q}:"br"^-{g}:@{<-}"bl"^-{f}:@{<-}"tl"^-{p} [d(.5)r(.35)] :@{=>}[r(.3)]^-{\lambda}
"ttl":"ttr"^-{q_2}:"tr"^-{h} "tl":@{<-}"ttl"^-{p_2} [d(.5)r(.35)] :@{=>}[r(.3)]^-{\lambda_2}}}} [r(1.5)] {=} [r(2.5)]
{\xybox{\xygraph{!{0;(1.5,0):(0,.6667)::} {q \downarrow h}="tll" [r] {f \downarrow gh}="tl" [r] {D}="tr" [d] {C}="br" [l] {A}="bl"
"tl":"tr"^-{q_3}:"br"^-{gh}:@{<-}"bl"^-{f}:@{<-}"tl"^-{p_3} [d(.5)r(.35)] :@{=>}[r(.3)]^-{\lambda_3}
"tll" (:"bl"_-{pp_2}|-{}="beq",:"tl"^-{k},:@/^{2pc}/"tr"^-{q_2}|-{}="teq") "tl":@{}"teq"|-{=} "beq":@{}"tl"|-{=}}}}} \]
where $\lambda_3$ is another comma square, and $\id:pp_2 \to p_3k$ exhibits $p_3$ as an absolute left Kan extension of $pp_2$ along $k$.
\end{lem}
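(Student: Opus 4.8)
The plan is to argue directly from the universal properties of comma objects, using only the fact that $\lambda$, $\lambda_2$ are comma squares. First I would observe what the objects and arrows of each category in sight are. An object of $q \downarrow h$ is a triple $(d, x, \beta)$ with $d \in D$, $x \in f\downarrow g$, and $\beta : q(x) \to h(d)$ a morphism of $B$; spelling out $x = (a,b,\gamma)$ with $\gamma : f(a) \to g(b)$ and noting $q(x) = b$, this is the same as a tuple $(a, d, \gamma : fa \to gb, \beta : b \to hd)$, i.e. the data $(a, d, \delta := g(\beta)\gamma : fa \to ghd)$ together with the \emph{intermediate factorisation} $(\gamma,\beta)$ of $\delta$ through $gb$. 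Meanwhile an object of $f \downarrow gh$ is exactly $(a, d, \delta : fa \to ghd)$. So the functor $k : q\downarrow h \to f\downarrow gh$ forgets the intermediate factorisation, and the comparison $\id : pp_2 \to p_3 k$ and the pasting equalities asserted in the statement are immediate from the construction of $\lambda_3$ as the comma square of $f$ against $gh$ (compose $\lambda$ with $q_2^*(\lambda_2)$, or more precisely whisker appropriately, to produce the canonical cone exhibiting $q\downarrow h$ over $f$ and $gh$, and let $k$ be the induced comparison). I would present this first paragraph as the construction of $k$, $p_3$, $q_3$, $\lambda_3$ and the verification of the two pasting identities, which is routine from the 1-dimensional and 2-dimensional universal properties of comma objects.

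The substantive claim is that $\id : pp_2 \to p_3 k$ exhibits $p_3$ as an \emph{absolute} pointwise left Kan extension of $pp_2$ along $k$. The cleanest route is to exhibit $k$ as having a left adjoint $r : f\downarrow gh \to q\downarrow h$ with $p_3 \cong pp_2 r$ and with the unit/counit compatible with $\id$; then by the facts recalled just before the lemma — left adjoints preserve pointwise left Kan extensions, and the unit of an adjunction is an \emph{absolute pointwise} left Kan extension (Street, Prop.\ 20 of \cite{Street-FibrationIn2cats}) — the conclusion follows formally. The left adjoint $r$ sends $(a,d,\delta : fa \to ghd)$ to the object of $q \downarrow h$ whose intermediate factorisation is the \emph{trivial} one: take $b = hd$, $\gamma = \delta$, $\beta = 1_{hd}$. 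One checks $kr = 1$, that the unit $1 \to rk$ at $(a,d,\gamma,\beta : b \to hd)$ is the morphism of $q\downarrow h$ given by $\beta$ itself (which is a legitimate morphism there, since it is compatible with $p$ and intertwines the factorisations), and that these satisfy the triangle identities. Since $p_3(a,d,\delta) = a$ and $pp_2(a,d,\gamma,\beta) = a$ as well, one has $p_3 = pp_2 r$ strictly, the comparison $\id : pp_2 \to p_3 k = pp_2 rk$ is $pp_2$ applied to the unit, and absoluteness is inherited. Alternatively, and perhaps more in keeping with the 2-categorical generality (no assumption that $\ca K = \Cat$), I would instead argue at the level of representable functors $\ca K(Z,-)$, where the above becomes the genuine adjunction of hom-categories and the conclusion transports back by Yoneda together with the pointwise-ness being detected representably.

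The main obstacle I anticipate is purely bookkeeping: making the two pasting identities in the displayed equation precise in a general 2-category with comma objects, and checking the triangle identities for $k \dashv r$ at that level of generality, since all the "data" are really induced 1- and 2-cells from universal properties rather than set-theoretic functions one can point at. A secondary subtlety is ensuring that "absolute" really does propagate: one needs that whiskering the unit of $k \dashv r$ by an arbitrary $m : V \to W$ still exhibits an absolute pointwise left Kan extension, which is exactly the content of Prop.\ 20 of \cite{Street-FibrationIn2cats} (the unit of any adjunction is an absolute pointwise left extension), so I would cite that rather than reprove it. Once $k \dashv r$ is in hand the rest is formal, so I expect the write-up to be short.
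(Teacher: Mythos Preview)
Your approach is essentially the paper's: both establish an adjunction $k \dashv i$ (your $r$) with identity counit and then invoke the fact that units of adjunctions are absolute left Kan extensions, the paper working directly from the universal properties of $\lambda$, $\lambda_2$, $\lambda_3$ in the abstract 2-category where you sketch the argument in $\Cat$ and defer to representables. One slip to fix: you write ``exhibit $k$ as having a left adjoint $r$'' but you mean \emph{right} adjoint, since you (correctly) go on to set up $k \dashv r$ with counit $kr = 1$ and unit $1 \to rk$.
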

\begin{proof}
By the universal property of $\lambda_3$ there is a unique $k:q \downarrow h \to f \downarrow gh$ as shown, that is, such that $p_3k=pp_2$, $q_3k=q_2$ and $\lambda_3k=(g\lambda_2)(\lambda p_2)$. Note also that one has $p_4:f \downarrow gh \to f \downarrow g$ unique such that $pp_4=p_3$, $qp_4=hq_3$ and $\lambda p_4=\lambda_3$ by the universal property of $\lambda$. Using the universal property of $\lambda_2$ there is a unique $i:f \downarrow gh \to q \downarrow h$ such that $p_2i=p_4$, $q_2i=q_3$ and $\lambda_2i=\id$. Since $p_3ki=p_3$, $q_3ki=q_3$ and $\lambda_3ki=\lambda_3$, we have $ki=1_{f \downarrow gh}$. Since $pp_2ik=pp_3k$, $qp_2ik=hq_2$, and $\lambda p_2ik=\lambda_3k$ we have a commutative square as on the left in
\[ \xygraph{{\xybox{\xygraph{!{0;(1.5,0):(0,.6667)::} {fpp_2}="tl" [r] {fpp_2ik}="tr" [d] {gqp_2ik}="br" [l] {gqp_2}="bl" "tl":"tr"^-{f \id}:"br"^-{\lambda p_2ik}:@{<-}"bl"^-{g\lambda_2}:@{<-}"tl"^-{\lambda p_2}}}} [r(4)]
{\xybox{\xygraph{!{0;(1.5,0):(0,.6667)::} {qp_2}="tl" [r] {qp_2ik}="tr" [d] {hq_2ik}="br" [l] {hq_2}="bl" "tl":"tr"^-{q\eta'}:"br"^-{\lambda_2ik}:@{<-}"bl"^-{h \id}:@{<-}"tl"^-{\lambda_2}}}}} \]
and so by the universal property of $\lambda$, there is a unique $\eta':p_2 \to p_2ik$ such that $p\eta'=\id$ and $q\eta'=\lambda_2$. Since $q_2ik=q_2$ and $\lambda_2i=\id$ we have a commutative square as on the right in the previous display, and so by the universal property of $\lambda_2$, there is a unique $\eta:1_{q \downarrow h} \to ik$ such that $p_2\eta'=\eta$ and $q_2\eta=\id$. Since $p_3k\eta=\id$ and $q_3k\eta=\id$, we have $k\eta=\id$. Since $p\eta'i=\id$ and $q\eta'i=\id$ we have $\eta'i=\id$, and so $p_2\eta i = \id$, moreover $q_2\eta i=\id$, and so $\eta i = \id$. Thus $\eta$ is the unit of an adjunction $k \ladj i$ in which the counit is an identity. Since $pp_2\eta = \id$, and units of adjunctions are absolute left Kan extensions, the result follows.
\end{proof}
\begin{prop}\label{prop:commas-are-exact}
\cite{Street-FibrationIn2cats}
If $f : A \to C$ and $g : B \to C$ are morphisms of a 2-category $\ca K$ with comma objects, then the defining comma square of $f \downarrow g$ is exact.
\end{prop}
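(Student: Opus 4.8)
The plan is to reproduce the argument of Proposition~24 of \cite{Street-FibrationIn2cats}, with Lemma~\ref{lem:vert-comp-comma} supplying the crucial factorisation. Write the defining comma square of $f\downarrow g$ with projections $p:f\downarrow g\to A$, $q:f\downarrow g\to B$ and comparison 2-cell $\lambda:fp\Rightarrow gq$. Given a 2-cell $\psi:h\Rightarrow lf$ exhibiting $l:C\to V$ as a pointwise left extension of $h:A\to V$ along $f$, the goal is to show that the pasting $\Phi:=(l\lambda)(\psi p):hp\Rightarrow (lg)q$ exhibits $lg$ as a pointwise left extension of $hp$ along $q$. By the formulation of pointwiseness underlying Definition~\ref{def:ExactSquare} (that of \cite{Street-FibrationIn2cats}), this amounts to fixing an arbitrary $x:X\to B$, forming the comma object $q\downarrow x$ with projections $p_2:q\downarrow x\to f\downarrow g$, $q_2:q\downarrow x\to X$ and comparison $\lambda_2:qp_2\Rightarrow xq_2$, and checking that the pasting $(lg\lambda_2)(\Phi p_2):hpp_2\Rightarrow (lgx)q_2$ exhibits $lgx$ as a left extension of $hpp_2$ along $q_2$.

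The next step is to recognise this pasting via Lemma~\ref{lem:vert-comp-comma}, applied with its ``$h$'' taken to be $x:X\to B$. That lemma produces the comma object $f\downarrow gx$, with projections $p_3:f\downarrow gx\to A$, $q_3:f\downarrow gx\to X$ and comparison $\lambda_3:fp_3\Rightarrow (gx)q_3$, together with a functor $k:q\downarrow x\to f\downarrow gx$ for which $p_3k=pp_2$, $q_3k=q_2$ and $\lambda_3k=(g\lambda_2)(\lambda p_2)$, and for which $\mathrm{id}:pp_2\to p_3k$ exhibits $p_3$ as an \emph{absolute} left extension of $pp_2$ along $k$. A routine whiskering computation, using $\lambda_3k=(g\lambda_2)(\lambda p_2)$ and the definition of $\Phi$, then identifies
\[ (lg\lambda_2)(\Phi p_2) \;=\; \big((l\lambda_3)(\psi p_3)\big)\,k \;=\; \Theta k , \]
where $\Theta:=(l\lambda_3)(\psi p_3):hp_3\Rightarrow (lgx)q_3$ is precisely the pasting of $\psi$ with the comma square of $f\downarrow gx$.

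It then remains to see that $\Theta k$ exhibits $lgx$ as a left extension of $hpp_2=hp_3k$ along $q_2=q_3k$. Since $\psi$ is a pointwise left extension, instantiating its defining property at the morphism $gx:X\to C$ shows that $\Theta$ exhibits $lgx$ as a (pointwise) left extension of $hp_3$ along $q_3$; and whiskering by $h$ the absolute left extension supplied by Lemma~\ref{lem:vert-comp-comma} — absolute left extensions being preserved by all functors out of their target — shows that $\mathrm{id}:hpp_2\to hp_3k$ exhibits $hp_3$ as an absolute left extension of $hpp_2$ along $k$. By the composability of left extensions \cite{Street-FibrationIn2cats, StreetWalters-YonedaStructures} — the composite of an absolute left extension along $k$ with a (pointwise) left extension along $q_3$ being a (pointwise) left extension along $q_3k$ — the 2-cell $\Theta k$ exhibits $lgx$ as a (pointwise) left extension of $hpp_2$ along $q_2$, as required; since $x$ was arbitrary, $\Phi$ is a pointwise left extension of $hp$ along $q$, i.e.\ the comma square is exact. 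I expect the only real work to be the careful bookkeeping of Lemma~\ref{lem:vert-comp-comma} — in particular, observing that its absoluteness clause is exactly what licenses the whiskering by $h$ in this last step — and no genuine obstacle beyond keeping the various projections and coherence cells straight.
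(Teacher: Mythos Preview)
Your proposal is correct and follows essentially the same route as the paper: both arguments apply Lemma~\ref{lem:vert-comp-comma} to factor the stacked comma squares, use the pointwiseness of $\psi$ at $gx$ to see that $\Theta=(l\lambda_3)(\psi p_3)$ is a left extension, whisker the absolute left extension along $k$ by $h$, and then conclude via composability. Your write-up is in fact slightly more careful than the paper's (which overloads the symbol $h$), but the strategy and the key lemma are identical.
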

\begin{proof}
If $\psi$ exhibits $l$ as a pointwise left Kan extension $h$ along $f$ then by definition the composite on the left
\[ \xygraph{{\xybox{\xygraph{!{0;(.8,0):(0,1.25)::} {f \downarrow g}="tl" [r(2)] {B}="tr" [l(2)d] {A}="l" [r(2)] {C}="r" [dl] {V}="b" "l":"r"^-{f}:"b"^-{l}:@{<-}"l"^-{h} [d(.5)r(.8)] :@{=>}[r(.4)]^-{\psi}
"tl"(:"tr"^-{q}:"r"^-{g},:"l"_-{p}) "tl" [d(.5)r(.8)] :@{=>}[r(.4)]^-{\lambda}}}}
[r(4.5)u(.333)]
{\xybox{\xygraph{!{0;(1.5,0):(0,.6667)::} {q \downarrow h}="tll" [r] {f \downarrow gh}="tl" [r] {D}="tr" [d] {C}="br" [l] {A}="bl" [r(.5)d] {V}="bb"
"tl":"tr"^-{q_3}:"br"^-{gh}:@{<-}"bl"_-{f}:@{<-}"tl"^-{p_3} [d(.5)r(.35)] :@{=>}[r(.3)]^-{\lambda_3}
"tll" (:"bl"_-{pp_2}|-{}="beq",:"tl"^-{k},:@/^{2pc}/"tr"^-{q_2}|-{}="teq") "tl":@{}"teq"|-{=} "beq":@{}"tl"|-{=}
"bb" (:@{<-}"bl"^{h},:@{<-}"br"_{l}) "bb" [l(.1)u(.5)] :@{=>}[r(.2)]^{\psi}}}}} \]
exhibits $lg$ as a left Kan extension of $hp$ along $q$. We must show that this composite is itself a pointwise left Kan extension. Let $h : D \to B$. Factoring the vertical composite of the comma squares defining $f \downarrow g$ and $q \downarrow h$ as in Lemma \ref{lem:vert-comp-comma}, and pasting this on top of $\psi$ gives the diagram on the right in the previous display. Since $\id$ exhibits $hp_3$ as a left Kan extension of $hpp_2$ along $k$ by Lemma \ref{lem:vert-comp-comma}, and $(l\lambda_3)(\psi p_3)$ exhibits $lgh$ as a left Kan extension of $hp_3$ along $q_3$, the composite exhibits $lgh$ as a left Kan extension of $hpp_2$ along $q_2$ by the composablility of left Kan extensions.
\end{proof}

\subsection{Pullbacks and bipullbacks}
\label{ssec:exact-pb}
We now turn to the discussion of when pullback squares and bipullback squares are exact. The natural conditions we find involve the notions of fibration and  \cite{Street-FibrationIn2cats}, and their bicategorical analogues \cite{Street-FibrationInBicats} as introduced by Street, in a 2-category. Elementary descriptions of fibrations and split fibrations in terms of cartesian 2-cells were given in Section 2 of \cite{Weber-2Toposes} and Section 3 of \cite{Weber-Fam2fun}. The bicategorical notions we require, such as bipullbacks and the bicategorical analogues of fibrations and opfibrations, as well as isofibrations, were given an exposition in Section 3 of \cite{Weber-Fam2fun}. In particular, the bicategorical analogues of fibrations and opfibrations are called bi-fibrations and bi-opfibrations.

There are two ways to express that an arrow $f : A \to C$ in a 2-category $\ca K$ with comma objects has the structure of a fibration (resp. opfibration). There is an elementary description in terms of $f$-cartesian (resp. $f$-opcartesian) 2-cells generalising the notion of a cleavage. Alternatively there is a 2-monad $\Phi_{\ca K,C}$ (resp. $\Psi_{\ca K,C}$) on $\ca K/C$, and then to give $f$ the structure of a fibration (resp. opfibration) is to give it a pseudo $\Phi_{\ca K,C}$-algebra (resp. a pseudo $\Psi_{\ca K,C}$-algebra) structure. If $\ca K$ also has pullbacks, then the equivalence of these alternative definitions is, as explained in Theorem 2.7 of \cite{Weber-2Toposes}, about the interplay between comma objects involving $f$ and pullbacks involving $f$.

Let us recall some of this interplay. Given a pullback square and the corresponding comma square
\[ \xygraph{{\xybox{\xygraph{!{0;(1.25,0):(0,.8)::} {P}="tl" [r] {B}="tr" [d] {C}="br" [l] {A}="bl" "tl":"tr"^-{q}:"br"^-{g}:@{<-}"bl"^-{f}:@{<-}"tl"^-{p} "tl":@{}"br"|-{\tn{pb}}}}} [r(3.5)]
{\xybox{\xygraph{!{0;(1.25,0):(0,.8)::} {f \downarrow g}="tl" [r] {B}="tr" [d] {C}="br" [l] {A}="bl" "tl":"tr"^-{q_2}:"br"^-{g}:@{<-}"bl"^-{f}:@{<-}"tl"^-{p_2} "tl" [d(.5)r(.35)] :@{=>}[r(.3)]^-{\lambda}}}}} \]
one has the unique map $i:P \to f \downarrow g$ such that $p_2i=p$, $q_2i=q$ and $\lambda i = \id$. Theorem 2.7 of \cite{Weber-2Toposes} says in part that $g$ is a fibration iff for all $f$, $i$ has a right adjoint $r$ over $A$, and for this adjunction the unit is invertible. Dually, $f$ is an opfibration iff for all $g$, $i$ has a left adjoint $l$ over $B$, and for this adjunction the counit is invertible.

To obtain the adjunction $i \ladj r$ from the property that $g$ is a fibration, one takes a $g$-cartesian lift $\lambda_2:q_3 \to q_2$ of $\lambda$, induces $r:f \downarrow g \to P$ as the unique map such that $pr=gp_2$ and $qr=q_3$, and then one obtains the counit $\varepsilon:ir \to 1_{f \downarrow g}$ as the unique 2-cell such that $p_2\varepsilon=\id$ and $q_2\varepsilon=\lambda_2$. The $g$-cartesianness of $\lambda_2$ enables us to verify $\varepsilon$ is indeed the counit of an adjunction, and the fully faithfulness of $i$ ensures that the unit $\nu:1_P \to ri$ is invertible. All this is explained in more detail in the proof of Theorem 2.7 in \cite{Weber-2Toposes}. Dually, to obtain the adjunction $l \ladj i$ from the property that $f$ is an opfibration, one takes an $f$-opcartesian lift $\lambda'_2:p_2 \to p_3$ of $\lambda$, induces $l:f \downarrow g \to P$ as the unique map such that $ql=fq_2$ and $pl=p_3$, and then one obtains the unit $\nu':1_{f \downarrow g} \to il$ as the unique 2-cell such that $q_2\nu'=\id$ and $p_2\nu'=\lambda'_2$. The reason for recalling this detail is to obtain
\begin{lem}\label{lem:factor-pb}
Let $f$, $g$ and $i:P \to f \downarrow g$ be given as above, in a 2-category $\ca K$ with comma objects and pullbacks. If $g$ is a fibration then the composite 2-cell on the left in
\[ \xygraph{{\xybox{\xygraph{!{0;(1.5,0):(0,.6667)::} {P}="tll" [r] {f \downarrow g}="tl" [r] {B}="tr" [d] {C}="br" [l] {A}="bl" "tll"(:"tl"^-{i}:"tr"^-{q_2}:"br"^-{g}:@{<-}"bl"^-{f}:@{<-}"tl"_-{p_2},:"bl"_{p}) "tll":@/^{2pc}/"tr"^-{q}|-{}="deq"
"tl" [d(.5)r(.45)] :@{=>}[r(.2)]^-{\lambda} "tll" [d(.5)r(.65)] :@{=>}[r(.2)]^-{p\nu} "deq":@{}"tl"|(.5){=}}}} [r(5)]
{\xybox{\xygraph{!{0;(1.5,0):(0,.6667)::} {f \downarrow g}="tll" [r] {P}="tl" [r] {B}="tr" [d] {C}="br" [l] {A}="bl" "tll"(:"tl"^-{l}:"tr"^-{q}:"br"^-{g}:@{<-}"bl"^-{f}:@{<-}"tl"_-{p},:"bl"_{p_2}) "tll":@/^{2pc}/"tr"^-{q_2}|-{}="deq"
"tl" [d(.5)r(.45)] :@{=>}[r(.2)]^-{\id} "tll" [d(.5)r(.65)] :@{=>}[r(.2)]^-{p_2\nu'} "deq":@{}"tl"|(.5){=}}}}} \]
is the identity. If $f$ is an opfibration then the composite 2-cell on the right in the previous display is $\lambda$.
\end{lem}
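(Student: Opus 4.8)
The plan is to unwind the two pasting composites using the explicit descriptions of the adjunctions $i \ladj r$ and $l \ladj i$ recalled in the paragraphs preceding the statement, together with the triangle identities.

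In the fibration case, the displayed composite on the left is the vertical composite of $f(p\nu) : fp \to fp_2 i$ with the whiskered comma cell $\lambda i : fp_2 i \to gq_2 i$. Since $p_2 i = p$ and $q_2 i = q$ this is indeed a $2$-cell $fp \to gq$, and since $\lambda i = \id$ by the defining property of $i$ it reduces to $f(p\nu) : fp \to fp$. So it suffices to show $p\nu = \id_p$, where $\nu : 1_P \to ri$ is the unit of $i \ladj r$. For this I would whisker the triangle identity $(\varepsilon i)(i\nu) = \id_i$ on the left by $p_2$, giving $(p_2\varepsilon i)(p_2 i\nu) = \id_{p_2 i}$; since $p_2\varepsilon = \id$, which is one of the two equations characterising $\varepsilon$, the factor $p_2\varepsilon i$ is an identity, leaving $(p_2 i)\nu = \id$, that is $p\nu = \id_p$, as required.

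In the opfibration case the argument is even more direct. The displayed composite on the right is the vertical composite of $f(p_2\nu') : fp_2 \to fp_3$ with the identity $2$-cell of the pullback square whiskered by $l$; here I use $p_2 i = p$, $pl = p_3$ and $ql = q_2$, so that the source cell is as stated and the whiskered identity cell is $\id : fp_3 \to gq_2$. Hence the composite equals $f(p_2\nu')$. Now $p_2\nu' = \lambda'_2$ by the description of the unit $\nu'$ of $l \ladj i$, and $\lambda'_2$ is an $f$-opcartesian lift of $\lambda$, hence $f\lambda'_2 = \lambda$; so the composite is precisely $\lambda$.

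I do not expect any real obstacle here: in each case the content is entirely contained in the triangle identities and in the defining equations $p_2\varepsilon = \id$ (respectively $p_2\nu' = \lambda'_2$) recalled beforehand. The one point that warrants care, and which I would present carefully, is reading off correctly which $2$-cell each region of the pasting diagram denotes and which whiskering is applied, while keeping straight the compatibility equations $p_2 i = p$, $q_2 i = q$, $pl = p_3$, $ql = q_2$ that link the abstract adjunction data to the concrete comma-and-pullback data.
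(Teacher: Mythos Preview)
Your proof is correct. For the opfibration case you spell out precisely what the paper means by ``follows immediately from the definitions'': the composite reduces to $f(p_2\nu') = f\lambda'_2 = \lambda$.

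For the fibration case your route differs somewhat from the paper's. You first use $\lambda i = \id$ (part of the defining property of $i$) to reduce the composite to $fp\nu$, and then derive $p\nu = \id$ from the triangle identity $(\varepsilon i)(i\nu) = \id_i$ together with the $A$-side defining equation $p_2\varepsilon = \id$. The paper instead sets up an auxiliary pasting diagram containing both $\nu$ and $\varepsilon$: evaluated via the triangle identity it collapses to $\id$, while evaluated using the $B$-side defining equation $q_2\varepsilon = \lambda_2$ (so that $gq_2\varepsilon = \lambda$) it yields the statement's left composite $(\lambda i)(fp\nu)$. Your argument is more direct and avoids the auxiliary diagram; the paper's version does not need to invoke $\lambda i = \id$ separately. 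Both rest on the same triangle identity and on one of the two characterising equations for $\varepsilon$---just different ones.
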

\begin{proof}
The case where $f$ is an opfibration follows immediately from the definitions and was observed in \cite{Weber-2Toposes} Example 2.20. Supposing that $g$ is a fibration, note that both $\lambda$ and $fp_2\nu'$ are equal to the composite
\[ \xygraph{!{0;(1.5,0):(0,.6667)::}
{P}="tl" [r] {f \downarrow g}="tm" [dl] {P}="ml" [r] {f \downarrow g}="mm" [r] {B}="mr" [d(1.25)] {C}="br" [l(2)] {A}="bl"
"tl"(:"ml"_-{1}:@{<-}"tm"|-{r},:"tm"^-{i}:"mm"^-{1}) "ml":"mm"_-{i}:"mr"^-{q_2}:"br"^{g}:@{<-}"bl"^-{f}:@{<-}"ml"^{p}:@/_{1.5pc}/"mr"_-{q}
"tl" [d(.35)r(.2)] :@{=>}[r(.2)]^-{\nu} "tl" [d(.65)r(.6)] :@{=>}[r(.2)]^-{\varepsilon}
"mm" [d(.35)] {=} [d(.55)] {=}} \]
because of one of the triangle equations for the adjunction $i \ladj r$, and since by the definition of $\varepsilon$ recalled above one has $gq_2\varepsilon = \lambda$.
\end{proof}
There is an analogous discussion for bi-fibrations and bi-opfibrations, involving the interplay of comma objects and isocomma objects, and an analogue of Lemma \ref{lem:factor-pb} for bi-fibrations and bi-opfibrations. Recall that given arrows $g:B \to C$ and $f:A \to C$ in $\ca K$, their \emph{isocomma object} is a square as on the left
\[ \xygraph{{\xybox{\xygraph{!{0;(1.25,0):(0,.8)::} {f \downarrow_{\iso}g}="tl" [r] {B}="tr" [d] {C}="br" [l] {A}="bl" "tl":"tr"^-{q}:"br"^-{g}:@{<-}"bl"^-{f}:@{<-}"tl"^-{p}
"tl" [d(.5)r(.5)] {\iso} "tl" [d(.25)r(.5)] {\scriptsize \tn{$\lambda$}}}}} [r(3.5)]
{\xybox{\xygraph{!{0;(1.25,0):(0,.8)::} {P}="tl" [r] {B}="tr" [d] {C}="br" [l] {A}="bl" "tl":"tr"^-{}:"br"^-{g}:@{<-}"bl"^-{f}:@{<-}"tl"^-{}
"tl" [d(.5)r(.5)] {\iso}}}}} \]
satisfying the analogous universal property to that of comma object, but only amongst squares over $f$ and $g$ with an invertible 2-cell. Recall that a pseudo-commuting square as on the right in the previous display is a bipullback when the induced map $P \to f\downarrow_{\iso}g$ in $\ca K$ is an equivalence. The following two lemmas will enable us to adapt the arguments which exhibit exact pullbacks, to arguments that exhibit exact bipullbacks, in the proof of Proposition \ref{prop:exact-pullbacks}.
\begin{lem}\label{lem:pullback-cofully-faithful}
Given a pullback square
\[ \xygraph{{P}="tl" [r] {B}="tr" [d] {C}="br" [l] {A}="bl" "tl":"tr"^-{q}:"br"^-{g}:@{<-}"bl"^-{f}:@{<-}"tl"^-{p}:@{}"br"|-{\tn{pb}}} \]
in a 2-category with comma objects, in which $g$ is an isofibration and $f$ has a fully faithful left (resp. right) adjoint. Then $q$ has a fully faithful left (resp. right) adjoint.
\end{lem}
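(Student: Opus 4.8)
The plan is to treat the ``left adjoint'' case directly, using only the given pullback square together with the (representable) meaning of ``isofibration'', and then to obtain the ``right adjoint'' case by dualising $2$-cells. Write $u \ladj f$ for the given adjunction, with unit $\eta : 1_C \to fu$ --- invertible because $u$ is fully faithful --- and counit $\varepsilon : uf \to 1_A$; denote the pullback projections by $p : P \to A$ and $q : P \to B$, so that $fp = gq$.

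First I would rectify the invertible $2$-cell $\eta g : g \Rightarrow fug$ using that $g$ is an isofibration: regarding $g$ and $fug$ as parallel $1$-cells $B \to C$ and lifting $\eta g$ (whose domain is $g = g \comp 1_B$) along $g$, one obtains $\psi : B \to B$ with $g\psi = fug$ together with an invertible $2$-cell $\omega : 1_B \to \psi$ satisfying $g\omega = \eta g$. Since $f(ug) = fug = g\psi$, the universal property of the pullback gives a unique $v : B \to P$ with $pv = ug$ and $qv = \psi$; this $v$ will be the left adjoint we want. For the unit of $v \ladj q$ take $\rho := \omega : 1_B \to \psi = qv$, which is invertible --- and this is precisely what forces $v$ to be fully faithful.

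Next I would construct the counit $\sigma : vq \Rightarrow 1_P$. The composite $vq$ is the unique map with $p(vq) = ugq = ufp$ and $q(vq) = \psi q$, and one defines $\sigma$ by its components $p\sigma := \varepsilon p : ufp \Rightarrow p$ and $q\sigma := (\omega q)^{-1} : \psi q \Rightarrow q$; the compatibility needed for $\sigma$ to exist is $f\varepsilon p = (\eta fp)^{-1}$, which holds since $f\varepsilon = (\eta f)^{-1}$ by a triangle identity for $u \ladj f$. One triangle identity for $v \ladj q$, namely $(q\sigma)(\rho q) = 1_q$, is then immediate from $q\sigma = (\omega q)^{-1}$ and $\rho q = \omega q$. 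For the other, $(\sigma v)(v\rho) = 1_v$, I would verify the $p$- and $q$-components separately: the $p$-component reduces to the triangle identity $(\varepsilon u)(u\eta) = 1_u$ whiskered by $g$, and the $q$-component reduces to $1_\psi$ once one notes that $\psi \star \omega = \omega \star \psi$, which follows from invertibility of $\omega$ by applying the interchange law to $\omega \star \omega$. This yields an adjunction $v \ladj q$ with invertible unit, hence $v$ fully faithful.

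For the ``right adjoint'' version, applying the above in $\ca K^{\co}$ suffices: reversing $2$-cells interchanges left and right adjoints and the two fully-faithfulness conditions (invertible unit versus invertible counit), leaves the pullback square unchanged (pullbacks are conical limits, insensitive to $2$-cell direction), and preserves the property ``$g$ is an isofibration'' (a functor is an isofibration iff its opposite is). The only step that is not routine bookkeeping is the second triangle identity above --- in particular, spotting the identity $\psi \star \omega = \omega \star \psi$ that comes from $\omega$ being invertible; everything else is diagram chasing with adjunction triangles and the pullback universal property. A more conceptual alternative would be to first invoke that a pullback along an isofibration is a bipullback, reduce to the iso-comma object $f \downarrow_{\iso} g$, and build the left adjoint from its universal property; but this presupposes that $f \downarrow_{\iso} g$ exists, which is not among the hypotheses, so the direct argument seems preferable here.
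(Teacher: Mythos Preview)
Your proof is correct and follows essentially the same construction as the paper: both lift $\eta g$ along the isofibration $g$ to obtain $\psi$ and $\omega$, use the pullback to produce the candidate left adjoint, and take the same counit with components $\varepsilon p$ and $(\omega q)^{-1}$. The only difference is in the final verification: the paper invokes Lemma~2.6 of \cite{Weber-2Toposes} (it suffices that $q\varepsilon'$ and $\varepsilon' l'$ be invertible), whereas you verify both triangle identities directly, making your argument self-contained --- your observation that $\psi\omega = \omega\psi$ via interchange and invertibility of $\omega$ is exactly what is needed for the second triangle identity.
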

\begin{proof}
We prove the result involving left adjoints; the result involving right adjoints is dual (work in $\ca K^{\co}$). Denote by $l:C \to A$, $\eta:1_C \to fl$ and $\varepsilon:lf \to 1_A$ the data of the adjunction $l \ladj f$, and note that $\eta$ is invertible. Since $g$ is an isofibration we have $h:E \to E$ and an isomorphism $\eta':1_E \to h$ such that $g\eta'=\eta g$ and so $gh=flg$. Using the pullback there exists unique $l':E \to A$ such that $pl'=lg$ and $ql'=h$. We have $pl'g=lgq=lfp$ and $ql'q=hq$, so that $\varepsilon p:pl'q \to p$ and $\eta'^{-1}q:ql'q \to q$. Since the triangle equations for $l \ladj f$ can be written as $f\varepsilon = \eta^{-1}f$ and $\varepsilon l = l\eta^{-1}$, we have $f\varepsilon p = \eta^{-1}fp=\eta^{-1}gq=g\eta'^{-1}q$, and so we have $\varepsilon':l'q \to 1_P$ unique such that $p\varepsilon'=\varepsilon p$ and $q\varepsilon'=\eta'^{-1}q$. It suffices by Lemma 2.6 of \cite{Weber-2Toposes} to show that $q\varepsilon'$ and $\varepsilon'l$ are invertible, and $q\varepsilon'$ 
clearly is by definition. The invertibility of $p\varepsilon'$ is equivalent to that of $p\varepsilon'l'$ and $q\varepsilon'l'$ using the pullback. Now $q\varepsilon'l'=\eta'^{-1}ql'$ and $p\varepsilon'l'=\varepsilon pl'=\varepsilon lg = l\eta^{-1}g$ so the result follows.
\end{proof}
\begin{lem}\label{lem:precomp-exact-equiv}
Suppose that $e:Q \to P$ has a fully faithful left or right adjoint, and that the square on the left
\[ \xygraph{{\xybox{\xygraph{{P}="tl" [r] {B}="tr" [d] {C}="br" [l] {A}="bl" "tl":"tr"^-{q}:"br"^-{g}:@{<-}"bl"^-{f}:@{<-}"tl"^-{p} [d(.5)r(.35)] :@{=>}[r(.3)]^-{\phi}}}} [r(3)]
{\xybox{\xygraph{{Q}="tl" [r] {B}="tr" [d] {C}="br" [l] {A}="bl" "tl":"tr"^-{qe}:"br"^-{g}:@{<-}"bl"^-{f}:@{<-}"tl"^-{pe} [d(.5)r(.35)] :@{=>}[r(.3)]^-{\phi e}}}}} \]
is an exact square in a 2-category with pullbacks and comma objects. Then the square on the right in the previous display is exact.
\end{lem}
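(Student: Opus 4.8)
The plan is to peel off Definition~\ref{def:ExactSquare} and reduce to a statement about restricting a single pointwise left Kan extension. Let $\psi$ exhibit $l$ as a pointwise left Kan extension of $h : A \to V$ along $f$. Since the left-hand square is exact, the pasted 2-cell $\sigma := (l\phi)(\psi p)$ exhibits $lg$ as a pointwise left Kan extension of $hp$ along $q$. The composite 2-cell attached to the right-hand square is $(l\phi e)(\psi p e) = \bigl((l\phi)(\psi p)\bigr)e = \sigma e$, so it suffices to establish: \emph{if $\sigma : n \to mq$ exhibits $m$ as a pointwise left Kan extension of $n : P \to V$ along $q : P \to B$, and $e : Q \to P$ has a fully faithful left or right adjoint, then $\sigma e$ exhibits $m$ as a pointwise left Kan extension of $ne$ along $qe$}.

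Two observations drive the proof. First, if $e$ has a fully faithful left adjoint $d$ (resp.\ right adjoint $u$), then $(-)\circ e : \ca K(P,V) \to \ca K(Q,V)$ is fully faithful: a fully faithful adjoint is precisely one whose unit $1 \to ed$ (resp.\ counit $eu \to 1$) is invertible, and precomposition turns the adjunction into $(-)\circ e \ladj (-)\circ d$ (resp.\ $(-)\circ u \ladj (-)\circ e$) with invertible unit (resp.\ counit), so that $(-)\circ e$ is a left adjoint with invertible unit (resp.\ a right adjoint with invertible counit), hence fully faithful. Consequently any morphism $\bar e$ with a fully faithful left or right adjoint reflects left Kan extensions under restriction: if $\tau$ exhibits $M$ as a left Kan extension of $N$ along $Q'$, then $\tau\bar e$ exhibits $M$ as a left Kan extension of $N\bar e$ along $Q'\bar e$, the requisite bijection being the original one composed with the bijection induced by $(-)\circ\bar e$.

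Second, for $b : E \to B$ the comma object $qe\downarrow b$ is canonically the pullback of the comma projection $p^{q}_{b} : q\downarrow b \to P$ along $e$, and the comma square of $qe\downarrow b$ is the comma square of $q\downarrow b$ whiskered by the pullback projection $\bar e : qe\downarrow b \to q\downarrow b$. Hence, by Street's characterisation of pointwise left Kan extensions \cite{Street-FibrationIn2cats}, $\sigma e$ is a pointwise left Kan extension of $ne$ along $qe$ iff for every $b$ the pasting of $\sigma e$ with the comma square of $qe\downarrow b$ is a left Kan extension; and this pasting is precisely $\tau_b\,\bar e$, where $\tau_b$ is the pasting of $\sigma$ with the comma square of $q\downarrow b$. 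Pointwiseness of $\sigma$ gives that $\tau_b$ is a left Kan extension, and the first observation upgrades $\tau_b\bar e$ to one as soon as $\bar e$ has a fully faithful adjoint. But $\bar e$ is the pullback of $e$ along the comma projection $p^{q}_{b}$, which is an isofibration (comma projections are isofibrations in any 2-category with comma objects), so Lemma~\ref{lem:pullback-cofully-faithful} produces the fully faithful adjoint of $\bar e$ from that of $e$, on the matching side. This completes the argument.

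I expect the only real friction to be bookkeeping: checking that $qe\downarrow b$ is the pullback $Q\times_P(q\downarrow b)$ \emph{with the correct comma 2-cell}, so that the pasting in question is genuinely $\tau_b\bar e$, and that Lemma~\ref{lem:pullback-cofully-faithful} applies with the comma projection in the role of the isofibration; the rest is formal manipulation of adjunctions and universal properties.
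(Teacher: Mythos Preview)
Your proposal is correct and follows essentially the same route as the paper's proof: both establish that precomposition with a morphism having a fully faithful adjoint preserves left Kan extensions, identify $qe\downarrow b$ as the pullback of the comma projection $q\downarrow b \to P$ along $e$, and invoke Lemma~\ref{lem:pullback-cofully-faithful} (using that the comma projection is an isofibration---the paper phrases this as ``split fibration, as part of a 2-sided discrete fibration'') to transport the fully faithful adjoint to $\bar e$. The only difference is presentational: you first abstract out the reduction to a statement about a single pointwise left Kan extension, whereas the paper works directly with the full pasted composite.
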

\begin{proof}
First observe that if
\[ \xygraph{{A}="l" [r(2)] {C}="r" [dl] {V}="b" "l":"r"^-{f}:"b"^-{k}:@{<-}"l"^-{h} [d(.5)r(.85)] :@{=>}[r(.3)]^-{\psi}} \]
exhibits $k$ as a left Kan extension of $h$ along $f$, and $d:G \to A$ has a fully faithful left or right adjoint, then for all $s:C \to V$, the effect of pasting with $\psi d$ is obtained by applying the composite function
\[ \ca K(C,V)(k,s) \xrightarrow{\tn{paste with} \, \psi} \ca K(A,V)(h,sf) \xrightarrow{(-) \comp d} \ca K(G,X)(hd,sfd) \]
which is a bijection since $\psi$ is a left Kan extension and $(-) \comp d:\ca K(A,X) \to \ca K(G,X)$ is fully faithful. Thus $\psi d$ exhibits $k$ as a left Kan extension of $hd$ along $fd$.

Now suppose $\psi$ exhibits $k$ as a pointwise left Kan extension of $h$ along $f$, and $x:F \to B$. We must show that the composite 2-cell
\[ \xygraph{{qe \downarrow x}="p1" [r(2)] {q \downarrow x}="p2" [r(2)] {F}="p3" [d] {B}="p4" [d] {C}="p5" [dl] {V}="p6" [ul] {A}="p7" [u] {D}="p8" [l(2)] {E}="p9"
"p1":"p2"^-{e_2}:"p3"^-{q_1}:"p4"^-{x}:"p5"^-{g}:"p6"^-{k}:@{<-}"p7"^-{h}:@{<-}"p8"^-{p}:@{<-}"p9"^-{e}:@{<-}"p1"^-{p_2}
"p1":@/^{2pc}/"p3"^-{q_2} "p2":"p8"_{p_1}:"p4"^-{q} "p7":"p5"^-{f}
"p1" [d(.5)r] {\scriptsize{\tn{pb}}} "p2" [u(.35)] {\scriptsize{=}}
"p2" [r(.85)d(.5)] :@{=>}[r(.3)]^{\lambda} "p8" [r(.85)d(.5)] :@{=>}[r(.3)]^{\phi} "p7" [r(.85)d(.5)] :@{=>}[r(.3)]^{\psi}} \]
exhibits $kgx$ as a left Kan extension of $hpep_2$ along $q_2$. Since $\phi$ is exact, the composite of $\psi$, $\phi$ and $\lambda$ is a left Kan extension. Since $p_1$ is a split fibration (as part of a 2-sided discrete fibration $D \to F$) it is in particular an isofibration. Thus by Lemma \ref{lem:pullback-cofully-faithful} $e_2$ has a fully faithful left or right adjoint, and so the result follows by the observation made at the beginning of the proof.
\end{proof}
Note that an equivalence clearly satisfies the hypothesis on $e$ in the above, and so this last result includes the statement that exact squares are stable by precomposition with equivalences.
\begin{prop}\label{prop:exact-pullbacks}
Let $\ca K$ be a 2-category with comma objects and pullbacks.
\begin{enumerate}
\item A pullback square as on the left
\[ \xygraph{{\xybox{\xygraph{{P}="tl" [r] {B}="tr" [d] {C}="br" [l] {A}="bl" "tl":"tr"^-{q}:"br"^-{g}:@{<-}"bl"^-{f}:@{<-}"tl"^-{p}:@{}"br"|-{\tn{pb}}}}}
[r(3)]
{\xybox{\xygraph{{P}="tl" [r] {B}="tr" [d] {C}="br" [l] {A}="bl" "tl":"tr"^-{q}:"br"^-{g}:@{<-}"bl"^-{f}:@{<-}"tl"^-{p}:@{}"br"|*{\iso}}}}} \]
is exact when $f$ is an opfibration or $g$ is a fibration.\label{propcase:ex-pb}
\item If moreover $\ca K$ admits isocomma objects, then a bipullback square as on the right in the previous display is exact when $f$ is a bi-opfibration or $g$ is a bi-fibration.\label{propcase:ex-bipb}
\end{enumerate}
\end{prop}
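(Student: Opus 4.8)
The plan is to reduce both parts to the exactness of comma squares, Proposition \ref{prop:commas-are-exact}, using the comparison map $i : P \to f \downarrow g$ between the pullback (resp.\ bipullback) and the corresponding comma object, together with the factorisation lemmas proved above. First consider part (\ref{propcase:ex-pb}) in the case where $g$ is a fibration. By the interplay between comma objects and pullbacks recalled before Lemma \ref{lem:factor-pb}, the map $i$ has a right adjoint $r$ over $A$ with invertible unit $\nu : 1_P \to ri$. Write $\lambda$ for the comma square on $f \downarrow g$ and $\phi = \lambda i$ for the pullback square (strictly commuting, so $\phi$ is an identity; in the pseudo-pullback variant $\phi$ is the given invertible cell). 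Given $\psi$ exhibiting $l$ as a pointwise left Kan extension of $h$ along $f$, Proposition \ref{prop:commas-are-exact} tells us that the paste of $\psi$ with $\lambda$ exhibits $lg q_2$ as a pointwise left Kan extension of $h p_2$ along $q_2$. Now $i$ is fully faithful (its unit is invertible), so $(-)\circ i$ is fully faithful, and hence — by the observation at the start of the proof of Lemma \ref{lem:precomp-exact-equiv} — precomposing that left extension with $i$ yields a left extension of $h p_2 i = h p$ along $q_2 i = q$; using Lemma \ref{lem:factor-pb}, the cell one obtains this way is exactly the paste of $\psi$ with $\phi$. But we must check it is a \emph{pointwise} left extension. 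For this, given $x : F \to B$, form $q \downarrow x$ and pull back $p_1 : q\downarrow x \to P$; since $i$ fully faithful implies (via Lemma \ref{lem:pullback-cofully-faithful}, as $p_1$ is an isofibration) that the induced comparison $q \downarrow x \to (q_2 i)\downarrow x$ is an equivalence — equivalently, that the base change of $i$ along $x$ has a fully faithful right adjoint — we can transport the pointwise statement for $\lambda$ along this equivalence exactly as in the proof of Lemma \ref{lem:precomp-exact-equiv}. The case where $f$ is an opfibration is dual: now $i$ has a \emph{left} adjoint $l$ over $B$ with invertible counit, so $i$ is again fully faithful, Lemma \ref{lem:factor-pb} identifies the relevant composite as $\lambda$ precomposed with $l$, and the same argument goes through (working in $\ca K^{\co}$ converts one case to the other).

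For part (\ref{propcase:ex-bipb}), the strategy is identical with ``pullback'' replaced by ``bipullback'' and ``comma object'' unchanged, but with one extra layer: a bipullback $P$ maps by an equivalence to the \emph{isocomma} $f\downarrow_{\iso} g$, and by Lemma \ref{lem:precomp-exact-equiv} (or the remark following it, that exact squares are stable under precomposition with equivalences) it suffices to prove the isocomma square is exact. Then, in the bi-fibration case, the analogue of Theorem 2.7 of \cite{Weber-2Toposes} for bi-fibrations and bi-opfibrations — referred to before Lemma \ref{lem:pullback-cofully-faithful} — gives that the comparison $j : f\downarrow_{\iso} g \to f\downarrow g$ has a right adjoint over $A$ whose unit is invertible, hence $j$ is fully faithful; one then runs exactly the argument of part (\ref{propcase:ex-pb}), invoking the bicategorical analogues of Lemma \ref{lem:factor-pb} and Lemma \ref{lem:pullback-cofully-faithful} that were announced in the text. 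The bi-opfibration case is dual.

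I expect the main obstacle to be the pointwise-ness bookkeeping in part (\ref{propcase:ex-pb}): it is easy to see that pasting $\psi$ with the pullback square gives \emph{a} left Kan extension, since that follows formally from fully-faithfulness of $(-)\circ i$; the work is in verifying it remains pointwise, i.e.\ that it is preserved after further pulling back along an arbitrary $x : F \to B$. This is precisely the place where one needs Lemma \ref{lem:pullback-cofully-faithful} to guarantee that the base-changed comparison map $q\downarrow x \to (qi)\downarrow x$ again has a fully faithful adjoint — which is why the hypothesis is ``$f$ opfibration or $g$ fibration'' rather than merely ``$i$ has an adjoint'' — and then to recognise, via the factorisation lemmas, that the transported cell really is the composite we started with. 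In the bipullback case there is the additional minor nuisance that $\phi$ is a genuine invertible $2$-cell rather than an identity, so the identifications in Lemma \ref{lem:factor-pb} must be applied in their isocomma form; but this is routine once the diagrams are set up. A secondary point to handle carefully is the pseudo-pullback variant pictured on the right of (\ref{propcase:ex-pb}): it differs from a strict pullback by an equivalence, so it falls under Lemma \ref{lem:precomp-exact-equiv} and needs no separate argument.
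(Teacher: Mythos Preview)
There is a genuine gap. Your central move is ``$i$ is fully faithful (invertible unit), so $(-)\circ i$ is fully faithful'', but this implication fails. The observation at the start of the proof of Lemma~\ref{lem:precomp-exact-equiv} requires $d$ to \emph{have} a fully faithful left or right adjoint, not to \emph{be} fully faithful. In the fibration case you have $i \dashv r$ with invertible unit $\nu:1_P\to ri$, so the left adjoint $i$ is fully faithful; but precomposition $(-)\circ i$ is the \emph{right} adjoint in $(-)\circ r \dashv (-)\circ i$, and it is fully faithful only when the counit $\varepsilon : ir \to 1_{f\downarrow g}$ is invertible, which is not assumed and is generally false (it would force $P\simeq f\downarrow g$). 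The opfibration case has the dual defect. Consequently neither your non-pointwise step nor your appeal to Lemma~\ref{lem:pullback-cofully-faithful} for the pointwise step goes through: the latter would need $i$ to have a fully faithful adjoint, which again would be $r$ (or $l$), and that is not given.

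The paper exploits $i \dashv r$ differently. Rather than precomposing the comma-square extension with $i$ and relying on fully-faithfulness of $(-)\circ i$, it uses that the unit $\nu$ is an \emph{absolute} left Kan extension: hence $hp\nu$ exhibits $hp_2$ as the left Kan extension of $hp$ along $i$, and composability with the comma-square extension (together with Lemma~\ref{lem:factor-pb} to identify the resulting $2$-cell) yields the non-pointwise statement when $g$ is a fibration. For pointwiseness one forms the comma objects $q\downarrow x$ and $q_1\downarrow x$ and uses that the first projection of a comma square is always a fibration; this produces a new pullback-along-a-fibration to which the non-pointwise special case applies, after which composability (in the fibration case) or cancellability (in the opfibration case, via the other half of Lemma~\ref{lem:factor-pb}) finishes the argument. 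The two cases are thus not simply $\ca K^{\co}$-duals of one another, since exactness concerns left extensions only.
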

\begin{proof}
(\ref{propcase:ex-pb}): Suppose that $\psi$ exhibits $k$ as a pointwise left Kan extension of $h$ along $f$. We must show that the composite on the left hand side of
\[ \xygraph{!{0;(2,0):}
{\xybox{\xygraph{{P}="tl" [r(2)] {B}="tr" [l(2)d] {A}="l" [r(2)] {C}="r" [dl] {V}="b" "l":"r"^-{f}:"b"^-{k}:@{<-}"l"^-{h} [d(.5)r(.85)] :@{=>}[r(.3)]^-{\psi}
"tl"(:"tr"^-{q}:"r"^-{g},:"l"_-{p}) "tl":@{}"r"|-{\tn{\scriptsize pb}}}}}
[r] {=} [r(1.5)]
{\xybox{\xygraph{!{0;(2,0):(0,.5)::}
{P}="tll" [r] {f \downarrow g}="tl" [r] {B}="tr" [d] {C}="br" [l] {A}="bl" [r(.5)d] {V}="b"
"tll"(:"tl"^-{i}:"tr"^-{q_1}:"br"^-{g}:@{<-}"bl"_-{f}:@{<-}"tl"_-{p_1},:"bl"_{p}) "bl":"b"_{h}:@{<-}"br"_{k}
"tl" [d(.5)r(.425)] :@{=>}[r(.15)]^-{\lambda} "tll" [d(.5)r(.65)] :@{=>}[r(.2)]^-{p\nu}
"bl" [d(.4)r(.425)] :@{=>}[r(.15)]^-{\psi}}}}} \]
exhibits $kg$ as a pointwise left Kan extension of $hp$ along $q$ when $f$ is an opfibration or $g$ is a fibration. In the case where $g$ is a fibration, one has by Lemma \ref{lem:factor-pb} that the above equation holds, in which $\nu$ is the unit of an adjunction $i \ladj r$ coming from the fact that $g$ is a fibration. Since $hp\nu$ exhibits $hp_1$ as a left Kan extension of $hp$ along $i$, and the composite of $\psi$ and $\lambda$ exhibits $kg$ as a left Kan extension of $fp_1$ along $q_1$, the composite exhibits $kg$ as a left Kan extension of $hp$ along $q$. Thus when $g$ is a fibration $\psi p$ exhibits $kg$ as a left Kan extension of $hp$ along $q$.

For the general cases let $x:D \to B$, consider the diagram on the left when $f$ is an opfibration
\[ \xygraph{{\xybox{\xygraph{!{0;(1.5,0):(0,.6667)::}
{f \downarrow g}="tl" [r] {P}="tm" [r] {B}="tr" [d] {C}="br" [l] {A}="bl" [r(0.5)d] {V}="b" "tr" [u] {D}="ttr" [l(2)] {q_1 \downarrow x}="ttl" [r] {q \downarrow x}="ttm"
"tl":"tm"^-{l}:"tr"^-{q}:"br"^{g}:@{<-}"bl"_-{f}:@{<-}"tl"^{p_1}
"tm":"bl"^{p} "bl":"b"_-{h} "br":"b"^-{k}
"tl" [d(.5)r(.65)] :@{=>}[r(.2)]^-{p_1\nu'} "tm":@{}"br"|-{\tn{pb}}
"bl" [d(.5)r(.425)] :@{=>}[r(.15)]^-{\psi}
"ttl":"tl"_{p_3} "ttl":"ttm"^-{q_4}:"ttr"^-{q_2}:"tr"^{x} "ttm":"tm"_{p_2} "ttl":@/^{1.5pc}/"ttr"^-{q_3}
"ttm" [d(.5)r(.4)] :@{=>}[r(.2)]^-{\lambda_2} "ttl" [d(.5)r(.5)] {\tn{\scriptsize pb}} "ttm" [u(.25)] {=}}}}
[r(5)]
{\xybox{\xygraph{!{0;(1.5,0):(0,.6667)::}
{P}="tl" [r] {f \downarrow g}="tm" [r] {B}="tr" [d] {C}="br" [l] {A}="bl" [r(0.5)d] {V}="b" "tr" [u] {D}="ttr" [l(2)] {q \downarrow x}="ttl" [r] {q_1 \downarrow x}="ttm"
"tl":"tm"^-{i}:"tr"^-{q_1}:"br"^{g}:@{<-}"bl"_-{f}:@{<-}"tl"^{p}
"tm":"bl"^{p_1} "bl":"b"_-{h} "br":"b"^-{k}
"tl" [d(.5)r(.65)] :@{=>}[r(.2)]^-{p\nu} "tm" [d(.5)r(.4)] :@{=>}[r(.2)]^-{\lambda}
"bl" [d(.5)r(.425)] :@{=>}[r(.15)]^-{\psi}
"ttl":"tl"_{p_2} "ttl":"ttm"^-{q_5}:"ttr"^-{q_3}:"tr"^{x} "ttm":"tm"_{p_3} "ttl":@/^{1.5pc}/"ttr"^-{q_2}
"ttm" [d(.5)r(.4)] :@{=>}[r(.2)]^-{\lambda_3} "ttl" [d(.5)r(.5)] {\tn{\scriptsize pb}} "ttm" [u(.25)] {=}}}}} \]
and the diagram on the right in the previous display when $g$ is a fibration. To form these diagrams one takes the comma objects $\lambda_2$ and $\lambda_3$, and then since $ql=q_1$ and $q_1i=q$, one induces $q_4$ and $q_5$ so that $\lambda_2q_4=\lambda_3$ and $\lambda_3q_5=\lambda_2$. The goal is to verify that the composite of $\lambda_2$, the pullback square and $\psi$ exhibits $kgx$ as a left Kan extension of $hpp_2$. Note this composite 2-cell equals the composite on the right in the previous display by definition.

Suppose that $f$ is an opfibration. The composite on the left is a left Kan extension, since it is by definition the composite of $\lambda_3$, $\lambda$ and $\psi$, $\psi$ is a pointwise left Kan extension and by Proposition \ref{prop:commas-are-exact}. By Theorem 3.5 of \cite{Weber-Fam2fun} $p_2$ is a fibration. Thus by the case considered at the beginning of this proof $hp_1\nu'p_3$ exhibits $hpp_2$ as a left Kan extension of $hp_1p_3$ along $q_4$. Thus the result in this case follows by the cancellability of left Kan extensions.

Suppose that $g$ is a fibration. Since $p_3$ is a fibration by Theorem 3.5 of \cite{Weber-Fam2fun}, by the case considered at the beginning of this proof $hp\nu p_2$ exhibits $hp_1p_3$ as a left Kan extension of $hpp_2$ along $q_5$. Since $\psi$ is a pointwise left Kan extension, the composite of $\lambda_3$, $\lambda$ and $\psi$ exhibits $kgx$ as a left Kan extension of $hp_1p_3$ along $q_3$. Thus the result in this case follows by the composability of left Kan extensions.

(\ref{propcase:ex-bipb}): By Lemma \ref{lem:precomp-exact-equiv} it suffices to consider only the case where the bipullback is an isocomma square. From here one establishes the analogue of Lemma \ref{lem:factor-pb} for bi-fibrations and bi-opfibrations, and proceeds exactly as for (\ref{propcase:ex-pb}), except in that all the pullbacks that arise in the discussion are replaced by the corresponding isocomma objects.
\end{proof}

\subsection{Colax idempotent 2-monads}
\label{ssec:general-exactness-in-colaxidem-case}
Recall that a 2-monad $(\ca K,T)$ is colax idempotent when $\eta^TT \ladj \mu^T$, basic example being $\FPMnd$. For such 2-monads it turns out that all colax algebra morphisms are exact.

Recall that when $(\ca K,T)$ is colax idempotent, to give a pseudo $T$-algebra structure on $A \in \ca K$, is to give a right adjoint $a$ to $\eta^T_A : A \to TA$ in $\ca K$. Moreover, given pseudo $T$-algebras $(I,i)$ and $(J,j)$, and a morphism $f : I \to J$ in $\ca K$, the unique 2-cell $\overline{f}$ satisfying
\begin{equation}\label{eq:colax-coh-morphism-alg-coKZ} 
\xygraph{!{0;(2.25,0):(0,1)::} 
{\xybox{\xygraph{!{0;(.75,0):(0,1.3333)::} {I}="p0" [r(2)] {TI}="p1" [r(2)] {TJ}="p2" [dl] {J}="p3" [l(2)] {I}="p4" "p0":"p1"^-{\eta_I}:"p2"^-{Tf}:"p3"^-{j}:@{<-}"p4"^-{f}:@{<-}"p0"^-{1_A}
"p1":"p4"^{i} "p0" [d(.5)r(.8)] :@{=>}[r(.4)]^{u_I} "p1" [d(.5)r(.3)] :@{=>}[r(.4)]^{\overline{f}}
}}}
[r] {=} [r]
{\xybox{\xygraph{!{0;(.75,0):(0,1.3333)::} {I}="p0" [r(2)] {J}="p1" [r(2)] {TJ}="p2" [dl] {J}="p3" [l(2)] {I}="p4" "p0":"p1"^-{f}:"p2"^-{\eta_J}:"p3"^-{j}:@{<-}"p4"^-{f}:@{<-}"p0"^-{1_I}:@{}"p3"|-{=}
"p1":"p3"_-{1_J} "p1" [d(.5)r(.8)] :@{=>}[r(.4)]^{u_J}}}}}
\end{equation}
where $u_I$ (resp. $u_J$) is the unit of $\eta_I \ladj i$ (resp. $\eta_J \ladj j$),  provides the coherence datum for a colax morphism $(I,i) \to (J,j)$ of pseudo $T$-algebras.
\begin{prop}\label{prop:exactness-coKZ}
Let $(\ca K,T)$ be a colax idempotent 2-monad on a 2-category $\ca K$ with comma objects. Then any colax morphism of pseudo $T$-algebras is exact.
\end{prop}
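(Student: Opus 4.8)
The plan is to reduce the exactness of the algebra square of a colax morphism $(f,\overline{f}):(I,i)\to(J,j)$ to Proposition~\ref{prop:commas-are-exact} (exactness of comma squares) together with the known fact that exact squares are preserved by pasting with other exact squares and by the coherence $2$-cells coming from adjunctions. The key structural observation, which is special to the colax idempotent case, is the factorisation~(\ref{eq:colax-coh-morphism-alg-coKZ}) of $\overline{f}$ in terms of $\eta^T$, the unit $u_I$ of $\eta^T_I\dashv i$, and the unit $u_J$ of $\eta^T_J\dashv j$. Since $u_I$ and $u_J$ are units of adjunctions, by Proposition~20 of~\cite{Street-FibrationIn2cats} they exhibit the relevant $2$-cells as absolute pointwise left Kan extensions, and so pasting with them preserves pointwise left Kan extensions; this is precisely the kind of step already used repeatedly in Section~\ref{sec:ExactSquares}.

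First I would recall that the algebra square to be shown exact is
\[ \xygraph{!{0;(1.5,0):(0,.6667)::} {TI}="p0" [r] {TJ}="p1" [d] {J}="p2" [l] {I}="p3" "p0":"p1"^-{Tf}:"p2"^-{j}:@{<-}"p3"^-{f}:@{<-}"p0"^-{i} "p0" [d(.55)r(.4)] :@{=>}[r(.2)]^-{\overline{f}}} \]
so that, given $\psi$ exhibiting $l$ as a pointwise left Kan extension of $h$ along $i:TI\to I$, I must show the pasting of $\psi$ with $\overline{f}$ exhibits $lj$ as a pointwise left Kan extension of $hTf$ along $Tf$. Using the factorisation~(\ref{eq:colax-coh-morphism-alg-coKZ}), the composite $2$-cell $\overline{f}$ pasted with $\psi$ can be rewritten in terms of the naturality square of $\eta^T$ at $f$ (a commuting square, hence exact as a pullback need not even enter — it is a genuine naturality square of a $2$-natural transformation, and pasting with it is harmless since $\eta^T$ is $2$-natural), the unit $u_I$, and the unit $u_J$. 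Concretely: since $u_I$ exhibits $i$ as an absolute pointwise left Kan extension of $1_I$ along $\eta^T_I$, and $\psi$ is a pointwise left Kan extension along $i$, the composability of pointwise left Kan extensions gives that $\psi$ pasted with $u_I$ exhibits $l$ as a pointwise left Kan extension of $h\eta^T_I$ along $\eta^T_I$ — wait, more carefully, one uses $u_I$ to rewrite along the $I$-side and then cancellability/composability to move the computation to a statement about $\eta^T_J$, where $u_J$ (again an absolute left Kan extension datum) lets one conclude. The upshot is that exactness of $\overline{f}$ follows formally from: (i) the naturality of $\eta^T$, (ii) the absoluteness of the Kan extensions witnessed by $u_I,u_J$, and (iii) the composability and cancellability of pointwise left Kan extensions.

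The main obstacle I expect is bookkeeping rather than conceptual: assembling the pasting diagram so that the factorisation~(\ref{eq:colax-coh-morphism-alg-coKZ}) can actually be substituted in, and then tracking which leg each absolute-Kan-extension argument is being applied to, so that composability is applied in the correct direction (and cancellability in the other). One subtlety worth flagging is that $u_I$ and $u_J$ are units of adjunctions $\eta^T_I\dashv i$ and $\eta^T_J\dashv j$ — note these are the \emph{left} adjoints being $\eta^T$, which is the colax idempotent orientation — so the relevant statement is that $i\eta^T_I$-side is the trivial one and the interesting absolute left Kan extension is $u_I:1_I\to i\eta^T_I$ exhibiting $i$ along $\eta^T_I$; I must be careful not to invoke the dual (lax idempotent) statement by mistake. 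Once the orientation is fixed, the argument is a short diagram chase of exactly the flavour of the proof of Proposition~\ref{prop:exact-pullbacks}, and no genuinely new tool beyond Proposition~\ref{prop:commas-are-exact} and the cited facts about absolute Kan extensions is needed.
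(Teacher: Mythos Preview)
Your overall strategy matches the paper's: use the factorisation~(\ref{eq:colax-coh-morphism-alg-coKZ}), the fact that $u_I$ and $u_J$ are absolute pointwise left Kan extensions (being units of adjunctions), and composability/cancellability of left Kan extensions. However, two points need correcting.

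First, the setup is misstated. Exactness of the algebra square requires starting from a $\psi$ exhibiting $l$ as a pointwise left Kan extension of some $h$ along the \emph{bottom} arrow $f:I\to J$ (not along $i:TI\to I$), and the goal is that the pasting exhibits $lj$ as a pointwise left Kan extension of $hi$ (not $hTf$) along $Tf$. This is just bookkeeping, but getting the orientation right is exactly what you flag as the main obstacle, so it matters.

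Second, and more substantively, the claim that the naturality square of $\eta^T$ at $f$ is ``harmless'' because $\eta^T$ is $2$-natural is not justified: commuting squares are not automatically exact. The paper does not argue at the level of pasting exact squares; it instead tests pointwiseness directly against an arbitrary $k:K\to TJ$. One forms the comma $Tf\downarrow k$ with cell $\lambda_1$, and the factorisation~(\ref{eq:colax-coh-morphism-alg-coKZ}) rewrites the resulting pasting in terms of the comma $\eta_J\downarrow k$ with cell $\lambda_2$. In both rewritings a \emph{pullback} square appears on top (pulling back the comma along $\eta_I$ on one side, along $f$ on the other), and these pullback squares are exact by Proposition~\ref{prop:exact-pullbacks} because the first comma projections $p_1,p_2$ are fibrations. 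Then $u_I$ and $u_J$ supply the absolute left Kan extensions, composability shows the right-hand composite is a left Kan extension, and cancellability plus the equation gives the left-hand one. So the tool actually invoked is Proposition~\ref{prop:exact-pullbacks}, not merely Proposition~\ref{prop:commas-are-exact}; your closing remark that nothing beyond the latter is needed undersells what the argument requires.
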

\begin{proof}
Given pseudo $T$-algebras $(I,i)$ and $(J,j)$ and $f : I \to J$, we must show that the unique 2-cell $\overline{f}$ satisfying (\ref{eq:colax-coh-morphism-alg-coKZ}) is exact. Given $\psi$ as below which exhibits $h$ as a pointwise left Kan extension of $g$ along $f$, for any $k : K \to TJ$, by (\ref{eq:colax-coh-morphism-alg-coKZ}) we have
\begin{equation}\label{eq:exactness-coKZ} 
\xygraph{!{0;(3,0):(0,1)::} 
{\xybox{\xygraph{!{0;(.75,0):(0,1.3333)::} {I}="p0" [r(2)] {TI}="p1" [r(2)] {TJ}="p2" [dl] {J}="p3" [l(2)] {I}="p4" "p0":"p1"^-{\eta_I}:"p2"^-{Tf}:"p3"^-{j}:@{<-}"p4"_-{f}:@{<-}"p0"^-{1_A}
"p1":"p4"^{i} "p0" [d(.5)r(.8)] :@{=>}[r(.4)]^{u_I} "p1" [d(.5)r(.3)] :@{=>}[r(.4)]^{\overline{f}}
"p3" [dl] {A}="b"
"b" (:@{<-}"p3"_-{h},:@{<-}"p4"^-{g})
"p4" [d(.5)r(.8)] :@{=>}[r(.4)]^{\psi}
"p0" [u] {\eta_Jf \downarrow k}="t0" [r(2)]  {Tf \downarrow k}="t1" [r(2)] {K}="t2"
"p0":@{<-}"t0"^-{p_3}:"t1"^-{q_3}:"t2"^-{q_1}:"p2"^-{k} "t1":"p1"_-{p_1} "t0" [d(.45)r] {\scriptstyle{\tn{pb}}} "t1" [d(.5)r(.8)] :@{=>}[r(.4)]^-{\lambda_1}}}}
[r] {=} [r(.9)]
{\xybox{\xygraph{!{0;(.75,0):(0,1.3333)::} {I}="p0" [r(2)] {J}="p1" [r(2)] {TJ}="p2" [dl] {J}="p3" [l(2)] {I}="p4" "p0":"p1"^-{f}:"p2"^-{\eta_J}:"p3"^-{j}:@{<-}"p4"_(.6){f}:@{<-}"p0"^-{1_I}:@{}"p3"|-{=}
"p1":"p3"_-{1_J} "p1" [d(.5)r(.8)] :@{=>}[r(.4)]^{u_J}
"p3" [dl] {A}="b"
"b" (:@{<-}"p3"_-{h},:@{<-}"p4"^-{g})
"p4" [d(.5)r(.8)] :@{=>}[r(.4)]^{\psi}
"p0" [u] {\eta_Jf \downarrow k}="t0" [r(2)]  {\eta_J \downarrow k}="t1" [r(2)] {K}="t2"
"p0":@{<-}"t0"^-{p_3}:"t1"^-{q_4}:"t2"^-{q_2}:"p2"^-{k} "t1":"p1" "t1":"p1"_-{p_2} "t0" [d(.45)r] {\scriptstyle{\tn{pb}}} "t1" [d(.5)r(.8)] :@{=>}[r(.4)]^-{\lambda_2}}}}}
\end{equation}
in which $\lambda_1$ and $\lambda_2$ are comma 2-cells. Our goal is to show that the composite of $\lambda_1$, $\overline{f}$ and $\psi$ on the left hand side of (\ref{eq:exactness-coKZ}) exhibits $hjk$ as a left Kan extension of $gip_1$ along $q_1$. As the first projection maps of comma squares, $p_1$ and $p_2$ are fibrations, and so by Proposition \ref{prop:exact-pullbacks}, the pullback squares in the above diagrams are exact. Thus on the right hand side of (\ref{eq:exactness-coKZ}) $\psi p_3$ exhibits $hp_2$ as a left Kan extension of $gp_3$ along $q_4$. Moreover as units of adjunctions are absolute pointwise left Kan extensions, $gu_Ip_3$ exhibits $gip_1$ as a left Kan extension of $gp_3$ along $q_3$, and $(hj\lambda_2)(hu_Jp_2)$ exhibits $hjk$ as a left Kan extension of $hp_2$ along $q_2$. By the composability of left Kan extensions, the composite on the right hand side of (\ref{eq:exactness-coKZ}) is a left Kan extension, and so by the cancellability of left Kan extensions and (\ref{eq:exactness-coKZ}), the result follows.
\end{proof}
\begin{exam}\label{ex:fpp-ple}
Let $I$ and $J$ be small categories with finite products, and $A$ be a cocomplete cartesian closed category. If $f : I \to J$ is any functor and $g : I \to A$ is a finite product preserving functor, then by Theorem \ref{thm:AlgKan}(\ref{thmcase:AK-pseudo}) we recover the classical fact \cite{Lawvere-FunctorialSemantics} that a pointwise left Kan extension of $g$ along $f$ is finite product preserving. Moreover, one can weaken the hypotheses required of $A$ given a fixed $f$, asking only for enough colimits in $A$ to enable computation of left Kan extensions along $f$, and that just these colimits are preserved by functors of the form $X \times (-)$ for $X \in A$.
\end{exam}

\subsection{Exact colax monoidal functors}
\label{ssec:exact-colax-monoidal-functors}
In this section we characterise the exact colax morphisms of algebras for the 2-monads $\MCMnd$, $\BMCMnd$ and $\SMCMnd$ of monoidal, braided monoidal and symmetric monoidal categories respectively, on $\CAT$. From these characterisations, one easily exhibits examples of colax morphisms of algebras which are not exact for these 2-monads, by contrast with the previous section.

We now characterise exact colax monoidal functors, using the explicit reformulation of exactness given by Lemma \ref{lem:combinatorial-exactness-Cat-fact-version}. Given a colax monoidal functor $F : \ca V \to \ca W$, $X \in \ca V$, $Y_1$ and $Y_2 \in \ca W$, and $f: FX \to Y_1 \tensor Y_2$, we denote by $\tn{Fact}_F(X,f,Y_1,Y_2)$ the following category. Its objects are 5-tuples $(g,Z_1,Z_2,h_1,h_2)$ providing a factorisation
\[ FX \xrightarrow{Fg} F(Z_1 \tensor Z_2) \xrightarrow{\overline{F}_2} FZ_1 \tensor FZ_2 \xrightarrow{h_1 \tensor h_2} Y_1 \tensor Y_2 \]
of $f$. An arrow $(g,Z_1,Z_2,h_1,h_2) \to (g',Z'_1,Z'_2,h'_1,h'_2)$ is a pair $(k_1,k_2)$ where $k_1 : Z_1 \to Z'_1$ and $k_2 : Z_2 \to Z'_2$ such that $(k_1 \tensor k_2)g = g'$, $h_1 = k_1F(k_1)$ and $h_2 = k_2F(k_2)$. Identities and compositions are inherited from $\ca V$.
\begin{prop}\label{prop:exact-colax-monoidal-functors}
A colax monoidal functor $F : \ca V \to \ca W$ between monoidal categories is exact as a colax morphism of pseudo $\MCMnd$-algebras iff
\begin{enumerate}
\item For any $X \in \ca V$ and $f:FX \to I$ in $\ca W$, there is a unique $g : X \to I$ in $\ca V$ such that $f = \overline{F}_0F(g)$. \label{propcase:colax-monoidal-nullary}
\item For any $X \in \ca V$, $Y_1$, $Y_2 \in \ca W$ and $f : FX \to Y_1 \tensor Y_2$, the category $\tn{Fact}_F(X,f,Y_1,Y_2)$ defined above is connected. \label{propcase:colax-monoidal-binary}
\end{enumerate}
\end{prop}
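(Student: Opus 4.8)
The plan is to translate exactness into the combinatorial criterion of Lemma~\ref{lem:combinatorial-exactness-Cat-fact-version}, make the categories $\tn{Fact}$ occurring there completely explicit via the description of $\MCMnd$, and then induct on the length of words in $\ca W$. First I would recall from Section~5 of \cite{Weber-PolynomialFunctors} (see also Section~2 of \cite{Weber-CodescCrIntCat}) that $\MCMnd(\ca C)$ has objects the finite sequences $(C_1,\dots,C_n)$ of objects of $\ca C$, a morphism into $(C'_1,\dots,C'_{n'})$ being an equality $n=n'$ together with a family $(\delta_i\colon C_i\to C'_i)$; that the pseudo $\MCMnd$-algebra structure on a monoidal category is the left-bracketed tensor $\bigotimes\colon(C_1,\dots,C_n)\mapsto C_1\otimes\dots\otimes C_n$; and that the colax morphism structure on a colax monoidal functor $F$ is the transformation $\overline F\colon F\bigotimes\to\bigotimes\MCMnd(F)$ with components $\overline F_n\colon F(X_1\otimes\dots\otimes X_n)\to FX_1\otimes\dots\otimes FX_n$. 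Evaluating the structure axiom of the colax morphism $(F,\overline F)$ at the decomposition of $(X_1,\dots,X_n)$ as $\bigl((X_1,\dots,X_{n-1}),(X_n)\bigr)\in\MCMnd^2(\ca V)$ gives the recursion $\overline F_n=(\overline F_{n-1}\otimes 1_{FX_n})\circ\overline F_2$, with $\overline F_1=1$ (the associativity constraints of $\ca V$ and $\ca W$ relevant to this last-factor cut being identities, by the left-bracketing convention). With this dictionary, Definition~\ref{def:exact-T-morphism} says $(F,\overline F)$ is exact iff the square with edges $\MCMnd(F)$, $\bigotimes_{\ca V}$, $F$, $\bigotimes_{\ca W}$ and filler $\overline F$ is exact in $\Cat$, and Lemma~\ref{lem:combinatorial-exactness-Cat-fact-version} rewrites this as: for all $X\in\ca V$, all $(Y_1,\dots,Y_m)$ in $\ca W$ and all $\gamma\colon FX\to Y_1\otimes\dots\otimes Y_m$, the category $\tn{Fact}_{\overline F}\bigl(X,\gamma,(Y_1,\dots,Y_m)\bigr)$ is connected, where this category has objects the tuples $\bigl(\alpha\colon X\to Z_1\otimes\dots\otimes Z_m,\ (Z_1,\dots,Z_m),\ (\beta_i\colon FZ_i\to Y_i)\bigr)$ satisfying $(\beta_1\otimes\dots\otimes\beta_m)\overline F_m\,F\alpha=\gamma$, and morphisms the families $(\delta_i\colon Z_i\to Z'_i)$ with $(\delta_1\otimes\dots\otimes\delta_m)\alpha=\alpha'$ and $\beta_i=\beta'_i\,F\delta_i$.

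Next I would read off the low-length cases. For $m=0$ the sequence $()$ admits only its identity morphism, so $\tn{Fact}_{\overline F}(X,\gamma,())$ is discrete with object set $\{g\colon X\to I_{\ca V}\mid\overline F_0\,F g=\gamma\}$; requiring all of these to be connected is exactly condition~(\ref{propcase:colax-monoidal-nullary}). For $m=1$ the object $(1_X,(X),\gamma)$ is initial (using $\overline F_1=1$), so $\tn{Fact}_{\overline F}(X,\gamma,(Y_1))$ is always connected. For $m=2$ the description above identifies $\tn{Fact}_{\overline F}(X,\gamma,(Y_1,Y_2))$ with the category $\tn{Fact}_F(X,\gamma,Y_1,Y_2)$ of the statement, so connectedness there is condition~(\ref{propcase:colax-monoidal-binary}). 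The forward implication is now immediate: if $(F,\overline F)$ is exact then all these $\tn{Fact}$ categories are connected, in particular those for $m=0$ and $m=2$, i.e.\ (\ref{propcase:colax-monoidal-nullary}) and (\ref{propcase:colax-monoidal-binary}) hold.

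For the converse I would prove, by induction on $m$, that (\ref{propcase:colax-monoidal-nullary}) and (\ref{propcase:colax-monoidal-binary}) force $\tn{Fact}_{\overline F}(X,\gamma,(Y_1,\dots,Y_m))$ to be connected and nonempty for every $X$, $\gamma$ and word of length $m$; the cases $m\le 2$ are settled above. For $m\ge 3$, the recursion $\overline F_m=(\overline F_{m-1}\otimes 1)\circ\overline F_2$ lets one define a ``peel off the last factor'' functor
\[ \Phi\colon\tn{Fact}_{\overline F}\bigl(X,\gamma,(Y_1,\dots,Y_m)\bigr)\longrightarrow\tn{Fact}_F\bigl(X,\gamma,\ Y_1\otimes\dots\otimes Y_{m-1},\ Y_m\bigr), \]
sending $\bigl(\alpha,(Z_1,\dots,Z_m),(\beta_1,\dots,\beta_m)\bigr)$ to the binary factorisation with first factor $Z_1\otimes\dots\otimes Z_{m-1}$, second factor $Z_m$, first leg $(\beta_1\otimes\dots\otimes\beta_{m-1})\overline F_{m-1}$ and second leg $\beta_m$, and acting on morphisms by $(\delta_i)\mapsto(\delta_1\otimes\dots\otimes\delta_{m-1},\delta_m)$. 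By (\ref{propcase:colax-monoidal-binary}) the codomain of $\Phi$ is connected and nonempty, so any two objects $O,O'$ of the domain have their images joined by a zig-zag there. Given a $\tn{Fact}_F$-object $(g,U,V,a,b)$ occurring along this zig-zag, the induction hypothesis applied to the word $(Y_1,\dots,Y_{m-1})$ and the morphism $a\colon FU\to Y_1\otimes\dots\otimes Y_{m-1}$ gives that $\tn{Fact}_{\overline F}(U,a,(Y_1,\dots,Y_{m-1}))$ is connected and nonempty; choosing any object $(\alpha'',(W_i),(\gamma_i))$ there produces an object of $\tn{Fact}_{\overline F}(X,\gamma,(Y_1,\dots,Y_m))$ lying over $(g,U,V,a,b)$, namely $\bigl((\alpha''\otimes 1_V)g,\,(W_1,\dots,W_{m-1},V),\,(\gamma_1,\dots,\gamma_{m-1},b)\bigr)$, and — using the naturality of $\overline F_2$ — any two such ``refinements'' of the same $(g,U,V,a,b)$, as well as $O$, $O'$ and the refinements chosen at successive vertices of the base zig-zag, are connected by zig-zags in the length-$(m-1)$ categories lifted coordinatewise. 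Splicing these together joins $O$ to $O'$, completing the induction and hence the proof.

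The main obstacle will be step three: organising this zig-zag-lifting bookkeeping cleanly. The conceptually clean way to phrase it is to recognise $\tn{Fact}_{\overline F}(X,\gamma,(Y_1,\dots,Y_m))$, up to $\pi_0$, as a Grothendieck-type construction over $\tn{Fact}_F(X,\gamma,Y_1\otimes\dots\otimes Y_{m-1},Y_m)$ whose fibres are the categories $\tn{Fact}_{\overline F}(U,a,(Y_1,\dots,Y_{m-1}))$, so that connectedness follows from the elementary fact that such a construction over a connected nonempty base with connected nonempty fibres is connected; and this is precisely the point at which the coherence of the colax structure $\overline F$ and the left-bracketing convention on $\bigotimes$ (which keeps the relevant associativity constraints identities) are used in an essential way. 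Everything else — the translation through Lemma~\ref{lem:combinatorial-exactness-Cat-fact-version} and the identifications in the cases $m\le 2$ — is a routine unravelling of definitions.
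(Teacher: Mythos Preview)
Your proposal is correct and follows essentially the same route as the paper: translate via Lemma~\ref{lem:combinatorial-exactness-Cat-fact-version}, identify the cases $m=0,1,2$ with conditions~(\ref{propcase:colax-monoidal-nullary}), ``always true'', and~(\ref{propcase:colax-monoidal-binary}) respectively, and then induct by peeling off the last factor via a functor $\Phi$ (the paper's $Q$) into the binary $\tn{Fact}_F$ category, lifting zig-zags using the inductive hypothesis on the length-$(m{-}1)$ word.

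Two small points of comparison. First, the paper disposes of coherence by invoking the coherence theorem for monoidal categories and passing to strict $\ca V$ and $\ca W$; your ``left-bracketing convention'' does not by itself make associators identities, so you should either invoke coherence explicitly or carry the associators through the recursion $\overline F_m=(\overline F_{m-1}\otimes 1)\circ\overline F_2$. Second, your ``Grothendieck-type construction'' framing is heuristic rather than literal: the actual fibres of $\Phi$ are not the categories $\tn{Fact}_{\overline F}(U,a,(Y_1,\dots,Y_{m-1}))$, and the object you build from a refinement of $(g,U,V,a,b)$ does not lie strictly over it but only admits a morphism from it in the base. The paper makes exactly this precise by showing (i) for every $B$ in the base there is $E$ upstairs with $B\to\Phi E$, and (ii) any single base arrow $\Phi E_1\to\Phi E_2$ can be lifted to an explicit zig-zag $E_1\to\dots\leftarrow E_2$, constructed from a zig-zag in $\tn{Fact}_{\overline F}(\bigotimes_i X_{1,i},f_2,(Y_i)_i)$. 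Your sketch gestures at this but leaves the splicing to the reader; the paper writes it out.
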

\begin{proof}
Applying the Lemma \ref{lem:combinatorial-exactness-Cat-fact-version}, the exactness of
\[ \xygraph{!{0;(1.5,0):(0,.6667)::}
{\MCMnd\ca V}="p0" [r] {\MCMnd\ca W}="p1" [d] {\ca W}="p2" [l] {\ca V}="p3" "p0":"p1"^-{\MCMnd F}:"p2"^-{\bigotimes}:@{<-}"p3"^-{F}:@{<-}"p0"^-{\bigotimes} "p0" [d(.55)r(.35)] :@{=>}[r(.3)]^-{\overline{F}}} \]
says that for all $n \in \N$, $X \in \ca V$, $Y_1,...,Y_n \in \ca W$ and $f : FX \to \bigotimes_{i=1}^nY_i$, the category $\tn{Fact}_{\overline{F}}(X,f,(Y_i)_i)$, the objects of which are factorisations
\[ \begin{array}{c} {{FX} \xrightarrow{Fg} {F\bigotimes_iX_i} \xrightarrow{\overline{F}_n} {\bigotimes_iFX_i} \xrightarrow{\bigotimes_ih_i} {\bigotimes_iY_i}} \end{array} \]
of $f$, is connected. Fixing $n \in \N$, we denote by $P_n$ the statement that for all $X$, $(Y_i)_{1{\leq}i{\leq}n}$ and $f$ the category $\tn{Fact}_{\overline{F}}(X,f,(Y_i)_i)$ is connected. Thus the exactness of $(F,\overline{F})$ as a colax morphism of pseudo $\MCMnd$-algebras is the statement $(\forall n \in \N, P_n)$.

When $n = 0$ the sequence $(Y_i)_i$ can only be the empty sequence $()$ and we recall that the $0$-ary tensor product is by definition the tensor unit $I$. Thus the category $\tn{Fact}_{\overline{F}}(X,(),f)$ is discrete, and is just the set of $g : X \to I$ in $\ca V$ such that $f = \overline{F}_0F(g)$. Thus $P_0$ is equivalent to (\ref{propcase:colax-monoidal-nullary}). Since by definition $\tn{Fact}_{\overline{F}}(X,f,(Y_1,Y_2)) = \tn{Fact}_F(X,f,Y_1,Y_2)$, $P_2$ is just (\ref{propcase:colax-monoidal-binary}) and so we have shown $(\implies)$. Note also that $P_1$ is always true because $(1_X,X,f)$ is an initial object of $\tn{Fact}_{\overline{F}}(X,f,(Y))$. Thus it suffices to show that for $n \geq 2$, $P_2 \wedge P_n \implies P_{n+1}$.

By the coherence theorem for monoidal categories it suffices to consider the case where $\ca V$ and $\ca W$ are strict. Let $f : FX \to \bigotimes_{i=1}^{n+1} Y_i$ and define the categories $\ca E$ and $\ca B$ as
\[ \begin{array}{lccr} {\ca E = \tn{Fact}_{\overline{F}}(X,f,(Y_i)_i)} &&&
{\ca B = \tn{Fact}_F(X,f,\bigotimes_{i=1}^nY_i,Y_{n+1}).} \end{array} \]
By $P_2$, $\ca B$ is connected. For any object $(g,(X_i)_i,(h_i)_i)$ of $\ca E$ one has, by factoring the coherence $\overline{F}_{n+1}$ as in the commutative diagram
\[ \xygraph{!{0;(2.5,0):(0,.4)::} {FX}="p0" [r] {F\bigotimes_{i=1}^{n+1}X_i}="p1" [r(1.25)] {\bigotimes_{i=1}^{n+1}FX_i}="p2" [r] {\bigotimes_{i=1}^{n+1}Y_i}="p3" [l(1.625)d] {F(\bigotimes_{i=1}^nX_i) \tensor FX_{n+1}}="p4" "p0":"p1"^-{Fg}:"p2"^-{\overline{F}_{n+1}}:"p3"^-{\bigotimes_ih_i}:@{<-}"p4"^-{} "p1":"p4"^(.6){\overline{F}_2}:"p2"^(.4){\overline{F}_n \tensor \id}} \]
an object $(g,(\bigotimes_{i=1}^nX_i,X_{n+1}),((\bigotimes_{i=1}^nh_i)\overline{F}_n,h_{n+1}))$ of $\ca B$, this being the object map of a functor $Q : \ca E \to \ca B$.

Let $(g_1,(X_{\bullet},X_{n+1}),(h_{\bullet},h_{n+1}))$ be an object of $\ca B$. Then by $P_n$ one can factor $h_{\bullet}$ as
\[ \begin{array}{c} {FX_{\bullet} \xrightarrow{Fg_2} F\bigotimes_{i=1}^nX_i \xrightarrow{\overline{F}} \bigotimes_{i=1}^nFX_i \xrightarrow{\bigotimes_ih_i} \bigotimes_{i=1}^nY_i} \end{array} \]
and so one has $(g,(X_i)_i,(h_i)_i)$ in $\ca E$, where $g = (g_2 \tensor 1_{X_{n+1}})g_1$. Moreover one has a morphism $(g_2,1_{X_{n+1}}) : (g_1,(X_{\bullet},X_{n+1}), (h_{\bullet},h_{n+1})) \to Q(g,(X_i)_i,(h_i)_i)$ in $\ca B$. That is, for all $B \in \ca B$, there exists $E \in \ca E$ and $B \to QE$ in $\ca B$. Thus since $\ca B$ is non-empty, $\ca E$ is non-empty.

Since $\ca B$ is connected, it suffices to show that if $E_1$ and $E_2$ are objects of $\ca E$ such that there exists an arrow $QE_1 \to QE_2$ of $\ca B$, then there exists an undirected path $E_1 \to ... \leftarrow E_2$ in $\ca E$. Denoting
\[ \begin{array}{lccr} {E_1 = (g_1,(X_{1,i})_i,(h_{1,i})_i)} &&& {E_2 = (g_2,(X_{2,i})_i,(h_{2,i})_i)} \end{array} \]
the data of $QE_1 \to QE_2$ amounts to $a$ and $b$ as in
\[ \xygraph{!{0;(1.25,0):(0,.7)::} {FX}="p0" [u(2)r] {F\bigotimes_{i=1}^{n+1}X_{1,i}}="p1" [r(3)] {F(\bigotimes_{i=1}^nX_{1,i}) \tensor FX_{1,n+1}}="p2" [dr] {\bigotimes_{i=1}^{n+1}FX_{1,i}}="p3" [dr] {\bigotimes_{i=1}^{n+1}Y_i}="p4" [dl] {\bigotimes_{i=1}^{n+1}FX_{2,i}}="p5" [dl] {F(\bigotimes_{i=1}^nX_{2,i}) \tensor FX_{2,n+1}}="p6" [l(3)] {F\bigotimes_{i=1}^{n+1}X_{2,i}}="p7" "p0":"p1"^-{Fg_1}:"p2"^-{\overline{F}_2}:"p3"^(.6){\overline{F}_n \tensor \id}:"p4"^(.6){\bigotimes_ih_{1,i}}:@{<-}"p5"^(.4){\bigotimes_ih_{2,i}}:@{<-}"p6"^(.4){\overline{F}_n \tensor \id}:@{<-}"p7"^-{\overline{F}_2}:@{<-}"p0"^-{Fg_2}
"p1":"p7"^-{F(a \tensor b)} "p2":"p6"_-{Fa \tensor Fb}} \]
such that
\[ \begin{array}{lcccr} {(a \tensor b)g_1 = g_2} && {\bigotimes_{i=1^n}h_{2,i}\overline{F}_nF(a) = \bigotimes_{i=1}^nh_{1,i}\overline{F}_n} && {h_{2,n+1}F(b) = h_{1,n+1}} \end{array} \]
so that in particular, the above diagram commutes. Denoting by $f_2$ the common morphism $F\bigotimes_{i=1}^nX_{1,i} \to \bigotimes_{i=1}^nY_i$, the category $\tn{Fact}_{\overline{F}}(\bigotimes_{i=1}^nX_{1,i},f_2,(Y_i)_i)$ is connected by $P_n$. Thus one has $m \in \N$ and for $1 \leq i \leq n$ an undirected path
\[ X_{1,i} \xrightarrow{\delta_{1,i}} Z_{1,i} \leftarrow {...} \rightarrow Z_{m,i} \xleftarrow{\delta_{m+1,i}} X_{2,i} \]
in $\ca V$, morphisms $c_j : F\bigotimes_{i=1}^nX_{1,i} \to F\bigotimes_{i=1}^nZ_{j,i}$ and $k_{j,i} : FZ_{j,i} \to Y_i$ of $\ca W$ assembling together to form an undirected path
\[ \xygraph{!{0;(2,0):(0,.5)::} {(1,(X_{1,i})_i,h_{1,i})}="p0" [d] {(c_1,(Z_{1,i}),(k_{1,i})_i)}="p1" [r] {...}="p2" [r] {(c_m,(Z_{m,i}),(k_{m,i})_i)}="p3" [u] {(a,(X_{2,i})_i,(h_{2,i})_i)}="p4" "p0":"p1"_-{(\delta_{1,i})_i}:@{<-}"p2"^-{(\delta_{2,i})_i}:"p3"^-{(\delta_{m,i})_i}:@{<-}"p4"_-{(\delta_{m+1,i})_i}} \]
in $\tn{Fact}_{\overline{F}}(\bigotimes_{i=1}^nX_{1,i},f_2,(Y_i)_i)$. Extending the definition of $Z_{j,i}$, $k_{j,i}$, and $\delta_{j,i}$ as follows
\[ \begin{array}{rcl}
{Z_{j,i}} & = & {\left\{
\begin{array}{lll}
{X_{1,n+1}} && {i = n+1} \\
{X_{2,i}} && {j=m+1, i \leq n}
\end{array}
\right.}
\end{array} \]
\[ \begin{array}{rcl}
{k_{j,i}} & = & {\left\{
\begin{array}{lll}
{h_{1,n+1}} && {i = n+1} \\
{h_{2,i}} && {j=m+1, i \leq n}
\end{array}
\right.}
\end{array} \]
\[ \begin{array}{rcl}
{\delta_{j,i}} & = & {\left\{
\begin{array}{lll}
{1_{X_{1,n+1}}} && {j \leq m+1, i = n+1} \\
{1_{X_{2,i}}} && {j = m+2, i \leq n} \\
{b} && {j = m+2, i \leq n+1}
\end{array}
\right.}
\end{array} \]
and defining
\[ \begin{array}{rcl}
{d_j} & = & {\left\{
\begin{array}{lll}
{(c_1 \tensor 1_{X_{1,n+1}})g_1} && {j \leq m} \\
{(a \tensor 1_{X_{1,n+1}})g_1} && {j = m+1}
\end{array}
\right.}
\end{array} \]
the information at hand assembles together to form an undirected path
\[ \xygraph{!{0;(1.9,0):(0,.5)::} {(g_1,(X_{1,i})_i,(h_{1,i})_i)}="p0" [d] {(d_1,(Z_{1,i}),(k_{1,i})_i)}="p1" [r] {...}="p2" [r] {(d_m,(Z_{m,i}),(k_{m,i})_i)}="p3" [u] {(d_{m+1},(Z_{m+1,i}),(k_{m+1,i})_i)}="p4" [r(2)] {(g_2,(X_{2,i})_i,(h_{2,i})_i)}="p5" "p0":"p1"_-{(\delta_{1,i})_i}:@{<-}"p2"^-{(\delta_{2,i})_i}:"p3"^-{(\delta_{m,i})_i}:@{<-}"p4"_-{(\delta_{m+1,i})_i}:"p5"^-{(\delta_{m+2,i})_i}} \]
from $E_1$ to $E_2$ in $\ca E$ as required.
\end{proof}
\begin{exam}\label{exam:cartesian-comonoidal->exact}
When $\ca V$ and $\ca W$ are cartesian monoidal any functor $F : \ca V \to \ca W$ is colax monoidal in a unique way, with $\overline{F}$ being provided by the product obstruction maps. Condition \ref{propcase:colax-monoidal-nullary} of Proposition \ref{prop:exact-colax-monoidal-functors} follows immediately in this case because the units of $\ca V$ and $\ca W$ are terminal. Condition \ref{propcase:colax-monoidal-binary} of Proposition \ref{prop:exact-colax-monoidal-functors} follows in this case because
\[ FX \xrightarrow{F\nabla_X} F(X \times X) \xrightarrow{\overline{F}_2} FX \times FX \xrightarrow{f\tn{pr}_1 \times f\tn{pr}_2} Y_1 \times Y_2 \]
is initial in $\tn{Fact}_F(X,f,Y_1,Y_2)$.
\end{exam}
\begin{exam}\label{exam:counterex-exactness-M}
Given a monoidal category $\ca V$, taking $F$ in Proposition \ref{prop:exact-colax-monoidal-functors} to be the unique $\ca V \to 1$, condition \ref{propcase:colax-monoidal-nullary} says exactly that $\ca V$'s unit must be terminal. Thus for any $\ca V$ whose unit is not terminal, the unique (strict monoidal) functor $\ca V \to 1$ is not exact. In particular this applies to any $\ca V$ of the form $\MCMnd\ca A$ where $\ca A$ is non-empty.
\end{exam}
\begin{exam}\label{exam:exact-comonoids}
A colax monoidal functor $F : 1 \to \ca W$ is the same thing as a comonoid $(C,\varepsilon,\delta)$ in $\ca W$, with $C$ being the effect of $F$ on the unique object of $1$, the counit $\varepsilon : C \to I$ being the unit coherence $\overline{F}_0$ and $\delta : C \to C \tensor C$ being the unique component of $\overline{F}_2$. Condition \ref{propcase:colax-monoidal-nullary} of Proposition \ref{prop:exact-colax-monoidal-functors} in this case says that the only morphism $C \to I$ in $\ca W$ is the counit $\varepsilon$ of the comonoid. Condition \ref{propcase:colax-monoidal-binary} of Proposition \ref{prop:exact-colax-monoidal-functors} in this case says that any $f : C \to Y_1 \tensor Y_2$ factors as
\[ C \xrightarrow{\delta} C \tensor C \xrightarrow{h_1 \tensor h_2} Y_1 \tensor Y_2 \]
for unique morphisms $h_1$ and $h_2$. Taking $\ca W$ to be the category $\tnb{Vect}_k$ of vector spaces over a field $k$ with its usual tensor product (that classifies bilinear maps), the vector space $k[x]$ of polynomials in 1-variable admits a simple well-known comonoid structure, with counit and comultiplication defined on the basis $\{1,x,x^2,...\}$ by
\[ \begin{array}{lccr}
{\begin{array}{rcl}
{\varepsilon(x^n)} & = & {\left\{
\begin{array}{lll}
1 && {n = 0} \\
0 && {n \neq 0}
\end{array}
\right.}
\end{array}}
&&&
{\begin{array}{rcl}
{\delta(x^n)} & = & {\sum\limits_{i=0}^n x^i \tensor x^{n-i}.}
\end{array}} \end{array} \]
Since there is more than one linear map $k[x] \to k$, any comonoid structure on $k[x]$ will fail to satisfy condition \ref{propcase:colax-monoidal-nullary}, giving more counterexamples to exactness for the 2-monad $\MCMnd$. We leave to the reader the routine exercise of showing that Condition \ref{propcase:colax-monoidal-binary} is also violated for the above comonoid structure on $k[x]$.
\end{exam}
A complete understanding of when a symmetric (resp. braided) colax monoidal functor is exact as a colax morphism of pseudo $\SMCMnd$-algebras (resp. $\BMCMnd$-algebras) is obtained from Proposition \ref{prop:exact-colax-monoidal-functors} and
\begin{prop}\label{prop:exactness-for-Sm-and-Bm}
\begin{enumerate}
\item A symmetric colax monoidal functor $F : \ca V \to \ca W$ between symmetric monoidal categories is exact as a colax morphism of pseudo $\SMCMnd$-algebras iff it is exact as a colax morphism of pseudo $\MCMnd$-algebras.
\item A braided colax monoidal functor $F : \ca V \to \ca W$ between braided monoidal categories is exact as a colax morphism of pseudo $\BMCMnd$-algebras iff it is exact as a colax morphism of pseudo $\MCMnd$-algebras.
\end{enumerate}
\end{prop}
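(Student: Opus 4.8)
The plan is to read exactness off the combinatorial criterion of Lemma~\ref{lem:combinatorial-exactness-Cat-fact-version} in each of the three cases, and then to produce a comparison equivalence between the resulting factorisation categories. I would treat (1) and (2) uniformly, writing $T$ for $\SMCMnd$ in case (1) and for $\BMCMnd$ in case (2), and calling the \emph{labelling} of a morphism of $T\ca V$ its underlying permutation (case (1)) or braid (case (2)). The square whose exactness says that $(F,\overline{F})$ is exact as a colax $T$-morphism has exactly the shape of the square appearing in the proof of Proposition~\ref{prop:exact-colax-monoidal-functors} for $\MCMnd$: namely $TF$ along the top, $\bigotimes$ down each side and $F$ along the bottom, with coherence cell $\overline{F}$. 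The only difference from the $\MCMnd$ case is that a morphism $(Z_i)_{i=1}^m\to(W_j)_{j=1}^m$ of $T\ca V$ now carries a labelling --- in particular a bijection $\sigma$ of index sets --- together with component morphisms $Z_i\to W_{\sigma(i)}$ of $\ca V$. First I would record three elementary facts about labellings: labellings multiply under composition in $T\ca V$ and $T\ca W$; $TF$ preserves labellings; and a morphism of $T\ca V$ whose labelling is trivial (identity permutation, resp.\ identity braid) is exactly a morphism of $\MCMnd\ca V$, i.e.\ a family of component morphisms. I would also note that $\bigotimes_{\ca V}$ carries a morphism of $T\ca V$ with identity components to the corresponding symmetry (resp.\ braiding) isomorphism of $\ca V$.

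By Lemma~\ref{lem:combinatorial-exactness-Cat-fact-version}, $(F,\overline{F})$ is exact for $T$ (resp.\ for $\MCMnd$) iff for every $X\in\ca V$, every finite sequence $(Y_j)_{j=1}^m$ in $\ca W$ and every $\gamma:FX\to\bigotimes_j Y_j$ the category $\tn{Fact}^T(X,\gamma,(Y_j)_j)$ (resp.\ $\tn{Fact}_{\overline{F}}(X,\gamma,(Y_j)_j)$, as in the proof of Proposition~\ref{prop:exact-colax-monoidal-functors}) is connected. Unwinding Lemma~\ref{lem:combinatorial-exactness-Cat-fact-version}, an object of $\tn{Fact}^T(X,\gamma,(Y_j)_j)$ is a triple $(g,(Z_i)_{i=1}^m,\beta)$ with $g:X\to\bigotimes_i Z_i$ in $\ca V$ and $\beta:(FZ_i)_i\to(Y_j)_j$ a morphism of $T\ca W$, subject to the factorisation constraint, and a morphism is a morphism $\delta$ of $T\ca V$ on the underlying sequences compatible with the $g$'s and $\beta$'s. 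By the third fact above, the trivially-labelled sub-data is precisely $\tn{Fact}_{\overline{F}}(X,\gamma,(Y_j)_j)$, which yields a functor $\iota:\tn{Fact}_{\overline{F}}(X,\gamma,(Y_j)_j)\to\tn{Fact}^T(X,\gamma,(Y_j)_j)$. The crux is to show $\iota$ is an equivalence. For fullness, a morphism $\delta$ of $\tn{Fact}^T$ between objects in the image of $\iota$ satisfies $\beta_1=\beta_2\circ TF(\delta)$ with $\beta_1,\beta_2$ trivially labelled, so comparing labellings forces $\delta$ to be trivially labelled, hence to come from $\tn{Fact}_{\overline{F}}$; faithfulness is immediate. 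For essential surjectivity, given $(g,(Z_i)_i,\beta)$ with $\beta$ of labelling $\ell$ and underlying bijection $\sigma$, the labelling $\ell$ determines an isomorphism $\delta$ of $T\ca V$ from $(Z_i)_i$ to $(Z_{\sigma^{-1}(j)})_j$ with identity components, invertible since the index category underlying $T$ is a groupoid; transporting $(g,(Z_i)_i,\beta)$ along $\delta$ --- replace $g$ by $\bigotimes_{\ca V}(\delta)\circ g$ and $\beta$ by $\beta\circ TF(\delta)^{-1}$ --- gives an isomorphic object whose $\beta$-part is trivially labelled, hence lies in the image of $\iota$.

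Having established that $\iota$ is an equivalence, I would conclude: equivalent categories have the same set of connected components, so $\tn{Fact}^T(X,\gamma,(Y_j)_j)$ is connected iff $\tn{Fact}_{\overline{F}}(X,\gamma,(Y_j)_j)$ is, and running over all $X$, $(Y_j)_j$ and $\gamma$ and re-invoking Lemma~\ref{lem:combinatorial-exactness-Cat-fact-version} shows exactness for $T$ is equivalent to exactness for $\MCMnd$. Nothing distinguished permutations from braids beyond the bookkeeping, so this proves (1) and (2) at once. I do not expect a genuine obstacle: the only care needed is in tracking labellings through composition and through $TF$, and the real content is the single observation that, because the index category underlying $T$ is a \emph{groupoid}, the labelling carried by $\beta$ can always be absorbed into the component $g$, which collapses the $T$-factorisation categories onto the $\MCMnd$-ones up to equivalence.
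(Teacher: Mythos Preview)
Your proposal is correct and follows essentially the same approach as the paper: both arguments invoke Lemma~\ref{lem:combinatorial-exactness-Cat-fact-version}, identify the $\MCMnd$-factorisation category with the full subcategory of trivially-labelled objects inside the $T$-factorisation category, and then establish essential surjectivity by using the labelling $\rho$ (or $\ell$ in your notation) to build an isomorphism $(\rho,(1_{Z_i})_i)$ into the image. The only cosmetic differences are that you package $(\rho,(h_i)_i)$ into a single morphism $\beta$ of $T\ca W$ and treat the symmetric and braided cases in one pass, and that you spell out the fullness argument explicitly (via the labelling equation $\ell_1=\ell_2\cdot\ell_\delta$ in a group), whereas the paper simply reads it off from the condition $\rho_2\rho_3=\rho_1$ in its description of morphisms.
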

\begin{proof}
We give the proof only in the symmetric case since the proof in the braided case is almost identical. We denote by $J_{\ca V} : \MCMnd\ca V \to \SMCMnd\ca V$ the identity on objects inclusion, which is the $\ca V$-component of a morphism of monads $\MCMnd \to \SMCMnd$. Our task is to verify that the square on the left, which is the colax $\SMCMnd$-morphism coherence for $F$,
\[ \xygraph{{\xybox{\xygraph{!{0;(1.5,0):(0,.6667)::} {\SMCMnd\ca V}="p0" [r] {\SMCMnd\ca W}="p1" [d] {\ca W}="p2" [l] {\ca V}="p3" "p0":"p1"^-{\SMCMnd F}:"p2"^-{\bigotimes}:@{<-}"p3"^-{F}:@{<-}"p0"^-{\bigotimes} "p0" [d(.55)r(.35)] :@{=>}[r(.3)]^-{\overline{F}}}}}
[r(4)]
{\xybox{\xygraph{!{0;(1.5,0):(0,.6667)::} {\MCMnd\ca V}="p0" [r] {\MCMnd\ca W}="p1" [d] {\ca W}="p2" [l] {\ca V}="p3" "p0":"p1"^-{\MCMnd F}:"p2"^-{\bigotimes}:@{<-}"p3"^-{F}:@{<-}"p0"^-{\bigotimes} "p0" [d(.55)r(.35)] :@{=>}[r(.3)]^-{\overline{F}J_{\ca V}}}}}} \]
is exact iff the square on the right is exact, because the square on the right is the colax $\MCMnd$-morphism coherence 2-cell for $F$. By Lemma \ref{lem:combinatorial-exactness-Cat-fact-version}, for $X \in \ca V$, $n \in \N$, $Y_1,...,Y_n \in \ca W$ and $f : FX \to \bigotimes_{i=1}^n Y_i$, it suffices to show that
\[ \tn{Fact}_{\overline{F}J_{\ca V}}(X,f,(Y_i)_i) \catequiv \tn{Fact}_{\overline{F}}(X,f,(Y_i)_i).  \]

In the proof of Proposition \ref{prop:exact-colax-monoidal-functors} we unpacked the category $\tn{Fact}_{\overline{F}J_{\ca V}}(X,f,(Y_i)_i)$ in explicit terms, and we now proceed to the same for $\tn{Fact}_{\overline{F}}(X,f,(Y_i)_i)$. An object of this last category is a 4-tuple $(g,(Z_i)_i,\rho,(h_i)_i)$ where $Z_i \in \ca V$, $g : X \to \bigotimes_{i=1}^nZ_i$, $\rho \in \Sigma_n$ and $h_i : FZ_i \to Y_{\rho i}$, providing a factorisation
\[ FX \xrightarrow{Fg} F\bigotimes_{i=1}^nZ_i \xrightarrow{\overline{F}_n} \bigotimes_{i=1}^nFZ_i \xrightarrow{\bigotimes(\rho,(h_i)_i)} \bigotimes_{i=1}^nY_i  \]
of $f$. A morphism $(g_1,(Z_{1,i})_i,\rho,(h_{1,i})_i) \to (g_2,(Z_{2,i})_i,\rho,(h_{2,i})_i)$ is a pair $(\rho_3,(\delta_i)_i)$ where $\rho_3 \in \Sigma_n$ and $\delta_i : Z_{1,i} \to Z_{2,\rho_3i}$ such that
\[ \begin{array}{lcccr} {g_2 = \bigotimes(\rho_3,(\delta_i)_i)g_1} && {\rho_2\rho_3 = \rho_1} && {h_{1,i} = h_{2,\rho_3i}F(\delta_i).} \end{array} \]
With this explicit description in hand it is clear that $\tn{Fact}_{\overline{F}J_{\ca V}}(X,f,(Y_i)_i)$ may be identified as the full subcategory of $\tn{Fact}_{\overline{F}}(X,f,(Y_i)_i)$ consisting of objects of the form $(g,(Z_i)_i,1_n,(h_i)_i)$. Since for any $(g,(Z_i)_i,\rho,(h_i)_i)$ one has an isomorphism
\[ \begin{array}{c} {(\rho,(1_{Z_i})_i) : (g,(Z_i)_i,\rho,(h_i)_i) \longrightarrow (\bigotimes(\rho,(1_{Z_i})_i)g,(Z_{\rho^{-1}i})_i,1_n,(h_{\rho^{-1}i})_i)} \end{array} \]
the inclusion of this subcategory is essentially surjective on objects.
\end{proof}
By Proposition \ref{prop:exactness-for-Sm-and-Bm} and Examples \ref{exam:cartesian-comonoidal->exact}-\ref{exam:exact-comonoids} one has the following examples and counterexamples of exactness for the 2-monad $\SMCMnd$. For each of these there is an evident analogue for the 2-monad $\BMCMnd$.
\begin{exams}\label{exams:exactness-for-Sm-and-Bm}
\begin{enumerate}
\item Symmetric monoidal functors between cartesian monoidal categories are exact as colax $\SMCMnd$-morphisms.
\item When the unit of a symmetric $\ca V$ is not terminal, the unique $\ca V \to 1$ is not exact as a (strict) $\SMCMnd$-algebra morphism.
\item The comonoid $k[x]$ in $\tnb{Vect}_k$ recalled in Example \ref{exam:exact-comonoids} is cocommutative, and a cocommutative comonoid in a symmetric monoidal category $\ca W$ may be identified as a symmetric colax monoidal functor $1 \to \ca W$. Identifying $k[x]$ in this way gives another example of a non-exact colax morphism of pseudo $\SMCMnd$-algebras.
\end{enumerate}
\end{exams}

\subsection{Diexact naturality squares}
\label{ssec:exact-nat}
It turns out that the naturality squares of the units and multiplications of the 2-monads arising in many of the situations of interest for us, are exact in both possible senses. To avoid confusion on this last point we make
\begin{conv}\label{conv:commuting-exactness}
A commutative square in a 2-category $\ca K$ with comma objects, as on the left
\[ \xygraph{!{0;(2.5,0):} {\xybox{\xygraph{{P}="tl" [r] {B}="tr" [d] {C}="br" [l] {A}="bl"
"tl":"tr"^-{q}:"br"^{g}:@{<-}"bl"^-{f}:@{<-}"tl"^{p}}}} [r]
{\xybox{\xygraph{{P}="tl" [r] {B}="tr" [d] {C}="br" [l] {A}="bl"
"tl":"tr"^-{q}:"br"^{g}:@{<-}"bl"^-{f}:@{<-}"tl"^{p} "tl" [d(.5)r(.35)] :@{=>}[r(.3)]^-{\id}}}} [r]
{\xybox{\xygraph{{P}="tl" [r] {A}="tr" [d] {C}="br" [l] {B}="bl"
"tl":"tr"^-{p}:"br"^{f}:@{<-}"bl"^-{g}:@{<-}"tl"^{q}}}} [r]
{\xybox{\xygraph{{P}="tl" [r] {B}="tr" [d] {C}="br" [l] {A}="bl"
"tl":"tr"^-{q}:"br"^{g}:@{<-}"bl"^-{f}:@{<-}"tl"^{p}:@{}"br"|-*{\iso}}}}} \]
is said to be exact when the identity 2-cell second from the left is exact in the sense of Definition \ref{def:ExactSquare}. In general this is different from saying that the commutative square second from the right is exact, this being the exactness of the evident square in which the identity is oriented the other way as $\id : gq \to fp$. We adopt the same convention for squares as on the right in the previous display, which commute up to isomorphism when we wish to avoid naming the isomorphism under consideration.
\end{conv}
\noindent In this section we give general conditions under which diexact 2-natural transformations arise, this applying in particular to the units and multiplications of $\MCMnd$, $\SMCMnd$ and $\BMCMnd$. An interesting consequence of these considerations is that algebraic cocompleteness can be transferred along polynomial adjunctions of 2-monads as we see in Proposition \ref{prop:transfer-algebraic-cocompleteness}.
\begin{defn}\label{def:exact-nat}
Let $S$ and $T : \ca K \to \ca L$ be 2-functors, $\phi : S \to T$ be a 2-natural transformation and suppose that $\ca L$ has comma objects. Then $\phi$ is \emph{exact} when for all $f : A \to B$ in $\ca K$, the square on the left
\[ \xygraph{{\xybox{\xygraph{!{0;(1.25,0):(0,.8)::} {SA}="p0" [r] {SB}="p1" [d] {TB}="p2" [l] {TA}="p3" "p0":"p1"^-{Sf}:"p2"^-{\phi_B}:@{<-}"p3"^-{Tf}:@{<-}"p0"^-{\phi_A}}}}
[r(4.5)]
{\xybox{\xygraph{!{0;(1.25,0):(0,.8)::} {SA}="p0" [r] {TA}="p1" [d] {TB}="p2" [l] {SB}="p3" "p0":"p1"^-{\phi_A}:"p2"^-{Tf}:@{<-}"p3"^-{\phi_B}:@{<-}"p0"^-{Sf}}}}} \]
is exact in $\ca L$, and \emph{diexact} when both the above squares are exact for all $f$.
\end{defn}
\begin{rem}\label{rem:exact-mu}
Let $\ca K$ be a 2-category with comma objects. To say that the multiplication $\mu^T$ for a 2-monad $(\ca K,T)$ is an exact 2-natural transformation in the sense of Definition \ref{def:exact-nat}, is to say that every free $T$-algebra morphism (that is any strict morphism of the form $Tf$) is exact in the sense of Definition \ref{def:exact-T-morphism}.
\end{rem}
For a 2-functor $T:\ca K \to \ca L$ and an object $X \in \ca K$, we denote by $T_X : \ca K/X \to \ca L/TX$ the 2-functor given on objects by applying $T$ to morphisms into $X$. A \emph{local right adjoint} \cite{Weber-Fam2fun} $\ca K \to \ca L$ is a 2-functor $T:\ca K \to \ca L$ equipped with a left adjoint to $T_X$ for all $X \in \ca K$. When $\ca K$ has a terminal object $1$, to exhibit $T$ as a local right adjoint, it suffices to give a left adjoint to $T_1$.
\begin{defn}\label{def:opfamilial}
\cite{Weber-Fam2fun, Weber-PolynomialFunctors}
Suppose that $\ca K$ and $\ca L$ have comma objects and $\ca K$ has a terminal object $1$. 
\begin{enumerate}
\item An \emph{opfamilial 2-functor} $T : \ca K \to \ca L$ is a local right adjoint equipped with $\overline{T}_1 : \ca K \to \Algs {\Psi_{T1}}$ such that $U^{\Psi_{T1}}\overline{T}_1 = T_1$.
\item A 2-natural transformation $\phi : S \to T$ between opfamilial 2-functors is \emph{opfamilial} when its naturality squares are pullbacks, and when for all $X \in \ca K$, $\alpha$'s naturality square with respect to the unique map $t_X : X \to 1$ is a morphism of split opfibrations $(\alpha_X,\alpha_1) : St_X \to Tt_X$.
\item An \emph{opfamilial 2-monad} is a 2-monad whose underlying endo-2-functor, unit and multiplication are opfamilial.
\end{enumerate}
\end{defn}
By Proposition 4.3.3 and Lemma 4.3.5 of \cite{Weber-PolynomialFunctors}, opfamilial 2-functors and 2-natural transformations are those 2-functors and 2-natural transformations which are compatible with the theory of opfibrations. In particular, if $f : A \to B$ has the structure of a split opfibration in $\ca K$, and $S$, $T : \ca K \to \ca L$ and $\phi : S \to T$ are opfamilial, then $Sf$ and $Tf$ have the structure of split opfibrations in $\ca L$, and the naturality square of $\phi$ at $f$ gives a morphism $(\phi_A,\phi_B) : Sf \to Tf$ of split opfibrations. Dually \emph{familial} 2-functors are those that are compatible with the theory of fibrations. By Proposition 7.11 of \cite{Weber-Fam2fun} opfamilial 2-functors $T$ such that $T1$ is groupoidal{\footnotemark{\footnotetext{Recall that an object $X$ of a 2-category is \emph{groupoidal} (resp. \emph{discrete}) when for all $Y \in \ca K$, $\ca K(Y,X)$ is a groupoid (resp. discrete).}} are also familial, and by Theorem 7.12 of \cite{Weber-Fam2fun} such 2-functors are particularly well-behaved since they preserve groupoidal objects, and comma objects up to equivalence.

The result of applying $\PFun{\Cat} : \Polyc{\Cat} \to \TwoCAT$ to
\[ \xygraph{{I}="p0" [r] {E}="p1" [r] {B}="p2" [r] {J}="p3" "p0":@{<-}"p1"^-{s}:"p2"^-{p}:"p3"^-{t}} \]
is opfamilial (resp. familial) when $I$ is discrete, $p$ has the structure of a split fibration (resp. split opfibration), and $t$ has the structure of a split opfibration (resp. split fibration), by \cite{Weber-PolynomialFunctors} Theorem 4.4.5. Note also that the value at $1$ of the associated polynomial functor is just $t : B \to J$ as an object of $\Cat/J$. Theorem 4.4.5 of \cite{Weber-PolynomialFunctors} also provides sufficient conditions on a morphism of polynomial monads in $\Cat$ to give rise to a familial or opfamilial 2-natural transformation. As an application, one finds that the 2-monads $\MCMnd$, $\SMCMnd$ and $\BMCMnd$ are familial and opfamilial, and $\FPMnd$ is opfamilial but not familial. See Section 5 of \cite{Weber-PolynomialFunctors} for further discussion.
\begin{prop}\label{prop:diexact-bipb-natsq}
Let $S$ and $T: \ca A \to \ca B$ be 2-functors between finitely complete 2-categories and $\phi:S \to T$ be a 2-natural transformation between them. If
\begin{enumerate}
\item $T$ is opfamilial,
\item $T1$ is groupoidal, and
\item $\phi$'s naturality squares are pullbacks
\end{enumerate}
then $\phi$'s naturality squares are also bipullbacks and $\phi$ is diexact.
\end{prop}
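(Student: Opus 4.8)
The plan is to deduce both conclusions from Proposition~\ref{prop:exact-pullbacks}(\ref{propcase:ex-pb}) once we know that for every object $B$ the component $\phi_B\colon SB\to TB$ is at the same time a fibration and an opfibration in $\ca B$. Granting this: for $f\colon A\to B$ the naturality square of $\phi$ at $f$ is a pullback by hypothesis~(3), and its right-hand vertical leg is $\phi_B$; so, $\phi_B$ being a fibration, Proposition~\ref{prop:exact-pullbacks}(\ref{propcase:ex-pb}) makes this square exact in the orientation of the left-hand square of Definition~\ref{def:exact-nat}, and, $\phi_B$ being an opfibration, the same result makes it exact in the orientation of the right-hand square; thus $\phi$ is diexact. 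Moreover a fibration is in particular an isofibration and a pullback one of whose legs is an isofibration is a bipullback, so the naturality squares are bipullbacks as well.

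So the work is to equip each $\phi_B$ with this structure. First I would use that $\ca B$, being finitely complete, has cotensors with the free-standing isomorphism $\mathbb{I}$, and hence admits the usual factorisation of any morphism as an equivalence followed by an isofibration; applying it to $\phi_1\colon S1\to T1$ gives $S1\xrightarrow{e}P\xrightarrow{\pi}T1$ with $e$ an equivalence and $\pi$ an isofibration. The hypothesis that $T1$ is groupoidal is what I would exploit next: since every 2-cell into $T1$ is invertible, the iso-lifting property of $\pi$ supplies lifts of every reindexing 2-cell on either side, and these lifts, being isomorphisms, are simultaneously $\pi$-cartesian and $\pi$-opcartesian; hence $\pi$ is at once a fibration and an opfibration. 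Now for each $B$ the naturality square of $\phi$ at $t_B\colon B\to 1$ is a pullback, so $\phi_B$ is the pullback of $\phi_1$ along $Tt_B$; composing with the factorisation of $\phi_1$ and pasting pullbacks exhibits $\phi_B$, up to an equivalence in $\ca B/TB$, as the pullback $Tt_B^{\ast}(\pi)$ of $\pi$ along $Tt_B$. Pullbacks of fibrations are fibrations and pullbacks of opfibrations are opfibrations, and both properties are stable under equivalence in the slice $\ca B/TB$ (the relevant pseudo-algebra structure transports along such equivalences); therefore $\phi_B$ is a fibration and an opfibration. If one prefers not to transport structure, one keeps $Tt_B^{\ast}(\pi)$ throughout and at the end carries exactness and the bipullback property back along the equivalence via Lemma~\ref{lem:precomp-exact-equiv}.

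The opfamilial hypothesis on $T$, together with $T1$ groupoidal, enters as follows: by \cite{Weber-Fam2fun} Proposition~7.11 $T$ is also familial, and by \cite{Weber-Fam2fun} Theorem~7.12 it preserves groupoidal objects and comma objects up to equivalence; in particular each $Tt_B\colon TB\to T1$ carries the structure of a split fibration and of a split opfibration, reconfirming that the maps one pulls back along above are isofibrations and providing convenient bookkeeping, even though the factorisation argument does not formally require it. The step I expect to be the real obstacle is the one where groupoidality is used: running, in an arbitrary finitely complete 2-category rather than in $\Cat$, the argument that an isofibration into a groupoidal object is both a fibration and an opfibration, and nailing down the stability of these notions under equivalence in a slice; the other ingredients --- the equivalence/isofibration factorisation from finite completeness, a fibration being an isofibration, a pullback along an isofibration being a bipullback --- are standard and should be cited rather than reproved.
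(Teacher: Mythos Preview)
Your overall strategy---reduce to showing each $\phi_B$ is both a fibration and an opfibration, then apply Proposition~\ref{prop:exact-pullbacks}(\ref{propcase:ex-pb})---differs from the paper's, which works with bi-fibrations and bi-opfibrations and invokes Proposition~\ref{prop:exact-pullbacks}(\ref{propcase:ex-bipb}) instead. The paper's route is shorter: since $T1$ is groupoidal, \emph{any} morphism into it is automatically a bi-fibration and a bi-opfibration (for a 2-cell $\beta$ into $\phi_1 x$, the identity on $x$ is a cartesian lift up to the necessarily invertible $\beta$), so $\phi_1$ is one with no factorisation needed. Opfamiliality of $T$ is used only to show $Tt_X$ is an isofibration (\cite{Weber-Fam2fun} Theorem~6.2), whence the pullback naturality square at $t_X$ is a bipullback; cancellability of bipullbacks then handles general $f$, and bipullback-stability of bi-(op)fibrations transfers the structure from $\phi_1$ to every $\phi_X$.

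There is a genuine gap in your execution, located exactly where you claim opfamiliality is dispensable. After factoring $\phi_1 = \pi e$ you assert that ``pasting pullbacks exhibits $\phi_B$, up to an equivalence in $\ca B/TB$, as $Tt_B^*(\pi)$''. But the induced comparison $u : SB \to Tt_B^*(P)$ is the pullback of $e$ along the projection $Tt_B^*(P) \to P$, and pullbacks of equivalences are not equivalences in general. Concretely, the pseudo-inverse $r = \mathrm{pr}_1 : P \to S1$ of $e$ satisfies $\phi_1 r \cong \pi$ only up to the tautological isomorphism living in $P$, not on the nose, so $e$ is not an equivalence in the strict slice $\ca B/T1$ and you cannot simply appeal to 2-functoriality of $Tt_B^*$ on strict slices. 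What makes $u$ an equivalence is that $Tt_B^*(P) \to P$ is an isofibration, which it is because it is a pullback of $Tt_B$, and $Tt_B$ is an isofibration precisely because $T$ is opfamilial. So opfamiliality is essential at the very step you labelled optional. With that correction your argument can be completed, but it then rests on the same key fact ($Tt_B$ an isofibration) as the paper's, while taking a longer detour through strict fibrations, factorisation, and transport of pseudo-algebra structure along an equivalence in the slice.
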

\begin{proof}
By Theorem 6.2 of \cite{Weber-Fam2fun} $T$ preserves isofibrations. For any $X \in \ca A$ the unique morphism $t_X:X \to 1$ is an isofibration. Since the square on the left
\[ \xygraph{{\xybox{\xygraph{!{0;(1.25,0):(0,.8)::} {SX}="tl" [r] {S1}="tr" [d] {T1}="br" [l] {TX}="bl" "tl":"tr"^-{St_X}:"br"^-{\phi_1}:@{<-}"bl"^-{Tt_X}:@{<-}"tl"^-{\phi_X}}}} [r(4)]
{\xybox{\xygraph{!{0;(1.25,0):(0,.8)::} {SX}="p1" [r] {SY}="p2" [r] {S1}="p3" [d] {T1}="p4" [l] {TY}="p5" [l] {TX}="p6"
"p1":"p2"^-{Sf}:"p3"^-{St_Y}:"p4"^-{\phi_1}:@{<-}"p5"^-{Tt_Y}:@{<-}"p6"^-{Tf}:@{<-}"p1"^-{\phi_X} "p2":"p5"^{\phi_Y}}}}} \]
is a pullback and $Tt_X$ is an isofibration, this square is also a bipullback by \cite{Weber-Fam2fun} Example 3.9. For a general morphism $f:X \to Y$ one uses the cancellability of bipullbacks (see \cite{Weber-Fam2fun} Proposition 3.10) in the context of the diagram on the right in the previous display. Since $T1$ groupoidal, the component $\phi_1 : S1 \to T1$ is a bi-fibration and a bi-opfibration. Since bi-fibrations and bi-opfibrations are stable under bipullback, and the square on the left in the previous display is a bipullback, any component $\phi_X$ of $\phi$ is a bi-fibration and a bi-opfibration. Thus the naturality squares of $\phi$ are exact in both possible senses (of Convention \ref{conv:commuting-exactness}) by Proposition \ref{prop:exact-pullbacks}(\ref{propcase:ex-bipb}).
\end{proof}
In many situations when we wish to apply Theorem \ref{thm:main}, a further condition is satisfied, namely that $B_T$ is a groupoid. In that case by the following corollary, $\mu^T$ is diexact. This applies in particular when $T = \MCMnd$, $\SMCMnd$ or $\BMCMnd$.
\begin{cor}\label{cor:mu^T-diexact-in-main-thm}
If $(\Cat/I,T)$ is a polynomial 2-monad in which
\begin{enumerate}
\item $I$ is discrete,
\item $B_T$ is a groupoid, and
\item $p_T$ has the structure of a split fibration,
\end{enumerate}
then $\mu^T$ is diexact.
\end{cor}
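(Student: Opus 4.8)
The plan is to deduce the corollary directly from Proposition~\ref{prop:diexact-bipb-natsq}. Concretely, one applies that proposition with the 2-functor there called $S$ taken to be $T^2$, the 2-functor there called $T$ taken to be the underlying endo-2-functor of our polynomial 2-monad, the 2-natural transformation $\phi$ taken to be $\mu^T$, and $\ca A = \ca B = \Cat/I$. Note that Proposition~\ref{prop:diexact-bipb-natsq} imposes conditions only on the codomain 2-functor $T$ and on $\phi$, not on $S$, so nothing need be checked about $T^2$ beyond its being a 2-functor. All that then remains is to verify the three hypotheses of Proposition~\ref{prop:diexact-bipb-natsq} for this choice, after which the conclusion ``$\phi$ is diexact'' is exactly ``$\mu^T$ is diexact''.

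The first two checks are routine. The 2-category $\Cat/I$ is finitely complete since $\Cat$ is. That the endo-2-functor $T$ is opfamilial follows from Theorem~4.4.5 of \cite{Weber-PolynomialFunctors}, recalled just before Proposition~\ref{prop:diexact-bipb-natsq}: the polynomial 2-functor associated to $(s_T,p_T,t_T)$ is opfamilial as soon as $I$ is discrete, $p_T$ carries the structure of a split fibration, and $t_T$ carries that of a split opfibration. Hypotheses~(1) and~(3) of the corollary give the first two of these conditions, and since $I$ is discrete the functor $t_T : B_T \to I$ is automatically a split opfibration, its structure maps being identities. For the hypothesis that $T1$ be groupoidal: the terminal object $1$ of $\Cat/I$ is $\id_I$, so $T1$ is $t_T : B_T \to I$ regarded as an object of $\Cat/I$ (the value of the polynomial 2-functor at $1$, as noted before Proposition~\ref{prop:diexact-bipb-natsq}); for any object $Y \to I$ of $\Cat/I$, a 2-cell of $\Cat/I(Y,T1)$ is a natural transformation over $I$ between functors $Y \to B_T$, and as $B_T$ is a groupoid by hypothesis~(2) every such natural transformation is invertible, whence $\Cat/I(Y,T1)$ is a groupoid.

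The step I expect to be the main point is the third hypothesis, that the naturality squares of $\mu^T$ are pullbacks. For this one uses that $\mu^T$ belongs to the structure of a \emph{polynomial} 2-monad, so that it is $\PFun{\Cat}$ applied to the 2-cell of $\Polyc{\Cat}$ realising the multiplication of the associated monad in $\Polyc{\Cat}$; and 2-natural transformations of the form $\PFun{\Cat}(f_1,f_2)$, with $(f_1,f_2)$ a 2-cell of $\Polyc{\Cat}$ of the shape displayed in Section~\ref{ssec:MainTheorem}, have naturality squares which are pullbacks, by \cite{Weber-PolynomialFunctors} --- equivalently, every polynomial 2-monad is a cartesian 2-monad. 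Granting this, all three hypotheses of Proposition~\ref{prop:diexact-bipb-natsq} hold and the corollary follows. Should one wish to sidestep the general cartesianness statement, the same point can be verified by direct inspection in the motivating cases $T = \MCMnd$, $\BMCMnd$, $\SMCMnd$, but the monad-level argument is cleaner and handles the general statement at once.
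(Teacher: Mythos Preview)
Your proof is correct and follows exactly the approach the paper intends: the corollary is stated immediately after Proposition~\ref{prop:diexact-bipb-natsq} with no separate proof, and you have simply spelled out the verification of that proposition's three hypotheses. Your checks --- finite completeness of $\Cat/I$, opfamiliality of $T$ via Theorem~4.4.5 of \cite{Weber-PolynomialFunctors} (using that $t_T$ is trivially a split opfibration since $I$ is discrete), groupoidality of $T1 = t_T$ in $\Cat/I$ from $B_T$ being a groupoid, and cartesianness of $\mu^T$ from its being a polynomial 2-natural transformation --- are all sound and match what the paper leaves implicit.
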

We shall now see that Proposition \ref{prop:diexact-bipb-natsq} can also be used to transfer algebraic cocompleteness across an adjunction of 2-monads in the following way. Recall that an adjunction of 2-monads as on the left
\[ \begin{array}{lccr} {F : (\ca L,S) \longrightarrow (\ca K,T)} &&&
{SF^*A \xrightarrow{F^l_A} F^*TA \xrightarrow{F^*a} F^*A} \end{array} \]
as recalled in Section \ref{ssec:internal-algebras}, includes the data of an underlying adjunction $F_! \ladj F^* : \ca K \to \ca L$, $F^l : SF^* \to F^*T$ and $F^c : F_!S \to TF_!$, with $F^l$ and $F^c$ mates under $F_! \ladj F^*$ and compatible with the 2-monad structures of $S$ and $T$. Recall moreover that given a pseudo $T$-algebra $(A,a)$, the pseudo $S$-algebra $\overline{F}(A,a)$ has underlying object $F^*A$, and action given by the composite on the right in the previous display. Finally recall from Definition \ref{def:algebraic-cocompleteness}, that $(A,a)$ is algebraically complete with respect to $f : I \to J$ in $\ca K$, when $A$ admits all pointwise left extensions along $f$ in $\ca K$, and moreover these are compatible with $A$'s pseudo algebra structure. Proposition \ref{prop:transfer-algebraic-cocompleteness} below, says that if the adjunction of 2-monads $F$ is nice enough, then for $g : K \to L$ in $\ca L$, the algebraic cocompleteness of $A$ with respect to $F_!g$ implies the algebraic cocompleteness of $\overline{F}A$ with respect to $g$.

As recalled in Section \ref{ssec:MainTheorem}, $F : (\Cat/I,S) \to (\Cat/J,T)$ is a polynomial adjunction of 2-monads if it arises from a morphism of polynomial monads over $\Cat$, that is to say, if it the result of applying $\PFun{\Cat} : \Polyc{\Cat} \to \TwoCAT$ to
\[ \xygraph{!{0;(1.5,0):(0,.6667)::} {I}="p0" [r] {E_S}="p1" [r] {B_S}="p2" [r] {I}="p3" [d] {J.}="p4" [l] {B_T}="p5" [l] {E_T}="p6" [l] {J}="p7" "p0":@{<-}"p1"^-{s_S}:"p2"^-{p_S}:"p3"^-{t_S}:"p4"^-{f}:@{<-}"p5"^-{t_T}:@{<-}"p6"^-{p_T}:"p7"^-{s_T}:@{<-}"p0"^-{f} "p1":"p6"_-{F_2} "p2":"p5"^-{F_1} "p1":@{}"p5"|-{\tn{pb}}} \]
\begin{prop}\label{prop:transfer-algebraic-cocompleteness}
Let $F : (\Cat/I,S) \to (\Cat/J,T)$ be a polynomial adjunction of 2-monads, $A$ be a pseudo $T$-algebra, and $g : X \to Y$ be a morphism of $\Cat/I$. Suppose that
\begin{enumerate}
\item $I$ and $J$ are discrete,
\item $B_T$ is a groupoid, and
\item $p_T$ has the structure of a split fibration.
\end{enumerate}
If $A$ is algebraically cocomplete relative to $F_!g$, then $\overline{F}A$ is algebraically cocomplete relative to $g$.
\end{prop}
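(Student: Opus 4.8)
The plan is to factor the pseudo $S$-algebra structure of $\overline F A$ through the transpose of the colax coherence $F^c$, and then reduce the two clauses of Definition \ref{def:algebraic-cocompleteness} for $\overline F A$ to the corresponding clauses for $A$ relative to $F_!g$, the only substantial new ingredient being an exactness statement furnished by Proposition \ref{prop:diexact-bipb-natsq}. First I would record the bookkeeping. Write $\eta,\varepsilon$ for the unit and counit of $F_!\ladj F^*$; as recalled in Section \ref{ssec:internal-algebras} the pseudo $S$-algebra $\overline F A$ has underlying object $F^*A$ and action $\overline a:=F^*a\comp F^l_A$. Put
\[ \theta:=F^l_{F_!(-)}\comp S\eta_{(-)}\;:\;S\Longrightarrow F^*TF_!, \]
the transpose of $F^c$ across $F_!\ladj F^*$; it is 2-natural. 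Using 2-naturality of $F^l$ at the morphism $\varepsilon_A$ together with a triangle identity one obtains $\overline a=F^*(a\comp T\varepsilon_A)\comp\theta_{F^*A}$, and then 2-naturality of $\theta$ and functoriality of $T$ give, for every 2-cell $\psi$ between morphisms out of some $X\in\Cat/I$,
\[ \overline a\comp S(\psi)=F^*\big(a\comp T(\widehat\psi)\big)\comp\theta_X,\qquad \widehat\psi:=\varepsilon_A\comp F_!(\psi). \]
This identity is the device that converts left extensions ``upstairs'' (around $A$, for $T$) into left extensions ``downstairs'' (around $\overline F A$, for $S$).

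Next I would discharge Definition \ref{def:algebraic-cocompleteness}. Since $I$ is discrete, $\Cat/I\catequiv\Cat^I$ and every morphism decomposes componentwise; writing $g=(g_i)_i$ and using $(F^*A)_i=A_{f(i)}$ and that the $j$-component of $F_!g$ is $\coprod_{i\in f^{-1}(j)}g_i$, the fact that a pointwise left Kan extension along a coproduct of functors is the coproduct of the pointwise left Kan extensions — as in the proof of Proposition \ref{prop:algcocomp-M} — shows that clause~(1) for $\overline F A$ relative to $g$ is exactly the same family of assertions as clause~(1) for $A$ relative to $F_!g$, and also that whenever $\psi$ exhibits $h$ as a pointwise left Kan extension of $k$ along $g$ in $\Cat/I$, the 2-cell $\widehat\psi=\varepsilon_AF_!(\psi)$ exhibits $\widehat h:=\varepsilon_AF_!(h)$ as a pointwise left Kan extension of $\widehat k:=\varepsilon_AF_!(k)$ along $F_!g$ in $\Cat/J$. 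Clause~(2) for $A$ relative to $F_!g$ then makes $aT(\widehat\psi)$ exhibit $aT(\widehat h)$ as a pointwise left Kan extension of $aT(\widehat k)$ along $TF_!g$; applying $F^*=\Delta_f$, which preserves pointwise left Kan extensions (it is reindexing along a function of discrete index sets, equivalently a left adjoint to $\Pi_f$), gives the same for $F^*(aT(\widehat\psi))$ along $F^*TF_!g$; and pasting with the naturality square of $\theta$ at $g$ — once that square is known to be exact — transports this to a left Kan extension along $S(g)$, which by the displayed identity is precisely the assertion that $\overline a\,S(\psi)$ exhibits $\overline a\,S(h)$ as a pointwise left Kan extension of $\overline a\,S(k)$ along $S(g)$.

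The one genuinely new point, and the step I expect to be the main obstacle, is the exactness of the naturality squares of $\theta:S\to F^*TF_!$; I would obtain it by applying Proposition \ref{prop:diexact-bipb-natsq} to $\theta$. Three things must be checked. (a) $F^*TF_!$ is opfamilial: by the criterion recalled after Definition \ref{def:opfamilial} each of $\Delta_f$, $T$ and $\Sigma_f$ is opfamilial — $I$ and $J$ are discrete, the identity maps involved are split $(\mathrm{op})$fibrations, $p_T$ is a split fibration by hypothesis, and $t_T$ and $f$ are split opfibrations being functors into, respectively between, discrete categories — and opfamilial 2-functors are closed under composition. (b) $(F^*TF_!)(1)$ is groupoidal: it is $\Delta_f\big(T(I\xrightarrow{f}J)\big)$, the object $(I\xrightarrow{f}J)$ of $\Cat/J$ is groupoidal because $I$ is discrete, $T$ preserves groupoidal objects because it is opfamilial with $T1=(B_T\to J)$ groupoidal (here $B_T$ being a groupoid enters), hence familial by the results of \cite{Weber-Fam2fun} recalled after Definition \ref{def:opfamilial}, and $\Delta_f$ plainly preserves groupoidal objects. (c) The naturality squares of $\theta$ are pullbacks: since $\eta$ has pullback naturality squares (their components are coproduct coprojections over the discrete $I$) and $\Delta_f$ preserves pullbacks, this reduces to the cartesianness of $F^c$, which follows from the pullback square in the definition of a morphism $(f,F)$ of $\PolyMnd{\Cat}$ — that pullback makes the inputs of an $S$-operation $b$ correspond to those of $Fb$, so an $F^*A$-labelling of the inputs of $b$ is determined by the associated $A$-labelling, and chasing this through the formula for $F^c$ yields pullback naturality squares. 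Granted (a)--(c), Proposition \ref{prop:diexact-bipb-natsq} makes $\theta$ diexact, in particular exact at $g$, which closes the argument. Beyond assembling these facts, the real work is the clean verification of the identity $\overline a\,S(\psi)=F^*(aT(\widehat\psi))\comp\theta_X$ and of the cartesianness of $F^c$.
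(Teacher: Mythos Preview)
Your approach is correct but takes a genuinely different route from the paper. You apply Proposition~\ref{prop:diexact-bipb-natsq} to the transpose $\theta:S\to F^*TF_!$ of $F^c$, establish the explicit factorisation $\overline a\,S(\psi)=F^*(aT(\widehat\psi))\circ\theta_X$, and then argue componentwise (using the discreteness of $I,J$) to transfer pointwiseness across $F^*=\Delta_f$ and to relate extensions along $g$ with those along $F_!g$. The paper instead applies Proposition~\ref{prop:diexact-bipb-natsq} directly to $F^c:F_!S\to TF_!$ and works with the mate correspondence under $F_!\dashv F^*$ at a more abstract level: for clause~(1) it observes that mateship preserves left extensions and that $\Sigma_f$ preserves comma objects (so pointwiseness descends, via Theorem~7.4 of \cite{Weber-2Toposes}); for clause~(2) it exhibits the two relevant composites as mates and concludes. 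The paper's route is shorter---there is no need to check that $\Delta_f$ is opfamilial, that $\Delta_f$ preserves pointwise left extensions, or that $\theta$ is cartesian, since the cartesianness of $F^c$ is immediate from $F^c$ lying in the image of $\PFun{\Cat}$---whereas your route makes the mechanism more concrete at the cost of additional verifications. One minor correction: ``left adjoint to $\Pi_f$'' is not a valid reason for $\Delta_f$ to preserve pointwise left Kan extensions, since left adjoints do not in general preserve left extensions; your ``reindexing over discrete index sets'' justification is the correct one, and works because pointwise left extensions in $\Cat^I$ are computed componentwise and $\Delta_f$ merely reindexes components (equivalently, $\Delta_f$ is both a left and a right adjoint, hence preserves ordinary left extensions and comma objects).
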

\begin{proof}
The 2-natural transformation $F^c : F_!S \to TF_!$ is in the image of $\PFun{\Cat}$ and so its naturality squares are pullbacks. By Theorem 4.4.5 of \cite{Weber-OpPoly2Mnd}, $T$ and $F_! = \Sigma_f$ are opfamilial, and thus so is $TF_!$. Moreover $TF_!1$ is the result of applying $T$ to $F_!1 = f \in \Cat/J$, which is discrete in $\Cat/J$ since $I$ is discrete. Since $B_T$ is a groupoid, $T1 = t_T \in \Cat/J$ is groupoidal, thus $T$ preserves groupoidal objects, and so $TF_!1$ is groupoidal. Thus by Proposition \ref{prop:diexact-bipb-natsq}, the naturality squares of $F^c$ are diexact bipullback squares.

To see that the underlying object $F^*A$ in $\Cat/I$ of $\overline{F}A$ admits all left Kan extensions along $g$, we consider $h : X \to F^*A$ and denote by $\overline{h} : F_!X \to A$ its mate under $F_! \ladj F^*$. Then mateship gives a bijection between 2-cells $\phi$ and $\overline{\phi}$ as in
\[ \xygraph{{\xybox{\xygraph{{X}="p0" [r(2)] {Y}="p1" [dl] {F^*A}="p2" "p0":"p1"^-{g}:"p2"^-{k}:@{<-}"p0"^-{h} "p0" [d(.5)r(.85)] :@{=>}[r(.3)]^-{\phi}}}}
[r(4)]
{\xybox{\xygraph{{F_!X}="p0" [r(2)] {F_!Y}="p1" [dl] {A}="p2" "p0":"p1"^-{F_!g}:"p2"^-{\overline{k}}:@{<-}"p0"^-{\overline{h}} "p0" [d(.5)r(.85)] :@{=>}[r(.3)]^-{\overline{\phi}}}}}} \]
and under this bijection, $\phi$ exhibits $k$ as a left Kan extension of $h$ along $g$ iff $\overline{\phi}$ exhibits $\overline{k}$ as a left Kan extension of $\overline{h}$ along $F_!g$. In our situation $F_! = \Sigma_f$ preserves comma objects, and so by the proof of Theorem 7.4 of \cite{Weber-2Toposes}, $\phi$ is a pointwise left Kan extension whenever $\overline{\phi}$ is. Thus the pointwise left Kan extension of $h$ along $g$ is computed by taking the pointwise left Kan extension of $\overline{h}$ along $F_!g$, which exists by the hypothesis on $A$, and then taking its mate in the manner just described.

We must now verify that pointwise left Kan extensions into $F^*A$ constructed in this way are compatible, in the sense of Definition \ref{def:algebraic-cocompleteness}, with $\overline{F}A$'s pseudo $S$-algebra structure. Since $F^l$ and $F^c$ are mates via $F_! \ladj F^*$, it is straight forward to verify that
\[ \xygraph{{\xybox{\xygraph{{SX}="p0" [r(2)] {SY}="p1" [dl] {SF^*A}="p2" [d] {F^*A}="p3" "p0":"p1"^-{g}:"p2"^-{k}:@{<-}"p0"^-{h} "p2":"p3"^-{F^*(a)F^l_A}
"p0" [d(.5)r(.85)] :@{=>}[r(.3)]^-{S\phi}}}}
[r(4)]
{\xybox{\xygraph{{TF_!X}="p0" [r(2)] {TF_!Y}="p1" [dl] {TA}="p2" [d] {A}="p3" "p0":"p1"^-{TF_!g}:"p2"^-{T\overline{k}}:@{<-}"p0"^-{T\overline{h}} "p2":"p3"^-{a}
"p0" :@{<-}[u] {F_!SX}^-{F^c_X} :[r(2)] {F_!SY}^-{F_!Sg} :"p1"^-{F^c_Y}
"p0" [d(.5)r(.85)] :@{=>}[r(.3)]^-{T\overline{\phi}}}}}} \]
are mates via $F_! \ladj F^*$. The composite on the right is a pointwise left Kan extension since $A$ is algebraically cocomplete and the naturality square for $F^c$ at the top is exact. Thus its mate, the composite on the left, is a pointwise left Kan extension.
\end{proof}
\begin{exams}\label{exams:alg-cocompleteness-V_bullet}
As recalled in Section \ref{ssec:MainTheorem}, and explained in \cite{Weber-OpPoly2Mnd}, an operad $T$ with set of colours $I$ determines an adjunction $\tn{Ar}_T : (\Cat/I,T) \to (\Cat,\SMCMnd)$ of 2-monads. Given a symmetric monoidal category $\ca V$, $\tn{Ar}_T^*\ca V \in \Cat/I$ is, as an $I$-indexed family of categories, constant at $\ca V$, and the $T$-algebra structure of $\overline{Ar}_T\ca V$ is obtained by using the symmetric monoidal structure of $\ca V$. This was described explicitly in Example 4.6 of \cite{Weber-OpPoly2Mnd}, where $\overline{Ar}_T\ca V$ was denoted as $\ca V_{\bullet}$. By Propositions \ref{prop:algcocomp-Sm-or-Bm} and \ref{prop:transfer-algebraic-cocompleteness}, if $\ca V$ is cocomplete and its tensor product preserves colimits in each variable, then $\ca V_{\bullet}$ is algebraically cocomplete as a pseudo $T$-algebra, with respect to all functors over $I$ between small categories. The same is true more generally when $\tn{Ar}_T$ is replaced by any polynomial adjunction of 2-monads $(\Cat/I,T) \to (\Cat,\SMCMnd)$ with $I$ discrete, and thus applies also to the case of $\Cat$-operads. Replacing $\SMCMnd$ by $\MCMnd$ or $\BMCMnd$, one obtains the analogous results for non-symmetric and braided operads.
\end{exams}

\section{Exact squares via codescent}
\label{sec:exact-via-codescent}

In this section and the next we explain why, in the context of Theorem \ref{thm:main}, $T^G : T^R \to T^S$ is exact. The internal algebra classifiers $T^R$ and $T^S$ are computed as codescent objects of crossed internal categories by \cite{Weber-CodescCrIntCat}. We recall the crossed double categories of \cite{Weber-CodescCrIntCat} and their codescent objects in Section \ref{ssec:codesc-crossed-dblcat}. The main result of this section is Theorem \ref{thm:hard-exactness}, which is the combinatorial reason for $T^G$'s exactness. This result is outside of any monad-theoretic context, and provides conditions on a pullback square $\ca S$ of crossed double categories which ensure that the process of taking codescent sends $\ca S$ to an exact square. Then in Sections \ref{ssec:int-alg-class-codesc} and \ref{ssec:alg-square-before-codescent} we identify the monad-theoretic context to which Theorem \ref{thm:hard-exactness} may be applied to give the proof of Theorem \ref{thm:main}.

Sections \ref{ssec:pi0-exactness}-\ref{ssec:exactness-of-internalisations} are concerned with the proof of Theorem \ref{thm:hard-exactness}. In Section \ref{ssec:pi0-exactness} the notion of $\pi_0$-exact square of 2-categories is defined, and the conclusion of Theorem \ref{thm:hard-exactness} is reformulated in such terms. In Corollary \ref{cor:pi0-exact-via-2-coends} the property of $\pi_0$-exactness is described more explicitly. This involves a 2-coend of a certain type, and in Section \ref{ssec:lax-coends}, this 2-coend is replaced by a lax coend which is easier to analyse. From this point, we become interested in how to compute such lax coends. At the end of Section \ref{ssec:lax-coends}, the weight for lax coends is described in terms of a 2-categorical weight $\ca H$, and the relevant $\ca H$-weighted colimits are then computed in Section \ref{ssec:lax-wedges}. In Corollary \ref{cor:lax-coend-formula} we obtain an explicit formula for the lax coends of interest using these results. Corollaries \ref{cor:pi0-exact-via-2-coends} and \ref{cor:lax-coend-formula} together result in a combinatorial characterisation of $\pi_0$-exact squares in Proposition \ref{prop:pi0-exact-combinatorial-characterisation}, which generalises Guitart's explicit characterisation \cite{Guitart-ExactSquares} of exact squares in $\Cat$. This is then applied in the proof of Theorem \ref{thm:hard-exactness} in Section \ref{ssec:exactness-of-internalisations}.

In Section \ref{ssec:int-alg-class-codesc} we describe $T^G$ as the result of taking codescent of a morphism of simplicial $T$-algebras which we describe in Construction \ref{const:simplicial-morphism-underlying-TG}. Then in Section \ref{ssec:alg-square-before-codescent}, we finally exhibit $T^G$'s exactness by applying Theorem \ref{thm:hard-exactness} to this situation.

\subsection{Codescent for crossed double categories}
\label{ssec:codesc-crossed-dblcat}
Denoting by $\delta : \Delta \to \Cat$ the inclusion obtained by regarding non-empty ordinals $[n] = \{0 < ...< n\}$ as categories $0 \to ... \to n$, the \emph{codescent object} of a simplicial object $X : \Delta^{\op} \to \ca K$ in a 2-category $\ca K$ is defined to be the colimit of $X$ weighted by $\delta$ in the sense of $\Cat$-enriched category theory \cite{Kelly-EnrichedCatsBook}. For this type of colimit the corresponding notion of \emph{cocone} for $X$ with vertex $Z$ amounts to a pair $(f_0,f_1)$, where $f_0:X_0 \to Z$ and $f_1:f_0d_1 \to f_0d_0$ are in $\ca K$, and satisfy $f_1s_0=1_{f_0}$ and $(f_1d_0)(f_1d_2)=f_1d_1$. As such, a codescent object for $X$ consists of an object $\tn{CoDesc}(X)$ of $\ca K$ and a cocone $(q_0,q_1)$ with vertex $\tn{CoDesc}(X)$ universal in the evident sense recalled in detail in Section 4.2 of \cite{Weber-CodescCrIntCat}.

Recall that a simplicial object $X : \Delta^{\op} \to \ca K$ in $\ca K$ is an \emph{internal category} when for all $n \in \N$ the square
\[ \xygraph{!{0;(1.5,0):(0,.6667)::} {X_{n+2}}="tl" [r] {X_{n+1}}="tr" [d] {X_n}="br" [l] {X_{n+1}}="bl" "tl":"tr"^-{d_{n+2}}:"br"^-{d_0}:@{<-}"bl"^-{d_{n+1}}:@{<-}"tl"^-{d_0}} \]
is a pullback. A category object $X$ in $\Cat$ is commonly known as a \emph{double category}. Following the conventions of \cite{Weber-CodescCrIntCat}, in elementary terms such an $X$ consists of (1) \emph{objects} -- which are the objects of $X_0$, (2) \emph{vertical arrows} -- which are the arrows of $X_0$, (3) \emph{horizontal arrows} -- which are the objects of $X_1$, and (4) \emph{squares} -- which are the arrows of $X_1$. In addition one has compositions of vertical arrows, compositions of horizontal arrows, and both vertical and horizontal composition of squares. Given categories $X$ and $Y$ internal to $\ca K$, an \emph{internal functor} between them is a morphism $f : X \to Y$ in $[\Delta^{\op},\ca K]$. When $\ca K$ is $\Cat$ these are usually referred to as \emph{double functors} involving assignations at the level of objects, vertical arrows, horizontal arrows and squares, compatible in the evident way with the compositions listed above.

We recall now the notions of crossed internal category and crossed internal functor from \cite{Weber-CodescCrIntCat}. When $\ca K$ has comma objects and pullbacks, a \emph{crossed internal category} is an internal category $X : \Delta^{\op} \to \ca K$, together with the structure of a split opfibration on $d_0:X_1 \to X_0$ such that 
\[ \xygraph{!{0;(1.5,0):(0,.6667)::} {X_0}="p1" [r] {X_1}="p2" [r] {X_2}="p3" [dl] {X_0}="p4" "p1":"p2"^-{s_0}:@{<-}"p3"^-{d_1}:"p4"^-{d_0^2}:@{<-}"p1"^-{1} "p2":"p4"^{d_0}} \]
are morphisms of split opfibrations over $X_0$. When $\ca K = \Cat$ such an $X$ is called a \emph{crossed double category}. The main extra structure one has in a crossed double category $X$ is that for each pair of arrows $(h,v)$ as on the left
\[ \xygraph{{\xybox{\xygraph{{x}="p0" [r] {y}="p1" [d] {z}="p2" "p0":"p1"^-{h}:"p2"^-{v}}}}
[r(3)]
{\xybox{\xygraph{{x}="p0" [r] {y}="p1" [d] {z}="p2" [l] {w}="p3" "p0":"p1"^-{h}:"p2"^-{v}:@{<-}"p3"^-{\rho_{h,v}}:@{<-}"p0"^-{\lambda_{h,v}}:@{}"p2"|-*{\kappa_{h,v}}}}}} \]
one has distinguished squares, called \emph{chosen opcartesian squares} as on the right satisfying a universal property, coming from the fact that these squares are opcartesian arrows for the opfibration $d_0 : X_1 \to X_0$, and these squares are closed under vertical and horizontal composition. A full unpacking of this notion in the case $\ca K = \Cat$ is given in Section 5.1 of \cite{Weber-CodescCrIntCat}.

Let $X$ and $Y$ be crossed internal categories in a finitely complete 2-category $\ca K$. A \emph{crossed internal functor} $f:X \to Y$ is an internal functor such that the square
\[ \xygraph{!{0;(1.5,0):(0,.6667)::} {X_1}="tl" [r] {X_0}="tr" [d] {Y_0}="br" [l] {Y_1}="bl" "tl":"tr"^-{d_0}:"br"^-{f_0}:@{<-}"bl"^-{d_0}:@{<-}"tl"^-{f_1}} \]
is a morphism from $d_0:X_1 \to X_0$ to $d_0:Y_1 \to Y_0$ of split opfibrations. When $\ca K = \Cat$ we shall say that $f$ is a \emph{crossed double functor}. In elementary terms, crossed double functors are double functors that preserve chosen opcartesian squares. We denote by $\CrIntCat {\ca K}$ the category of crossed internal categories in $\ca K$ and crossed internal functors between them.

In \cite{Weber-CodescCrIntCat} the computation of codescent objects of crossed double categories was understood, and we recall the relevant details now. For $X$ a crossed double category, one computes $\tn{CoDesc}(X)$ by first constructing a 2-category $\tn{Cnr}(X)$, and then applying $\pi_0$ to the homs of this 2-category. The 2-category $\tn{Cnr}(X)$ is defined in elementary terms as follows. An object is an object of the double category $X$. An arrow $x \to y$ is a pair $(f,g)$ where $f$ is a vertical arrow and $g$ is a horizontal arrow as on the left in
\[ \xygraph{{\xybox{\xygraph{{x}="l" [d] {z}="m" [r] {y}="r" "l":"m"^-{f}:"r"^-{g}}}}
[r(3)]
{\xybox{\xygraph{{x}="ttl" [d] {a}="tl" [r] {y}="tr" [d] {b}="br" [l] {c}="bl" [r(2)] {z}="brr"
"ttl":"tl"_{f}:"tr"^-{g}:"br"^-{h}:@{<-}"bl"^-{\rho_{g,h}}:@{<-}"tl"^-{\lambda_{g,h}}:@{}"br"|-*{\kappa_{g,h}}:"brr"_-{k}}}}
[r(3)]
{\xybox{\xygraph{{x}="ttl" [d] {z_1}="tl" [r] {y}="tr" [d] {y}="br" [l] {z_2}="bl" "ttl":"tl"_-{f}:"tr"^-{g}:"br"^-{1_y}:@{<-}"bl"^-{k}:@{<-}"tl"^-{\alpha} "tl":@{}"br"|-*{\beta}}}}} \]
and is called a \emph{corner} from $x$ to $y$. Such an $(f,g)$ is an identity in $\Cnr(X)$ when $f$ and $g$ are identities. The composite of $(f,g):x \to y$ and $(h,k):y \to z$ in $\Cnr(X)$ is defined to be $(\lambda_{g,h}f,k\rho_{g,h})$ as in the middle of the previous display. Given $(f,g)$ and $(h,k):x \to y$, a 2-cell $(f,g) \to (h,k)$ in $\Cnr(X)$ is a pair $(\alpha,\beta)$ where $\alpha$ is a vertical arrow and $\beta$ is a square as on the right in the previous display, such that $\alpha f=h$. Vertical composition of 2-cells in $\Cnr(X)$ is given in the evident manner by vertical composition in $X$. The assignation $X \mapsto \Cnr(X)$ is the object map of a functor as on the left in
\[ \begin{array}{lccr} {\Cnr : \CrIntCat{\Cat} \longrightarrow \TwoCat} &&& {\pi_{0*} : \TwoCat \to \Cat} \end{array} \]
and the functor on the right applies $\pi_0$ to the homs of a 2-category (leaving the objects fixed). By Corollary 5.4.5 of \cite{Weber-CodescCrIntCat} one has
\begin{thm}\label{thm:codesc-crossed-int-cat}
\cite{Weber-CodescCrIntCat} The functor
\[ \CoDesc : \CrIntCat{\Cat} \longrightarrow \Cat \]
factors as $\CoDesc = \pi_{0*}\Cnr$.
\end{thm}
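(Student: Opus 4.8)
The plan is to prove the identification by checking that, for each crossed double category $X$, the $2$-category $\pi_{0*}\Cnr(X)$ equipped with a canonical cocone represents the functor $Z \mapsto \tn{Cocone}(X,Z)$ on $\Cat$, and then to upgrade this to a natural identification of functors. Throughout I would take for granted that $\Cnr$ is a well-defined functor $\CrIntCat{\Cat} \to \TwoCat$ --- in particular that composition of corners is strictly associative and unital --- since this is exactly what the coherence axioms built into the notion of crossed double category are designed to provide, and it is established prior to the statement being proved.

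The first step is to write down the cocone on $V := \pi_{0*}\Cnr(X)$. I would take $q_0 : X_0 \to V$ to be the identity on objects, sending a vertical arrow $v : x \to z$ to the connected component of the corner $(v,1_z)$, and take $q_1 : q_0 d_1 \to q_0 d_0$ to have component at a horizontal arrow $g$ equal to the component of the corner $(1,g)$. The verifications that $q_0$ is a functor, that $q_1$ is natural in squares of $X_1$, and that the cocone equations $q_1 s_0 = 1_{q_0}$ and $(q_1 d_0)(q_1 d_2) = q_1 d_1$ hold, all follow from the formula $(f,g)\ast(h,k) = (\lambda_{g,h}f,\,k\rho_{g,h})$ for corner composition together with the fact that passing to $\pi_0$ identifies corners lying in a common connected component. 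The main step is then to show that precomposition with $(q_0,q_1)$ is an isomorphism of categories $\Cat(V,Z) \iso \tn{Cocone}(X,Z)$, $2$-naturally in $Z$; the conclusion $\CoDesc(X) \iso \pi_{0*}\Cnr(X)$ then follows from the universal property of the codescent object, which exists because $\Cat$ is cocomplete. For the inverse I would send a cocone $(f_0,f_1)$ to the functor $F$ defined on objects by $Fx = f_0 x$ and on a corner by $F[(f,g)] = (f_1)_g \circ f_0(f)$; the identity and unit axioms for $F$ are immediate, functoriality on composites reduces --- after using that $f_1$ respects horizontal composition and $f_0$ respects vertical composition --- precisely to naturality of $f_1$ at the chosen opcartesian square $\kappa_{g,h}$, and well-definedness on $\pi_0$ follows from naturality of $f_1$ at the square $\beta$ witnessing a $2$-cell of $\Cnr(X)$ together with the vertical-part equation $\alpha f = h$. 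Checking that these two assignments are mutually inverse, and that cocone morphisms correspond to natural transformations, is then routine.

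I expect the main obstacle to be organisational rather than conceptual: carefully reconciling the simplicial face-map conventions for $X : \Delta^{\op} \to \Cat$ with the geometry of squares in the double category, and confirming that the data and axioms of a codescent cocone $(f_0,f_1)$ match exactly the instances of naturality of $f_1$ at chosen opcartesian squares that are needed to make $F$ functorial on $\pi_{0*}\Cnr(X)$ --- that is, that the crossed structure on $X$ is neither too weak nor too strong for this correspondence to go through. Once the cocone correspondence is in place for a fixed $X$, naturality in $X$ is formal: a crossed double functor $\phi$ induces $\Cnr(\phi)$, hence $\pi_{0*}\Cnr(\phi)$, and the comparison with $\CoDesc(\phi)$ commutes because, under the above isomorphism, both are given by precomposing cocones with $\phi$. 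This yields $\CoDesc = \pi_{0*}\Cnr$ as functors $\CrIntCat{\Cat} \to \Cat$.
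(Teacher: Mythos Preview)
The paper does not give its own proof of this statement: it is quoted verbatim as Corollary~5.4.5 of \cite{Weber-CodescCrIntCat}, with the surrounding text simply recalling the definitions of $\Cnr(X)$ and $\pi_{0*}$ needed to parse the formula $\CoDesc = \pi_{0*}\Cnr$. So there is no in-paper argument to compare against.

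Your proposal is the expected direct verification of the universal property, and it is sound. You correctly identify the candidate cocone $(q_0,q_1)$ on $\pi_{0*}\Cnr(X)$, and you pinpoint the two genuine content steps: that functoriality of the induced $F$ on composites of corners reduces to naturality of $f_1$ at the chosen opcartesian squares $\kappa_{g,h}$ (this is exactly where the crossed structure is used), and that well-definedness on $\pi_0$-classes of corners comes from naturality of $f_1$ at the squares $\beta$ appearing in the 2-cells of $\Cnr(X)$. This is essentially how the result is established in \cite{Weber-CodescCrIntCat}; the only thing one might add is that the 2-dimensional part of the universal property (bijectivity on modifications/natural transformations) also needs to be checked, but as you note this is routine once the 1-dimensional bijection is in place. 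Your remark that naturality in $X$ is formal once the pointwise identification is made is also correct.
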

In Theorem \ref{thm:hard-exactness}, the central result of this section, conditions on a pullback square in $\CrIntCat{\Cat}$ are exhibited, which ensure that $\CoDesc$ sends it to an exact square in $\Cat$. To understand what pullbacks in $\CrIntCat{\Cat}$ amount to, we have
\begin{lem}\label{lem:pbs-CrIntCat}
The functor
\[ \begin{array}{lccr} {\CrIntCat{\Cat} \longrightarrow \Set \times \Set \times \Set \times \Set} &&& {X \mapsto (X_{00},X_{01},X_{10},X_{11})} \end{array} \]
where $X_{00}$, $X_{01}$, $X_{10}$ and $X_{11}$ are the sets of objects, vertical arrows, horizontal arrows and squares of $X$ respectively; preserves and reflects limits.
\end{lem}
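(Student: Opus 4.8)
The plan is to show that the functor in the statement, which I will write as $U : \CrIntCat{\Cat} \to \Set^{4}$, creates limits; since $\Set^{4}$ is complete, this immediately gives both preservation and reflection of limits. The key structural observation is that $\CrIntCat{\Cat}$ is, up to canonical equivalence, the category of models in $\Set$ of an essentially algebraic theory with four sorts --- ``objects'', ``vertical arrows'', ``horizontal arrows'' and ``squares'' --- and that $U$ is exactly the functor forgetting a model down to its four underlying sorts. Concretely, unwinding the definitions recalled above, a crossed double category $X$ amounts to: two internal-category structures, one on $(X_{00},X_{01})$ and one on $(X_{10},X_{11})$, each presented by source, target and identity maps together with a partial composition operation whose domain is a pullback of the sorts; the face and degeneracy functors $d_{0},d_{1} : X_{1} \to X_{0}$ and $s_{0} : X_{0} \to X_{1}$, realised as functions between sorts commuting with all of the above, so that the internal-category (Segal) conditions are built into the presentation rather than being extra axioms; and the split opfibration cleavage on $d_{0}$, a further partial operation landing in squares, defined on the evident pullback of horizontal arrows against vertical arrows, subject to the unitality and ``splitness'' equations coming from the requirement that $s_{0}$ and $d_{1} : X_{2} \to X_{1}$ be morphisms of split opfibrations, together with the opcartesian universal property of the chosen squares. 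A morphism of $\CrIntCat{\Cat}$ is precisely a quadruple of functions between the four sorts commuting with all of this data, so morphisms, like objects, are detected by $U$.

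First I would dispose of the purely operational part of the creation-of-limits argument. Given a diagram $D : \ca I \to \CrIntCat{\Cat}$, form the limit $L = (L_{00},L_{01},L_{10},L_{11})$ of $UD$ in $\Set^{4}$. Because finite limits commute with arbitrary limits, each pullback occurring as the domain of a composition operation or of the cleavage, computed at $L$, is the limit over $\ca I$ of the corresponding domains at the $D_{i}$; hence the structure maps of the $D_{i}$ induce unique structure maps on $L$, and every equation of the theory holds at $L$ because it holds at each $D_{i}$ and a limit cone of sets is jointly monic. This exhibits $L$ as an internal category in $\Cat$ with a cleavage on $d_{0}$, the projections $L \to D_{i}$ being strictly compatible with all the structure, and one checks directly that this cone is universal among cones over $D$ in $\CrIntCat{\Cat}$. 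What remains is the single piece of structure that is a universal property rather than an equation: the chosen squares of $L$ must actually be opcartesian for $d_{0} : L_{1} \to L_{0}$.

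This last verification is the only slightly delicate point, and I would carry it out pointwise. Each chosen square of $L$ projects along every $L \to D_{i}$ to a chosen, hence opcartesian, square of $D_{i}$. Given a competing square over it in $L_{1}$, its images in the $D_{i}$ are competing squares over opcartesian squares, so each admits a unique comparison square in $D_{i}$; by the uniqueness half of opcartesianness these comparison squares are compatible along the transition maps of $D$, hence assemble (together with their vertical boundaries) into elements of $L_{11}$ and $L_{01}$, giving the required comparison square in $L$; and its uniqueness follows from uniqueness in the $D_{i}$ since $L_{11} \to \prod_{i} (D_{i})_{11}$ and $L_{01} \to \prod_{i}(D_{i})_{01}$ are injective. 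Hence $L$ is a crossed double category and $U$ creates the limit of $D$, which proves the lemma. The anticipated obstacle is precisely this interplay between the universal property of opcartesian squares and the joint monicity of a limit cone; everything else is the standard fact that a forgetful functor from the models of an essentially algebraic theory to its underlying sorts creates limits.
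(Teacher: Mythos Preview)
Your proof is correct and complete; the pointwise verification that chosen opcartesian squares remain opcartesian in the limit is carried out properly, using joint monicity of the limit cone for both existence (compatibility of the comparison squares) and uniqueness.

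The paper's proof takes a different and shorter route. Rather than treating opcartesianness as a universal property to be verified by hand, it invokes the fact that a \emph{split} opfibration in a finitely complete 2-category $\ca K$ is the same thing as a strict algebra for the 2-monad $\Psi$ on $\ca K/X_0$, and that this algebra structure, together with morphisms of split opfibrations, is expressible purely in terms of finite limits in $\ca K$. Consequently $\CrIntCat{\ca K}$ is closed under limits in $[\Delta^{\op},\ca K]$ and the inclusion creates them; no separate check of opcartesianness is needed, since it is a consequence of the algebra axioms rather than an independent condition. The paper then passes through the nerve into bisimplicial sets and observes that every $X_{mn}$ is a finite limit of the four basic sets $X_{00},X_{01},X_{10},X_{11}$, so the forgetful functor to $\Set^4$ also preserves and reflects limits. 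Your approach is more elementary and self-contained, not relying on the monad-theoretic description of split opfibrations; the paper's approach is more conceptual, works uniformly for any $\ca K$ with the relevant limits, and avoids the one ``delicate'' step you isolated by absorbing it into the known fact that strict algebras for a limit-preserving 2-monad are closed under limits.
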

\begin{proof}
As a sub-2-category of $\tn{Cat}(\ca K)$, $\CrIntCat{\ca K}$ has any limits that $\ca K$ has, and these are computed componentwise, since split opfibrations and their morphisms are expressed internally to $\ca K$ using limits. Thus the inclusion of categories $\CrIntCat{\ca K} \hookrightarrow [\Delta^{\op},\ca K]$ preserves and reflects limits. When $\ca K = \Cat$, using the nerve functor one has an inclusion of $[\Delta^{\op},\ca K]$ into the category of bisimplicial sets, which preserves and reflects limits. Since for a double category $X$, the sets $X_{mn}$ participating in its associated bisimplicial set can be reconstructed as limits of $X_{00}$, $X_{01}$, $X_{10}$ and $X_{11}$, the result follows.
\end{proof}
\begin{defn}\label{defn:extra-conditions-TG-alg-square}
Let $\ca K$ be a 2-category with comma objects and pullbacks, $X,Y:\Delta^{\op} \to \ca K$ be category objects, and $f : X \to Y$ be an internal functor.
\begin{enumerate}
\item $f$ is a \emph{discrete fibration} when the square
\[ \xygraph{{X_1}="p0" [r] {Y_1}="p1" [d] {Y_0}="p2" [l] {X_0}="p3" "p0":"p1"^-{f_1}:"p2"^-{d_0}:@{<-}"p3"^-{f_0}:@{<-}"p0"^-{d_0}} \]
is a pullback.
\item $f$ is an \emph{objectwise opfibration} when the morphism $f_0 : X_0 \to Y_0$ is an opfibration.
\end{enumerate}
\end{defn}
\begin{thm}\label{thm:hard-exactness}
Suppose that $\ca S$ is a pullback square
\[ \xygraph{{P}="p0" [r] {B}="p1" [d] {C}="p2" [l] {A}="p3" "p0":"p1"^-{}:"p2"^-{g}:@{<-}"p3"^-{f}:@{<-}"p0"^-{}:@{}"p2"|-{\tn{pb}}} \]
in $\CrIntCat {\Cat}$ in which $g$ is a discrete fibration and $f$ is an objectwise opfibration. Then $\CoDesc(\ca S)$ is exact.
\end{thm}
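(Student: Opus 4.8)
The plan is to route the problem through the decomposition $\CoDesc = \pi_{0*}\Cnr$ of Theorem~\ref{thm:codesc-crossed-int-cat}, so that it suffices to show that the square $\Cnr(\ca S)$ of $2$-categories is sent by $\pi_{0*}$ to an exact square of categories. In Section~\ref{ssec:pi0-exactness} I would make this property into a definition --- calling a lax square of $2$-categories \emph{$\pi_0$-exact} when applying $\pi_{0*}$ to it produces an exact square in $\Cat$ --- and reformulate the conclusion of the theorem as the statement that $\Cnr(\ca S)$ is $\pi_0$-exact. First I would unwind $\Cnr$ applied to the pullback: by Lemma~\ref{lem:pbs-CrIntCat} the double category $P$ is computed componentwise on objects, vertical arrows, horizontal arrows and squares, and the chosen opcartesian squares of the crossed structure, being expressed by limits of split opfibrations, are likewise computed componentwise; hence $\Cnr$ preserves this pullback, and the hypotheses on $\ca S$ translate into: $\Cnr(g)$ is a suitable ``discrete fibration of $2$-categories'' (horizontal arrows and squares of $B$ are freely lifted along $g$) and $\Cnr(f)$ has, on objects, the opfibration-like property inherited from $f_0$ being an opfibration in $\Cat$. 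This is the $2$-categorical analogue of the set-up of Proposition~\ref{prop:exact-pullbacks}.

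The second ingredient is a workable criterion for $\pi_0$-exactness, obtained by generalising Guitart's combinatorial characterisation of exact squares in $\Cat$. Following the template of Section~\ref{sec:exact-in-Cat}, $\pi_0$-exactness of a lax square $(p,q,f,g,\phi)$ of $2$-categories comes down to whether the comparison $2$-cell of $\Cat$-valued bimodules --- the analogue of the $\tilde\phi$ of Section~\ref{sec:exact-in-Cat}, from a bimodule built out of $q$ and $p$ into $\ca C(f-,g-)$ --- becomes invertible hom-wise after applying $\pi_0$, and the domain of this $2$-cell is given by a $2$-coend of the form $\int^{x}\ca B(qx,b)\times\ca A(a,px)$. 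The technical heart, occupying Sections~\ref{ssec:lax-coends}--\ref{ssec:lax-wedges}, is to replace this $2$-coend by the corresponding \emph{lax} coend, whose defining cocones are lax wedges and which therefore admits an explicit description, and to verify that the comparison between the lax coend and the $2$-coend is inverted by $\pi_0$; the lax coend itself I would compute by exhibiting it as an $\ca H$-weighted colimit for an explicit $2$-categorical weight $\ca H$ and then evaluating such weighted colimits directly, arriving at Corollary~\ref{cor:lax-coend-formula}. Combining this with Corollary~\ref{cor:pi0-exact-via-2-coends} yields the combinatorial criterion of Proposition~\ref{prop:pi0-exact-combinatorial-characterisation}: $\pi_0$-exactness holds iff, for all objects $a$, $b$ and every $1$-cell $\gamma:fa\to gb$, an explicitly described factorisation $2$-category is connected --- the direct analogue of Lemma~\ref{lem:combinatorial-exactness-Cat-fact-version}.

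With this criterion in hand, the proof of Theorem~\ref{thm:hard-exactness} in Section~\ref{ssec:exactness-of-internalisations} becomes the verification of the connectedness of these factorisation $2$-categories for $\Cnr(\ca S)$, and here all three structural hypotheses are used. The discrete fibration property of $g$ makes a corner of $\Cnr(B)$ amount to a corner of $\Cnr(C)$ together with a chosen lift of its target, so the $B$-component of any factorisation is rigidly determined by its $C$-component. The chosen opcartesian squares built into the crossed double category structure of $A$ (hence of $P$) allow one to compose and re-factor corners of $\Cnr(A)$, which is what produces the edges of the connecting zigzags. And the objectwise opfibration property --- $f_0$ an opfibration in $\Cat$ --- supplies the opcartesian lifts of vertical arrows needed to slide one candidate factorisation onto another. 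The combinatorics is of the same flavour as, though more involved than, the zigzag arguments in the proofs of Propositions~\ref{prop:exact-colax-monoidal-functors} and~\ref{prop:exactness-coKZ}.

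The step I expect to be the main obstacle is the passage from the $2$-coend to the lax coend together with its evaluation --- essentially the content of Sections~\ref{ssec:lax-coends}--\ref{ssec:lax-wedges} --- since lax coends are delicate: one must both pin down the correct weight $\ca H$ and confirm that the comparison cell, which is not itself invertible, is nonetheless sent to an isomorphism by $\pi_0$. By comparison, once Proposition~\ref{prop:pi0-exact-combinatorial-characterisation} is available, the verification for $\Cnr(\ca S)$, while lengthy, is a routine diagram chase driven by the three hypotheses above.
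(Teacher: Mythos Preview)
Your plan is the paper's approach exactly: reduce via $\CoDesc = \pi_{0*}\Cnr$ to the $\pi_0$-exactness of $\Cnr(\ca S)$, establish the combinatorial criterion of Proposition~\ref{prop:pi0-exact-combinatorial-characterisation} through the lax-coend machinery of Sections~\ref{ssec:lax-coends}--\ref{ssec:lax-wedges}, and then verify it using the three hypotheses (the fibrewise reduction to canonical form is Lemma~\ref{lem:C-fibres-for-hard-exactness}, and the final zigzag argument is the proof in Section~\ref{ssec:exactness-of-internalisations}).

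One point to correct in your description of the criterion. You phrase Proposition~\ref{prop:pi0-exact-combinatorial-characterisation} as ``for every $1$-cell $\gamma:fa\to gb$, a factorisation $2$-category is connected'' --- the direct analogue of Lemma~\ref{lem:combinatorial-exactness-Cat-fact-version}. That reformulation works in $\Cat$ only because the hom-set $C(fa,gb)$ is discrete, so that a functor into it is inverted by $\pi_0$ precisely when its fibres are connected. In the $2$-categorical setting the codomain $C(fa,gb)$ of the comparison functor $\ca C(\phi,a,b)$ is a genuine category (its arrows come from $2$-cells of $C$), and connectedness of the fibres over each $\gamma$ is neither necessary nor sufficient for $\pi_0$ to invert $\ca C(\phi,a,b)$. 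The actual criterion is that $\pi_0$ inverts $\ca C(\phi,a,b)$, unpacked via Remark~\ref{rem:inverted-by-pi0-explained}; in the final verification one must show that an \emph{arrow} $\ca C(x_1)\to\ca C(x_2)$ in the target --- not merely $\ca C(x_1)=\ca C(x_2)$ --- forces $x_1$ and $x_2$ into the same component of $\ca F$. This is why the final diagram in the paper's proof carries a nontrivial $2$-cell $(\phi_1,\phi_2)$ of $\Cnr(C)$ that must be lifted using the discrete-fibration hypothesis on $g$. Your overall plan is unaffected, but the endgame combinatorics has this extra degree of freedom that the $\Cat$-level analogue does not.
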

The proof of Theorem \ref{thm:hard-exactness} occupies the rest of Section \ref{sec:exact-via-codescent}. Notice that the hypotheses on $\ca S$ are a double categorical mixture of both types of hypotheses resulting in exact pullback squares in Proposition \ref{prop:exact-pullbacks}(\ref{propcase:ex-pb}).

These hypotheses on $\ca S$ are combinatorial. In elementary double categorical terms, the condition that $g$ be a discrete fibration amounts to the following two conditions. First, the underlying functor between the underlying horizontal categories is a discrete fibration, that is to say, given $b \in B$ and a horizontal arrow $h:c \to gb$ in $C$, there is a unique horizontal arrow $k:b_2 \to b$ in $B$ such that $gk = h$. Second, one has a unique horizontal lifting property for squares. This says that for any square in $C$ as on the left in
\[ \xygraph{{\xybox{\xygraph{{c_1}="p0" [r] {gb_1}="p1" [d] {gb_2}="p2" [l] {c_2}="p3" "p0":"p1"^-{h_1}:"p2"^-{gv_2}:@{<-}"p3"^-{h_2}:@{<-}"p0"^-{v_1}:@{}"p2"|-*{\alpha}}}}
[r(3)]
{\xybox{\xygraph{{b_3}="p0" [r] {b_1}="p1" [d] {b_2}="p2" [l] {b_4}="p3" "p0":"p1"^-{k_1}:"p2"^-{v_2}:@{<-}"p3"^-{k_2}:@{<-}"p0"^-{v_3}:@{}"p2"|-*{\beta}}}}} \]
there is a unique square in $B$ as on the right such that $g\beta = \alpha$. In particular in this last situation, $k_1$ and $k_2$ are the unique lifts of $h_1$ and $h_2$. On the other hand, the hypothesis on $f$ is about lifting vertical arrows.

\subsection{$\pi_0$-exactness}
\label{ssec:pi0-exactness}
As we saw in Lemma \ref{lem:combinatorial-exactness-Cat}, one way to express combinatorially what exactness amounts in $\Cat$, is that a lax square as on the left
\[ \xygraph{{\xybox{\xygraph{!{0;(1.5,0):(0,.6667)::} {P}="p0" [r] {B}="p1" [d] {C}="p2" [l] {A}="p3" "p0":"p1"^-{q}:"p2"^-{g}:@{<-}"p3"^-{f}:@{<-}"p0"^-{p} "p0" [d(.55)r(.4)] :@{=>}[r(.2)]^-{\phi}}}}
[r(5.5)]
{\xybox{\xygraph{{\overline{\phi}_{a,b} : (q \downarrow b) \times_P (a \downarrow p) \longrightarrow C(fa,gb)} [d(.4)] {(x,\beta:qx \to b,\alpha:a \to px) \mapsto g\beta \comp \phi_x \comp f\alpha}}}}} \]
is exact iff for all $a \in A$ and $b \in B$, the functor $\pi_0:\Cat \to \Set$ inverts the functor $\overline{\phi}_{a,b}$ described on the right in the previous display. In the context of Theorem \ref{thm:hard-exactness}, $P$, $A$, $B$ and $C$ are all of the form $\tn{CoDesc}(X)$ for some crossed double category $X$.
\begin{defn}\label{def:pi0-exact}
A lax square $\ca S$
\[ \xygraph{!{0;(1.5,0):(0,.6667)::} {P}="p0" [r] {B}="p1" [d] {C}="p2" [l] {A}="p3" "p0":"p1"^-{q}:"p2"^-{g}:@{<-}"p3"^-{f}:@{<-}"p0"^-{p} "p0" [d(.55)r(.4)] :@{=>}[r(.2)]^-{\phi}} \]
in $\TwoCat$ is \emph{$\pi_0$-exact} when $\pi_{0*}(\ca S)$ is an exact square in $\Cat$.
\end{defn}
\begin{rem}\label{rem:exactness-via-Cnr}
By the way in which codescent objects of crossed double categories are computed, as recalled in Theorem \ref{thm:codesc-crossed-int-cat}, to say that $\CoDesc(\ca S)$ is exact in the context of Theorem \ref{thm:hard-exactness}, is to say that $\Cnr(\ca S)$ is $\pi_0$-exact.
\end{rem}
\noindent We shall achieve an explicit characterisation of $\pi_0$-exact squares of 2-categories in Proposition \ref{prop:pi0-exact-combinatorial-characterisation} below, from which Theorem \ref{thm:hard-exactness} will follow. In this section we obtain the analogue of Lemma \ref{lem:combinatorial-exactness-Cat} for $\pi_0$-exact squares, in Corollary \ref{cor:pi0-exact-via-2-coends}.

As mentioned in Section \ref{sec:exact-in-Cat}, Definition \ref{def:Guitart-exactness-in-Cat} of an exact square in $\Cat$ can be generalised to the setting of $\ca V$-categories, where $\ca V$ is nice enough (symmetric monoidal closed cocomplete), and further still to the setting of proarrow equipments in the sense of \cite{Wood-Proarrows-I, Wood-Proarrows-II} as in \cite{Koudenburg-Thesis, MelliesTabareau-TAlgTheoriesKan}. That is, a lax square in $\Enrich {\ca V}$ as on the left in
\[ \xygraph{{\xybox{\xygraph{!{0;(1.5,0):(0,.6667)::} {P}="p0" [r] {B}="p1" [d] {C}="p2" [l] {A}="p3" "p0":"p1"^-{q}:"p2"^-{g}:@{<-}"p3"^-{f}:@{<-}"p0"^-{p} "p0" [d(.55)r(.4)] :@{=>}[r(.2)]^-{\phi}}}}
[r(4)]
{\xybox{\xygraph{!{0;(1.5,0):(0,.6667)::} {P}="p0" [r] {B}="p1" [d] {C}="p2" [l] {A}="p3" "p0":"p1"^-{B(q,1)}:@{<-}"p2"^-{C(1,g)}:@{<-}"p3"^-{C(f,1)}:"p0"^-{A(1,p)} "p0" [d(.55)r(.4)] :@{=>}[r(.2)]^-{\tilde{\phi}}}}}} \]
is \emph{$\ca V$-exact} when the induced 2-cell $\tilde{\phi}$ on the right in $\Mod {\ca V}$ is invertible. The cases of interest for us is when $\ca V$ is either $\Set$ or $\Cat$, in both cases with the cartesian monoidal structure, and in these cases the bicategories $\Mod {\ca V}$ are denoted $\Prof$ (as above) and $\TwoProf$ respectively. In the case $\ca V = \Cat$ this notion of exactness is, as we shall see, more restrictive than the $\pi_0$-exactness of Definition \ref{def:pi0-exact}.

Given a 2-profunctor $F:A \to B$ between 2-categories, we define the profunctor $\pi_{0*}F:\pi_{0*}A \to \pi_{0*}B$ by $(\pi_{0*}F)(a,b) = \pi_0(F(a,b))$. Clearly given a 2-functor $f:A \to B$, one has
\[ \begin{array}{lccr} {\pi_{0*}(B(f,1)) = (\pi_{0*}B)(\pi_{0*}f,1)} &&&
{\pi_{0*}(B(1,f)) = (\pi_{0*}B)(1,\pi_{0*}f).} \end{array} \]
With the obvious extension of $\pi_{0*}$ to 2-cells in $\TwoProf$ we have extended the definition of $\pi_{0*}$ to the level of modules, modulo verifying that this extension is compatible with module composition.

To establish this compatibility some remarks regarding 2-coends are in order. Given a 2-functor $T:A^{\op} \times A \to B$ the 2-coend $\int^{a \in A} T(a,a)$ is by definition \cite{Kelly-EnrichedCatsBook} the weighted colimit $\tn{col}(\tn{Hom}_{A^{\op}},T)$, which is to say that it is defined by isomorphisms
\[ B(\int^{a \in A} T(a,a),b) \iso [A \times A^{\op}](\tn{Hom}_{A^{\op}},B(T,b)) \]
2-natural in $b$. Thus a 2-dinatural transformation for $T$ with vertex $b$ is by definition a 2-natural transformation $\phi:\tn{Hom}_{A^{\op}} \to B(T,b)$, which amounts to giving components $\phi_a : T(a,a) \to b$ satisfying the dinaturality condition familiar from the 1-dimensional notion of ``dinatural transformation'', together with a 2-dimensional condition which says that given $\alpha_1,\alpha_2 : a_2 \to a_1$ and $\beta:\alpha_1 \to \alpha_2$ in $A$, one has $\phi_{a_1}T(1,\beta) = \phi_{a_2}T(\beta,1)$. For the sake of the following lemma, we denote by $\tn{obj}$ the functor $\Cat \to \Set$ which sends a category to its set of objects, and by $\tn{obj}_* : \TwoCat \to \Cat$ the 2-functor which sends a 2-category $X$ to its underlying category, which amounts to applying $\tn{obj}$ to the homs of $X$.
\begin{lem}\label{lem:coend-obs-for-pi0star}
Suppose that $T:A^{\op} \times A \to \Cat$ is a 2-functor such that for all $a,b \in A$, $T(a,b)$ is in fact discrete. Then
\[ \int^{a \in A} T(a,a) = \int^{a \in \tn{obj}_*A} T(a,a) = \int^{a \in \pi_{0*}A} T(a,a). \]
\end{lem}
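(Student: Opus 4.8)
The plan is to compare the three colimits through their defining universal properties, by showing that for every category $\mathcal C$ the categories of cocones with vertex $\mathcal C$ all coincide. The key structural fact, which I would establish first, is that since each $T(a,b)$ is discrete the 2-functor $T:A^{\op}\times A\to\Cat$ sends every 2-cell to an identity; hence $Tf=Tg$ whenever $f$ and $g$ are joined by a 2-cell, so $T$ factors as $T=\tn{disc}\circ\overline T\circ Q$, where $Q:A^{\op}\times A\to\pi_{0*}(A^{\op}\times A)=(\pi_{0*}A)^{\op}\times\pi_{0*}A$ is the quotient 2-functor, $\overline T:(\pi_{0*}A)^{\op}\times\pi_{0*}A\to\Set$, and $\tn{disc}:\Set\to\Cat$ is the discrete-category functor. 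It is $\overline T$ that is meant by ``$T(a,a)$'' in the third coend; note that $\tn{obj}(\pi_{0*}A)=\tn{obj}(\tn{obj}_*A)=\tn{obj}A$, so there is no ambiguity. Since $\tn{disc}$ has both adjoints $\pi_0\dashv\tn{disc}\dashv\tn{obj}$ it preserves colimits, so in each of the three cases the coend is a discrete category.

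For the comparison I would use the description of cocones recalled just before the statement: a cocone with vertex $\mathcal C$ for the 2-coend $\int^{a\in A}T(a,a)$ is a 2-wedge, that is, a family $\phi_a:T(a,a)\to\mathcal C$ satisfying the 1-dimensional dinaturality equations together with the 2-dimensional condition $\phi_{a_1}T(1,\beta)=\phi_{a_2}T(\beta,1)$ for each 2-cell $\beta:\alpha_1\to\alpha_2$ of $A$. The first observation is that this 2-dimensional condition is automatic here: $T(1,\beta)$ and $T(\beta,1)$ are identity 2-cells, so the equation degenerates to the 1-dimensional dinaturality equation for $\alpha_1$ (equivalently $\alpha_2$, which has the same image under $T$). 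The second observation is that the dinaturality equation for a 1-cell $\alpha$ involves only $T(\alpha,1)$ and $T(1,\alpha)$, which depend on $\alpha$ only through its class $[\alpha]$ in a hom-category of $A$. Together these say that a 2-wedge for $T$ over $A$ is literally the same data as an ordinary wedge for $T$ over $\tn{obj}_*A$, and also the same data as an ordinary wedge for $\overline T$ over $\pi_{0*}A$. The identical reduction handles morphisms of cocones: a modification of 2-wedges is a family $\theta_a:\phi_a\Rightarrow\phi'_a$ subject to a compatibility with $T(\alpha,1)$ and $T(1,\alpha)$, which again depends on $\alpha$ only through $[\alpha]$ and matches the notion of wedge morphism in both ordinary cases. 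Hence the three categories of cocones coincide 2-naturally in $\mathcal C$, and so do the objects they represent.

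It is worth recording why this avoids the usual pathologies of colimits in $\Cat$: presenting each coend as a reflexive coequalizer of shape $\coprod_{a,a'}(\mathrm{hom})\times T(a',a)\rightrightarrows\coprod_a T(a,a)$ --- with $\mathrm{hom}$ the hom-category $A(a,a')$ in the enriched case, the hom-set $\tn{obj}\,A(a,a')$ in the $\tn{obj}_*A$ case, and $\pi_0A(a,a')$ in the $\pi_{0*}A$ case --- the two legs always land in the discrete category $\coprod_a T(a,a)$, so each factors through $\pi_0$ of its source. The resulting quotient source $\coprod_{a,a'}\pi_0A(a,a')\times T(a',a)$ and pair of legs (the left and right actions of $\pi_{0*}A$ on $\overline T$) are the same in all three cases, the factoring maps are epimorphisms in $\Cat$, and so the three coequalizers agree; the common value, a coequalizer of discrete categories, is the ordinary coend of sets. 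Either this presentation or the universal-property argument can serve as the actual proof.

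The main obstacle is precisely this point: one must not take for granted that a $\Cat$-enriched 2-coend can be replaced by an ordinary coend, and the crux --- in either formulation --- is the observation that the relevant data (the legs of the coequalizer, equivalently the dinaturality and modification conditions) factor through $\pi_0$ because their target is discrete. Everything else is routine bookkeeping with variances and with the identification $\tn{obj}(\pi_{0*}A)=\tn{obj}A$.
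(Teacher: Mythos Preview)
Your proposal is correct and follows essentially the same route as the paper: compare cocones and observe that (i) the 2-dimensional dinaturality condition is vacuous because $T$ is discretely valued, and (ii) the 1-dimensional dinaturality depends on a 1-cell only through its connected component since $T(\alpha,1)$ and $T(1,\alpha)$ are identities for any 2-cell $\alpha$. The only difference is one of economy: the paper invokes the fact that $\Cat$ admits all cotensors, so it suffices to match up the \emph{sets} of dinatural transformations rather than the full categories of cocones; this dispenses with your separate check on modifications and with the coequalizer presentation, though both of those are perfectly valid alternatives.
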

\begin{proof}
Since $\Cat$ admits all cotensors, it suffices to show that the types of dinatural transformations defining each of these coends turn out to be the same in this case. A $1$-dinatural transformation for $T$ is automatically a $2$-dinatural transformation since $T$ is discretely valued, and so one has the first equality. For any 2-cell $\alpha:f \to g$ in $A$, $T(\alpha,1)$ and $T(1,\alpha)$ will be identities, and so $T(f,1) = T(g,1)$ and $T(1,f) = T(1,g)$, whence the second equality.
\end{proof}
Note that the adjunction $\pi_0 \ladj d$ is in fact a 2-adjunction, when $\Set$ is regarded as a locally discrete 2-category. Hence given $2$-profunctors $F:A \to B$ and $G:B \to C$ one has canonical natural isomorphisms
\[ \begin{array}{rcl} {\pi_{0*}(G \comp F)(a,c)}
& {\iso} & {\displaystyle \int^{b \in B} \pi_0G(b,c) \times \pi_0F(a,b)} \\
& {\iso} & {\displaystyle \int^{b \in \pi_{0*}B} \pi_0G(b,c) \times \pi_0F(a,b)} \\
& {=} & {(\pi_{0*}G \comp \pi_{0*}F)(a,c)} \end{array} \]
the first of which follows since $\pi_0$ preserves 2-colimits and finite products, and the second follows by Lemma \ref{lem:coend-obs-for-pi0star}. Thus we have shown
\begin{cor}\label{cor:extend-pi0-star}
The 2-functor $\pi_{0*}:\TwoCat \to \Cat$ extends to a homomorphism of bicategories $\pi_{0*}:\TwoProf \to \Prof$ compatibly with the inclusions, that is, such that
\[ \xygraph{{\xybox{\xygraph{!{0;(2,0):(0,.5)::} {\TwoCat^{\co}}="p0" [r] {\Cat^{\co}}="p1" [d] {\Prof}="p2" [l] {\TwoProf}="p3" "p0":"p1"^-{\pi_{0*}^{\co}}:"p2"^-{}:@{<-}"p3"^-{\pi_{0*}}:@{<-}"p0"^-{}:@{}|-{=}"p2"}}}
[r(5)]
{\xybox{\xygraph{!{0;(2,0):(0,.5)::} {\TwoCat^{\op}}="p0" [r] {\Cat^{\op}}="p1" [d] {\Prof}="p2" [l] {\TwoProf}="p3" "p0":"p1"^-{\pi_{0*}^{\op}}:"p2"^-{}:@{<-}"p3"^-{\pi_{0*}}:@{<-}"p0"^-{}:@{}|-{=}"p2"}}}} \]
\end{cor}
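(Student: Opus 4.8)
The plan is to promote $\pi_{0*} : \TwoCat \to \Cat$ to a homomorphism of bicategories $\TwoProf \to \Prof$ by first naming all the structure cells and then checking the homomorphism axioms; by the discussion preceding the statement almost all of this is already in place. On objects $\pi_{0*}$ is the given 2-functor. On a 2-profunctor $F : A \to B$, i.e.\ a 2-functor $A^{\op} \times B \to \Cat$, it returns the profunctor $(\pi_{0*}F)(a,b) = \pi_0(F(a,b))$; on a 2-cell $\theta : F \to G$ of $\TwoProf$ it returns the natural transformation with components $\pi_0(\theta_{a,b})$. First I would note that, for fixed $A$ and $B$, this is a functor $\TwoProf(A,B) \to \Prof(\pi_{0*}A,\pi_{0*}B)$, since it is obtained by post-composing the evaluation functors with $\pi_0 : \Cat \to \Set$.

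Next come the unitor and compositor. The unitor is an identity: the identity module on a 2-category $A$ in $\TwoProf$ is the hom-2-functor $(a,a') \mapsto A(a,a')$, and applying $\pi_{0*}$ pointwise yields $(a,a') \mapsto \pi_0(A(a,a'))$, which is by construction the hom-set profunctor of $\pi_{0*}A$, i.e.\ the identity module on $\pi_{0*}A$. The compositor $\pi_{0*}(G \comp F) \iso \pi_{0*}G \comp \pi_{0*}F$ is precisely the composite natural isomorphism displayed just above the statement, built from the fact that $\pi_0$ preserves finite products and all colimits together with Lemma \ref{lem:coend-obs-for-pi0star}. I would then check that this isomorphism is natural in $F$ and $G$ --- both sides are assembled functorially from $F$ and $G$ via coends and application of $\pi_0$, so this is automatic --- and verify the three coherence axioms of a homomorphism of bicategories: the associativity pentagon for triple composites $H \comp G \comp F$, and the two unit triangles. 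After unwinding, each of these reduces to the corresponding coherence isomorphism for composition of modules in $\Prof$ and $\TwoProf$ (which itself is a Fubini-type identity of iterated coends), transported along the canonical isomorphism; the only property of $\pi_0$ that enters is that it is a cocontinuous strong monoidal functor, which is exactly what lets it commute, up to coherent isomorphism, with the colimits defining module composition.

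Finally, for the claimed compatibility with the inclusions $\pi_{0*}^{\co} : \TwoCat^{\co} \to \Cat^{\co} \hookrightarrow \Prof$ and $\pi_{0*}^{\op} : \TwoCat^{\op} \to \Cat^{\op} \hookrightarrow \Prof$, I would invoke the strict identities $\pi_{0*}(B(f,1)) = (\pi_{0*}B)(\pi_{0*}f,1)$ and $\pi_{0*}(B(1,f)) = (\pi_{0*}B)(1,\pi_{0*}f)$ already recorded above. These show that the two legs of each triangle agree on objects (both are $\pi_{0*}$) and on 1-cells on the nose, and one then checks that $\pi_{0*}$'s unitor and compositor restrict to the identity structure cells of the two inclusions, so that the triangles are genuine equalities of homomorphisms rather than merely pseudocommutative.

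The only obstacle is bookkeeping: confirming naturality of the compositor and chasing the homomorphism coherence diagrams. There is no genuine difficulty, since everything is dictated by $\pi_0 : \Cat \to \Set$ being a finite-product-preserving cocontinuous functor; the content is simply that each coherence diagram for $\pi_{0*}$ collapses onto one that already holds for composition of $\Cat$- and $\Set$-valued modules.
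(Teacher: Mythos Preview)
Your proposal is correct and follows essentially the same approach as the paper: the paper's argument is precisely the discussion immediately preceding the corollary, which establishes the compositor isomorphism via $\pi_0$ preserving finite products and 2-colimits together with Lemma~\ref{lem:coend-obs-for-pi0star}, and then simply says ``Thus we have shown''. You have spelled out in more detail the coherence bookkeeping (naturality of the compositor, the pentagon and triangle axioms, and the restriction to the inclusions) that the paper leaves implicit, but the substance is identical.
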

Thus in particular $\pi_{0*}$ sends $\Cat$-exact squares to exact squares in $\Cat$, but more importantly, one has the immediate
\begin{cor}\label{cor:pi0-exact-via-2-coends}
A lax square
\[ \xygraph{!{0;(1.5,0):(0,.6667)::} {P}="p0" [r] {B}="p1" [d] {C}="p2" [l] {A}="p3" "p0":"p1"^-{q}:"p2"^-{g}:@{<-}"p3"^-{f}:@{<-}"p0"^-{p} "p0" [d(.55)r(.4)] :@{=>}[r(.2)]^-{\phi}} \]
in $\TwoCat$ is $\pi_0$-exact iff for all $a \in A$ and $b \in B$, the functor $\pi_0:\Cat \to \Set$ inverts the functor
\[ \tilde{\phi}_{a,b} : \int^{x \in P} B(qx,b) \times A(a,px) \longrightarrow C(fa,gb) \]
defined by composition with the components of $\phi$.
\end{cor}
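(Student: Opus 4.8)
The plan is to reduce the statement to Guitart's profunctorial characterisation of exactness in $\Cat$, namely Theorem~\ref{thm:exact-Cat-alternative-charns}(\ref{thmcase:exact-prof}) applied to the square $\pi_{0*}(\ca S)$, and then to transport this back through the homomorphism $\pi_{0*}:\TwoProf \to \Prof$ of Corollary~\ref{cor:extend-pi0-star}. First I would unwind Definition~\ref{def:pi0-exact}: the square $\ca S$ is $\pi_0$-exact precisely when the lax square $\pi_{0*}(\ca S)$ in $\Cat$, whose $2$-cell is $\pi_{0*}\phi$, is exact, and by Theorem~\ref{thm:exact-Cat-alternative-charns}(\ref{thmcase:exact-prof}) this holds iff the $2$-cell $\widetilde{\pi_{0*}\phi}$ in $\Prof$ associated to that lax square is invertible. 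A $2$-cell in $\Prof$ is invertible exactly when each of its components is a bijection of sets, so $\pi_0$-exactness of $\ca S$ amounts to the bijectivity, for all $a \in A$ and $b \in B$, of the component
\[ (\widetilde{\pi_{0*}\phi})_{a,b} : \int^{x \in \pi_{0*}P} (\pi_{0*}B)(qx,b) \times (\pi_{0*}A)(a,px) \longrightarrow (\pi_{0*}C)(fa,gb). \]

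Next I would identify this component with $\pi_0$ applied to $\tilde{\phi}_{a,b}$. Recall that $\tilde{\phi}$ is the $2$-cell $B(q,1) \comp A(1,p) \to C(f,g)$ in $\TwoProf$ corresponding to the lax square $\ca S$ under the bijection recalled before Theorem~\ref{thm:exact-Cat-alternative-charns}; concretely it is obtained from $\phi$ by the mate calculus for the adjunctions $B(q,1) \ladj B(1,q)$ and $A(p,1) \ladj A(1,p)$ together with the co-Yoneda isomorphism $C(f,g) \iso C(1,g) \comp C(f,1)$. Since $\pi_{0*}:\TwoProf \to \Prof$ is a homomorphism of bicategories compatible with the inclusions of $\TwoCat^{\co}$ and $\TwoCat^{\op}$ (Corollary~\ref{cor:extend-pi0-star}), it preserves these representable profunctors and their adjunctions, and hence, by the naturality of mates, carries $\tilde{\phi}$ to the $2$-cell in $\Prof$ corresponding to $\pi_{0*}(\ca S)$, that is to $\widetilde{\pi_{0*}\phi}$, modulo the canonical comparison isomorphisms identifying $\pi_{0*}$ of a profunctor composite with the composite of the $\pi_{0*}$'s. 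Unpacking those comparison isomorphisms, which by the proof of Corollary~\ref{cor:extend-pi0-star} rest on $\pi_0$ preserving $2$-colimits and finite products and on Lemma~\ref{lem:coend-obs-for-pi0star}, identifies $(\widetilde{\pi_{0*}\phi})_{a,b}$ with $\pi_0$ applied to $\tilde{\phi}_{a,b} : \int^{x \in P} B(qx,b) \times A(a,px) \longrightarrow C(fa,gb)$. Combining this with the first paragraph gives the asserted equivalence: $\ca S$ is $\pi_0$-exact iff $\pi_0$ inverts $\tilde{\phi}_{a,b}$ for all $a$ and $b$.

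I expect the only delicate point to be the mate bookkeeping of the second paragraph, i.e.\ checking that $\pi_{0*}$ sends the $2$-cell canonically attached to a lax square to the $2$-cell canonically attached to its $\pi_{0*}$-image, compatibly with the comparison isomorphisms for $\pi_{0*}$ of a composite; this is really just Corollary~\ref{cor:extend-pi0-star} combined with the functoriality of the mate correspondence, but it should be spelled out cleanly. Everything else is a direct appeal to results already in hand (Theorem~\ref{thm:exact-Cat-alternative-charns}, Corollary~\ref{cor:extend-pi0-star} and Lemma~\ref{lem:coend-obs-for-pi0star}) together with the elementary observation that invertibility of a $2$-cell in $\Prof$ is detected componentwise.
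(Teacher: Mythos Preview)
Your proposal is correct and follows essentially the same route as the paper: the corollary is stated there as an immediate consequence of Corollary~\ref{cor:extend-pi0-star}, and your argument simply unpacks what ``immediate'' means---namely that the homomorphism $\pi_{0*}:\TwoProf\to\Prof$, being compatible with the inclusions from $\TwoCat^{\co}$ and $\TwoCat^{\op}$, carries $\tilde{\phi}$ to $\widetilde{\pi_{0*}\phi}$, so that Guitart's profunctorial criterion for $\pi_{0*}(\ca S)$ becomes the componentwise condition that $\pi_0$ invert each $\tilde{\phi}_{a,b}$. The mate bookkeeping you flag as the delicate point is exactly what the compatibility squares in Corollary~\ref{cor:extend-pi0-star} encode, so nothing further is needed.
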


\subsection{Lax coends}
\label{ssec:lax-coends}
As with many other colimits in $\Cat$, 2-coends such as that which describes the domain of $\tilde{\phi}_{a,b}$, are difficult to compute in general. This is because it is possible that some quotienting will occur at the level of objects, causing ``new'' composable sequences to arise in the colimit, which then make it hard to keep track of all of the freely added composites which must then also appear. However, since by Corollary \ref{cor:pi0-exact-via-2-coends} we are only concerned with the value of such 2-coends ``up to functors inverted by $\pi_0$'', it turns out that for the purposes of characterising $\pi_0$-exact squares, it suffices to consider ``lax coends'' which turn out to be a lot easier to compute. In this section we describe these lax coends, and explain why knowing them is sufficient for our purposes. Moreover we compute the weight governing lax coends as a codescent object, which in Section \ref{ssec:lax-wedges} will enable us to compute the lax coends of interest to us.

For the remainder of this section and the next, let
\[ \begin{array}{lccr} {S:P^{\op} \to \Cat} &&& {T:P \to \Cat} \end{array} \]
be 2-functors and denote by $S \times T$ the 2-functor whose effect on objects is given by $(x,y) \mapsto Sx \times Ty$. For the sake of brevity denote by $H : P \times P^{\op} \to \Cat$ the 2-functor we denoted above as $\tn{Hom}_{P^{\op}}$, whose effect on objects is $(x,y) \mapsto P(y,x)$. We wish to understand
\[ \begin{array}{rcl} {\displaystyle \int^{x \in P} Sx \times Tx} & = & {\tn{col}(H,S \times T)} \end{array} \]
up to a functor inverted by $\pi_0$. By definition the weight $H$ is an object of $[P \times P^{\op},\Cat]$. We shall regard this 2-category as the 2-category of strict algebras and strict morphisms of a 2-monad $L$ to be defined below, so that one can consider another weight $H^{\dagger}_L$, where $(-)^{\dagger}_L$ is the left adjoint to the inclusion $J_L : \Algs L \to \Algl L$.

The 2-monad $L$ is essentially a special case of that described in Section 6.6 of \cite{BWellKellyPower-2DMndThy}. Regard the set $\tn{ob}(P)$ of objects of $P$ as a discrete 2-category, and then left extension and restriction along the inclusion $\tn{ob}(P) \to P$ gives a 2-monad on $[\tn{ob}(P), [P^{\op},\Cat]]$. The 2-category of strict algebras and strict maps may be identified with $[P, [P^{\op},\Cat]]$, and lax morphisms $F \to G$ may be identified with lax natural transformations $F \to G$. Our 2-monad $L$ is exactly this, except that we regard the underlying 2-category and the 2-category of strict algebras and strict maps as
\[ \begin{array}{lccr} {[\tn{ob}(P) \times P^{\op},\Cat]} &&& {[P \times P^{\op},\Cat]} \end{array} \]
respectively. In these terms lax morphisms $F \to G$ may be identified with lax natural transformations $F \to G$ which are strictly natural in the second variable.

Explicitly $L$ is given on objects by
\[ LX(x,y) = \coprod_{z \in \tn{ob}(P)} X(z,y) \times P(z,x) \]
and following \cite{BWellKellyPower-2DMndThy} one may exhibit the rest of the monad structure. Since coproducts in $\Cat$ commute with connected limits and $\Cat$ is cartesian closed, the above formula exhibits $L$ as connected limit preserving. Since in $\Cat$ coproducts of pullback squares are pullbacks, and squares of the form
\[ \xygraph{!{0;(2,0):(0,.5)::} {A \times B}="p0" [r] {A \times D}="p1" [d] {C \times D}="p2" [l] {C \times B}="p3" "p0":"p1"^-{1_A \times g}:"p2"^-{f \times 1_D}:@{<-}"p3"^-{1_C \times g}:@{<-}"p0"^-{f \times 1_B}} \]
are pullbacks, the unit and multiplication of $L$ may be exhibited as cartesian. Thus $L$ is a cartesian 2-monad. Moreover note that since $[P \times P^{\op},\Cat]$ is cocomplete, the codescent objects necessary for the description of $(-)^{\dagger}_L$ exist and we can make
\begin{defn}\label{def:lax-coend}
Let $\ca K$ be a 2-category and $F:P^{\op} \times P \to \ca K$ be a 2-functor. Then the colimit of $F$ weighted by $H^{\dagger}_L$ is called the \emph{lax coend} of $F$.
\end{defn}
The counit of the adjunction $(-)^{\dagger}_L \ladj J_L$ gives us a 2-natural transformation $E:H^{\dagger}_L \to H$. Since computing weighted colimits is functorial in the weight, one has a ``comparison functor''
\[ \tn{col}(E,S \times T) : \tn{col}(H^{\dagger}_L,S \times T) \longrightarrow \tn{col}(H,S \times T) \]
between the lax and strict coends of interest. The unit of $(-)^{\dagger}_L \ladj J_L$ gives us $N:H \to H^{\dagger}_L$ in $\Algl L$, and by \cite{LackShul-Enhanced} Lemma 2.5 one has an adjunction $E \ladj N$ in $\Algl L$ with identity counit. As we shall now see, the existence of this adjunction at the level of weights enables us to verify that the above comparison between lax and strict coends is inverted by $\pi_0$.
\begin{prop}\label{prop:weight-result-for-pi0-invertibiility}
Let $A$ be a small 2-category, and $F:A \to \Cat$ and $I:A^{\op} \to \Cat$ be 2-functors. Then one has isomorphisms
\[ \pi_0\tn{col}(I,F) \iso \tn{col}(I,\pi_0F) \iso \tn{col}(d\pi_0I,\pi_0F) \]
2-natural in $I$ and $F$.
\end{prop}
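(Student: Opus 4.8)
The plan is to derive both isomorphisms from the single structural input, recalled just above, that $\pi_0 \ladj d$ is a $\Cat$-enriched adjunction when $\Set$ is regarded as a locally discrete 2-category, together with its two standard consequences: $d$ is fully faithful and $\pi_0 d = 1$, and applying $[A^{\op},-]$ to this 2-adjunction yields a $\Cat$-adjunction $[A^{\op},\pi_0] \ladj [A^{\op},d]$ between $[A^{\op},\Cat]$ and $[A^{\op},\Set]$. Throughout, $\pi_0 F$ denotes the composite $\pi_0 \comp F : A \to \Set$, and the colimits $\tn{col}(I,\pi_0 F)$ and $\tn{col}(d\pi_0 I, \pi_0 F)$ are the $\Cat$-weighted colimits of this $\Set$-valued diagram computed in $\Set$, whereas $\tn{col}(I,F)$ is computed in $\Cat$.

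For the first isomorphism I would invoke that a left $\Cat$-adjoint preserves $\Cat$-weighted colimits \cite{Kelly-EnrichedCatsBook}; applied to $\pi_0 \ladj d$, the weight $I$ and the diagram $F$, this gives $\pi_0\tn{col}(I,F) \iso \tn{col}(I,\pi_0 F)$, realised by the canonical comparison map and hence natural in $I$ and $F$ by 2-functoriality of weighted colimits in the weight and the diagram.

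For the second isomorphism I would compare representing objects. By definition $\tn{col}(I,\pi_0 F)$ and $\tn{col}(d\pi_0 I, \pi_0 F)$ are objects of $\Set$ equipped with natural isomorphisms $\Set(\tn{col}(I,\pi_0 F),S) \iso [A^{\op},\Cat](I, \Set(\pi_0 F(-),S))$ and $\Set(\tn{col}(d\pi_0 I, \pi_0 F),S) \iso [A^{\op},\Cat](d\pi_0 I, \Set(\pi_0 F(-),S))$, where $\Set(\pi_0 F(-),S) : A^{\op} \to \Set \hookrightarrow \Cat$ is discrete-valued. The functor-category adjunction $[A^{\op},\pi_0] \ladj [A^{\op},d]$ together with $\pi_0 d = 1$ gives, for any discrete-valued $dL$ with $L : A^{\op} \to \Set$, natural isomorphisms $[A^{\op},\Cat](I, dL) \iso [A^{\op},\Set](\pi_0 I, L) \iso [A^{\op},\Cat](d\pi_0 I, dL)$. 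Taking $L = \Set(\pi_0 F(-),S)$ shows the two colimits represent the same 2-functor $\Set \to \Set$, so they are canonically isomorphic by the Yoneda lemma, naturally in $I$ and $F$.

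The one point that needs care is the interplay of the two enrichments: I must check that a 2-natural transformation, or a modification between two of them, with target a discrete-valued 2-functor $A^{\op} \to \Cat$, is constrained by the 2-cells of $A^{\op}$ no more than by the underlying $1$-category, since every component lands in a discrete category. This is exactly what licenses replacing $I$ by $d\pi_0 I$ in the weight, and it is immediate; consequently I do not anticipate a genuine obstacle, the argument being a formal manipulation of the 2-adjunction $\pi_0 \ladj d$ and the 2-functoriality of weighted colimits. Should one instead read $\tn{col}(d\pi_0 I, \pi_0 F)$ as a colimit in $\Cat$ of the discrete-valued diagram $d\pi_0 F$ weighted by $d\pi_0 I$, one simply adds that $\pi_0$ preserves it and that the induced comparison, being a map of discrete categories onto the set already exhibited, is an isomorphism, so the statement is unchanged.
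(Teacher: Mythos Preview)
Your proposal is correct and follows essentially the same approach as the paper: the first isomorphism is obtained from $\pi_0 \ladj d$ being a 2-adjunction (so the left 2-adjoint preserves weighted colimits), and the second by a representability/Yoneda argument using the induced adjunction on functor categories together with the 2-fully-faithfulness of $d$. The paper writes out the same chain of natural isomorphisms you describe, with the same justifications; your extra paragraph on the interplay of enrichments simply makes explicit what the paper leaves implicit in its use of $[A^{\op},\Set]$ and $d_*\Set$.
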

\begin{proof}
The first isomorphism follows since $\pi_0 \ladj d$ is a 2-adjunction when one regards $\Set$ as a locally discrete 2-category. Consistent with our notation, we write $d_*\Set$ for the category of $\Set$ so regarded. For the second isomorphism we have the following sequence of natural isomorphisms
\[ \begin{array}{lll} {(d_*\Set)(\tn{col}(d\pi_0I,\pi_0F),X)} & {\iso} & {[A^{\op},\Cat](d\pi_0I,(d_*\Set)(\pi_0F,X))} \\
& {\iso} & {[A^{\op},\Cat](d\pi_0I,d(\Set(\pi_0F,X)))} \\
& {\iso} & {[A^{\op},\Set](\pi_0I,\Set(\pi_0F,X))} \\
& {\iso} & {[A^{\op},\Cat](I,d(\Set(\pi_0F,X)))} \\
& {\iso} & {[A^{\op},\Cat](I,(d_*\Set)(\pi_0F,X))} \\
& {\iso} & {(d_*\Set)(\tn{col}(I,\pi_0F),X)} \end{array} \]
coming from the definitions of weighted colimit and of $d_*$, $\pi_0 \ladj d$ and the 2-fully faithfulness of $d$.
\end{proof}
Since the 2-functor $\pi_0 \comp (-) : [P \times P^{\op},\Cat] \to [P \times P^{\op},\Set]$ given by composition with $\pi_0$ factors through the inclusion $J_L$, the existence of the adjunction $E \ladj N$ in $\Algl L$ ensures that $\pi_0 \comp (-)$ inverts $E$. Since the composite isomorphism of Proposition \ref{prop:weight-result-for-pi0-invertibiility} is natural in the weight, $\pi_0\tn{col}(E,S \times T)$ is an isomorphism since $d\pi_0E$ is. Thus we have proved
\begin{cor}\label{cor:pi0-inverts-laxcoend->coend}
The functor $\tn{col}(E,S \times T)$ is inverted by $\pi_0$.
\end{cor}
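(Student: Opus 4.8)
The plan is to deduce the Corollary from the adjunction $E \ladj N$ in $\Algl L$ together with the weight-wise naturality recorded in Proposition \ref{prop:weight-result-for-pi0-invertibiility}. The whole argument is formal once one knows that passing to $\pi_0$ of the homs is a 2-functor out of $\Algl L$.

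First I would make precise the factorisation through $J_L$ mentioned just before the statement: composition with $\pi_0$ extends to a 2-functor $\pi_0 \comp (-) : \Algl L \to \Algs L$. Indeed, for $X \in \Algl L$ the 2-functor $\pi_0 \comp X : P \times P^{\op} \to \Set$ carries a strict $L$-algebra structure (strict and lax $L$-algebras have the same underlying objects, and $\pi_0$ preserves the relevant limits/colimits), while for a lax $L$-morphism $f$ the coherence 2-cell $\overline{f}$ is sent by $\pi_0 \comp (-)$ to a 2-cell in the locally discrete 2-category $[P \times P^{\op},\Set]$, hence to an identity, so $\pi_0 \comp f$ is strict; compatibility with composites of lax morphisms is checked the same way. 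Thus $\pi_0 \comp (-)$ genuinely factors as $\Algl L \to \Algs L \hookrightarrow [P \times P^{\op},\Set]$.

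Next I would apply this 2-functor to the adjunction $E \ladj N$ in $\Algl L$, which (by \cite{LackShul-Enhanced} Lemma 2.5) has identity counit. The counit being an identity forces $(\pi_0 \comp E)(\pi_0 \comp N) = 1$, and the unit is a 2-cell $1 \to NE$ in $\Algl L$ whose image under $\pi_0 \comp (-)$ lives in the locally discrete $[P \times P^{\op},\Set]$, hence is an identity, forcing $(\pi_0 \comp N)(\pi_0 \comp E) = 1$. Therefore $\pi_0 \comp E$ is an isomorphism; equivalently $d\pi_0 E$ (the result of applying $\Cat \to \Set \xrightarrow{d} d_*\Set$ pointwise to $E$) is an isomorphism of weights on $P \times P^{\op}$.

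Finally I would invoke Proposition \ref{prop:weight-result-for-pi0-invertibiility}, which gives, 2-naturally in the weight $W : P \times P^{\op} \to \Cat$, an isomorphism $\pi_0\tn{col}(W, S \times T) \iso \tn{col}(d\pi_0 W, \pi_0(S \times T))$. Instantiating its naturality at the morphism of weights $E : H^{\dagger}_L \to H$ yields a commutative square identifying $\pi_0\tn{col}(E, S \times T)$ with $\tn{col}(d\pi_0 E, \pi_0(S \times T))$; since $d\pi_0 E$ is an isomorphism by the previous step, and a weighted colimit of a fixed diagram along isomorphic weights is an isomorphism, this latter functor is an isomorphism, hence so is $\pi_0\tn{col}(E, S \times T)$ — which is exactly the statement that $\tn{col}(E, S \times T)$ is inverted by $\pi_0$. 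The one step needing genuine care, and the only real obstacle in this otherwise purely formal chain, is the first: verifying that $\pi_0 \comp (-)$ really does send lax $L$-morphisms to strict ones compatibly with composition, since it is this that lets the adjunction $E \ladj N$ transport to an honest isomorphism.
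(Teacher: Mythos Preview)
Your argument is correct and follows essentially the same route as the paper: use that post-composition with $\pi_0$ extends along $J_L$ to a 2-functor out of $\Algl L$ (because the target is locally discrete), so the adjunction $E \ladj N$ with identity counit is sent to an isomorphism, and then transport this to the colimit via the naturality-in-the-weight recorded in Proposition~\ref{prop:weight-result-for-pi0-invertibiility}. One notational slip: the codomain of the extended 2-functor is $[P \times P^{\op},\Set]$ (regarded as locally discrete), not $\Algs L$, since $\Algs L = [P \times P^{\op},\Cat]$; your reasoning is unaffected, but the factorisation should read $\Algl L \to [P \times P^{\op},\Set]$.
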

\noindent and so for the purposes of characterising $\pi_0$-exact squares, lax coends are as good as strict ones. The remainder of this section and the next is devoted to the computation of $\tn{col}(H^{\dagger}_L,S \times T)$, which is ultimately achieved in Corollary \ref{cor:lax-coend-formula} below.

As is well-known \cite{Bourke-Thesis, Lack-Codescent, Weber-CodescCrIntCat}, the weight $H^{\dagger}_L$ can be computed as the codescent object of $\ca R_LH$, where $\ca R_LH$ is the simplicial object in $[P \times P^{\op},\Cat]$ whose codescent-relevant parts are
\begin{equation}\label{eq:simp-object-for-H-dagger}
\xygraph{!{0;(2,0):(0,1)::} {L^3H}="p0" [r] {L^2H}="p1" [r] {LH}="p2"
"p2":"p1"|-{L\eta^L_H} "p1":@<1.5ex>"p2"^-{\mu^L_H} "p1":@<-1.5ex>"p2"_-{Lh} "p0":@<1.5ex>"p1"^-{\mu^L_{LH}} "p0":"p1"|-{L\mu^L_H} "p0":@<-1.5ex>"p1"_-{L^2h}}
\end{equation}
where $h:LH \to H$ is the strict $L$-algebra action for $H:P \times P^{\op} \to \Cat$, which in this case encodes its functoriality data in the first variable.

Let us unpack (\ref{eq:simp-object-for-H-dagger}) in more elementary terms. For $x,y \in P$ denote by $s_n(y,x)$ the set of sequences of objects of $P$ of length $(n+2)$ starting from $y$ and finishing at $x$. We denote a typical $z \in s_n(y,x)$ as $(z_1,...,z_n)$ and let $z_0=y$ and $z_{n+1}=x$. By the definitions of $L$ and $H$, for $x,y \in P$ one has
\[ L^nH(x,y) = \coprod\limits_{z \in s_n(y,x)} \prod_{i=0}^n P(z_i,z_{i+1}). \]
Thus one has following elementary description of the category $L^nH(x,y)$. An object is a functor $p:[n+1] \to P$ such that $p0 = y$ and $p(n+1) = x$. Such a $p$ is clearly a path of length $(n+1)$ from $y$ to $x$, we denote by $p_i:p(i-1) \to pi$ the $i$-th arrow in this path, and when convenient we shall denote $p$ also as the $(n+1)$-tuple $(p_1,...,p_{n+1})$. A morphism $\phi:p \to r$ in $L^nH(x,y)$ may be identified as an icon $p \to r$ in the sense of \cite{Lack-Icons}. Recall that an icon is a lax natural transformation whose 1-cell components are identities, and so to give such a $\phi$ is to give 2-cells $\phi_i:p_i \to r_i$ for $1 \leq i \leq n+1$.

In these terms the components of the morphisms appearing in (\ref{eq:simp-object-for-H-dagger}) are given on objects as follows. The effect of $\mu^L_{H,x,y}$ and $Lh_{x,y}$ on a path $(p_1,p_2,p_3)$ of length $3$ is $(p_1,p_3p_2)$ and $(p_2p_1,p_3)$ respectively, and $L\eta^L_{H,x,y}(p_1,p_2) = (p_1,1_{p1},p_2)$. The effect of $\mu^L_{LH,x,y}$, $L\mu^L_{H,x,y}$ and $L^2h_{x,y}$ on $(p_1,p_2,p_3,p_4)$ is $(p_1,p_2,p_4p_3)$, $(p_1,p_3p_2,p_4)$ and $(p_2p_1,p_3,p_4)$ respectively.

Since $L$ is a cartesian 2-monad (\ref{eq:simp-object-for-H-dagger}) is a category object (see for instance Proposition 4.4.1 of \cite{Weber-CodescCrIntCat}). However at this generality there is no reason why (\ref{eq:simp-object-for-H-dagger}) should be a crossed internal category. However despite the fact that the methods of \cite{Weber-CodescCrIntCat} do not apply here, we are nevertheless able to compute this codescent object. We do this by exhibiting a codescent cocone directly.
\begin{const}\label{const:lax-wedge-weight}
We now describe a 2-functor
\[\ca H : P \times P^{\op} \longrightarrow \TwoCat.\]
For $x,y \in P$ the 2-category $\ca H(x,y)$ is defined as follows. An object is an object of $LH(x,y)$, that is to say, a path of length 2 in $P$ from $y$ to $x$. A morphism $p \to r$ is a triple $(f,f_1,f_2)$ where $f:p1 \to r1$, $f_1:fp_1 \to r_1$ and $f_2:r_2f \to p_2$ as in
\[ \xygraph{!{0;(1.5,0):(0,.4)::} {y}="p0" [ur] {p1}="p1" [dr] {x.}="p2" [dl] {r1}="p3" "p0":"p1"^-{p_1}:"p2"^-{p_2}:@{<-}"p3"^-{r_2}:@{<-}"p0"^-{r_1} "p1":"p3"^-{f}
"p0" [u(.25)r(.6)] :@{=>}[d(.5)]^{f_1} "p2" [u(.25)l(.65)] :@{=>}[d(.5)]^{f_2}} \]
A 2-cell $(f,f_1,f_2) \to (g,g_1,g_2)$ is a 2-cell $\alpha:f \to g$ in $P$ such that $g_1(\alpha p_1)=f_1$ and $(r_2\alpha)f_2=g_2$. The 2-categorical compositions for $\ca H(x,y)$ are inherited in the evident way from those of $P$, and this construction is 2-functorial in $x$ and $y$.
\end{const}
The vertex of the codescent cocone we are in the process of describing is $\pi_{0*}\ca H$. We will describe the rest of the data as the effect of post-composition with $\pi_{0*}$ on $q_0:d_*LH \to \ca H$ and $q_1:q_0\mu^L_H \to q_0Lh$, noting that $\pi_{0*}d_*$ is the identity. Now $(q_0,q_1)$ will almost be a codescent cocone in $[P \times P^{\op},\TwoCat]$, except that the components of $q_1$ are lax natural transformations. To clarify what sort of entity $q_1$ really is, some preliminary remarks are in order.

For 2-categories $A$ and $B$, $[A,B]$ denotes the 2-category of 2-functors from $A$ to $B$, 2-natural transformations and modifications, and $[A,B]_{\tn{l}}$ denotes the 2-category of 2-functors from $A$ to $B$, lax natural transformations and modifications. Note that the assignation $(A,B) \mapsto [A,B]_{\tn{l}}$ is 2-functorial in $A$ and $B$, in fact this is part of a well-known closed structure on $\TwoCat$.  Given a small 2-category $A$ and 2-functors $X$ and  $Y:A \to \TwoCat$, we define the 2-category
\[ [A,\TwoCat](X,Y)_{\tn{l}} = \int_{a \in A} [Xa,Ya]_{\tn{l}} \]
in which the end on the right hand side is taken in the $\TwoCat$-enriched sense, where the tensor product on $\TwoCat$ is taken to be cartesian product. An object of this 2-category consists of 2-functors $F_a : Xa \to Ya$ for each $a \in A$, 2-naturally in $a$. A morphism $\phi:F \to G$ consists of lax natural transformations $\phi_a:F_a \to G_a$, 2-naturally in $a$. This naturality is in the evident sense, given that lax natural transformations can be horizontally composed with 2-functors, strict 2-natural transformations and modifications thereof. We call such a $\phi$ a \emph{lax modification}. Composition with $\pi_{0*}$ gives a 2-functor
\[ \pi_{0*} \comp (-) : [A,\TwoCat](X,Y)_{\tn{l}} \longrightarrow [A,\Cat](\pi_{0*}X,\pi_{0*}Y) \]
whose codomain is locally discrete. In particular $\pi_{0*} \comp (-)$ sends lax modifications to modifications.
\begin{const}\label{const:lax-codescent-cocone}
The 2-natural transformation on the left
\[ \begin{array}{lccr} {q_0 : d_*LH \longrightarrow \ca H} &&& {q_1:q_0\mu^L_H \to q_0Lh} \end{array} \]
and the lax modification on the right will now be described. For $x,y \in P$ define the 2-functor $q_{0,x,y} : d_*LH(x,y) \to \ca H(x,y)$ to be the identity on objects, and for $(f_1,f_2):(p_1,p_2) \to (r_1,r_2)$ in $LH(x,y)$, define $q_{0,x,y}(f_1,f_2) = (1_z,f_1,f_2)$, where $z = p1 = r1$. The 2-functors $q_{0,x,y}$ are easily seen to be be 2-natural in $x$ and $y$. The lax modification $q_1$ is defined as follows. For $x,y \in P$ and $p = (p_1,p_2,p_3)$ in $L^2H(x,y)$, we take the corresponding 1-cell component $(q_{1,x,y})_p$ to be $(p_2,\id,\id):(p_1,p_3p_2) \to (p_2p_1,p_3)$. The lax naturality 2-cell $(q_{1,x,y})_f$ with respect to $f:p \to r$ in $L^2H(x,y)$ is given by
\[ \xygraph{!{0;(3,0):(0,.3333)::} {(p_1,p_3p_2)}="p0" [r] {(r_1,r_3r_2)}="p1" [d] {(r_2r_1,r_3)}="p2" [l] {(p_2p_1,p_3)}="p3" "p0":"p1"^-{(1_{z_1},f_1,f_3 \cdot f_2)}:"p2"^-{(r_2,\id,\id)}:@{<-}"p3"^-{(1_{z_2},f_2 \cdot f_1,f_3)}:@{<-}"p0"^-{(p_2,\id,\id)} "p0" [d(.55)r(.42)] :@{=>}[r(.16)]^-{f_2}} \]
where $z_1 = p1 = r1$ and $z_2 = p2 = r2$, and ``$\cdot$'' denotes horizontal composition of 2-cells in $P$. The 2-naturality of the $q_{1,x,y}$ in $x$ and $y$ follows easily from the 2-category structure of $P$.
\end{const}
Note also that in the context of Construction \ref{const:lax-codescent-cocone} the equations
\[ \begin{array}{lccr} {q_1(L\eta^L_H) = \id} &&& {(q_1L^2h)(q_1\mu^L_{LH}) = q_1L\mu^L_H} \end{array} \]
also follow easily from the 2-category structure of $P$. Thus $(\pi_{0*}q_0,\pi_{0*}q_1)$ is a cocone for the simplicial object $\ca R_LH$.
\begin{prop}\label{prop:H-dagger}
$(\pi_{0*}q_0,\pi_{0*}q_1)$ is a codescent cocone which exhibits
\[ H^{\dagger}_L = \pi_{0*}\ca H. \]
\end{prop}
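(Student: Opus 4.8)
The plan is to verify the universal property of a codescent object directly for the cocone $(\pi_{0*}q_0,\pi_{0*}q_1)$, using the fact that, just as in the proof of Theorem~\ref{thm:codesc-crossed-int-cat}, the weight $H^{\dagger}_L$ is itself a codescent object, here of the simplicial object $\ca R_LH$ in $[P\times P^{\op},\Cat]$. Since a codescent cocone with the universal property is unique up to unique isomorphism, it suffices to show that $(\pi_{0*}q_0,\pi_{0*}q_1)$ has that universal property. First I would observe that $\pi_{0*}\ca H \in [P\times P^{\op},\Cat]$ is genuinely a strict $L$-algebra: the $L$-action is forced by the functoriality of $\ca H$ in its first variable composed with $\pi_{0*}$, and one checks the action axioms from the $2$-category structure of $P$. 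Then $(\pi_{0*}q_0,\pi_{0*}q_1)$ being a cocone is exactly the two displayed equations just before the statement, which were already noted to follow from the structure of $P$.

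The main work is the universal property. Given any strict $L$-algebra $K \in [P\times P^{\op},\Cat]$ and a cocone $(k_0,k_1)$ for $\ca R_LH$ with vertex $K$ — that is, $k_0:LH\to K$ a strict $L$-morphism (equivalently, by the identification recalled in Section~\ref{ssec:lax-coends}, a lax natural transformation $H\to K$ strictly natural in the second variable, or what amounts to the underlying $P^{\op}$-indexed data together with $2$-cells for the arrows $p_i$ of $P$) and $k_1$ the required invertible $2$-cell, subject to the simplicial identities — I must produce a unique strict $L$-morphism $\overline{k}:\pi_{0*}\ca H\to K$ with $\overline{k}\,\pi_{0*}q_0 = k_0$ and $\overline{k}\,\pi_{0*}q_1 = k_1$. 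On objects $\pi_{0*}\ca H(x,y)$ and $\pi_{0*}q_{0,x,y}$ agree with $LH(x,y)$ (as $q_0$ is identity on objects and $\pi_{0*}d_* = \id$), so $\overline{k}$ is forced on objects by $k_0$. The content is: (i) every morphism $(f,f_1,f_2):p\to r$ of $\ca H(x,y)$ can, after applying $\pi_0$, be written as a zig-zag built from morphisms of $LH(x,y)$ (via the $q_{0}$-images $(1_z,f_1,f_2)$) and the components of $q_1$ (the distinguished arrows $(p_2,\id,\id):(p_1,p_3p_2)\to(p_2p_1,p_3)$) — this is where the interchange structure of $P$, encoded in the middle factorisation of $(f,f_1,f_2)$ through an identity-indexed path, is used; (ii) the value of $\overline{k}$ on such a zig-zag is independent of the chosen decomposition, which reduces via the relations defining $\ca H(x,y)$ and the cocone equations for $(k_0,k_1)$ — in particular the invertibility of $k_1$ and the $2$-cocycle identity $(k_1L^2h)(k_1\mu^L_{LH}) = k_1L\mu^L_H$ — to a finite check; (iii) functoriality of the resulting $\overline{k}$ on homs; (iv) that $\overline{k}$ is a strict $L$-morphism, i.e.\ $2$-natural in the first variable as encoded by $L$; and (v) that $\pi_0$ is needed precisely to collapse the ambiguity in (i)--(ii) into a well-defined function on $\pi_0$ of the homs, which is why the vertex is $\pi_{0*}\ca H$ rather than $\ca H$. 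Uniqueness of $\overline{k}$ is immediate once existence is set up, since $\pi_{0*}q_0$ is surjective on objects and arrows modulo the $q_1$-generated zig-zags.

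The hard part will be step (ii), the coherence: showing that two different ways of expressing a given arrow of $\ca H(x,y)$ as a composite of $q_0$-images and $q_1$-components are identified after applying $\pi_0 \comp \overline{k}$. Concretely, an arrow $(f,f_1,f_2):(p_1,p_2)\to(r_1,r_2)$ has a ``length-zero'' description when $f$ is an identity (namely $q_{0}(f_1,f_2)$), but a general $f:p1\to r1$ in $P$ is itself a composite in $P$ and one must check that the induced decompositions agree; since $\ca H(x,y)$ has, for each such $f$, a unique arrow with given $f_1,f_2$, the comparison comes down to the fact that the two cocone legs $k_0\mu^L_H$ and $k_0Lh$ are related by $k_1$ in a way compatible with composition in $P$, which is exactly the associativity-type simplicial identity. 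I expect this to be a diagram chase of moderate size, structurally parallel to (but simpler than) the verification of Corollary~5.4.5 of \cite{Weber-CodescCrIntCat}, since here $\ca H$ is built directly rather than extracted from a crossed double category, and the relations in $\ca H(x,y)$ are given explicitly by the conditions $g_1(\alpha p_1)=f_1$ and $(r_2\alpha)f_2=g_2$ on $2$-cells. Once (ii) is done, (iii)--(v) are routine consequences of the $2$-category axioms for $P$ and the naturality built into Constructions~\ref{const:lax-wedge-weight} and~\ref{const:lax-codescent-cocone}.
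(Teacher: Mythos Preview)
Your overall plan --- decompose morphisms of $\ca H(x,y)$ and check that the induced map is well-defined and functorial --- is the right shape, but the sketch contains several genuine errors that would derail the argument as written.

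First, the $2$-cell $k_1$ in a codescent cocone is \emph{not} required to be invertible; you invoke ``the invertibility of $k_1$'' in step (ii), and if your coherence check really uses that, the argument fails. Second, you describe $k_0:LH\to K$ as corresponding to a lax natural transformation $H\to K$; that is not right either. A strict $L$-morphism out of the free algebra $LH$ is just a map $H\to K$ in $[\tn{ob}(P)\times P^{\op},\Cat]$, i.e.\ pointwise data with no laxity. The lax-natural interpretation applies to lax $L$-morphisms $H\to K$ between $L$-algebras, which is a different thing. Third, there is no need for zig-zags: every arrow $(f,f_1,f_2):(p_1,p_2)\to(r_1,r_2)$ of $\ca H(x,y)$ is an honest \emph{composite}
\[ (p_1,p_2)\xrightarrow{(1,\id,f_2)}(p_1,r_2f)\xrightarrow{(f,\id,\id)}(fp_1,r_2)\xrightarrow{(1,f_1,\id)}(r_1,r_2), \]
with the middle factor a $q_1$-component and the outer factors $q_0$-images; no backward arrows are needed, and this is why invertibility of $k_1$ is irrelevant.

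The paper's proof is organised differently and more cleanly. Rather than working directly with $\pi_0$-classes of morphisms, it uses the adjunction $\pi_{0*}\dashv d_*$ to reduce to showing that composition with $(q_{0,x,y},q_{1,x,y})$ gives a bijection between $2$-functors $\ca H(x,y)\to A$ and ``lax codescent data'' $(\phi_0,\phi_1)$, for \emph{locally discrete} $A$. The point is that for such $A$, lax codescent data coincide with ordinary codescent cocones, and the local discreteness forces every $2$-cell in $A$ to be an identity. This is exactly what makes the functoriality checks go through: after forcing $\overline{\phi}$ on objects, arrows, and $2$-cells via the three-fold factorisation above, the remaining compatibilities (that $\overline{\phi}$ respects the interactions between the three classes of generating morphisms) reduce to certain lax naturality $2$-cells of $\phi_1$ being identities --- which they are, because the target is locally discrete. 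Your step (ii) is trying to do the same job by hand at the level of $\pi_0$-classes, which is workable but messier, and your current description of it is not correct.
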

\begin{proof}
For $x,y \in P$ it suffices to show that $(\pi_{0*}q_{0,x,y},\pi_{0*}q_{1,x,y})$ is a codescent cocone in $\Cat$. For any 2-category $A$, let us denote by $\tn{LCD}(A)$ the set of pairs $(\phi_0,\phi_1)$ where $\phi_0:d_*LH(x,y) \to A$ is a 2-functor and $\phi_1:\phi_0\mu^L_H \to \phi_0Lh_{x,y}$ is a lax natural transformation, such that
\[ \begin{array}{lccr} {\phi_1L\eta^L_{H,x,y} = \id} &&& {(\phi_1L^2h_{x,y})(\phi_1\mu^L_{LH,x,y}) = \phi_1L\mu^L_{H,x,y}.} \end{array} \]
Precomposition with $(q_{0,x,y}, q_{1,x,y})$ gives a function
\[ (-) \comp (q_{0,x,y}, q_{1,x,y}) : \tn{ob}[\ca H(x,y),A] \longrightarrow \tn{LCD}(A).  \]
When $A$ is locally discrete, that is, of the form $d_*B$ for some category $B$, $\tn{LCD}(A)$ is in bijection with the set of cocones for $(\ca R_LH)(x,y)$ with vertex $B$, and under this correspondence, composition with $(q_{0,x,y}, q_{1,x,y})$ is identified with composition with $(\pi_{0*}q_{0,x,y}, \pi_{0*}q_{1,x,y})$. Thus it suffices to show that $(-) \comp (q_{0,x,y}, q_{1,x,y})$ is bijective when $A$ is locally discrete. So we suppose that we are given $(\phi_0,\phi_1)$ as above with $A$ locally discrete, and we must exhibit $\overline{\phi}:\ca H(x,y) \to A$ unique such that $\overline{\phi}q_{0,x,y} = \phi_0$ and $\overline{\phi}q_{1,x,y} = \phi_1$.

The definition $\overline{\phi}(p_1,p_2) = \phi_0(p_1,p_2)$ of $\overline{\phi}$ on objects is forced by the equation $\overline{\phi}q_{0,x,y} = \phi_0$ on objects. Note that any arrow $(f,f_1,f_2):(p_1,p_2) \to (r_1,r_2)$ in $\ca H(x,y)$ can be factored in the following way
\[ (p_1,p_2) \xrightarrow{(1_{p1},\id,f_2)} (p_1,r_2f) \xrightarrow{(f,\id,\id)} (fp_1,r_2) \xrightarrow{(1_{r1},f_1,\id)} (r_1,r_2) \]
and so the equations $\overline{\phi}q_{0,x,y} = \phi_0$ and $\overline{\phi}q_{1,x,y} = \phi_1$ force the definition
\[ \overline{\phi}(f,f_1,f_2) = \phi_0(f_1,\id)(\phi_{1})_{(p_1,f,r_2)}\phi_0(\id,f_2) \]
of $\overline{\phi}$ on 1-cells. Any 2-cell $\alpha:(f,f_1,f_2) \to (g,g_1,g_2)$ in $\ca H(x,y)$ can be factored as
\[ \xygraph{!{0;(2.5,0):(0,.4)::} {(p_1,p_2)}="p0" [r] {(p_1,r_2f)}="p1" [r] {(fp_1,r_2)}="p2" [dr] {(r_1,r_2)}="p3" [l] {(gp_1,r_2)}="p4" [l] {(p_1,r_2g)}="p5" "p0":"p1"^-{(1_{p1},\id,f_2)}:"p2"^-{(f,\id,\id)}:@/^{1pc}/"p3"^-{(1_{r1},f_1,\id)}:@{<-}"p4"^-{(1_{r1},g_1,\id)}:@{<-}"p5"^-{(g,\id,\id)}:@/^{1pc}/@{<-}"p0"^-{(1_{p1},\id,g_2)} "p1":"p5"|-{(1_{p1},\id,r_2\alpha)} "p2":"p4"|-{(1_{r1},\alpha p_1,\id)}
"p1" [d(.35)r(.45)] :@{=>}[d(.3)]^{\alpha}
"p0" [d(.5)r(.55)] {\scriptstyle =} "p2" [d(.5)r(.45)] {\scriptstyle =}} \]
and we observe that the lax square in the middle of this last diagram is just the lax naturality 2-cell $(q_{1,x,y})_{(\id,\alpha,\id)}$. Thus the equation $\overline{\phi}q_{1,x,y} = \phi_1$ forces us to define $\overline{\phi}(\alpha)$ to be the composite 2-cell
\[ \xygraph{!{0;(2.5,0):(0,.4)::} {\phi_0(p_1,p_2)}="p0" [r] {\phi_0(p_1,r_2f)}="p1" [r] {\phi_0(fp_1,r_2)}="p2" [dr] {\phi_0(r_1,r_2)}="p3" [l] {\phi_0(gp_1,r_2)}="p4" [l] {\phi_0(p_1,r_2g)}="p5" "p0":"p1"^-{\phi_0(\id,f_2)}:"p2"^-{(\phi_1)_{(p_1,f,r_2)}}:@{}"p3":@{<-}"p4"^-{\phi_0(g_1,\id)}:@{<-}"p5"^-{(\phi_1)_{(p_1,g,r_2)}}:@{}"p0"
"p1":"p5"_-{\phi_0(\id,r_2\alpha)} "p2":"p4"^-{\alpha p_1,\id)}
"p1" [d(.35)r(.35)] :@{=>}[d(.3)]^{(\phi_1)_{(\id,\alpha,\id)}}} \]
in $X$, which since $X$ is locally discrete, is an identity. It suffices to show that with these assignations, $\overline{\phi}$ respects the composition of 1-cells.

Given the 3-fold factorisation of arrows of $\ca H(x,y)$ described above, it suffices to verify that $\overline{\phi}$ is functorial with respect to morphisms of the form
\[ \begin{array}{lcccccr} {\tn{(i)} \,\, (f,\id,\id)} &&& {\tn{(ii)} \,\, (1,f_1,\id)} &&& {\tn{(iii)} \,\, (1,\id,f_2)} \end{array} \]
separately, and moreover given $(f,f_1,f_2):p \to r$ and $(g,g_1,g_2):r \to s$ in $\ca H(x,y)$, that
\begin{itemize}
\item[(iv)] $\overline{\phi}(1_{r1},\id_{r_1},g_2)\overline{\phi}(1_{r1},f_1,\id_{r_2}) = \overline{\phi}(1_{r1},f_1,\id_{r_2})\overline{\phi}(1_{r1},\id_{r_1},g_2)$
\item[(v)] $\overline{\phi}(1_{r1},\id_{fp_1},g_2)\overline{\phi}(f,\id_{fp_1},\id_{r_2f}) = \overline{\phi}(f,\id_{fp_1},\id_{r_2gf})\overline{\phi}(1_{p1},\id_{p_1},g_2f)$
\item[(vi)] $\overline{\phi}(g,\id_{gr_1},\id_{s_2g})\overline{\phi}(1_{r_1},f_1,\id_{s_2g}) = \overline{\phi}(1_{s1},gf_1,\id_{s_2})\overline{\phi}(g,\id_{gfp_1},\id_{s_2g})$
\end{itemize}
so that $\overline{\phi}$ respects how these 3 classes of morphisms interact. 

Functoriality in the case (i) follows by the lax naturality axioms $\phi_1$, and in the cases (ii) and (iii) by the functoriality of $\phi_0$. The calculation
\[ \begin{array}{rllll}
{\overline{\phi}(1,\id,g_2)\overline{\phi}(1,f_1,\id)}
& = & {\phi_0(\id,g_2)\phi_0(f_1,\id)}
& = & {\phi_0(f_1,g_2)} \\ {}
& = & {\phi_0(f_1,\id)\phi_0(\id,g_2)}
& = & {\overline{\phi}(1,f_1,\id)\overline{\phi}(1,\id,g_2)}  \end{array} \]
establishes (iv). As for (v) and (vi) the lax naturality 2-cells $(\phi_1)_{(g_2,\id,\id)}$ and $(\phi_1)_{(\id,\id,f_1)}$ give 2-cells between opposing sides of these equations, which since $X$ is locally discrete, are identities.
\end{proof}

\subsection{Lax wedges}
\label{ssec:lax-wedges}
The formula $H^{\dagger}_L = \pi_{0*}\ca H$ of Proposition \ref{prop:H-dagger} tells us that we know how to compute lax coends if we know how to compute the corresponding 2-categorical colimit weighted by $\ca H$, by the following result.
\begin{prop}\label{prop:weights-of-the-form-pi0star-2catwt}
Let $A$ be a 2-category, and $I:A^{\op} \to \TwoCat$ and $F:A \to \Cat$ be 2-functors. Then $\tn{col}(\pi_{0*}I,F) = \pi_{0*}\tn{col}(I,d_*F)$.
\end{prop}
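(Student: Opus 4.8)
The plan is to reduce the claimed equation $\tn{col}(\pi_{0*}I,F) = \pi_{0*}\tn{col}(I,d_*F)$ to a statement about representable functors, exactly as in the proof of Proposition \ref{prop:weight-result-for-pi0-invertibiility}, and then chase isomorphisms between hom-categories using the adjunctions $\pi_0 \ladj d$ and $d_* \ladj \pi_{0*}$ together with the defining universal properties of weighted colimits. The key observation is that the right-hand side $\pi_{0*}\tn{col}(I,d_*F)$ is a priori a category (an object of $\Cat$), while the left-hand side $\tn{col}(\pi_{0*}I,F)$ is a $\Cat$-weighted colimit of the $\Cat$-valued $F$, hence also an object of $\Cat$; so both sides live in the same place and it suffices to exhibit a natural isomorphism of the functors they corepresent on $\Cat$.

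\textbf{Key steps.} First I would recall that for a $\TwoCat$-valued weight $I : A^{\op} \to \TwoCat$ and a $\TwoCat$-valued diagram, the $\TwoCat$-enriched colimit $\tn{col}(I,d_*F)$ is characterised by isomorphisms $\TwoCat(\tn{col}(I,d_*F),\ca X) \iso [A^{\op},\TwoCat](I,\TwoCat(d_*F,\ca X))$ 2-natural in $\ca X \in \TwoCat$. Next, since $\pi_{0*} : \TwoCat \to \Cat$ is right adjoint to $d_* : \Cat \to \TwoCat$ (the local-discreteness functor), and since moreover $d_*$ is 2-fully-faithful, I would compute $\Cat(\pi_{0*}\tn{col}(I,d_*F),B)$ for an arbitrary $B \in \Cat$ via the chain
\[ \Cat(\pi_{0*}\tn{col}(I,d_*F),B) \iso \TwoCat(\tn{col}(I,d_*F),d_*B) \iso [A^{\op},\TwoCat](I,\TwoCat(d_*F,d_*B)). \]
Now I would use that $\TwoCat(d_*Fa,d_*B) = d_*(\Cat(Fa,B))$ for each $a \in A$, again by the 2-fully-faithfulness of $d_*$, so that the weighted-limit end computing $[A^{\op},\TwoCat](I,\TwoCat(d_*F,d_*B))$ has discretely-valued target. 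At this point I would invoke (the argument of) Lemma \ref{lem:coend-obs-for-pi0star}, in its dual form for ends rather than coends, to replace the $\TwoCat$-enriched end over $A^{\op}$ by an ordinary $\Cat$-enriched end over $\pi_{0*}A^{\op}$ — the point being that for a discretely-valued target, 2-cells in $A$ act trivially and 1-cells act only through their $\pi_0$-classes, so the end over $A$ collapses to an end over $\pi_{0*}A$. Concretely this yields
\[ [A^{\op},\TwoCat](I,d_*\Cat(F,B)) \iso [A^{\op},\Cat]\bigl(\pi_{0*}I,\, d(\Cat(F,B))\bigr), \]
using that $I$ into a locally discrete target factors through $\pi_{0*}I$. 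Finally, since $\pi_0 \ladj d$ is a 2-adjunction when $\Set$ (and hence $\Cat$ via the relevant local-discreteness) is regarded appropriately, and since the weight $\pi_{0*}I$ is $\Cat$-valued while $F$ is $\Cat$-valued, the right-hand side is exactly $\Cat(\tn{col}(\pi_{0*}I,F),B)$ by the defining universal property of that colimit. Assembling the chain of isomorphisms, all evidently natural in $B$, gives $\Cat(\pi_{0*}\tn{col}(I,d_*F),B) \iso \Cat(\tn{col}(\pi_{0*}I,F),B)$ naturally in $B$, whence the desired equality by Yoneda.

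\textbf{Main obstacle.} The step I expect to require the most care is the collapsing of the $\TwoCat$-enriched end $\int_{a \in A^{\op}}[\,\cdot\,,d_*(\cdots)]$ down to the $\Cat$-enriched end over $\pi_{0*}A^{\op}$: one must check that the cotensors, the action of 1-cells, and the 2-cell conditions in the $\TwoCat$-enriched end all degenerate correctly when the target is locally discrete, so that the end is computed as a limit over the reflection $\pi_{0*}A^{\op}$. This is the exact dual of the reasoning in Lemma \ref{lem:coend-obs-for-pi0star}, but it is worth spelling out that $\pi_{0*} \comp (-)$ sends lax natural transformations to natural transformations and that a 2-functor $I \to d_*(\cdots)$ necessarily factors (uniquely) through the canonical $\TwoCat$-map $I \to d_*\pi_{0*}I$ computed pointwise. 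Once that factorisation and the triviality of 2-cells are pinned down, the remaining identifications are the standard adjunction and 2-fully-faithfulness manipulations already used repeatedly in Section \ref{ssec:lax-coends}, so no genuinely new difficulty arises there.
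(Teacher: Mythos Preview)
Your chain of isomorphisms is correct and is essentially the paper's own argument traversed from the other end; however, two points need correcting. First, a slip: $\pi_{0*}$ is the \emph{left} adjoint of $d_*$, not the right (this is $\pi_0 \ladj d$ applied to homs). You in fact use the isomorphism $\Cat(\pi_{0*}\ca X,B)\iso\TwoCat(\ca X,d_*B)$, which is exactly $\pi_{0*}\ladj d_*$, so only the label is wrong.

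Second, and more substantively, the step you flag as the ``main obstacle'' is not an obstacle at all, and your proposed route through it is both unnecessary and partly mistaken. The isomorphism
\[ [A^{\op},\TwoCat]\bigl(I,\,d_*\Cat(F,B)\bigr)\ \iso\ [A^{\op},\Cat]\bigl(\pi_{0*}I,\,\Cat(F,B)\bigr) \]
is immediate: it is the 2-adjunction $\pi_{0*} \ladj d_*$ post-composed along $[A^{\op},-]$. No end calculation, no analogue of Lemma~\ref{lem:coend-obs-for-pi0star}, and in particular no passage to $\pi_{0*}A^{\op}$ is required. In fact that last move would be wrong: $\pi_{0*}I:A^{\op}\to\Cat$ need not send 2-cells of $A$ to identities, so it need not factor through $\pi_{0*}A^{\op}$. (Also, your ``$d(\Cat(F,B))$'' is a type error, since $\Cat(F,B)$ is already $\Cat$-valued.) The paper's proof is precisely your chain with this step replaced by the one-line appeal to the adjunction: definition of weighted colimit; $\pi_{0*}\ladj d_*$; 2-fully-faithfulness of $d_*$ giving $d_*\Cat(F,X)=\TwoCat(d_*F,d_*X)$; definition of weighted colimit; $\pi_{0*}\ladj d_*$ once more.
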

\begin{proof}
One has natural isomorphisms
\[ \begin{array}{lll}
{\Cat(\tn{col}(\pi_{0*}I,F),X)}  & {\iso} & {[A^{\op},\Cat](\pi_{0*}I,\Cat(F,X))} \\
{}  & {\iso} & {[A^{\op},\TwoCat](I,d_*\Cat(F,X))} \\
{}  & {\iso} & {[A^{\op},\TwoCat](I,\TwoCat(d_*F,d_*X))} \\
{}  & {\iso} & {\TwoCat(\tn{col}(I,d_*F),d_*X)} \\
{}  & {\iso} & {\Cat(\pi_{0*}\tn{col}(I,d_*F),X)}
\end{array} \]
because of the definition of weighted colimit, the adjunction $\pi_{0*} \ladj d_*$, and since $d_*$ is 2-fully faithful.
\end{proof}
\noindent Thus the problem of understanding the lax coend $\tn{col}(H^{\dagger}_L,S \times T)$ comes down to that of understanding the weighted colimit $\tn{col}(\ca H,d_*S \times d_*T)$, which is a colimit in the setting of $\TwoCat$-enriched category theory. We now turn to an analysis of these.

By definition an $\ca H$-cocone for $F:P^{\op} \times P \to \TwoCat$ with vertex $X \in \TwoCat$ consists of 2-functors
\[ \phi_{x,y} : \ca H(x,y) \longrightarrow \TwoCat(F(x,y),X) \]
which are 2-natural in $x$ and $y$. In this section we shall give a minimalistic combinatorial description of the data contained in an $\ca H$-cocone, and using this, exhibit the universal such in the case where $F = d_*S \times d_*T$.
\begin{defn}\label{def:lax-wedge}
A \emph{lax wedge} $\psi$ for $F$ with vertex $X$ consists of
\begin{itemize}
\item $\forall x \in P$, a 2-functor $\psi_x:F(x,x) \to X$.
\item $\forall f:x \to y \in P$, a 2-natural transformation $\psi_f:\psi_xF(f,1) \to \psi_yF(1,f)$.
\item $\forall$ 2-cells $\alpha:f \to g$ in $P$, a modification
\[ \xygraph{!{0;(2.5,0):(0,.4)::} {\psi_xF(f,1)}="p0" [r] {\psi_xF(g,1)}="p1" [d] {\psi_yF(1,g)}="p2" [l] {\psi_yF(1,f)}="p3" "p0":"p1"^-{\psi_xF(\alpha,\id)}:"p2"^-{\psi_g}:@{<-}"p3"^-{\psi_yF(\id,\alpha)}:@{<-}"p0"^-{\psi_f} "p0" [d(.55)r(.43)] :@{=>}[r(.14)]^-{\psi_{\alpha}}} \]
\end{itemize}
subject to the unit, 1-cell composition, 2-cell vertical composition and 2-cell horizonal composition axioms. The unit axioms say that $\forall x \in P$, $\psi_{1_x} = 1_{\psi_x}$, and $\forall f:x \to y \in P$, $\psi_{\id_f} = \id_{\psi_f}$. The 1-cell composition axiom says that given $f:x \to y$ and $g:y \to z$ in $P$, $\psi_{gf} = (\psi_{g}F(1,f))(\psi_{f}F(g,1))$. The 2-cell vertical composition axiom says that given $\alpha$ and $\beta$ in $P$ as on the left in
\[ \xygraph{{\xybox{\xygraph{{x}="p0" [r(1.25)] {y}="p1" "p0":@/^{2pc}/"p1"^-{f}|-{}="t" "p0":"p1"|-{g}="m" "p0":@/_{2pc}/"p1"_-{h}|{}="b"
"t":@{}"m"|(.25){}="d1"|(.75){}="c1" "m":@{}"b"|(.25){}="d2"|(.75){}="c2"
"d1":@{=>}"c1"^-{\alpha} "d2":@{=>}"c2"^-{\beta}}}}
[r(6)]
{\xybox{\xygraph{!{0;(2.5,0):(0,.4)::}
{\psi_xF(f,1)}="p0" [r] {\psi_xF(g,1)}="p1" [r] {\psi_xF(h,1)}="p2" [d] {\psi_yF(1,h)}="p3" [l] {\psi_yF(1,g)}="p4" [l] {\psi_yF(1,f)}="p5" "p0":"p1"^-{\psi_xF(\alpha,\id)}:"p2"^-{\psi_xF(\beta,\id)}:"p3"^-{
\psi_h}:@{<-}"p4"^-{\psi_yF(\id,\beta)}:@{<-}"p5"^-{\psi_yF(\id,\alpha)}:@{<-}"p0"^-{\psi_f} "p1":"p4"^{\psi_g}
"p0" [d(.55)r(.43)] :@{=>}[r(.14)]^-{\psi_{\alpha}}
"p1" [d(.55)r(.43)] :@{=>}[r(.14)]^-{\psi_{\beta}}}}}} \]
the composite on the right in the previous display equals $\psi_{\beta\alpha}$. The 2-cell horizontal composition axiom says that given $\alpha$ and $\beta$ in $P$ as on the left in
\[ \xygraph{{\xybox{\xygraph{{x}="p0" [r] {y}="p1" [r] {z}="p2"
"p0":@/^{1pc}/"p1"^-{f}|-{}="tl":@/^{1pc}/"p2"^-{g}|-{}="tr"
"p0":@/_{1pc}/"p1"_-{h}|-{}="bl":@/_{1pc}/"p2"_-{k}|-{}="br"
"tl":@{}"bl"|(.25){}="d1"|(.75){}="c1" "tr":@{}"br"|(.25){}="d2"|(.75){}="c2"
"d1":@{=>}"c1"^-{\alpha} "d2":@{=>}"c2"^-{\beta}}}}
[r(6)]
{\xybox{\xygraph{!{0;(2.5,0):(0,.4)::} {\psi_xF(gf,1)}="p0" [r] {\psi_yF(g,f)}="p1" [r] {\psi_zF(1,gf)}="p2" [d] {\psi_zF(1,kh)}="p3" [l] {\psi_yF(k,h)}="p4" [l] {\psi_xF(kh,1)}="p5" "p0":"p1"^-{\psi_fF(g,1)}:"p2"^-{\psi_gF(1,f)}:"p3"^-{\psi_zF(1,\beta\cdot\alpha)}:@{<-}"p4"^-{\psi_kF(1,h)}:@{<-}"p5"^-{\psi_hF(k,1)}:@{<-}"p0"^-{\psi_xF(\beta\cdot\alpha,\id)} "p1":"p4"|-{\psi_yF(\beta,\alpha)}
"p0" [d(.35)r(.2)] :@{=>}[d(.3)]^-{\psi_{\alpha}F(\beta,\id)}
"p1" [d(.35)r(.35)] :@{=>}[d(.3)]^-{\psi_{\beta}F(\id,\alpha)}}}}} \]
the composite on the right in the previous display equals $\psi_{\beta\cdot\alpha}$.
\end{defn}
Let $\phi:\ca H \to \TwoCat(F,X)$ be an $\ca H$-cocone as above. For $x \in P$, $(1_x,1_x)$ is a path in $P$ of length 2 from $x$ to itself, and thus an object of $\ca H(x,x)$. We define $\overline{\phi}_x = \phi_{x,x}(1_x,1_x)$ so that by definition, $\overline{\phi}_x$ is a 2-functor $F(x,x) \to X$. Let $f:x \to y$ be in $P$. Since
\[ \begin{array}{lccr} {(1_x,f) = \ca H(f,1_x)(1_x,1_x)} &&& {(f,1_y) = \ca H(1_y,f)(1_y,1_y)} \end{array} \]
by the naturality of $\phi$ one has
\[ \begin{array}{lccr} {\phi_{y,x}(1_x,f) = \overline{\phi}_xF(f,1)} &&& {\phi_{y,x}(f,1_y) = \overline{\phi}_yF(1,f).} \end{array} \]
Moreover one has a morphism $(f,\id_f,\id_f):(1_x,f) \to (f,1_y)$ of $\ca H(y,x)$, and so one can define $\overline{\phi}_f = \phi_{y,x}(f,\id_f,\id_f)$, so that by definition $\overline{\phi}_f$ is a 2-natural transformation $\overline{\phi}_xF(f,1) \to \overline{\phi}_yF(1,f)$. Let $\alpha:f \to g$ be a 2-cell in $P$. Since
\[ \begin{array}{lccr} {(1_x,\id_{1_x},\alpha) = \ca H(\alpha,\id)_{(1_x,1_x)}} &&& {(1_y,\alpha,\id_{1_y}) = \ca H(\id,\alpha)_{(1_y,1_y)}} \end{array} \]
by the naturality of $\phi$ one has
\[ \begin{array}{lccr} {\phi_{y,x}(1_x,\id_{1_x},\alpha) = \overline{\phi}_xF(\alpha,\id)} &&& {\phi_{y,x}(1_y,\alpha,\id_{1_y}) = \overline{\phi}_yF(\id,\alpha).} \end{array} \]
Moreover one has a 2-cell as on the left
\[ \xygraph{{\xybox{\xygraph{!{0;(2,0):(0,.5)::} {(1_x,f)}="p0" [r] {(1_x,g)}="p1" [d] {(g,1_y)}="p2" [l] {(f,1_y)}="p3" "p0":"p1"^-{(1_x,\id,\alpha)}:"p2"^-{(g,\id,\id)}:@{<-}"p3"^-{(1_y,\alpha,\id)}:@{<-}"p0"^-{(f,\id,\id)} "p0" [d(.55)r(.4)] :@{=>}[r(.2)]^-{\alpha}}}}
[r(5)]
{\xybox{\xygraph{!{0;(2.5,0):(0,.4)::} {\overline{\phi}_xF(f,1)}="p0" [r] {\overline{\phi}_xF(g,1)}="p1" [d] {\overline{\phi}_yF(1,g)}="p2" [l] {\overline{\phi}_yF(1,f)}="p3" "p0":"p1"^-{\overline{\phi}_xF(\alpha,\id)}:"p2"^-{\overline{\phi}_g}:@{<-}"p3"^-{\overline{\phi}_yF(\id,\alpha)}:@{<-}"p0"^-{\overline{\phi}_f} "p0" [d(.55)r(.43)] :@{=>}[r(.14)]^-{\overline{\phi}_{\alpha}}}}}} \]
in $\ca H(y,x)$. We define $\overline{\phi}_{\alpha}$ to be the effect of $\phi_{y,x}$ on this 2-cell, so that by definition $\overline{\phi}_{\alpha}$ is a modification as indicated on the right in the previous display. Thus from an $\ca H$-cocone $\phi$ we have defined the data of a lax wedge $\overline{\phi}$.
\begin{lem}\label{lem:lax-wedge-from-caH-cocone}
In the manner just described, every $\ca H$-cocone $\phi$ for $F$ with vertex $X$ determines a lax wedge $\overline{\phi}$ for $F$ with vertex $X$.
\end{lem}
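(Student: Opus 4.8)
The plan is to verify that the data $(\overline{\phi}_x,\overline{\phi}_f,\overline{\phi}_\alpha)$ extracted from an $\ca H$-cocone $\phi$ satisfies each of the four axioms in Definition \ref{def:lax-wedge}: the unit axioms, the 1-cell composition axiom, the 2-cell vertical composition axiom, and the 2-cell horizontal composition axiom. The mechanism in every case is the same: each axiom is an equation between (composites of) morphisms or 2-cells built from the $\overline{\phi}$'s, and by the definitions $\overline{\phi}_x = \phi_{x,x}(1_x,1_x)$, $\overline{\phi}_f = \phi_{y,x}(f,\id_f,\id_f)$, $\overline{\phi}_\alpha = \phi_{y,x}(\alpha)$, together with the 2-naturality of $\phi$ in both variables, such an equation is pulled back to an equation between morphisms/2-cells of the categories $\ca H(x,y)$, where it then holds simply because of the way composition in $\ca H$ is defined (Construction \ref{const:lax-wedge-weight}) and the 2-category structure of $P$.

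Concretely, first I would dispatch the unit axioms: $\overline{\phi}_{1_x}=\phi_{x,x}$ applied to the identity morphism $(1_x,\id,\id)$ of $(1_x,1_x)$ in $\ca H(x,x)$, which is the identity of $\overline{\phi}_x$ since $\phi_{x,x}$ is a 2-functor; similarly $\overline{\phi}_{\id_f}$ is $\phi_{y,x}$ applied to an identity 2-cell. Next, for the 1-cell composition axiom with $f:x\to y$ and $g:y\to z$, I would note that the morphism $(gf,\id,\id):(1_x,gf)\to(gf,1_z)$ in $\ca H(z,x)$ factors, using the composition law of $\ca H$ and naturality of $\phi$ in both variables, as the horizontal composite of (the images under appropriate reindexings of) $(f,\id,\id)$ and $(g,\id,\id)$; applying $\phi_{z,x}$ and using its functoriality and the naturality identities $\phi_{z,x}(1_x,f)=\overline{\phi}_xF(f,1)$ etc.\ yields $\overline{\phi}_{gf}=(\overline{\phi}_gF(1,f))(\overline{\phi}_fF(g,1))$. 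The 2-cell vertical and horizontal composition axioms follow the same template: one exhibits the relevant 2-cell of $\ca H$ ($\beta\alpha$ in one case, $\beta\cdot\alpha$ in the other) as the appropriate composite of the 2-cells defining $\overline{\phi}_\alpha$ and $\overline{\phi}_\beta$ inside $\ca H$, using that vertical composition of 2-cells in $\ca H(x,y)$ is inherited from $P$ and that $\phi_{x,y}$ preserves 2-categorical composition, then applies $\phi$.

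I expect the main obstacle to be purely bookkeeping rather than conceptual: keeping straight the indices $(x,y)$ on the various components $\phi_{x,y}$, the direction of the morphisms and 2-cells in $\ca H(x,y)$ (whose hom-2-categories mix a covariant and a contravariant slot), and correctly identifying which reindexing of $(f,\id,\id)$-type arrows corresponds to which whiskering $F(f,1)$ or $F(1,f)$. The horizontal composition axiom is the most delicate, since it involves both a composable pair of 1-cells and a pair of 2-cells simultaneously, so the factorisation in $\ca H$ that realizes it is the longest; but once the correct factorisation is written down it again reduces to 2-functoriality of the $\phi_{x,y}$ and the interchange law in $P$. No new ideas are needed beyond those already used in establishing the naturality identities above and in the proof of Proposition \ref{prop:H-dagger}.
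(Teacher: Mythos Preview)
Your proposal is correct and follows essentially the same approach as the paper's proof: verify each lax wedge axiom by exhibiting the corresponding equation inside the 2-categories $\ca H(x,y)$ (using the composition law of Construction \ref{const:lax-wedge-weight} and the 2-category structure of $P$), then push through $\phi$ using 2-functoriality of its components and 2-naturality in both variables. The paper makes explicit one point you only hint at for the horizontal composition axiom: after expressing $\overline{\phi}_{\beta\cdot\alpha}$ as the horizontal composite $(\overline{\phi}_{\beta}F(1,h))\cdot(\overline{\phi}_{\alpha}F(g,1))$, two further commuting squares coming from naturality of $\phi$ are needed to rewrite this into the precise form $(\psi_{\beta}F(\id,\alpha))\cdot(\psi_{\alpha}F(\beta,\id))$ demanded by the axiom, but this is exactly the kind of bookkeeping you anticipated.
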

\begin{proof}
We must verify the lax wedge axioms for $\overline{\phi}$. The unit axioms follow from the 2-functoriality of the components of $\phi$. Given $f:x \to y$ and $g:y \to z$ in $P$, note that $(gf,\id_{gf},\id_{gf}) = (g,\id_{gf},\id_g)(f,\id_f,\id_{gf})$ and
\[ \begin{array}{lccr}
{(f,\id_f,\id_{gf}) = \ca H(g,1_x)(f,\id_f,\id_f)} &&&
{(g,\id_{gf},\id_g) = \ca H(1_z,f)(g,\id_g,\id_g)} \end{array} \]
and so the 1-cell axiom for $\overline{\phi}$ follows from the naturality of $\phi$ and the functoriality of $\phi$'s components. Given $f$, $g$ and $h:x \to y$, and $\alpha:f \to g$ and $\beta:g \to h$ in $P$, one has
\[ \xygraph{{\xybox{\xygraph{!{0;(2,0):(0,.5)::}
{(1_x,f)}="p0" [r] {(1_x,h)}="p1" [d] {(h,1_y)}="p2" [l] {(f,1_y)}="p3" "p0":"p1"^-{(1_x,\id,\beta\alpha)}:"p2"^-{(h,\id,\id)}:@{<-}"p3"^-{(1_y,\beta\alpha,\id)}:@{<-}"p0"^-{(f,\id,\id)}
"p0" [d(.55)r(.4)] :@{=>}[r(.2)]^-{\beta\alpha}}}}
[r(2.5)] {=} [r(3.5)]
{\xybox{\xygraph{!{0;(2,0):(0,.5)::}
{(1_x,f)}="p0" [r] {(1_x,g)}="p1" [r] {(1_x,h)}="p2" [d] {(h,1_y)}="p3" [l] {(g,1_y)}="p4" [l] {(f,1_y)}="p5" "p0":"p1"^-{(1_x,\id,\alpha)}:"p2"^-{(1_x,\id,\beta)}:"p3"^-{(h,\id,\id)}:@{<-}"p4"^-{(1_y,\beta,\id)}:@{<-}"p5"^-{(1_y,\alpha,\id)}:@{<-}"p0"^-{(f,\id,\id)} "p1":"p4"|-{(g,\id,\id)}
"p0" [d(.55)r(.4)] :@{=>}[r(.2)]^-{\alpha} "p1" [d(.55)r(.4)] :@{=>}[r(.2)]^-{\beta}}}}} \]
in $\ca H(y,x)$, and so the 2-cell vertical composition axiom for $\overline{\phi}$ follows from the 2-naturality of $\phi$ and the 2-functoriality of $\phi$'s components. Given $\alpha$ and $\beta$ as on the left
\[ \xygraph{{\xybox{\xygraph{{x}="p0" [r] {y}="p1" [r] {z}="p2"
"p0":@/^{1pc}/"p1"^-{f}|-{}="tl":@/^{1pc}/"p2"^-{g}|-{}="tr"
"p0":@/_{1pc}/"p1"_-{h}|-{}="bl":@/_{1pc}/"p2"_-{k}|-{}="br"
"tl":@{}"bl"|(.25){}="d1"|(.75){}="c1" "tr":@{}"br"|(.25){}="d2"|(.75){}="c2"
"d1":@{=>}"c1"^-{\alpha} "d2":@{=>}"c2"^-{\beta}}}}
[r(6)]
{\xybox{\xygraph{!{0;(2.5,0):(0,.4)::}
{(1_x,gf)}="p0" [r] {(1_x,kh)}="p1" [d] {(kh,1_z)}="p2" [l] {(gf,1_z)}="p3" "p0":"p1"^-{(1_x,\id,\beta\cdot\alpha)}:"p2"^-{(kh,\id,\id)}:@{<-}"p3"^-{(1_z,\beta\cdot\alpha,\id)}:@{<-}"p0"^-{(gf,\id,\id)}
"p0" [d(.55)r(.4)] :@{=>}[r(.2)]^-{\beta\cdot\alpha}}}}} \]
in $P$, the 2-cell in $\ca H(y,x)$ indicated on the right in the previous display is, by a straight forward calculation, the following horizontal composite
\[ \xygraph{
{\xybox{\xygraph{!{0;(2,0):(0,.5)::}
{(1_y,g)}="p0" [r] {(1_y,k)}="p1" [d] {(k,1_z)}="p2" [l] {(g,1_z)}="p3" "p0":"p1"^-{(1_y,\id,\beta)}:"p2"|-{(k,\id,\id)}:@{<-}"p3"^-{(1_z,\beta,\id)}:@{<-}"p0"|-{(g,\id,\id)}
"p0" [d(.55)r(.4)] :@{=>}[r(.2)]^-{\beta}}}}="pl"
[r(4.75)]
{\xybox{\xygraph{!{0;(2,0):(0,.5)::}
{(1_x,f)}="p0" [r] {(1_x,h)}="p1" [d] {(h,1_y)}="p2" [l] {(f,1_y)}="p3" "p0":"p1"^-{(1_x,\id,\alpha)}:"p2"|-{(h,\id,\id)}:@{<-}"p3"^-{(1_y,\alpha,\id)}:@{<-}"p0"|-{(f,\id,\id)}
"p0" [d(.55)r(.4)] :@{=>}[r(.2)]^-{\alpha}}}}="pr"
"pl" ([u(.9)l(1.5)] {}="tc1",[d(.9)l(1.5)] {}="bc1") "tc1"-@/_{.75pc}/"bc1"
"pl" ([u(.9)r(1.2)] {}="tc2",[d(.9)r(1.2)] {}="bc2") "tc2"-@/^{.75pc}/"bc2"
"pr" ([u(.9)l(1.5)] {}="tc3",[d(.9)l(1.5)] {}="bc3") "tc3"-@/_{.75pc}/"bc3"
"pr" ([u(.9)r(1.2)] {}="tc4",[d(.9)r(1.2)] {}="bc4") "tc4"-@/^{.75pc}/"bc4"
"pl" [l(2.35)] {\ca H(1_z,h)} "pr" [l(2.35)] {\ca H(g,1_x)} [l(.75)] {\cdot}
} \]
and so by the 2-naturality of $\phi$ and the 2-functoriality of $\phi$'s components, $\overline{\phi}_{\beta\cdot\alpha}$ is the horizontal composite embodied in the solid parts of
\[ \xygraph{!{0;(4,0):(0,.25)::} {\overline{\phi}_xF(gf,1)}="p0" [r] {\overline{\phi}_yF(g,f)}="p1" [r] {\overline{\phi}_zF(1,gf)}="p2" [d] {\overline{\phi}_zF(1,gh)}="p3" [d] {\overline{\phi}_zF(1,kh)}="p4" [l] {\overline{\phi}_yF(k,h)}="p5" [l] {\overline{\phi}_xF(kh,1)}="p6" [u] {\overline{\phi}_xF(gh,1)}="p7" [r] {\overline{\phi}_yF(g,h)}="p8" "p0":"p1"^-{\overline{\phi}_fF(g,1)}:@{.>}"p2"^-{\overline{\phi}_gF(1,f)}:@{.>}"p3"^-{\overline{\phi}_zF(1,g\alpha)}:"p4"^-{\overline{\phi}_zF(1,\beta h)}:@{<-}"p5"^-{\overline{\phi}_kF(1,h)}:@{<.}"p6"^-{\overline{\phi}_hF(k,1)}:@{<.}"p7"^-{\overline{\phi}_xF(\beta h,\id)}:@{<-}"p0"^-{\overline{\phi}_xF(g\alpha,\id)} "p8"(:@{<-}"p1"_-{\overline{\phi}_yF(g,\alpha)},:"p3"^-{\overline{\phi}_gF(1,h)},:"p5"_-{\overline{\phi}_yF(\beta,h)},:@{<-}"p7"^-{\overline{\phi}_hF(g,1)})
"p0" [d(.35)r(.35)] :@{=>}[d(.3)]^-{\overline{\phi}_{\alpha}F(g,1)}
"p8" [d(.35)r(.35)] :@{=>}[d(.3)]^-{\overline{\phi}_{\beta}F(1,h)}
"p6" [u(.4)r(.4)] {\scriptstyle =} "p2" [d(.4)l(.4)] {\scriptstyle =}} \]
and the naturality of $\phi$ provides the remaining commutative squares. The 2-cell horizontal composition axiom for $\overline{\phi}$ follows since the above diagram in its entirety is the required decomposition of $\overline{\phi}_{\beta\cdot\alpha}$ in terms of $\overline{\phi}_{\alpha}$ and $\overline{\phi}_{\beta}$.
\end{proof}
\begin{lem}\label{lem:lax-wedge-equals-caH-cocone}
The assignation $\phi \mapsto \overline{\phi}$ given by Lemma \ref{lem:lax-wedge-from-caH-cocone} gives a bijection between the set of $\ca H$-cocones for $F$ with vertex $X$, and the set of lax wedges for $F$ with vertex $X$, naturally in $F$ and $X$.
\end{lem}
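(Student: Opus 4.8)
The plan is to construct an explicit inverse to the assignation $\phi \mapsto \overline{\phi}$ of Lemma \ref{lem:lax-wedge-from-caH-cocone}, and then verify that the two constructions are mutually inverse and natural. Given a lax wedge $\psi$ for $F$ with vertex $X$, I would build an $\ca H$-cocone $\widehat{\psi}$ by specifying each 2-functor $\widehat{\psi}_{x,y} : \ca H(x,y) \to \TwoCat(F(x,y),X)$. On objects, a path $(p_1 : y \to z, p_2 : z \to x)$ of $\ca H(x,y)$ should be sent to the composite $\psi_z F(p_2,p_1) : F(x,y) \to X$, using that $F(x,y) = F(p_1 \cdot \ldots) $ — more precisely $\psi_z$ is a 2-functor $F(z,z) \to X$ and $F(p_2,p_1) : F(x,y) \to F(z,z)$, so this composite makes sense. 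On morphisms $(f,f_1,f_2) : (p_1,p_2) \to (r_1,r_2)$ of $\ca H(x,y)$ (with $f : p1 \to r1$, $f_1 : fp_1 \to r_1$, $f_2 : r_2 f \to p_2$) one uses the factorisation
\[ (p_1,p_2) \xrightarrow{(1,\id,f_2)} (p_1,r_2f) \xrightarrow{(f,\id,\id)} (fp_1,r_2) \xrightarrow{(1,f_1,\id)} (r_1,r_2) \]
already exploited in the proof of Proposition \ref{prop:H-dagger}, sending the outer two morphisms to whiskerings of $\psi_z F(-, -)$ by $f_1$ and $f_2$, and the middle morphism $(f,\id,\id)$ to $\psi_f$ whiskered appropriately; on 2-cells $\alpha : (f,f_1,f_2) \to (g,g_1,g_2)$ one sends $\alpha$ to the corresponding whiskering of the modification $\psi_\alpha$. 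The lax wedge axioms are precisely what is needed to see that $\widehat{\psi}_{x,y}$ respects composition of 1-cells and 2-cells (the unit and 1-cell composition axioms handle functoriality on the ``$(f,\id,\id)$'' part, the 2-cell vertical and horizontal composition axioms handle the modification part), and 2-naturality of $\widehat\psi$ in $x$ and $y$ follows from the naturality clauses built into Definition \ref{def:lax-wedge}.

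Next I would check $\widehat{\overline{\phi}} = \phi$ and $\overline{\widehat{\psi}} = \psi$. For the second, this is essentially immediate from the definitions: $\overline{\widehat\psi}_x = \widehat\psi_{x,x}(1_x,1_x) = \psi_x F(1_x,1_x) = \psi_x$, and similarly $\overline{\widehat\psi}_f$ and $\overline{\widehat\psi}_\alpha$ unwind to $\psi_f$ and $\psi_\alpha$ because the distinguished morphisms $(f,\id_f,\id_f)$ and 2-cells used to define $\overline{(-)}$ are sent by $\widehat\psi$ to exactly $\psi_f$, $\psi_\alpha$ (all the whiskerings being by identities in these special cases). For the first equation, one uses that any morphism and any 2-cell of $\ca H(x,y)$ factors through the three basic classes as recalled in the proof of Proposition \ref{prop:H-dagger}, together with the fact that $\phi_{x,y}$, being a 2-functor, is determined on all of $\ca H(x,y)$ by its behaviour on those basic pieces, which is precisely what $\overline\phi$ records. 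Finally, naturality in $F$ and $X$ of the bijection follows since both directions of the correspondence are described purely by whiskering and composing the given 2-functorial/2-natural/modification data, operations which are manifestly natural in $F$ (via precomposition of $F$-actions) and in $X$ (via postcomposition).

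The main obstacle I anticipate is not conceptual but bookkeeping: verifying that $\widehat\psi_{x,y}$ as defined on 2-cells is well-defined and functorial requires matching the three-fold factorisation of 2-cells of $\ca H(x,y)$ (displayed in the proof of Proposition \ref{prop:H-dagger}) against the 2-cell vertical and horizontal composition axioms of a lax wedge, and the horizontal composition axiom in particular, whose statement involves the elaborate hexagonal pasting diagram in Definition \ref{def:lax-wedge}. Keeping the whiskering conventions consistent — which $F(-,-)$ gets whiskered on which side, and in which variable $\alpha$ acts — is where errors are likely, so I would set up notation carefully at the outset, probably by reducing to the strict case via coherence where legitimate, and otherwise simply checking each of the six interaction conditions (i)--(vi) appearing in the proof of Proposition \ref{prop:H-dagger} transports across the correspondence. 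Everything else is formal.
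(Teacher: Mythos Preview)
Your proposal is correct and follows essentially the same route as the paper. The paper derives the reconstruction formulas $\phi_{x,y}(p_1,p_2)=\overline{\phi}_{p1}F(p_2,p_1)$, etc., from naturality of $\phi$ to establish injectivity, and then takes those same formulas as the definition of $\phi$ from $\overline{\phi}$ to establish surjectivity; this is exactly your $\widehat{\psi}$, so the only difference is that the paper packages the argument as ``injective plus surjective'' rather than ``construct an inverse and check both round-trips''. One minor remark: your aside about ``reducing to the strict case via coherence'' is a red herring here, since there is no pseudo-structure to strictify; the verifications are direct from the lax wedge axioms, as the paper does.
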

\begin{proof}
The given assignation is clearly compatible with precomposition by 2-natural transformations $F' \to F$ and with postcomposition by 2-functors $X \to X'$, and so is natural in the required sense. Given $(p_1,p_2) \in \ca H(x,y)$, we have $(p_1,p_2)= \ca H(p_2,p_1)(1_{p1},1_{p1})$, and so by the naturality of $\phi$ we have
\begin{equation}\label{eq:phi-on-objects}
\phi_{x,y}(p_1,p_2) = \overline{\phi}_{p1}F(p_2,p_1).
\end{equation}
The 3-way factorisation of $(f,f_1,f_2):(p_1,p_2) \to (r_1,r_2)$ described in the proof of Proposition \ref{prop:H-dagger} can be written as
\[ (f,f_1,f_2) = (\ca H(\id,f_1)_{(1_{r1},1_{r1})})(\ca H(r_2,p_1)(f,\id_f,\id_f))(\ca H(f_2,\id)_{(1_{p1},1_{p1})}) \]
and so by the naturality of $\phi$ and the functoriality of $\phi$'s components one has
\begin{equation}\label{eq:phi-on-arrows}
\phi_{x,y}(f,f_1,f_2) = (\overline{\phi}_{r1}F(\id,f_1))(\overline{\phi}_fF(r_2,p_1))(\overline{\phi}_{p1}F(f_2,\id)).
\end{equation}
Given a 2-cell $\alpha:(f,f_1,f_2) \to (g,g_1,g_2)$ in $\ca H(x,y)$, its corresponding decomposition, also described in the proof of Proposition \ref{prop:H-dagger}, can be written as
\[ \xygraph{*=(0,1.8){\xybox{\xygraph{!{0;(5,0):(0,.2)::}
{\ca H(p_2,p_1)(1_{p1},1_{p1})}="p0" [r] {\ca H(r_2f,p_1)(1_{p1},1_{p1})}="p1" [r] {\ca H(r_2,fp_1)(1_{r1},1_{r1})}="p2" [d] {\ca H(r_2,r_1)(1_{r1},1_{r1})}="p3" [l] {\ca H(r_2,gp_1)(1_{r1},1_{r1})}="p4" [l] {\ca H(r_2g,p_1)(1_{p1},1_{p1})}="p5"
"p0":"p1"|-{}="a1":"p2"|-{}="a2":@{}"p3":@{<-}"p4"|-{}="a3":@{<-}"p5"|-{}="a4":@{}"p0"
"p1":"p5"|-{}="a5" "p2":"p4"|-{}="a6"
"a1" [u(.3)] {\scriptstyle \ca H(f_2,\id)_{(1_{p1},1_{p1})}}
"a2" [u(.3)] {\scriptstyle \ca H(r_2,p_1)(f,\id_f,\id_f)}
"a3" [d(.3)] {\scriptstyle \ca H(\id,g_1)_{(1_{r1},1_{r1})}}
"a4" [d(.3)] {\scriptstyle \ca H(r_2,p_1)(g,\id_g,\id_g)}
"a5" [l(.4)u(.05)] {\scriptstyle \ca H(r_2,p_1)\ca H(\alpha,\id)_{(1_{p1},1_{p1})}}
"a6" [r(.5)] {\scriptstyle \ca H(r_2,p_1)\ca H(\id,\alpha)_{(1_{r1},1_{r1})}}
"p1" [d(.35)l(.1)] :@{=>}[d(.3)]^{\ca H(r_2,p_1)\alpha}}}}} \]
and so by the 2-naturality of $\phi$ and the 2-functoriality of $\phi$'s components one has
\begin{equation}\label{eq:phi-on-2-cells}
\phi_{x,y}(\alpha) = (\overline{\phi}_{r1}F(\id,g_1))(\overline{\phi}_{\alpha}F(r_2,p_1))(\overline{\phi}_{p1}F(f_2,\id)).
\end{equation}
Thus by (\ref{eq:phi-on-objects})-(\ref{eq:phi-on-2-cells}) $\phi \mapsto \overline{\phi}$ is injective. To show that $\phi \mapsto \overline{\phi}$ is surjective it suffices to show that given the lax wedge $\overline{\phi}$, and taking (\ref{eq:phi-on-objects})-(\ref{eq:phi-on-2-cells}) as a definition of $\phi$, that the components $\phi_{x,y}$ are 2-functors and are 2-natural in $x$ and $y$, and moreover that the lax wedge corresponding to $\phi$ is $\overline{\phi}$.

This last fact follows by applying (\ref{eq:phi-on-objects}) to the cases $(p_1,p_2) = (1_x,1_x)$, (\ref{eq:phi-on-arrows}) to the cases $(f,f_1,f_2) = (f,\id_f,\id_f)$, and (\ref{eq:phi-on-2-cells}) to the cases $\alpha:(f,\alpha,\id_f) \to (g,\id_g,\alpha)$ where $\alpha:f \to g$ is a 2-cell of $P$. Two-naturality in $x$ and $y$ is obvious from the definitions (\ref{eq:phi-on-objects})-(\ref{eq:phi-on-2-cells}). The unit axioms of 2-functoriality are exactly the unit axioms for $\overline{\phi}$. That the $\phi_{x,y}$ respect horizontal composition of 1-cells, vertical composition of 2-cells and horizontal composition of 2-cells, is easily seen to be a consequence of the definitions and the 1-cell composition axiom, the 2-cell vertical composition axiom and the 2-cell horizontal composition axiom respectively, by straight forward calculations that are left to the reader.
\end{proof}
\begin{rem}\label{rem:Hdagger-cocone}
For $F:P^{\op} \times P \to \Cat$ and $X \in \Cat$, an $H^{\dagger}_L$-cocone for $F$ with vertex $X$ is the same thing as a lax wedge for $d_*F$ with vertex $d_*X$, by Propositions \ref{prop:H-dagger}, \ref{prop:weights-of-the-form-pi0star-2catwt} and \ref{lem:lax-wedge-equals-caH-cocone}, and the adjunction $\pi_{0*} \ladj d_*$. Thus such an $H^{\dagger}_L$-cocone amounts to the following data:
\begin{itemize}
\item $\forall x \in P$, a functor $\psi_x:F(x,x) \to X$.
\item $\forall f:x \to y \in P$, a natural transformation $\psi_f:\psi_xF(f,1) \to \psi_yF(1,f)$.
\end{itemize}
such that
\begin{enumerate}
\item $\forall$ 2-cells $\alpha:f \to g$ in $P$, $(\psi_yF(\id,\alpha))\psi_f = \psi_g(\psi_xF(\alpha,\id))$.
\item $\forall x \in P$, $\psi_{1_x} = 1_{\psi_x}$.
\item Given $f:x \to y$ and $g:y \to z$ in $P$, $\psi_{gf} = (\psi_{g}F(1,f))(\psi_{f}F(g,1))$.
\end{enumerate}
\end{rem}
We turn now to the task of exhibiting a universal lax wedge for $d_*S \times d_*T$ given 2-functors $S:P^{\op} \to \Cat$ and $T:P \to \Cat$. We begin by describing its vertex.

To $S$ we can associate the 2-category $1 \dbslash S$ which is described as follows. An object is a pair $(x,y)$ where $x \in P$ and $y \in Sx$. An arrow $(x_1,y_1) \to (x_2,y_2)$ is a pair $(f_1,f_2)$ where $f_1:x _1 \to x_2$ and $f_2:y_1 \to Sf_1y_2$. A 2-cell $\alpha:(f_1,f_2) \to (g_1,g_2)$ consists of a 2-cell $\alpha:f_1 \to g_1$ in $P$ such that $(S(\alpha)y_2)f_2 = g_2$. With the compositions in $1 \dbslash S$ inherited in the obvious way from $P$, $(x,y) \mapsto x$ becomes the object map of a 2-functor $1 \dbslash S \to P$.

Similarly to $T$ we can associate the 2-category $1 // T$ which is described as follows. An object is a pair $(x,z)$ where $x \in P$ and $z \in Tx$. An arrow $(x_1,z_1) \to (x_2,z_2)$ is a pair $(f_1,f_2)$ where $f_1:x _1 \to x_2$ and $f_2:Tf_1z_1 \to z_2$. A 2-cell $\alpha:(f_1,f_2) \to (g_1,g_2)$ consists of a 2-cell $\alpha:f_1 \to g_1$ in $P$ such that $f_2 = g_2(T(\alpha)z_1)$. With the compositions in $1 // T$ inherited in the obvious way from $P$, $(x,z) \mapsto x$ becomes the object map of a 2-functor $1 // T \to P$.

It is useful to picture a typical morphism of $1 \dbslash S$ and a typical morphism of $1 // T$ as
\[ \xygraph{{\xybox{\xygraph{{1}="p0" [u(.5)r] {Sx_1}="p1" [d] {Sx_2}="p2"
"p0":"p1"^-{y_1}:@{<-}"p2"^-{Sf_1}:@{<-}"p0"^-{y_2}
"p0" [r(.5)u(.15)] :@{=>}[d(.3)]^{f_2}}}}
[r(3)]
{\xybox{\xygraph{{1}="p0" [u(.5)r] {Tx_1}="p1" [d] {Tx_2}="p2"
"p0":"p1"^-{z_1}:"p2"^-{Tf_1}:@{<-}"p0"^-{z_2}
"p0" [r(.5)u(.15)] :@{=>}[d(.3)]^{f_2}}}}} \]
respectively. The vertex of the universal lax wedge we are in the process of describing is the 2-category $(1 \dbslash S) \times_P (1 // T)$. An object of this 2-category will be denoted as $(x,y,z)$ where $(x,y) \in 1 \dbslash S$ and $(x,z) \in 1 // T$, an arrow will be denoted as $(f,f_1,f_2) : (x_1,y_1,z_1) \to (x_2,y_2,z_2)$ where $(f,f_1):(x_1,y_1) \to (x_2,y_2) \in 1 \dbslash S$ and $(f,f_2):(x_1,z_1) \to (x_2,z_2) \in 1 // T$, and a 2-cell $\alpha:(f,f_1,f_2) \to (g,g_1,g_2)$ is by definition a 2-cell $\alpha:f \to g$ in $P$ such that $(S(\alpha)y_2)f_1 = g_1$ and $f_2 = g_2(T(\alpha)z_1)$.
\begin{const}\label{const:universal-lax-wedge}
We now describe a lax wedge $\kappa$ for $d_*S \times d_*T$ with vertex $(1 \dbslash S) \times_P (1 // T)$. For $x \in P$ we define $\kappa_x : Sx \times Tx \to (1 \dbslash S) \times_P (1 // T)$ by
\[ \begin{array}{lccr} {\kappa_x(y,z) = (x,y,z)} &&& {\kappa_x(f_1,f_2) = (1_x,f_1,f_2)} \end{array} \]
where $y$ and $f_1:y_1 \to y_2$ are in $Sx$, and $z$ and $f_2:z_1 \to z_2$ are in $Tx$. For $f:x_1 \to x_2$ in $P$ we define $\kappa_f:\kappa_{x_1}(Sf \times 1) \to \kappa_{x_2}(1 \times Tf)$ as
\[ (\kappa_f)_{(y,z)} = (f,1_{Sfy},1_{Tfz}) : (x_1,Sfy,z) \to (x_2,y,Tfz) \]
where $y \in Sx_2$ and $z \in Tx_1$. For $\alpha:f \to g$ in $P$ we must give a modification as on the left
\[ \xygraph{{\xybox{\xygraph{!{0;(3,0):(0,.3333)::} {\kappa_{x_1}(Sf \times 1)}="p0" [r] {\kappa_{x_1}(Sg \times 1)}="p1" [d] {\kappa_{x_2}(1 \times Tg)}="p2" [l] {\kappa_{x_2}(1 \times Tf)}="p3" "p0":"p1"^-{\kappa_{x_1}(S\alpha \times \id)}:"p2"^-{\kappa_g}:@{<-}"p3"^-{\kappa_{x_d}(\id \times T\alpha)}:@{<-}"p0"^-{\kappa_f} "p0" [d(.55)r(.425)] :@{=>}[r(.15)]^-{\kappa_{\alpha}}}}}
[r(6)]
{\xybox{\xygraph{!{0;(3,0):(0,.3333)::} {(x_1,Sfy,z)}="p0" [r] {(x_1,Sgy,z)}="p1" [d] {(x_2,y,Tgz)}="p2" [l] {(x_2,y,Tfz)}="p3" "p0":"p1"^-{(1_{x_1},S(\alpha)_y,1_z)}:"p2"^-{(g,1_{Sgy},1_{Tgz})}:@{<-}"p3"^-{(1_{x_2},1_y,T(\alpha)_z)}:@{<-}"p0"^-{(f,1_{Sfy},1_{Tfz})} "p0" [d(.55)r(.425)] :@{=>}[r(.15)]^-{\alpha}}}}} \]
and for $y \in Sx_2$ and $z \in Tx_1$, the component $(\kappa_{\alpha})_{(y,z)}$ is as indicated on the right in the previous display. It is straight forward to verify that $\kappa_x$ is a functor, $\kappa_f$ and $\kappa_{\alpha}$ are natural in the required senses, and that the lax wedge axioms are satisfied.
\end{const}
\begin{prop}\label{prop:col-caH-ScrossT}
The lax wedge $\kappa$ of Construction \ref{const:universal-lax-wedge} exhibits
\[ \tn{col}(\ca H,d_*S \times d_*T) = (1 \dbslash S) \times_P (1 // T). \]
\end{prop}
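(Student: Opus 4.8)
The plan is to use the bijection of Lemma~\ref{lem:lax-wedge-equals-caH-cocone} to trade $\ca H$-cocones for lax wedges, and then show directly that the lax wedge $\kappa$ of Construction~\ref{const:universal-lax-wedge} is the universal one. Writing $V = (1 \dbslash S) \times_P (1 // T)$, by that lemma and the naturality asserted there it suffices to prove: for every $2$-category $X$ and every lax wedge $\psi$ for $d_*S \times d_*T$ with vertex $X$, there is a unique $2$-functor $\overline{\psi} : V \to X$ with $\overline{\psi}\kappa_x = \psi_x$, $\overline{\psi}\kappa_f = \psi_f$ and $\overline{\psi}\kappa_{\alpha} = \psi_{\alpha}$ for all objects $x$, morphisms $f$ and $2$-cells $\alpha$ of $P$, the correspondence $\psi \mapsto \overline{\psi}$ being natural in $X$; the $2$-dimensional part of the colimit universal property is handled in the same way.

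The construction and uniqueness of $\overline{\psi}$ rest on the observation that every cell of $V$ is forced to be a particular composite of cells lying in the images of the $\kappa_x$ together with the structure data $\kappa_f$ and $\kappa_{\alpha}$. On objects, $(x,y,z) = \kappa_x(y,z)$, so $\overline{\psi}(x,y,z) = \psi_x(y,z)$. A morphism $(f,f_1,f_2) : (x_1,y_1,z_1) \to (x_2,y_2,z_2)$ of $V$, with $f : x_1 \to x_2$ in $P$, $f_1 : y_1 \to Sf\,y_2$ in $Sx_1$ and $f_2 : Tf\,z_1 \to z_2$ in $Tx_2$, factors as
\[ (x_1,y_1,z_1) \xrightarrow{\kappa_{x_1}(f_1,1)} (x_1,Sf\,y_2,z_1) \xrightarrow{(\kappa_f)_{(y_2,z_1)}} (x_2,y_2,Tf\,z_1) \xrightarrow{\kappa_{x_2}(1,f_2)} (x_2,y_2,z_2), \]
so that $\overline{\psi}(f,f_1,f_2)$ is forced to be $\psi_{x_2}(1,f_2)\,(\psi_f)_{(y_2,z_1)}\,\psi_{x_1}(f_1,1)$. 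Likewise a $2$-cell $\alpha : (f,f_1,f_2) \to (g,g_1,g_2)$ of $V$ --- that is, a $2$-cell $\alpha : f \to g$ of $P$ with $(S(\alpha)y_2)f_1 = g_1$ and $f_2 = g_2(T(\alpha)z_1)$ --- admits the analogous three-fold decomposition in which the middle piece is the component $(\kappa_{\alpha})_{(y_2,z_1)}$, which forces the value of $\overline{\psi}(\alpha)$. One then \emph{defines} $\overline{\psi}$ by these forced formulas.

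It then remains to verify that $\overline{\psi}$ is a genuine $2$-functor --- preserving identities and the horizontal composition of $1$-cells, and the vertical and horizontal composition of $2$-cells --- and that whiskering $\overline{\psi}$ with $\kappa$ returns $\psi$. Preservation of identities is immediate from the unit axioms for a lax wedge. For compositionality one argues as in the proof of Lemma~\ref{lem:lax-wedge-equals-caH-cocone}: it is enough to treat separately the three basic types of generating cell, those of the form $\kappa_{x_1}(f_1,1)$, $(\kappa_f)_{(y,z)}$ and $\kappa_{x_2}(1,f_2)$, and then the three ways in which these interact, and each case reduces to the $1$-cell composition axiom, the $2$-cell vertical composition axiom, or the $2$-cell horizontal composition axiom for $\psi$, together with the functoriality of the $\psi_x$ and the naturality of the $\psi_f$; the interaction cases produce modification squares that commute by precisely those axioms. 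That $\psi$ is recovered follows by specialising the forced formulas to $(f,f_1,f_2)$ of the shapes $(f,1,1)$ and $(f,\id_f,\id_f)$ and to $2$-cells of the form $(f,\alpha,\id_f) \to (g,\id_g,\alpha)$. The main obstacle is the bookkeeping in this verification: there is no new idea beyond those used for Proposition~\ref{prop:H-dagger} and Lemma~\ref{lem:lax-wedge-equals-caH-cocone}, but one must carefully match each juxtaposition pattern of generating cells against the correct lax wedge axiom, which is notationally heavy.
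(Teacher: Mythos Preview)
Your proposal is correct and follows essentially the same approach as the paper: reduce via Lemma~\ref{lem:lax-wedge-equals-caH-cocone} to showing that $\kappa$ is the universal lax wedge, use the three-fold factorisation of an arbitrary morphism of $(1 \dbslash S) \times_P (1 // T)$ to force the definition of $\overline{\psi}$ on $1$- and $2$-cells, and then verify $2$-functoriality from the lax wedge axioms. The paper invokes cotensors in $\TwoCat$ to dispense with the $2$-dimensional universal property (rather than handling it ``in the same way''), and your phrase ``$(f,\id_f,\id_f)$'' is a notational slip imported from the $\ca H$-context---in $V$ the relevant specialisation is $(f,1_{Sfy},1_{Tfz})$---but these are cosmetic points.
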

\begin{proof}
Since $\TwoCat$ admits all cotensors as a $\TwoCat$-enriched category, it suffices to show, by Lemma \ref{lem:lax-wedge-equals-caH-cocone}, that for any 2-category $X$ and any lax wedge $\psi$ with vertex $X$, there exists a unique 2-functor $\overline{\psi}:(1 \dbslash S) \times_P (1 // T) \to X$ such that $\overline{\psi}\kappa = \psi$. For $x \in P$, the equation $\overline{\psi}\kappa_x = \psi_x$ forces $\overline{\psi}(x,y,z) = \psi_{x}(y,z)$. Observe that an arrow $(f,f_1,f_2) : (x_1,y_1,z_1) \to (x_2,y_2,z_2)$ in $(1 \dbslash S) \times_P (1 // T)$ can be factored as
\[ (x_1,y_1,z_1) \xrightarrow{(1,f_1,1)} (x_1,Sfy_2,z_1) \xrightarrow{(f,1,1,)} (x_2,y_2,Tfz_1) \xrightarrow{(1,1,f_2)} (x_2,y_2,z_2) \]
and since
\[ \begin{array}{lcccr} {(1,f_1,1) = \kappa_{x_1}(f_1,1)} && {(f,1,1) = (\kappa_f)_{(y_2,z_1)}} && {(1,1,f_2) = \kappa_{x_2}(1,f_2)} \end{array} \]
the equation $\overline{\psi}\kappa = \psi$ forces us to define
\[ \overline{\psi}(f,f_1,f_2) = \psi_{x_2}(1_{y_2},f_2)(\psi_f)_{(y_2,z_1)}\psi_{x_1}(f_1,z_1). \]
Let $\alpha:(f,f_1,f_2) \to (g,g_1,g_2)$ be a 2-cell in $(1 \dbslash S) \times_P (1 // T)$. Then this 2-cell can be decomposed in $(1 \dbslash S) \times_P (1 // T)$ in the following way
\[ \xygraph{!{0;(3,0):(0,.3333)::} {(x_1,y_1,z_1)}="p0" [r] {(x_1,Sfy_2,z_1)}="p1" [r] {(x_2,y_2,Tfz_1)}="p2" [dr] {(x_2,y_2,z_2)}="p3" [l] {(x_2,y_2,Tgz_1)}="p4" [l] {(x_1,Sgy_2,z_1)}="p5" "p0":"p1"^-{(1,f_1,1)}:"p2"^-{(f,1,1)}:@{}"p3":@{<-}"p4"^-{(1,1,g_2)}:@{<-}"p5"^-{(g,1,1)}:@{}"p0"
"p1":"p5"_-{(1,S(\alpha)_{y_2},1)} "p2":"p4"^-{(1,1,Tf(\alpha)_{z_1})}
"p1" [d(.35)r(.5)] :@{=>}[d(.3)]^{\alpha}} \]
and so the equation $\overline{\psi}\kappa = \psi$ forces $\overline{\psi}(\alpha)$ to be the composite
\[ \xygraph{!{0;(3,0):(0,.3333)::} {\psi_{x_1}(y_1,z_1)}="p0" [r] {\psi_{x_2}(Sfy_2,z_1)}="p1" [r] {\psi_{x_2}(y_2,Tfz_1)}="p2" [dr] {\psi_{x_2}(y_2,z_2).}="p3" [l] {\psi_{x_2}(y_2,Tgz_1)}="p4" [l] {\psi_{x_1}(Sgy_2,z_1)}="p5" "p0":"p1"^-{\psi_{x_1}(f_1,1)}:"p2"^-{(\psi_f)_{(y_2,z_1)}}:@{}"p3":@{<-}"p4"^-{\psi_{x_2}(1,g_2)}:@{<-}"p5"^-{(\psi_g)_{(y_2,z_1)}}:@{}"p0"
"p1":"p5"_-{\psi_{x_1}(S(\alpha)_{y_2},1)} "p2":"p4"^-{\psi_{x_2}(1,T(\alpha)_{z_1})}
"p1" [d(.35)r(.35)] :@{=>}[d(.3)]^{(\psi_{\alpha})_{(y_2,z_1)}}} \]
So if $\overline{\psi}$ exists then it is the unique 2-functor such that $\overline{\psi}\kappa = \psi$. Thus it suffices to show that if $\overline{\psi}$ is defined in this way, then it is indeed a 2-functor, and this 2-functoriality follows easily from the lax wedge axioms for $\psi$.
\end{proof}
Putting this result together with Propositions \ref{prop:H-dagger} and \ref{prop:weights-of-the-form-pi0star-2catwt} we obtain
\begin{cor}\label{cor:lax-coend-formula}
Let $P$ be a 2-category and $S:P^{\op} \to \Cat$ and $T:P \to \Cat$ be 2-functors. Then one has
\[ \tn{col}(H^{\dagger}_L,S \times T) = \pi_{0*}((1 \dbslash S) \times_P (1 // T)). \]
\end{cor}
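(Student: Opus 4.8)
The plan is to obtain the formula simply by chaining together the three preceding results — Proposition \ref{prop:H-dagger}, Proposition \ref{prop:weights-of-the-form-pi0star-2catwt} and Proposition \ref{prop:col-caH-ScrossT} — together with the elementary observation that the 2-functor $d_*$ preserves finite products. No genuinely new argument is needed; the work is entirely bookkeeping of the variances.

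First I would rewrite the weight using Proposition \ref{prop:H-dagger}, which gives $H^{\dagger}_L = \pi_{0*}\ca H$ for the 2-functor $\ca H : P \times P^{\op} \to \TwoCat$ of Construction \ref{const:lax-wedge-weight}, so that $\tn{col}(H^{\dagger}_L, S \times T) = \tn{col}(\pi_{0*}\ca H, S \times T)$. Next I would apply Proposition \ref{prop:weights-of-the-form-pi0star-2catwt} with the 2-category $A = P^{\op} \times P$, the 2-functor $I = \ca H : A^{\op} \to \TwoCat$ (note $A^{\op} = P \times P^{\op}$), and $F = S \times T : A \to \Cat$, whose value at $(x,y)$ is $Sx \times Ty$; this yields $\tn{col}(\pi_{0*}\ca H, S \times T) = \pi_{0*}\tn{col}(\ca H, d_*(S \times T))$. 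Here one must identify $d_*(S \times T)$ with $d_*S \times d_*T$, which is immediate since a discrete category on a product of sets is the product of the discrete categories, so $d_*$ computed pointwise sends the external product of $S$ and $T$ to the external product of $d_*S$ and $d_*T$.

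Finally I would invoke Proposition \ref{prop:col-caH-ScrossT}, which computes $\tn{col}(\ca H, d_*S \times d_*T) = (1 \dbslash S) \times_P (1 // T)$, and apply $\pi_{0*}$ to both sides of this equality. Assembling the chain produces $\tn{col}(H^{\dagger}_L, S \times T) = \pi_{0*}((1 \dbslash S) \times_P (1 // T))$, as required. The only point that warrants care — and the closest thing to an obstacle — is ensuring that the indexing 2-categories $P \times P^{\op}$ and $P^{\op} \times P$ are matched up consistently with the hypotheses of the three cited propositions, and that the compatibility of $d_*$ with the product forming $S \times T$ is recorded; everything else is formal manipulation of weighted (co)limits.
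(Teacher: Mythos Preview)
Your proposal is correct and follows exactly the paper's approach: the corollary is stated immediately after Proposition \ref{prop:col-caH-ScrossT} with the remark that it follows by putting that result together with Propositions \ref{prop:H-dagger} and \ref{prop:weights-of-the-form-pi0star-2catwt}. Your additional care about the variance bookkeeping and the compatibility of $d_*$ with products just makes explicit what the paper leaves implicit.
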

\begin{rem}\label{rem:set-case-consequence}
Returning to the situation of Lemma \ref{lem:coend-for-exactness-in-Cat}, in which $P$ is a mere category, and $S$ and $T$ are the discrete-valued $B(q-,b)$ and $A(a,p-)$ respectively. Then in this case $1 \dbslash S = q \downarrow b$, $1 // T = a \downarrow p$, and since these are locally discrete as 2-categories we have
\[ \pi_{0*}((1 \dbslash S) \times_P (1 // T)) = (1 \dbslash S) \times_P (1 // T) = (q \downarrow b) \times_P (a \downarrow p).  \]
By Corollary \ref{cor:pi0-inverts-laxcoend->coend} the canonical comparison functor
\[ (q \downarrow b) \times_P (a \downarrow p) \longrightarrow \int^{x \in P} B(qx,b) \times A(a,px)  \]
from the lax to the strict coend is inverted by $\pi_0$, and so we recover Lemma \ref{lem:coend-for-exactness-in-Cat} since the codomain of this comparison functor is discrete.
\end{rem}

\subsection{Proof of Theorem \ref{thm:hard-exactness}}
\label{ssec:exactness-of-internalisations}
In this section we return to the situation of Lemma \ref{cor:pi0-exact-via-2-coends}, and in the light of the developments of Sections \ref{ssec:lax-coends} and \ref{ssec:lax-wedges}, give a combinatorial characterisation of $\pi_0$-exact squares in $\TwoCat$. We then reformulate this characterisation in the case where the square in question is a pullback square, in terms of the 2-functors which generate the pullback. This last characterisation is then applied to give the proof of Theorem \ref{thm:hard-exactness}.

We are denoting by $\tn{obj}:\Cat \to \Set$ the functor which forgets the arrows of a category, and so $\tn{obj}_*:\TwoCat \to \Cat$, which on objects is the process of applying $\tn{obj}$ to the homs of a 2-category, is perhaps more plainly described as the process of forgetting 2-cells. From the adjunctions $\pi_0 \ladj d \ladj \tn{obj}$, for any category $X$ one obtains the function $c_X:\tn{obj}(X) \to \pi_0(X)$ naturally in $X$, which explicitly is given by associating to any object of $X$ its connected component. Thus for any 2-category $X$, $c_{X*}:\tn{obj}_*X \to \pi_{0*}X$ is the identity on objects functor which sends 1-cells of $X$ to their connected components in the appropriate hom-category of $X$.
\begin{lem}\label{lem:pi0-inverts-objstar->pi0star}
For any 2-category $X$, $c_{X*}$ is inverted by $\pi_0$.
\end{lem}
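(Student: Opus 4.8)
The plan is to reduce the statement to the observation that the set of connected components of a category depends only on the relation ``joined by a morphism'' on its set of objects, and that this relation is literally the same for $\tn{obj}_*X$ and for $\pi_{0*}X$.

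First I would record that $c_{X*}$ is the identity on objects, so that $\tn{obj}(\tn{obj}_*X) = \tn{obj}(X) = \tn{obj}(\pi_{0*}X)$ and $\pi_0(c_{X*}) : \pi_0(\tn{obj}_*X) \to \pi_0(\pi_{0*}X)$ is the map of quotient sets induced by $\id_{\tn{obj}(X)}$. In particular it is automatically surjective, and it is a bijection precisely when, for all $a,b \in X$, there is a zigzag of $1$-cells of $X$ joining $a$ to $b$ if and only if there is a zigzag of morphisms of $\pi_{0*}X$ joining $a$ to $b$. Next I would unpack the two generating relations: a $1$-cell $a \to b$ in $X$ is the same thing as an object of the hom-category $X(a,b)$, whereas a morphism $a \to b$ in $\pi_{0*}X$ is by definition an element of $\pi_0(X(a,b))$, that is, a connected component of $X(a,b)$. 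Since a connected component is a nonempty equivalence class of objects, $X(a,b)$ has an object iff it has a connected component; hence the ``there is an edge $a \to b$'' predicates for $\tn{obj}_*X$ and for $\pi_{0*}X$ agree. Therefore the equivalence relations they generate on $\tn{obj}(X)$ coincide, so $\pi_0(\tn{obj}_*X) = \pi_0(\pi_{0*}X)$ and $\pi_0(c_{X*})$ is the identity map, in particular a bijection; naturality in $X$ is inherited from that of $c$.

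The statement is essentially immediate once unwound, and there is no real obstacle; the only point that requires a little care is keeping track of the two roles played by $\pi_0$ here, namely as the functor $\Cat \to \Set$ that is being applied to the functor $c_{X*}$, and as the ingredient (together with $\pi_0 \ladj d$) packaged into $\pi_{0*} : \TwoCat \to \Cat$ out of which the category $\pi_{0*}X$ is built.
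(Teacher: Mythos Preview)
Your proof is correct and essentially identical to the paper's: both observe that $c_{X*}$ is the identity on objects (giving surjectivity of $\pi_0 c_{X*}$), and then argue injectivity by noting that a zigzag in $\pi_{0*}X$ lifts to a zigzag in $\tn{obj}_*X$ since each hom $X(a,b)$ has an object iff it has a connected component. The paper phrases the injectivity step as ``choose a representative of each equivalence class along the path'', while you phrase it as ``the edge relations coincide, hence so do the generated equivalence relations''; these are the same argument.
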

\begin{proof}
Since $c_{X*}$ is the identity on objects, $\pi_0c_{X*}$ is clearly surjective. To say that objects $x$ and $y$ of $X$ are identified by $\pi_0c_{X*}$, is to say that $x$ and $y$ are in the same connected component of $\pi_{0*}X$. Thus one has an undirected path
\[ x \xrightarrow{p_1} z_1 \xleftarrow{} ... \xrightarrow{} z_n \xleftarrow{p_{n+1}} y \]
of equivalence classes of arrows of $X$ by the connectedness relation in the appropriate homs. Taking $q_i \in p_i$ for $1 \leq i \leq n+1$, that is choosing inhabitants of these equivalence classes of arrows, gives an undirected path in $\tn{obj}_*X$ between $x$ and $y$ whence $\pi_0c_{X*}$ is injective.
\end{proof}
Recall the setting of Lemma \ref{cor:pi0-exact-via-2-coends} is that of a lax square
\[ \xygraph{!{0;(1.5,0):(0,.6667)::} {P}="p0" [r] {B}="p1" [d] {C}="p2" [l] {A}="p3" "p0":"p1"^-{q}:"p2"^-{g}:@{<-}"p3"^-{f}:@{<-}"p0"^-{p} "p0" [d(.55)r(.4)] :@{=>}[r(.2)]^-{\phi}} \]
in $\TwoCat$. For $a \in A$ and $b \in B$ writing
\[ E = (1 \dbslash B(q-,b)) \times_P (1 // A(a,p-))  \]
one has functors
\[ \tn{obj}_*E \xrightarrow{c_E} \pi_{0*}E \xrightarrow{\tn{comp.}} {\int^{x \in P} B(qx,b) \times A(a,px)} \xrightarrow{\tilde{\phi}_{a,b}} C(fa,gb).   \]
By Lemma \ref{lem:pi0-inverts-objstar->pi0star}, Corollary \ref{cor:lax-coend-formula} and Corollary \ref{cor:pi0-inverts-laxcoend->coend}, $\pi_0$ inverts the first two of these functors, and so by Lemma \ref{cor:pi0-exact-via-2-coends} the given square is $\pi_0$-exact iff for all $a,b \in P$, the above composite functor is inverted by $\pi_0$. This is our combinatorial characterisation of $\pi_0$-exactness. It remains only to read off what this composite functor is explicitly to have a directly usable criterion.

Let us denote the above composite functor as
\[ \ca C(\phi,a,b) : \ca F(\phi,a,b) \longrightarrow C(fa,gb)  \]
so that in particular
\[ \ca F(\phi,a,b) = \tn{obj}_*((1 \dbslash B(q-,b)) \times_P (1 // A(a,p-))), \]
and when the context, that is to say $(\phi,a,b)$, is clear, we write this more simply as $\ca C : \ca F \to C(fa,gb)$. In elementary terms the category $\ca F$ is described as follows. An object is a triple $(x,y,z)$ where $x \in P$, $y:qx \to b$ and $z:a \to px$. A morphism $(x_1,y_1,z_1) \to (x_2,y_2,z_2)$ is a triple $(h,h_1,h_2)$ as in
\[ \xygraph{{\xybox{\xygraph{!{0;(1,0):(0,.5)::} {a}="p0" [ur] {px_1}="p1" [d(2)] {px_2}="p2" "p0":"p1"^-{z_1}:"p2"^-{ph}:@{<-}"p0"^-{z_2} "p0" [r(.5)u(.2)] :@{=>}[d(.4)]^-{h_2}}}}
[r(2)]
{\xybox{\xygraph{!{0;(1,0):(0,.5)::} {b}="p0" [ul] {qx_1}="p1" [d(2)] {qx_2}="p2" "p0":@{<-}"p1"_-{y_1}:"p2"_-{qh}:"p0"_-{z_2} "p0" [l(.5)u(.2)] :@{=>}[d(.4)]_-{h_1}}}}} \]
and composition is inherited in the evident way from $A$ and $B$. The functor $\ca C(\phi,a,b)$ is given on objects by $(x,y,z) \mapsto g(y)\phi_xf(z)$, and on arrows by
\[ \xygraph{{(h,h_1,h_2)} [r] {\mapsto} [r(2.5)]
{\xybox{\xygraph{!{0;(.75,0):(0,1)::} {fa}="p0" [r(1.5)u] {fpx_1}="p1" [r(2)] {gqx_1}="p2" [r(1.5)d] {gb.}="p3" [l(1.5)d] {gqx_2}="p4" [l(2)] {fpx_2}="p5" "p0":"p1"^-{fz_1}:"p2"^-{\phi_{x_1}}:"p3"^-{gy_1}:@{<-}"p4"^-{gy_2}:@{<-}"p5"^-{\phi_{x_2}}:@{<-}"p0"^-{fz_2} "p1":"p5"^-{fph} "p2":"p4"_-{gqh}
"p0" [r(.75)u(.2)] :@{=>}[d(.4)]^{fh_2} "p3" [l(.75)u(.2)] :@{=>}[d(.4)]_{gh_1} "p1" [dr] {\scriptstyle =}}}}} \]
To summarise, our combinatorial characterisation of $\pi_0$-exactness is
\begin{prop}\label{prop:pi0-exact-combinatorial-characterisation}
A lax square
\[ \xygraph{!{0;(1.5,0):(0,.6667)::} {P}="p0" [r] {B}="p1" [d] {C}="p2" [l] {A}="p3" "p0":"p1"^-{q}:"p2"^-{g}:@{<-}"p3"^-{f}:@{<-}"p0"^-{p} "p0" [d(.55)r(.4)] :@{=>}[r(.2)]^-{\phi}} \]
in $\TwoCat$ is $\pi_0$-exact iff for all $a \in A$ and $b \in B$, the functor $\ca C(\phi,a,b)$ described above in elementary terms is inverted by $\pi_0$.
\end{prop}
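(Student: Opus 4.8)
The plan is to assemble the chain of reductions already set up in the discussion preceding the statement. By Definition \ref{def:pi0-exact} together with Corollary \ref{cor:pi0-exact-via-2-coends}, the given lax square is $\pi_0$-exact precisely when, for every $a \in A$ and $b \in B$, the functor
\[ \tilde{\phi}_{a,b} : \int^{x \in P} B(qx,b) \times A(a,px) \longrightarrow C(fa,gb) \]
is inverted by $\pi_0$. So it suffices to exhibit, for each such pair $(a,b)$, a functor $w_{a,b}$ which is inverted by $\pi_0$, whose codomain is the $2$-coend above, and such that $\ca C(\phi,a,b) = \tilde{\phi}_{a,b} \comp w_{a,b}$: since $\pi_0 w_{a,b}$ is then invertible, $\pi_0\ca C(\phi,a,b)$ is invertible if and only if $\pi_0\tilde{\phi}_{a,b}$ is.

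First I would take $w_{a,b}$ to be the composite
\[ \tn{obj}_*E \xrightarrow{c_{E*}} \pi_{0*}E \longrightarrow \int^{x \in P} B(qx,b) \times A(a,px), \]
where $E = (1 \dbslash B(q-,b)) \times_P (1 // A(a,p-))$ and the second arrow is the canonical comparison from the lax coend to the strict coend. The first factor $c_{E*}$ is inverted by $\pi_0$ by Lemma \ref{lem:pi0-inverts-objstar->pi0star}. The second factor is inverted by $\pi_0$ by combining Corollary \ref{cor:lax-coend-formula}, which identifies $\pi_{0*}E$ with the lax coend $\tn{col}(H^{\dagger}_L, B(q-,b) \times A(a,p-))$, with Corollary \ref{cor:pi0-inverts-laxcoend->coend}, which asserts that the comparison $\tn{col}(E,-)$ from the lax coend to the strict coend is inverted by $\pi_0$ — here taking $S = B(q-,b)$ and $T = A(a,p-)$. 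Hence $w_{a,b}$ is inverted by $\pi_0$, as required.

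It then remains only to check that $\ca C(\phi,a,b)$, defined to be $\tilde{\phi}_{a,b} \comp w_{a,b}$, has exactly the elementary description recorded just before the statement. This is a matter of unwinding definitions: an object of $\tn{obj}_*E$ is, by the explicit descriptions of $1 \dbslash S$ and $1 // T$ from Section \ref{ssec:lax-wedges}, a triple $(x, y : qx \to b, z : a \to px)$; a morphism is a triple $(h, h_1, h_2)$ with $h : x_1 \to x_2$ in $P$ and $h_1, h_2$ the evident $2$-cells; and chasing through $c_{E*}$, the comparison to the $2$-coend, and the componentwise description of $\tilde{\phi}_{a,b}$ (which sends the class of $(x,y,z)$ to $g(y)\phi_xf(z)$, and acts on a morphism by pasting $gh_1$, the naturality of $\phi$ at $h$, and $fh_2$) produces precisely the displayed object and arrow formulae for $\ca C(\phi,a,b)$. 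I do not anticipate a genuine obstacle: every nontrivial ingredient — the characterisation of exactness via $\tilde{\phi}_{a,b}$, the formula for the lax coend, and the two $\pi_0$-inversion results — is already established, so the work is essentially bookkeeping, the only mild care being needed to confirm that the comparison functor appearing in Corollary \ref{cor:pi0-inverts-laxcoend->coend} is literally the middle arrow of $w_{a,b}$ under the identification supplied by Corollary \ref{cor:lax-coend-formula}.
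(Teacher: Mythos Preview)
Your proposal is correct and follows essentially the same argument as the paper: the paper's proof is precisely the discussion immediately preceding the proposition, which factors $\ca C(\phi,a,b)$ as the composite $\tn{obj}_*E \to \pi_{0*}E \to \int^{x} B(qx,b)\times A(a,px) \to C(fa,gb)$, invokes Lemma~\ref{lem:pi0-inverts-objstar->pi0star}, Corollary~\ref{cor:lax-coend-formula} and Corollary~\ref{cor:pi0-inverts-laxcoend->coend} to see that the first two arrows are inverted by $\pi_0$, and then appeals to Corollary~\ref{cor:pi0-exact-via-2-coends}. Your unpacking of the elementary description and your caveat about identifying the middle comparison are exactly the bookkeeping the paper leaves implicit.
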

\begin{rem}\label{rem:inverted-by-pi0-explained}
Let $h:X \to Y$ be a functor. The condition that $\pi_0$ inverts $h$ amounts to the following conditions
\begin{enumerate}
\item For any $y \in Y$, there exists $x \in X$ and an undirected path in $Y$ between $hx$ and $y$.\label{pi0-surjective}
\item For all $x_1,x_2 \in X$, if there exists an undirected path between $hx_1$ and $hx_2$ in $Y$, then there exists an undirected path in $X$ between $x_1$ and $x_2$.\label{pi0-injective}
\end{enumerate}
Condition (\ref{pi0-surjective}) is the condition that $\pi_0h$ be surjective, and (\ref{pi0-injective}) is the condition that $\pi_0h$ be injective, unpacked in elementary terms. In the case where $h$ itself is surjective on objects, (\ref{pi0-surjective}) is automatic, and (\ref{pi0-injective}) clearly follows from
\begin{enumerate}
\setcounter{enumi}{2}
\item For all $x_1,x_2 \in X$, if there exists $hx_1 \to hx_2$ in $Y$, then there exists an undirected path in $X$ between $x_1$ and $x_2$.\label{pi0-injective-easy}
\end{enumerate}
\end{rem}
\begin{rem}\label{rem:recover-exactness-characterisation-in-Cat}
Given a lax square $\ca S$ in $\Cat$, one has a lax square $d_*\ca S$ in $\TwoCat$, and $\ca S$ is exact iff $d_*\ca S$ is $\pi_0$-exact. Thus the explicit characterisations of exact squares in $\Cat$ of Guitart \cite{Guitart-ExactSquares}, follow from Proposition \ref{prop:pi0-exact-combinatorial-characterisation} and Remark \ref{rem:inverted-by-pi0-explained}.
\end{rem}
In the case where $\phi$ is an identity and the commuting square is a pullback square, we denote $\ca C(\phi,a,b)$ and $\ca F(\phi,a,b)$ as $\ca C(f,g,a,b)$ and $\ca F(f,g,a,b)$, and as before denote these as $\ca C$ and $\ca F$ when $(f,g,a,b)$ are understood. An object of $\ca F$ in this case is a pair $(y,z)$ where $y:b_1 \to b$ is in $B$ and  $z:a \to a_1$ is in $A$, such that $fa_1 = gb_1$, and in diagramatic terms, we write such an object as
\[ fa \xrightarrow{fz} fa_1 = gb_1 \xrightarrow{gy} gb. \]
The effect of $\ca C$ on $(y,z)$ is just this composite $(gy)(fz)$. A morphism $(y_1,z_1) \to (y_2,z_2)$ consists of $(\beta,\delta,\alpha,\varepsilon)$ as depicted in
\[ \xygraph{!{0;(2,0):(0,.375)::} {fa}="p0" [ur] {fa_1=gb_1}="p1" [dr] {gb.}="p2" [dl] {fa_2=gb_2}="p3" "p0":"p1"^-{fz_1}:"p2"^-{gy_1}:@{<-}"p3"^-{gy_2}:@{<-}"p0"^-{fz_2} "p1":"p3"|-{f\alpha=g\beta}
"p0" [r(.5)u(.2)] :@{=>}[d(.4)]^{f\varepsilon}
"p2" [l(.5)u(.2)] :@{=>}[d(.4)]_{g\delta}} \]
and $\ca C(\beta,\delta,\alpha,\varepsilon)$ is the composite 2-cell $(g(\delta)f(z_1))(g(y_2)f(\varepsilon))$ in $C$. We summarise this special case in
\begin{cor}\label{cor:pi0-exact-pb}
Let $f:A \to C$ and $g:B \to C$ be 2-functors. Then the pullback square
\[ \xygraph{{P}="p0" [r] {B}="p1" [d] {C}="p2" [l] {A}="p3" "p0":"p1"^-{}:"p2"^-{g}:@{<-}"p3"^-{f}:@{<-}"p0"^-{}:@{}"p2"|-{\tn{pb}}} \]
is $\pi_0$-exact iff for all $a \in A$ and $b \in B$, the functor $\ca C(f,g,a,b)$ described above is inverted by $\pi_0$.
\end{cor}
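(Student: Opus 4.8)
The plan is to obtain this as the specialisation of Proposition \ref{prop:pi0-exact-combinatorial-characterisation} to a pullback square regarded as a lax square with identity coherence $2$-cell. A pullback square in $\TwoCat$ commutes strictly, so in the sense of Convention \ref{conv:commuting-exactness} it \emph{is} a lax square with $\phi = \id$; hence Proposition \ref{prop:pi0-exact-combinatorial-characterisation} applies and says that $\pi_0$-exactness is equivalent to the condition that $\pi_0$ invert the functor $\ca C(\id,a,b) : \ca F(\id,a,b) \to C(fa,gb)$ for every $a \in A$ and $b \in B$. By the notational convention introduced just before the statement, $\ca C(\id,a,b)$ and $\ca F(\id,a,b)$ for a pullback square are exactly what is denoted $\ca C(f,g,a,b)$ and $\ca F(f,g,a,b)$, so the corollary follows once we confirm that the elementary descriptions of these given before the statement are the correct specialisations of the general elementary descriptions of $\ca C(\phi,a,b)$ and $\ca F(\phi,a,b)$.

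This confirmation is pure bookkeeping, driven by the universal property of $P = A \times_C B$ in $\TwoCat$. First I would use that universal property to identify an object $x$ of $P$ with a pair $(a_1,b_1)$, $a_1 \in A$, $b_1 \in B$, satisfying $fa_1 = gb_1$, with $px = a_1$ and $qx = b_1$; a $1$-cell $h : x_1 \to x_2$ of $P$ with a pair $(\alpha,\beta)$ of $1$-cells $\alpha : px_1 \to px_2$ in $A$ and $\beta : qx_1 \to qx_2$ in $B$ with $f\alpha = g\beta$; and a $2$-cell of $P$ with a compatible pair of $2$-cells of $A$ and $B$. Under this dictionary an object $(x,y,z)$ of $\ca F(\id,a,b)$, with $y : qx \to b$ and $z : a \to px$, is exactly the datum of a pair $(y,z)$ with $y : b_1 \to b$ in $B$, $z : a \to a_1$ in $A$ and $fa_1 = gb_1$; and a morphism $(h,h_1,h_2)$ of $\ca F(\id,a,b)$, whose $2$-cell components $h_1$ and $h_2$ live in $B$ and $A$ respectively, becomes precisely a quadruple $(\beta,\delta,\alpha,\varepsilon)$ of the shape drawn before the statement, once one matches the orientations of $h_1$, $h_2$ with those of $\delta$, $\varepsilon$.

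Finally, on objects the functor $\ca C(\id,a,b)$ sends $(x,y,z)$ to $g(y)\phi_xf(z) = g(y)f(z)$ since $\phi_x = \id$, which is the prescribed value of $\ca C(f,g,a,b)$; and on morphisms $\ca C(\id,a,b)$ is the pasting appearing in the definition of $\ca C(\phi,a,b)$ with every occurrence of $\phi_x$ replaced by an identity. Here one uses that in a pullback $fph = gqh$ holds on the nose, so the region of that pasting labelled ``$=$'' is a literal equality; the resulting pasting is visibly the composite $(g(\delta) \cdot f(z_1))(g(y_2) \cdot f(\varepsilon))$, i.e.\ $\ca C(f,g,a,b)(\beta,\delta,\alpha,\varepsilon)$. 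There is no genuine obstacle: all the substance lies in Proposition \ref{prop:pi0-exact-combinatorial-characterisation} and the computations of the preceding two subsections, and the only points worth spelling out are the vanishing of the $\phi_x$-insertions — which is exactly the strict commutativity of a pullback square — and the identification of objects and $1$- and $2$-cells of $P$ with pullback data.
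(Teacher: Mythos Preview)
Your proposal is correct and follows essentially the same approach as the paper: the corollary is presented there as a direct specialisation of Proposition \ref{prop:pi0-exact-combinatorial-characterisation} to the case $\phi = \id$, with the preceding paragraph unpacking $\ca F(\id,a,b)$ and $\ca C(\id,a,b)$ in pullback terms exactly as you do. Your explicit invocation of the universal property of $P = A \times_C B$ to identify objects, $1$-cells and $2$-cells of $P$ with compatible pairs is a helpful clarification but not a departure from the paper's argument.
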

Recall that the situation of Theorem \ref{thm:hard-exactness} is that of a pullback square
\[ \xygraph{{P}="p0" [r] {B}="p1" [d] {C}="p2" [l] {A}="p3" "p0":"p1"^-{q}:"p2"^-{g}:@{<-}"p3"^-{f}:@{<-}"p0"^-{p}:@{}"p2"|-{\tn{pb}}} \]
in $\CrIntCat {\Cat}$ in which $g$ is a discrete fibration and $f$ is an objectwise opfibration. These hypotheses were described in elementary terms at the end of Section \ref{ssec:codesc-crossed-dblcat}. By Remark \ref{rem:exactness-via-Cnr} to prove  Theorem \ref{thm:hard-exactness}, it suffices to show that $\Cnr(f)$ and $\Cnr(g)$ satisfy the conditions of Corollary \ref{cor:pi0-exact-pb}. This means that for $a \in A$ and $b \in B$, we must show that the functor
\begin{equation}\label{eq:corner-indicator}
\ca C(\Cnr(f),\Cnr(g),a,b) : \ca F(\Cnr(f),\Cnr(g),a,b) \longrightarrow \Cnr(C)(fa,gb)
\end{equation}
is inverted by $\pi_0$.

By Corollary \ref{cor:pi0-exact-pb} and the definition of the 2-categories of corners, an object of $\ca F = \ca F(\Cnr(f),\Cnr(g),a,b)$ consists of $((v_2,h_2),(v_1,h_1))$ as on the left
\[ \xygraph{{\xybox{\xygraph{!{0;(1.7,0):(0,.2)::} {fa}="p0" [r] {fa_1=gb_1}="p1" [r] {gb}="p2" "p0":"p1"^-{(fv_1,fh_1)}:"p2"^-{(gv_2,gh_2)}}}}
[r(4)]
{\xybox{\xygraph{{a}="p0" [d] {i}="p1" [r] {a_1}="p2" "p0":"p1"^-{v_1}:"p2"^-{h_1}}}}
[r(2)]
{\xybox{\xygraph{{b_1}="p0" [d] {j}="p1" [r] {b_1}="p2" "p0":"p1"^-{v_2}:"p2"^-{h_2}}}}} \]
in $\Cnr(C)$, the data of which is depicted in double categorical terms on the right. A morphism $((v_2,h_2),(v_1,h_1)) \to ((v_4,h_4),(v_3,h_3))$ in $\ca F$ consists of the data depicted in
\[ \xygraph{!{0;(4,0):(0,.18)::} {fa}="p0" [ur] {fa_1=gb_1}="p1" [dr] {gb.}="p2" [dl] {fa_2=gb_2}="p3" "p0":"p1"^-{(fv_1,fh_1)}:"p2"^-{(gv_2,gh_2)}:@{<-}"p3"^-{(gv_4,gh_4)}:@{<-}"p0"^-{(fv_3,fh_3)} "p1":"p3"|-{(fv_5,fh_5)=(gv_6,gh_6)}
"p0" [r(.6)u(.2)] :@{=>}[d(.4)]_{(f\phi_1,f\phi_2)}
"p2" [l(.6)u(.2)] :@{=>}[d(.4)]^{(g\phi_3,g\phi_4)}} \]
Recall that the effect of $\ca C$ is to send such data to the composite 1 or 2-cells they describe in $\Cnr(C)$.
\begin{lem}\label{lem:C-fibres-for-hard-exactness}
Suppose that $f:A \to C$ and $g:B \to C$ are morphisms of $\CrIntCat {\Cat}$, $g$ is a discrete fibration, $f$ is an objectwise opfibration, $a \in A$ and $b \in B$. Then any object $((v_2,h_2),(v_1,h_1))$ of $\ca F$ is in the same connected component of its fibre by $\ca C$ as one in which $v_1$ is $f_0$-opcartesian, $h_1$ is a horizontal identity and $v_2$ is a vertical identity.
\end{lem}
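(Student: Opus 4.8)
The plan is to normalise the object $((v_2,h_2),(v_1,h_1))$ of $\ca F$ in three successive steps, at each stage exhibiting an explicit morphism in $\ca F$ (or a short undirected path) whose image under $\ca C$ is an isomorphism — or, even better, using the 2-cells that the $\Cnr$-construction records to connect the source and target within a single fibre of $\ca C$. Recall that the data is a pair of corners $(v_1,h_1):a\to a_1$ in $A$ and $(v_2,h_2):b_1\to b$ in $B$ with $fa_1=gb_1$, and that a morphism in $\ca F$ consists of vertical arrows $v_5$ (over $a$) and $v_6$ (over $b$, agreeing after $f$ and $g$ on the common middle object), together with squares $\phi_1,\phi_2$ on the $A$-side and $\phi_3,\phi_4$ on the $B$-side; $\ca C$ sends such a morphism to the composite cell it describes in $\Cnr(C)$. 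The target of the normalisation is a triple in which (i) $v_1$ is $f_0$-opcartesian, (ii) $h_1$ is a horizontal identity, and (iii) $v_2$ is a vertical identity.

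\textbf{Step 1: making $v_1$ opcartesian.} Since $f$ is an objectwise opfibration, $f_0:A_0\to C_0$ is an opfibration, so the vertical arrow $fv_1:fa\to fa_1$ in $C$ (the underlying vertical part of the corner $(fv_1,fh_1)$) factors through an $f_0$-opcartesian lift. Concretely, choose an $f_0$-opcartesian $\tilde v_1:a\to a'$ over $fv_1$ together with the induced vertical arrow $w:a'\to a_1$ with $v_1 = w\tilde v_1$ and $f(w)$ an identity. I will use $w$ (and the identity square it carries over $f$) to build a morphism in $\ca F$ from $((v_2,h_2),(\tilde v_1,h_1 w'))$ — for the appropriate re-choice of horizontal component — to $((v_2,h_2),(v_1,h_1))$, lying in a single fibre of $\ca C$ because $f(w)=1$ makes the relevant composite cell degenerate. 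The bookkeeping here is just the definition of composition of corners together with the chosen-opcartesian-square structure of the crossed double category $A$.

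\textbf{Step 2: absorbing $h_1$ into the middle object.} Once $v_1$ is opcartesian, the horizontal arrow $h_1:i\to a_1$ (where $i$ is the intermediate object of the corner) can be absorbed: replace the middle object by $i$ itself and post-compose $h_1$ into the $B$-side. More precisely, $g$ is a discrete fibration, so from $h_1$ (viewed in $C$ via $fh_1$, using $fa_1=gb_1$) there is a \emph{unique} horizontal lift along $g$ of the arrow $gb_1\xleftarrow{}f i$, equivalently one transports $h_2$ and the rest of the $B$-data across the unique lift; this uses both parts of the discrete-fibration hypothesis (unique horizontal lifting of arrows and of squares) spelled out at the end of Section~\ref{ssec:codesc-crossed-dblcat}. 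The resulting object has $h_1$ replaced by a horizontal identity, and the morphism connecting it to the Step-1 object is sent by $\ca C$ to an isomorphism (indeed an identity, because the lift is unique so no genuine 2-cell intervenes). Here I would be careful that the opcartesianness of $v_1$ established in Step~1 is preserved — it is, since we only changed horizontal data and the middle object, not $v_1$.

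\textbf{Step 3: killing $v_2$.} Finally, with $h_1$ a horizontal identity, the corner on the $B$-side is just $(v_2,h_2)$ with $v_2:b_1\to j$. Using the chosen opcartesian squares of the crossed double category $B$ — this is exactly the extra structure $\kappa_{h,v}$ recalled in Section~\ref{ssec:codesc-crossed-dblcat} — one factors the corner so as to slide $v_2$ past $h_2$: the opcartesian square $\kappa_{h_2,v_2}$ (or rather the universal property of the opfibration $d_0:B_1\to B_0$) provides a square whose vertical sides are $v_2$ and a vertical identity, relating $(v_2,h_2)$ to $(1,h_2')$ for a suitable $h_2'$, and this square is precisely the datum of a morphism in $\ca F$ in the same $\ca C$-fibre. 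After this step $v_2$ is a vertical identity, $h_1$ is a horizontal identity, and $v_1$ is still opcartesian, so we are done.

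\textbf{Main obstacle.} The routine parts are Steps~1 and 3, which are direct applications of the opfibration structures (objectwise for $f$, internal $d_0$-opfibration for the crossed double category $B$) together with the closure of chosen opcartesian squares under composition. The genuinely delicate step is Step~2: one must check that transporting the entire $B$-side of the configuration — horizontal arrow, vertical arrow, and the square relating them — across the unique $g$-lift is (a) well-defined, using \emph{both} the unique horizontal lifting of arrows and of squares in the discrete fibration $g$, and (b) functorial enough that it actually assembles into a morphism of $\ca F$ rather than merely a pointwise correspondence, and (c) compatible with the opcartesianness secured in Step~1. I expect the bulk of the written proof to be the verification in Step~2 that the reindexed data still satisfies the corner and crossed-double-category axioms and that $\ca C$ of the connecting morphism is invertible; the rest is diagram-chasing against the definitions.
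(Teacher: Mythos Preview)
Your three-step outline has the right target, but the ordering and the mechanism in Step~3 are both wrong, and Step~1 as described cannot be carried out in isolation.

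The problem with Step~1 is this: after factoring $v_1 = w\tilde v_1$ with $\tilde v_1$ $f_0$-opcartesian and $fw$ an identity, you need a morphism of $\ca F$ connecting $((v_2,h_2),(\tilde v_1,?))$ to $((v_2,h_2),(v_1,h_1))$. Keeping the middle object and the $B$-side fixed forces you to produce a 2-cell in $\Cnr(A)$ between $(\tilde v_1,h_1')$ and $(v_1,h_1)$, i.e.\ a square in $A$ with left side $w$, right side $1_{a_1}$, and bottom $h_1$; no such square is supplied by the crossed structure. If instead you change the middle object, you are forced to produce a matching corner on the $B$-side, which already drags in the $g$-lift you deferred to Step~2. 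So Step~1 cannot be done first.

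Step~3 is mis-justified: the chosen opcartesian squares $\kappa_{h,v}$ of a crossed double category are for horizontal-then-vertical configurations (top $h$, right side $v$). The corner $(v_2,h_2)$ is vertical-then-horizontal, so there is no $\kappa_{h_2,v_2}$ to invoke, and the opfibration $d_0:B_1\to B_0$ does not help either since $v_2$ lands at the \emph{source} of $h_2$, not its target.

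What actually works --- and what the paper does --- is to run the normalisation in the order $3,2,1$, using the \emph{other} hypothesis at each stage. To kill $v_2$ one lifts $gv_2$ to $A$ via the $f_0$-opfibration (obtaining $v_4:a_1\to i_2$) and composes in $\Cnr(A)$ using the opcartesian square $\kappa_2 = \kappa_{h_1,v_4}$ of $A$; this also replaces $h_1$ by a new horizontal $h_4$. To kill $h_4$ one lifts $fh_1$ to $B$ via the discrete fibration $g$, takes the opcartesian square $\kappa_3$ in $B$, and --- crucially --- uses that $f$ and $g$ are \emph{crossed} internal functors so that $f\kappa_2 = g\kappa_3$ equals the chosen opcartesian square $\kappa_1$ in $C$; this is exactly what makes the $A$- and $B$-corners match under $f$ and $g$. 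Only after this does the opcartesian factorisation of the new vertical $v_5v_1$ succeed, because the remainder $v_8$ satisfies $fv_8 = 1$, so the $B$-side can remain fixed. Your proposal never invokes the crossed-functor property of $f$ and $g$; without it the two sides cannot be aligned and the argument breaks.
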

\begin{proof}
We begin by taking the chosen opcartesian square $\kappa_1$ as on the left in
\[ \xygraph{{\xybox{\xygraph{!{0;(1.5,0):(0,.6667)::} {fi_1}="p0" [r] {fa_1=gb_1}="p1" [d] {gj_1}="p2" [l] {k_1}="p3" "p0":"p1"^-{fh_1}:"p2"^-{gv_2}:@{<-}"p3"^-{h_3}:@{<-}"p0"^-{v_3}:@{}"p2"|-*{\kappa_1}}}}
[r(3)]
{\xybox{\xygraph{{i_1}="p0" [r] {a_1}="p1" [d] {i_2}="p2" [l] {i_3}="p3" "p0":"p1"^-{h_1}:"p2"^-{v_4}:@{<-}"p3"^-{h_4}:@{<-}"p0"^-{v_5}:@{}"p2"|-*{\kappa_2}}}}
[r(2.5)]
{\xybox{\xygraph{{j_2}="p0" [r] {b_1}="p1" [d] {j_1}="p2" [l] {j_3}="p3" "p0":"p1"^-{h_5}:"p2"^-{v_2}:@{<-}"p3"^-{h_6}:@{<-}"p0"^-{v_6}:@{}"p2"|-*{\kappa_3}}}}} \]
Take an $f_0$-opcartesian lift $v_4:a_1 \to i_2$ of $gv_2$. Then take the chosen opcartesian square $\kappa_2$ in $A$, and since $f$ is a crossed internal functor $f\kappa_2=\kappa_1$. Define $h_5:j_2 \to b_1$ as the unique horizontal arrow in $B$ such that $gh_5=fh_1$. Then take the chosen opcartesian square $\kappa_3$ in $B$ as in the previous display, and since $g$ is a crossed internal functor $g\kappa_3=\kappa_1$. Next take an $f_0$-opcartesian lift $v_7:a \to i_4$ of $f(v_5v_1)$, and so one has $v_8:i_4 \to i_3$ unique such that $v_8v_7=v_5v_1$ and $fv_8=1_{k_1}$. The diagram
\[ \xygraph{!{0;(4,0):(0,.5)::}
{fa}="p0" [ur] {fa_1=gb_1}="p1" [dr] {gb}="p2" [dl] {fi_4=gj_3}="p3" [u] {fi_2=gj_1}="p4"
"p0":"p1"^-{(fv_1,fh_1)}:"p2"^-{(gv_2,gh_2)}:@{<-}"p3"^-{(g1_{j_3},g(h_2h_6))}:@{<-}"p0"^-{(fv_7,f1_{i_4})}
"p4" (:@{<-}"p0"_-{(f(v_5v_1),fh_5)}, :@{<-}"p1"|(.6){(fv_4,f1_{i_2})=(gv_2,g1_{j_1})}, :"p2"^-{(g1_{j_1},gh_2)}, :@{<-}"p3"|(.6){(fv_8,fh_4)=(g1_{j_3},gh_6)})
"p4" ([u(.4)l(.3)] :@{=>}[d(.15)]^-{f(\id)}, [u(.4)r(.3)] :@{=>}[d(.15)]^-{g(\id)}, [d(.4)l(.3)] :@{=>}[u(.15)]_-{f(\id)}, [d(.4)r(.3)] :@{=>}[u(.15)]_-{g(\id)})} \]
exhibits the object $((v_2,h_2),(v_1,h_1))$ of $\ca F$ as being in the same connected component of its fibre as an object of the required form.
\end{proof}
\begin{proof}
(\emph{of Theorem \ref{thm:hard-exactness}}).
Our task is to show that for $a \in A$ and $b \in B$, the functor (\ref{eq:corner-indicator}) satisfies the conditions described in Remark \ref{rem:inverted-by-pi0-explained}. Recall that in terms of the double category $C$, a morphism $fa \to gb$ in $\Cnr(C)$ is a pair $(v,h)$, where $v:fa \to c$ is a vertical arrow, and $h:c \to gb$ is a horizontal arrow. Since $f_0$ is an opfibration there is a vertical arrow $u:a \to a_1$ in $A$ such that $fu = v$. Since $g$ is a discrete fibration there is a (unique) horizontal arrow $k:b_2 \to b$ such that $gk = h$. The functor (\ref{eq:corner-indicator}) sends $((1_{b_2},k),(u,1_{a_1}))$ to $(v,h)$, and so we have verified condition(\ref{pi0-surjective}) of Remark \ref{rem:inverted-by-pi0-explained}. In fact in this case the functor (\ref{eq:corner-indicator}) itself is surjective on objects.

It remains to verify condition (\ref{pi0-injective-easy}) of Remark \ref{rem:inverted-by-pi0-explained}. In light of Lemma \ref{lem:C-fibres-for-hard-exactness} it suffices to verify that given 
\[ \xygraph{*=(0,1.2)!(0,-1.2){\xybox{\xygraph{{x_1}="q0" [r(.35)] {=} [r(2.25)u(.1)]
{\xybox{\xygraph{!{0;(1.7,0):(0,.1)::} {fa}="p0" [r] {fa_1=gb_1}="p1" [r] {gb}="p2" "p0":"p1"^-{(fv_1,f1_{a_1})}:"p2"^-{(g1_{b_1},gh_1)}}}}*\frm{-}
"q0" [r(6)] {x_2} [r(.35)] {=} [r(2.25)u(.1)]
{\xybox{\xygraph{!{0;(1.7,0):(0,.1)::} {fa}="p0" [r] {fa_2=gb_2}="p1" [r] {gb}="p2" "p0":"p1"^-{(fv_2,f1_{a_2})}:"p2"^-{(g1_{b_2},gh_2)}}}}*\frm{-}}}}} \]
in $\ca F$ where $v_1$ and $v_2$ are $f_0$-opcartesian, and
\[ \xygraph{!{0;(2,0):(0,.25)::}
{fa}="p0" [ur] {fa_1=gb_1}="p1" [dr] {gb}="p2" [dl] {fa_2=gb_2}="p3" "p0":"p1"^-{(fv_1,f1_{a_1})}:"p2"^-{(g1_{b_1},gh_1)}:@{<-}"p3"^-{(g1_{b_2},gh_2)}:@{<-}"p0"^-{(fv_2,f1_{a_2})} "p0" [u(.3)r(.9)] :@{=>}[d(.6)]^-{(\phi_1,\phi_2)}} \]
in $\Cnr(C)$, then $x_1$ and $x_2$ are in the same connected component of $\ca F$.

In double categorical terms we have $v_1$, $v_2$, $h_1$, $h_2$, $\phi_1$ and $\phi_2$ as in
\[ \xygraph{{a}="p0" :[d] {a_1}_-{v_1} "p0" [r] {a}="p1" :[d] {a_2}_-{v_2} "p1" [r] {b_1} :[r] {b}^-{h_1} [dl] {b_2} :[r] {b}^-{h_2} [ur] {gb_1}="p2" :[r] {gb}^-{gh_1} :[d] {gb}="p3"^-{g1_b} :@{<-}[l] {gb_2}^-{gh_2} :@{<-}"p2"^-{\phi_1}:@{}"p3"|-*{\phi_2} [ur] {b_1}="p4" :[r] {b}^-{h_1} :[d] {b}="p5"^-{1_b} :@{<-}[l] {b_2}^-{h_2} :@{<-}"p4"^-{\phi_3}:@{}"p5"|-*{\phi_4}} \]
such that $\phi_1f(v_1) = fv_2$. Since $g$ is a discrete fibration there is a unique square $\phi_4$ as on the right in the previous display, and by the uniqueness of lifts of horizontal arrows, the source and target horizontal arrows of $\phi_4$ must be $h_1$ and $h_2$ respectively as indicated. Since $\phi_1f(v_1)=fv_2$ and $v_1$ is $f_0$-opcartesian, there is a unique vertical arrow $v_3:a_1 \to a_2$ such that $fv_3=\phi_1$ and $v_3v_1=v_2$. Using all this information, the diagram
\[ \xygraph{!{0;(4,0):(0,.18)::} {fa}="p0" [ur] {fa_1=gb_1}="p1" [dr] {gb}="p2" [dl] {fa_2=gb_2}="p3" "p0":"p1"^-{(fv_1,f1_{a_1})}:"p2"^-{(g1_{b_1},gh_1)}:@{<-}"p3"^-{(g1_{b_2},gh_2)}:@{<-}"p0"^-{(fv_2,f1_{a_2})} "p1":"p3"|-{(fv_3,f1_{a_2})=(g\phi_3,g1_{b_2})}
"p0" [r(.6)u(.2)] :@{=>}[d(.4)]_{(f(\id),f(\id))}
"p2" [l(.6)u(.2)] :@{=>}[d(.4)]^{(g\phi_3,g\phi_4)}} \]
exhibits an arrow in $\ca F$ between $x_1$ and $x_2$.
\end{proof}

\subsection{$T^G$ via codescent}
\label{ssec:int-alg-class-codesc}
Given a 2-monad $(\ca K,T)$ and a strict $T$-algebra $(X,x)$, as in Section 4.3 of \cite{Weber-CodescCrIntCat}, we denote by $\ca R_TX$ the standard simplicial $T$-algebra whose 2-truncation is
\[ \xygraph{!{0;(2,0):(0,1)::} {T^3X}="p0" [r] {T^2X}="p1" [r] {TX.}="p2"
"p2":"p1"|-{T\eta^T_X} "p1":@<1.5ex>"p2"^-{\mu^T_X} "p1":@<-1.5ex>"p2"_-{Tx} "p0":@<1.5ex>"p1"^-{\mu^T_{TX}} "p0":"p1"|-{T\mu^T_X} "p0":@<-1.5ex>"p1"_-{T^2x}} \]
From the data of the adjunction $F : (\ca L,S) \to (\ca K,T)$ of 2-monads one has, for each strict $S$-algebra $(X,x)$, a simplicial object $\ca R_FX : \Delta^{\op} \to \Algs T$ defined as in Construction 4.3.1 \cite{Weber-CodescCrIntCat} by the equations on the left for $n \in \N$, and the isomorphisms on the right
\[ \begin{array}{lccr} {(\ca R_FX)_n = TF_!S^nX} &&& {\Algs S(\ca R_SX,\overline{F}Y) \iso \Algs T(\ca R_FX,Y)} \end{array} \]
which are 2-natural in $X$ and $Y$. From this abstract definition one may deduce, as in
Lemma 4.3.2 of \cite{Weber-CodescCrIntCat}, that the face and degeneracy maps of $\ca R_FX$ are given by the formulae
\[ \begin{array}{lccr}
{d_i^{n+1} = \left\{\begin{array}{lll}
{TF_!S^nx} && {i = 0} \\
{TF_!S^{n-i}\mu^S_{S^{i-1}X}} && {1 \leq i \leq n} \\
{\mu^T_{F_!S^nX}T(F^c_{S^nX})} && {i = n+1}
\end{array}\right.}
&&& {s_i^{n+1} = TF_!S^{n-i}\eta^S_{S^iX}.}
\end{array} \]
In particular $\ca R_T = \ca R_{1_T}$. Proposition 4.3.3 of \cite{Weber-CodescCrIntCat} says that when $\Algs T$ has codescent objects, the left adjoint $(-)^{\dagger}_F$ to $J_F$ exists and is given on objects by the formula on the left
\begin{equation}\label{eq:dagger-F-codesc-formula}
\begin{array}{lccr}
{X^{\dagger}_F = \tn{CoDesc}(\ca R_FX)} &&& {T^S = \tn{CoDesc}(\ca R_F1)}
\end{array}
\end{equation}
which in particular gives the formula on the right when $\ca L$ has a terminal object $1$.

We now extend this to exhibit $T^G$ as the result of applying
\[ \tn{CoDesc} : [\Delta^{\op},\Algs T] \longrightarrow \Algs T \]
to a morphism of simplicial $T$-algebras. As recalled in Remark \ref{rem:T^G-via-units}, the unit of the adjunction $(-)^{\dagger}_F \ladj J_F$ is denoted as $g^F$, and in the context of a morphism $G : H \to F$ of internalisable adjunctions, $g^F_1$ and $g^H_1$ are the universal lax morphisms
\[ \begin{array}{lccr} {g^S_T : 1 \longrightarrow \overline{F}T^S} &&& {g^R_T : 1 \longrightarrow \overline{H}T^R.} \end{array} \]
Given $(X,x) \in \Algs S$, by the universal property of the unit $g^H$, one has a unique strict $T$-morphism $G^{\dagger}_X$ making
\[ \xygraph{!{0;(2,0):(0,.5)::} {\overline{G}X}="p0" [r] {\overline{H}(\overline{G}X)^{\dagger}_H}="p1" [d] {\overline{H}X^{\dagger}_F}="p2" [l] {\overline{G}\overline{F}X^{\dagger}_F}="p3" "p0":"p1"^-{g^H_{\overline{G}X}}:"p2"^-{\overline{H}G^{\dagger}_X}:@{<-}"p3"^-{\overline{\gamma}_{X^{\dagger}_F}}:@{<-}"p0"^-{\overline{G}g^F_X}} \]
commute in $\Algl R$. When $X = 1$ one has $G^{\dagger}_X = T^G$ by Remark \ref{rem:T^G-via-units}. We shall now explain how $G^{\dagger}_X$ may be obtained as the result of applying $\tn{CoDesc}$ to a morphism of simplicial $T$-algebras. This morphism of simplicial $T$-algebras is provided by
\begin{const}\label{const:simplicial-morphism-underlying-TG}
Let $H : (\ca M,R) \to (\ca K,T)$ and $F : (\ca L,S) \to (\ca K,T)$ be internalisable adjunctions and $G:H \to F$ be a morphism thereof. Then we define
\[ (\overline{\ca R}_G)_X : \ca R_H\overline{G}X \longrightarrow \ca R_FX \]
2-naturally in $X \in \Algs S$, as unique such that
\[ \xygraph{!{0;(5,0):(0,.2)::} {\Algs T(\ca R_FX,\overline{F}Y)}="p0" [r(2)] {\Algs T(\ca R_H\overline{G}X,Y)}="p1" [d] {\Algs R(\ca R_R\overline{G}X,\overline{H}Y)}="p2" [l] {\Algs R(\overline{G}\ca R_SX,\overline{G}\overline{F}Y)}="p3" [l] {\Algs S(\ca R_SX,\overline{F}Y)}="p4"
"p0":"p1"^-{\Algs T(\ca R_FX,Y)}:"p2"^-{}:@{<-}"p3"^-{\Algs R(G^l_X,\overline{\gamma}_Y)}:@{<-}"p4"^-{\overline{G}_{\ca R_SX,\overline{F}Y}}:@{<-}"p0"^-{}} \]
commutes for all $Y \in \Algs T$, in which the vertical arrows are the isomorphisms recalled above.
\end{const}
Next we give an explicit description of the components of $(\overline{\ca R}_G)_{X,n}$. First we require some preliminary notation. Given an adjunction of 2-monads $F : (\ca L,S) \to (\ca K,T)$ one has 2-natural transformations $F^c : F_!S \to TF_!$ and $F^l : SF^* \to F^*T$ making $(F_!,F^c)$ and $(F^*,F^l)$ colax and lax monad morphisms respectively. In fact for each $n \in \N$ one has $F^c_n : F_!S^n \to T^nF_!$ and $F^l_n : S^nF^* \to F^*T^n$ defined inductively by the formulae
\[ \begin{array}{ccccccc} {F^c_0 = \id_{F_!}} && {F^c_{n+1} = (T^nF^c)(F^c_nS)} && {F^l_0 = \id_{F^*}} && {F^l_{n+1} = (F^lT^n)(SF^l_n)} \end{array} \]
so that in particular $F^c_1 = F^c$ and $F^l_n = F^l$.
\begin{lem}\label{lem:explicit-RG}
The component $(\overline{\ca R}_G)_{X,n}$ defined by Construction \ref{const:simplicial-morphism-underlying-TG} is given by the composite
\[ {TH_!R^nG^*X} \xrightarrow{TF_!((G^c_n)_{G^*X})} {TF_!S^nG_!G^*X} \xrightarrow{TF_!S^n\varepsilon^G_X} {TF_!S^nX.} \]
\end{lem}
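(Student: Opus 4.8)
The plan is to unwind the defining property of $(\overline{\ca R}_G)_{X,n}$ from Construction \ref{const:simplicial-morphism-underlying-TG} using the explicit descriptions of all the adjunction-of-2-monads isomorphisms involved. By Proposition 4.3.3 of \cite{Weber-CodescCrIntCat} (reproduced around equation (\ref{eq:dagger-F-codesc-formula})), the isomorphism $\Algs S(\ca R_SX,\overline{F}Y) \iso \Algs T(\ca R_FX,Y)$ is adjoint to the identification $(\ca R_FX)_n = TF_!S^nX$, and under it a map $\alpha : \ca R_SX \to \overline{F}Y$ of simplicial $S$-algebras corresponds to the map whose degree-$n$ component is $TF_!(S^n\alpha_X)$ composed with the coherence data of $F^c$; more precisely the $n$-component is obtained by applying the free $T$-algebra functor $TF_!$ to $\alpha$ in degree $n$ and then using $F^c_n$ to move $F_!S^n$ past to $T^nF_!$ so as to land in $Y$ via $Y$'s $T$-algebra structure. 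First I would write out, for each of the four legs of the commuting square in Construction \ref{const:simplicial-morphism-underlying-TG}, exactly what it does in degree $n$: the two vertical isomorphisms are the $\ca R_{(-)}$-formulas, the top arrow $\Algs T(\ca R_FX,Y)$ is precomposition with $(\overline{\ca R}_G)_X$ (the thing we want), and the composite along the bottom is built from $\overline{G}$ on hom-2-categories, $G^l_X$, and $\overline{\gamma}_Y$.

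Next I would use the Yoneda-style argument: since the square must commute for all $Y \in \Algs T$, taking $Y$ to be a free $T$-algebra and chasing the identity (or rather an appropriate universal element) pins down $(\overline{\ca R}_G)_{X,n}$ uniquely. Concretely, $(\ca R_FX)_n = TF_!S^nX$ is the $n$-th component of $\ca R_FX$, and the degree-$n$ part of the composite along the bottom of the square sends a $T$-algebra map out of $\ca R_FX$ to one out of $\ca R_H\overline{G}X$ by precomposing with a specific $T$-algebra map in degree $n$; that map, read off from the formulas for $\overline{G}$ (which at the level of underlying objects is $G^*$), for $G^l$, and for $\overline{\gamma}$ (which by Remark \ref{rem:monad-adjunction-over-base} is the canonical iso $G^*F^* \iso H^*$), is exactly the composite claimed. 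The main computational input is the inductive description of $G^c_n$ recalled just before the lemma statement, $G^c_{n+1} = (T^nG^c)(G^c_nS)$ — wait, more precisely here we need $G^c_n : G_!R^n \to S^nG_!$ since $G : (\ca M,R) \to (\ca L,S)$, so $G^c_n$ takes values in $S^nG_!$ — together with the mate relationship between $G^c$ and $G^l$ under $G_! \ladj G^*$ and the triangle identities for that adjunction, which are what convert the occurrence of $G^l_n$ in the bottom composite into $TF_!((G^c_n)_{G^*X})$ followed by the counit $TF_!S^n\varepsilon^G_X$.

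The proof structure I envisage is: (i) expand the degree-$n$ component of each side of the defining square using the formulas for $\ca R_F$, $\ca R_H$ and the hom-isomorphisms of Proposition 4.3.3; (ii) reduce, by the Yoneda lemma applied in $\Algs T$ with $Y$ ranging over free algebras $TF_!S^nX$-type objects (or simply by naturality and the uniqueness clause of Construction \ref{const:simplicial-morphism-underlying-TG}), to an equality of $T$-algebra morphisms with domain $TH_!R^nG^*X$; (iii) identify the bottom composite in that degree as the mate, under $F_!G_! = H_! \ladj G^*F^* \iso H^*$, of a morphism built from $G^l$, and then rewrite it via the standard mate calculus (using that $G^l$ is the mate of $G^c$) as $TF_!((G^c_n)_{G^*X})$ followed by $TF_!S^n\varepsilon^G_X$; (iv) conclude by uniqueness. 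The hard part will be step (iii): keeping the bookkeeping of the several 2-natural transformations ($F^c$, $G^c$, $H^c = F^cG_! \comp F_!G^c$, their mates, $\overline{\gamma}$, and the counit $\varepsilon^G$) straight, and verifying that the coherence conditions packaged into ``$G$ is an adjunction of 2-monads over $T$'' (Definition \ref{def:monad-adjunction-over-base}, i.e.\ $(F_!,F^c)(G_!,G^c) = (H_!,H^c)$) make the two expressions literally equal rather than merely isomorphic. Everything else is a routine, if lengthy, diagram chase that I would relegate to ``straightforward verification left to the reader''.
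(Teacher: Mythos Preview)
Your approach is essentially the paper's: set $Y = TF_!S^nX$, trace the identity $1_{TF_!S^nX}$ around the defining square using the explicit formulas for $\varphi^F_{X,Y,n}$ and its inverse from the proof of Lemma~4.3.2 of \cite{Weber-CodescCrIntCat}, and then simplify via mate calculus to convert the occurrence of $G^l_{n+1}$ into $G^c_n$ together with $\varepsilon^G$. The paper executes your step~(iii) via string diagrams, and the coherences actually invoked are the definition of $\gamma$, the compatibility of $F^l$ and $G^l$ with the monad units, the mateship of $G^l_n$ and $G^c_n$, a triangle identity for $F_! \dashv F^*$, and a unit law for $T$ --- the condition $(F_!,F^c)(G_!,G^c)=(H_!,H^c)$ enters only through $H_! = F_!G_!$, not through $H^c$ itself.
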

\begin{proof}
We denote by
\[ \varphi_{X,Y,n}^F : \Algs S(S^{n+1}X,\overline{F}Y) \longrightarrow \Algs T(TF_!S^nX,Y) \]
the components of the defining isomorphism of $\ca R_F$. In the proof of Lemma 4.3.2 of \cite{Weber-CodescCrIntCat}, this was denoted as $\varphi_{X,Y,n}$ and both $\varphi_{X,Y,n}^F$ and its inverse were described explicitly. We shall use these details here. By definition an explicit description of $(\overline{\ca R}_G)_{X,n}$ is obtained by tracing through the effect of the composite
\[ \varphi^H_{G^*X,Y,n} \comp \Algs R(G^l_{n+1,X},\overline{\gamma}_Y) \comp \overline{G}_{S^{n+1}X,\overline{F}Y} \comp (\varphi^F_{X,Y,n})^{-1} \]
on $1_{TF_!S^nX}$ (in the case $Y = TF_!S^nX$). By the formula for $(\varphi^F_{X,Y,n})^{-1}$, the morphism $(\varphi^F_{X,Y,n})^{-1}(1_{TF_!S^nX})$ is the component at $X$ of the composite
\[ (F^*\mu^TF_!S^n)(F^lTF_!S^n)(SF^*\eta^TF_!S^n)(S\eta^FS^n) \]
which in terms of the string diagrams{\footnotemark{\footnotetext{The string diagrams in this work go from top to bottom.}}} of \cite{JoyalStreet-GeometryTensorCalculus} can be written as on the left of
\[ \xygraph{!{0;(2,0):(0,1)::} 
{\xybox{\xygraph{!{0;(.3,0):(0,2.5)::} 
{\scriptstyle{\eta^F}} *\xycircle<6.5pt,6.5pt>{-}="p0" [d]
{\scriptstyle{\eta^T}} *\xycircle<6.5pt,6.5pt>{-}="p1" [l(2)]
{\scriptstyle{F^l}} *\xycircle<6.5pt,6.5pt>{-}="p2" [rd]
{\scriptstyle{\mu^T}} *\xycircle<7pt,6.5pt>{-}="p3"
"p0" (-"p2",-@/^{1pc}/[d(3.03)r] {F_!})
"p1"-"p3"
"p2" (-[u(2)l] {S},-"p3",-[d(2)l] {F^*})
"p3"-[d] {T}
"p0" [r(3)u] {S^n} -[d(4)] {S^n}}}}
[r] {=} [r]
{\xybox{\xygraph{!{0;(.3,0):(0,2.5)::} 
{\scriptstyle{\eta^F}} *\xycircle<6.5pt,6.5pt>{-}="p0" [l(2)d]
{\scriptstyle{F^l}} *\xycircle<6.5pt,6.5pt>{-}="p2"
"p0" (-"p2",-[d(3.03)r] {F_!})
"p2" (-[u(2)l] {S},-[d(2)r] {T},-[d(2)l] {F^*})
"p0" [r(3)u] {S^n} -[d(4)] {S^n}}}}} \]
and the equation follows by one of $T$'s unit laws. Applying
\[ \varphi^H_{G^*X,Y,n} \comp \Algs R(G^l_{n+1,X},\overline{\gamma}_Y) \comp \overline{G}_{S^{n+1}X,\overline{F}Y} \]
to this component, given that $G^l_{n+1} = (G^lR^n)(RG^l_n)$, reveals $(\overline{\ca R}_G)_{X,n}$ as the component at $X$ of the first term of
\[ \xygraph{!{0;(1.4,0):} 
{\xybox{\xygraph{!{0;(.25,0):(0,2.5)::} 
{\scriptstyle{\eta^R}} *\xycircle<6.5pt,6.5pt>{-}="p0" [r(2)]
{\scriptstyle{G^l_n}} *\xycircle<7pt,7pt>{-}="p1" [dl]
{\scriptstyle{G^l}} *\xycircle<6.5pt,6.5pt>{-}="p2" [r(2)]
{\scriptstyle{\eta^F}} *\xycircle<6.5pt,6.5pt>{-}="p3" [dl]
{\scriptstyle{F^l}} *\xycircle<6.5pt,6.5pt>{-}="p4" [dl]
{\scriptstyle{\gamma}} *\xycircle<6.5pt,6.5pt>{-}="p5" [dl]
{\scriptstyle{\varepsilon^H}} *\xycircle<7pt,7pt>{-}="p6" [d]
{\scriptstyle{\mu^T}} *\xycircle<7pt,6.5pt>{-}="p7"
"p0"-"p2"
"p1" (-[ul] {R^n},-[ur] {G^*},-"p2",-@`{"p1"+(2,0),"p3"+(1,0)}[r(3)d(6)] {S^n})
"p2" (-"p4",-@/_{.5pc}/"p5")
"p3" (-"p4",-@/^{.5pc}/[d(5.03)l(.5)] {F_!})
"p4" (-"p5",-@`{"p4"+(2,-1)}"p7")
"p5"-"p6"-@/^{.5pc}/[u(4.98)l] {H_!}
"p7" (-@/^{1pc}/[u(6)l(3)] {T},-[d] {T})}}}
[r] {=} [r]
{\xybox{\xygraph{!{0;(.25,0):(0,2.5)::} 
{\scriptstyle{G^l_n}} *\xycircle<7pt,7pt>{-}="p1" [dl]
{\scriptstyle{\eta^S}} *\xycircle<6.5pt,6.5pt>{-}="p2" [r(2)]
{\scriptstyle{\eta^F}} *\xycircle<6.5pt,6.5pt>{-}="p3" [dl]
{\scriptstyle{F^l}} *\xycircle<6.5pt,6.5pt>{-}="p4" [d(2)l(2)]
{\scriptstyle{\varepsilon^F}} *\xycircle<6.5pt,6.5pt>{-}="p6" [d]
{\scriptstyle{\mu^T}} *\xycircle<7pt,6.5pt>{-}="p7"
"p2" [u(.5)l] {\scriptstyle{\varepsilon^G}} *\xycircle<6.5pt,6.5pt>{-}="p8"
"p1" (-[ul] {R^n},-[ur] {G^*},-"p8",-@`{"p1"+(2,0),"p3"+(1,0)}[r(3)d(6)] {S^n})
"p2"-"p4"
"p3" (-"p4",-@/^{.5pc}/[d(5.03)l(.5)] {F_!})
"p4" (-"p6",-@`{"p4"+(2,-1)}"p7")
"p6"-@/^{.5pc}/[u(4.98)l(1.75)] {F_!}
"p7" (-@/^{1pc}/[u(6)l(3)] {T},-[d] {T})
"p8"-[u(1.48)l(.5)] {G_!}}}}
[r] {=} [r]
{\xybox{\xygraph{!{0;(.25,0):(0,2.5)::} 
{\scriptstyle{G^c_n}} *\xycircle<6.5pt,6.5pt>{-}="p0" [dr]
{\scriptstyle{\varepsilon^G}} *\xycircle<6.5pt,6.5pt>{-}="p1" [l(2)]
{\scriptstyle{\eta^F}} *\xycircle<6.5pt,6.5pt>{-}="p2" [dl]
{\scriptstyle{\varepsilon^F}} *\xycircle<6.5pt,6.5pt>{-}="p3" [dr]
{\scriptstyle{\eta^T}} *\xycircle<6.5pt,6.5pt>{-}="p4" [dl]
{\scriptstyle{\mu^T}} *\xycircle<6.5pt,6.5pt>{-}="p5"
"p0" (-[u(.98)l] {G_!},-[r(.75)u] {R^n},-"p1"-[u(2)r(1.5)] {G^*},
-@`{"p2"+(-2,1),"p1"+(3,-1.5)}[d(5)r(2)] {S^n})
"p2" (-"p3"-[u(2.98)l(.5)] {F_!},-@/^{.5pc}/[d(4.04)r] {F_!})
"p4"-"p5" (-[d] {T},-@/^{.5pc}/[u(5)l(2)] {T})
}}}
[r] {=} [r]
{\xybox{\xygraph{!{0;(.35,0):(0,2.5)::}
{\scriptstyle{G^c_n}} *\xycircle<6.5pt,6.5pt>{-}="p0" [dr]
{\scriptstyle{\varepsilon^G}} *\xycircle<6.5pt,6.5pt>{-}="p1"
"p0" (-[u(.97)l] {G_!}, -[r(.5)u] {R^n}, -"p1"-[u(2)r] {G^*}, -[d(2)l] {S^n}, [l(2)u(.97)] {F_!} -[d(3)] {F_!}, [l(3)u] {T} -[d(3)] {T})}}}} \]
and the above computation follows by the definition of $\gamma$, the compatibility of $F^l$ and $G^l$ with units, the mateship of $G^l_n$ and $G^c_n$, a triangle equation of $F_! \ladj F^*$, and a unit law of $T$. The component at $X$ of the last term in this computation is the composite of the statement.
\end{proof}
Having identified the simplicial morphism $(\ca R_G)_X$ in Construction \ref{const:simplicial-morphism-underlying-TG} and given an explicit description of its components in Lemma \ref{lem:explicit-RG}, we come to the main result of this section.
\begin{prop}\label{prop:internalisation-as-codescent}
Suppose that $H : (\ca M,R) \to (\ca K,T)$ and $F : (\ca L,S) \to (\ca K,T)$ are internalisable adjunctions of 2-monads, and that $G : H \to F$ is a morphism between them. Then
\[ G^{\dagger}_X = \tn{\CoDesc}((\overline {\ca R}_G)_X).  \]
\end{prop}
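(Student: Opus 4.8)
The plan is to reduce the identity $G^{\dagger}_X = \CoDesc((\overline{\ca R}_G)_X)$ to a purely formal verification once the two sides have been pinned down by their respective universal properties. The right-hand side is, by Theorem \ref{thm:codesc-crossed-int-cat}'s ambient setting and the formula (\ref{eq:dagger-F-codesc-formula}), a strict $T$-algebra morphism $\CoDesc(\ca R_H\overline{G}X) \to \CoDesc(\ca R_FX)$, i.e.\ a morphism $(\overline{G}X)^{\dagger}_H \to X^{\dagger}_F$; the left-hand side $G^{\dagger}_X$ is the strict $T$-algebra morphism $(\overline{G}X)^{\dagger}_H \to X^{\dagger}_F$ uniquely characterised by the commuting square in $\Algl R$ involving the units $g^H_{\overline{G}X}$, $g^F_X$, the coherence $\overline{\gamma}_{X^{\dagger}_F}$ and $\overline{G}g^F_X$. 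So the whole proof amounts to checking that $\CoDesc((\overline{\ca R}_G)_X)$ satisfies that same defining equation.

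First I would recall, as in Proposition 4.3.3 of \cite{Weber-CodescCrIntCat} and the surrounding discussion reproduced in Section \ref{ssec:int-alg-class-codesc}, how the unit $g^F_X : \overline{F}X \to \overline{F}X^{\dagger}_F$ is built from the codescent cocone on $\ca R_FX$: its underlying lax $S$-morphism is the composite of the coherence data for $\overline{F}$ applied to the universal cocone $(q_0^F,q_1^F)$ on $\ca R_FX = \CoDesc(\ca R_FX)$. The key naturality statement to isolate is that $\CoDesc$, being a weighted colimit, takes the simplicial morphism $(\overline{\ca R}_G)_X : \ca R_H\overline{G}X \to \ca R_FX$ to a $T$-algebra morphism compatible with the universal cocones; that is, $\CoDesc((\overline{\ca R}_G)_X) \comp q_0^{H,\overline{G}X} = q_0^{F,X} \comp (\overline{\ca R}_G)_{X,0}$ and similarly for the $q_1$'s. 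Then I would feed this through the liftings $\overline{H}$, $\overline{G}$ and the coherence $\overline{\gamma}$, and use exactly the compatibility that Construction \ref{const:simplicial-morphism-underlying-TG} builds in by definition — namely that $(\overline{\ca R}_G)_X$ is the unique simplicial morphism making the square involving $\Algs R(G^l_X,\overline{\gamma}_Y)$ and $\overline{G}_{\ca R_SX,\overline{F}Y}$ commute for all $Y$. Taking $Y = X^{\dagger}_F$ and evaluating at the element $1_{X^{\dagger}_F}$, or more precisely at the cocone exhibiting $X^{\dagger}_F$ as the codescent object, this commuting square unwinds precisely to the assertion that $g^H_{\overline{G}X}$ followed by $\overline{H}\CoDesc((\overline{\ca R}_G)_X)$ equals $\overline{\gamma}_{X^{\dagger}_F} \comp \overline{G}g^F_X$ in $\Algl R$.

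The bookkeeping-heavy step, and the main obstacle, is matching the explicit formula for $(\overline{\ca R}_G)_{X,n}$ from Lemma \ref{lem:explicit-RG} — the composite $TH_!R^nG^*X \xrightarrow{TF_!(G^c_n)} TF_!S^nG_!G^*X \xrightarrow{TF_!S^n\varepsilon^G} TF_!S^nX$ — against the explicit description of $g^F_X$ and $g^H_{\overline{G}X}$ as codescent cocones, so that the two composites in the square literally agree; here one must be careful that the isomorphisms $\Algs S(\ca R_SX,\overline{F}Y) \iso \Algs T(\ca R_FX,Y)$ and their $H$-analogues, together with $\overline{\gamma}$, are threaded consistently, essentially the same kind of string-diagram calculation that proves Lemma \ref{lem:explicit-RG}. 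Once that is done, uniqueness in the defining property of $G^{\dagger}_X$ (via the Yoneda-type argument with $g^H$'s universal property, exactly as in Remark \ref{rem:T^G-via-units}) gives $G^{\dagger}_X = \CoDesc((\overline{\ca R}_G)_X)$, and specialising to $X = 1$ recovers $T^G = \CoDesc((\overline{\ca R}_G)_1)$ as promised.
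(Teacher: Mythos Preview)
Your proposal is correct and follows essentially the same strategy as the paper: reduce to showing that $\CoDesc((\overline{\ca R}_G)_X)$ satisfies the defining square for $G^{\dagger}_X$ in $\Algl R$, then verify this using the explicit description of the units $g^F_X$, $g^H_{\overline{G}X}$ in terms of codescent cocones together with the explicit components of $(\overline{\ca R}_G)_X$ from Lemma~\ref{lem:explicit-RG}. Two small remarks: the unit is $g^F_X : X \to \overline{F}X^{\dagger}_F$, not $\overline{F}X \to \overline{F}X^{\dagger}_F$; and the paper makes your ``bookkeeping-heavy step'' concrete by reducing everything to a single identity about the canonical isomorphism $\overline{\gamma} : G^*F^* \iso H^*$, namely that $(H^*F_!\varepsilon^GG_!)(\eta^HG^*G_!)\eta^G$ and $(\overline{\gamma}H_!)(G^*\eta^FG_!)\eta^G$ both equal $\eta^H$, which is where the actual work lies.
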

\begin{rem}\label{rem:}
The case $X = 1$ of Proposition \ref{prop:internalisation-as-codescent} is our promised explicit description of $T^G : T^R \to T^S$. The diagram
\[ \xygraph{!{0;(3.5,0):(0,.5)::} {TH_!R^21}="pt0" [r] {TH_!R1}="pt1" [r] {TH_!1}="pt2"
"pt2":"pt1"|-{TH_!\eta^R_{1}} "pt1":@<1.5ex>"pt2"^-{\mu^T_{H_!1}T(H^c_{1})} "pt1":@<-1.5ex>"pt2"_-{TH_!(!)} "pt0":@<1.5ex>"pt1"^-{\mu^T_{H_!R1}T(H^c_{R1})} "pt0":"pt1"|-{TH_!\mu^R_{1}} "pt0":@<-1.5ex>"pt1"_-{TH_!R(!)}
"pt0" [d] {TF_!S^21}="pb0" [r] {TF_!S1}="pb1" [r] {TF_!1}="pb2"
"pb2":"pb1"|-{TF_!\eta^S_1} "pb1":@<1.5ex>"pb2"^-{\mu^T_{F_!1}T(F^c_1)} "pb1":@<-1.5ex>"pb2"_-{TF_!(!)} "pb0":@<1.5ex>"pb1"^-{\mu^T_{F_!S1}T(F^c_{S1})} "pb0":"pb1"|-{TF_!\mu^S_1} "pb0":@<-1.5ex>"pb1"_-{TF_!S(!)}
"pt0":"pb0"|-{TF_!(S^2(\varepsilon^G_1)(G^c_2)_{1})} "pt1":"pb1"|-{TF_!(S(\varepsilon^G_1)G^c_{1})} "pt2":"pb2"|-{TF_!(\varepsilon^G_1)}} \]
contains that part of $(\overline{\ca R}_G)_1$ which influences this explicit description of $T^G$.
\end{rem}
\begin{proof}
(\emph{of Proposition \ref{prop:internalisation-as-codescent}})
As recalled above the unit of the adjunction $(-)^{\dagger}_F \ladj J_F$ is denoted $g^F$, its components being lax morphisms between strict $T$-algebras. Suppose that for $(X,x) \in \Algs S$, that a codescent cocone for $\ca R_FX$
\[ \begin{array}{lccr} {q^F_{X,0} : TF_!X \to X^{\dagger}_F} &&& {q^F_{X,1} : q^F_{X,0}\mu^T_{F_!X}T(F^c_X) \to q^F_{X,0}TF_!(x)} \end{array} \]
is given. Then by Lemma 4.3.4 of \cite{Weber-CodescCrIntCat} the equations
\[ \begin{array}{lccr} {g^F_X = F^*(q^F_{X,0})F^*(\eta^T_{F_!X})\eta^F_X} &&& {\overline{g^F_X} = F^*(q^F_{X,1})F^*(\eta^T_{F_!SX})\eta^F_{SX}} \end{array} \]
describe the underlying arrow and lax morphism coherence 2-cell of
\[ g^F_X:X \longrightarrow \overline{F}X^{\dagger}_F \]
in terms of the given codescent data.

By the definition of $G^{\dagger}_X$ it suffices to show that
\[ \xygraph{!{0;(2,0):(0,.5)::} {\overline{G}X}="p0" [r] {\overline{H}(\overline{G}X)^{\dagger}_H}="p1" [d] {\overline{H}X^{\dagger}_F}="p2" [l] {\overline{G}\overline{F}X^{\dagger}_F}="p3" "p0":"p1"^-{g^H_{\overline{G}X}}:"p2"^-{\overline{H}\tn{\CoDesc}((\overline {\ca R}_G)_X)}:@{<-}"p3"^-{\overline{\gamma}_{X^{\dagger}_F}}:@{<-}"p0"^-{\overline{G}g^F_X}} \]
commutes in $\Algl R$. By definition the strict $T$-morphism $\tn{\CoDesc}((\overline {\ca R}_G)_X)$ is defined uniquely by
\[ \begin{array}{lccr} {\tn{\CoDesc}((\overline {\ca R}_G)_X)q^H_{\overline{G}X,0} = q^F_{X,0}TF_!(\varepsilon^G_X)} &&&
{\tn{\CoDesc}((\overline {\ca R}_G)_X)q^H_{\overline{G}X,1} = q^F_{X,1}TF_!(S(\varepsilon^G_X)G^c_{G^*X}).} \end{array} \]
Thus the lax $R$-algebra morphism $\overline{H}(\tn{CoDesc}(\overline{\ca R}_G)_X)g^H_{\overline{G}X}$ has, by Lemma 4.3.4 of \cite{Weber-CodescCrIntCat}, underlying $1$ and $2$-cell data given by 
\[ \begin{array}{lccr} {H^*(q^F_{X,0})H^*TF_!(\varepsilon^G_X)H^*(\eta^T_{H_!G^*X})\eta^H_{G^*X}}
&&&
{H^*(q^F_{X,1})H^*TF_!(S(\varepsilon^G_X)G^c_{G^*X})H^*(\eta^T_{H_!RG^*X})\eta^H_{G^*RX}.} \end{array} \]
The underlying $1$-cell of the lax $R$-morphism $\overline{\gamma}_{X^{\dagger}_F}\overline{G}(g^F_X)$ is the composite on the left hand side of
\[ \begin{array}{lcr} {\overline{\gamma}_{X^{\dagger}_F}G^*F^*(q^F_{X,0})G^*F^*(\eta^T_{F_!X})G^*(\eta^F_X)} & = &
{H^*(q^F_{X,0})H^*(\eta^T_{F_!X})\overline{\gamma}_{F_!X}G^*(\eta^F_X)} \end{array} \]
which because of the naturality of $\overline{\gamma}$ equals the expression on the right. Similarly the 2-cell data for $\overline{\gamma}_{X^{\dagger}_F}\overline{G}(g^F_X)$ is $H^*(q^F_{X,1})H^*(\eta^T_{F_!SX})\overline{\gamma}_{F_!SX}G^*(\eta^F_{SX})$.
To reconcile these two lax $R$-morphisms it suffices to show that the outside of
\[ \xygraph{!{0;(2,0):(0,.5)::} {G^*}="p0" [r] {G^*F^*F_!}="p1" [r] {H^*F_!}="p2" [d] {H^*TF_!}="p3" [l] {H^*TH_!G^*}="p4" [l] {H^*H_!G^*}="p5" "p0":"p1"^-{G^*\eta^F}:"p2"^-{\overline{\gamma}F_!}:"p3"^-{H^*\eta^TF_!}:@{<-}"p4"^-{H^*TF_!\varepsilon^G}:@{<-}"p5"^-{H^*\eta^TG^*}:@{<-}"p0"^-{\eta^HG^*} "p5":"p2"|-{H^*F_!\varepsilon^G}} \]
commutes, and we note that the bottom inner region of this diagram commutes by naturality. To establish commutativity of the top inner region, we note that by the universal property of $\eta^G$ it suffices to show
\begin{equation}\label{eq:last-eq-needed-for-codescent-lemma}
(H^*F_!\varepsilon^GG_!)(\eta^HG^*G_!)\eta^G = (\overline{\gamma}H_!)(G^*\eta^FG_!)\eta^G.
\end{equation}
Recall that the canonical isomorphism $\overline{\gamma}:G^*F^* \to H^*$ is the witness to the fact that both $G^*F^*$ and $H^*$ are right adjoints to $F_!G_! = H_!$. Thus one formula which determines this canonical isomorphism uniquely says that the right hand side of (\ref{eq:last-eq-needed-for-codescent-lemma}) equals $\eta^H$. The calculation
\[ {(H^*F_!\varepsilon^GG_!)(\eta^HG^*G_!)\eta^G} = {(H^*F_!\varepsilon^GG_!)(H^*H_!\eta^G)\eta^H} = {\eta^H} \]
in which the first step follows by naturality, and the second by a triangle equation for $G_! \ladj G^*$, establishes the same for the left hand side of (\ref{eq:last-eq-needed-for-codescent-lemma}).
\end{proof}

\subsection{Proof of Theorem \ref{thm:main}}
\label{ssec:alg-square-before-codescent}
Given an internalisable adjunction $F : (\ca L,S) \to (\ca K,T)$ of 2-monads as in Proposition \ref{prop:internalisation-as-codescent}, if $T$ preserves codescent objects, then by Corollary 4.3.6 of \cite{Weber-CodescCrIntCat} one can write the underlying object and $T$-algebra action of $T^S$ as
\[ \begin{array}{lccr} {T^S = \tn{CoDesc}(U^T\ca R_F1)} &&& {a^S = \tn{CoDesc}(\sigma_T\ca R_F1)} \end{array} \]
and similarly for $T^R$. Putting this together with Proposition \ref{prop:internalisation-as-codescent} gives
\begin{prop}\label{prop:int-algsquare-via-codescent}
Suppose that $H : (\ca M,R) \to (\ca K,T)$ and $F : (\ca L,S) \to (\ca K,T)$ are internalisable adjunctions of 2-monads, and that $G : H \to F$ is a morphism between them. Suppose moreover that $T$ preserves codescent objects. Then the algebra square for $T^G$ is obtained by applying $\tn{CoDesc}$ to the commutative square
\[ \xygraph{!{0;(3,0):(0,.3333)::} {TU^T\ca R_H1}="p0" [r] {TU^T\ca R_F1}="p1" [d] {U^T\ca R_F1}="p2" [l] {U^T\ca R_H1}="p3" "p0":"p1"^-{TU^T(\overline{\ca R}_G)_1}:"p2"^-{\sigma_T\ca R_F1}:@{<-}"p3"^-{U^T(\overline{\ca R}_G)_1}:@{<-}"p0"^-{\sigma_T\ca R_H1}} \]
of simplicial objects in $\ca K$.
\end{prop}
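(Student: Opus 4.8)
The plan is to assemble the statement from the pieces already proved earlier in the excerpt, so that the proof is essentially a matter of tracking which codescent object computes which ingredient. Recall that Proposition \ref{prop:internalisation-as-codescent} (in the case $X = 1$) tells us $T^G = \tn{CoDesc}((\overline{\ca R}_G)_1)$ as a morphism of simplicial $T$-algebras, and that $(\overline{\ca R}_G)_1 : \ca R_H 1 \to \ca R_F 1$ lives in $[\Delta^{\op},\Algs T]$. The algebra square for $T^G$ is, by Definition \ref{def:exact-T-morphism}, the square
\[ \xygraph{!{0;(1.5,0):(0,.6667)::} {TU^TT^R}="p0" [r] {TU^TT^S}="p1" [d] {U^TT^S}="p2" [l] {U^TT^R}="p3" "p0":"p1"^-{TU^T(T^G)}:"p2"^-{a^S}:@{<-}"p3"^-{U^T(T^G)}:@{<-}"p0"^-{a^R}} \]
in $\ca K$, where I write $a^R$, $a^S$ for the $T$-algebra actions of $T^R$, $T^S$. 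So the task reduces to identifying each corner, each arrow, and the commutativity of this square as the value of $\tn{CoDesc} : [\Delta^{\op},\ca K] \to \ca K$ on the displayed square of simplicial objects in $\ca K$.

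First I would invoke the hypothesis that $T$ preserves codescent objects, together with Corollary 4.3.6 of \cite{Weber-CodescCrIntCat}, to write $T^S = \tn{CoDesc}(U^T\ca R_F1)$ with $T$-algebra action $a^S = \tn{CoDesc}(\sigma_T\ca R_F1)$, where $\sigma_T$ is the relevant structure transformation, and similarly $T^R = \tn{CoDesc}(U^T\ca R_H1)$, $a^R = \tn{CoDesc}(\sigma_T\ca R_H1)$; this is exactly the sentence preceding the statement, so it may be used directly. Next, applying $U^T$ to the morphism of simplicial $T$-algebras $(\overline{\ca R}_G)_1$ gives a morphism $U^T(\overline{\ca R}_G)_1 : U^T\ca R_H1 \to U^T\ca R_F1$ of simplicial objects in $\ca K$, and since $\tn{CoDesc}$ is a functor on $[\Delta^{\op},\ca K]$ and $U^T$ commutes with codescent objects (as codescent objects in $\Algs T$ are computed in $\ca K$, by the general 2-dimensional monad theory recalled in Section \ref{ssec:alg-left-ext}), we get $U^T(T^G) = \tn{CoDesc}(U^T(\overline{\ca R}_G)_1)$. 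The same bookkeeping applied after whiskering with $T$ and with $\sigma_T$ identifies the remaining edge $TU^T(T^G) = \tn{CoDesc}(TU^T(\overline{\ca R}_G)_1)$ and the two actions, so all four edges and four vertices of the algebra square are in the image of $\tn{CoDesc}$ applied to the displayed square of simplicial objects.

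It then remains to check that the square of simplicial objects displayed in the statement genuinely commutes in $[\Delta^{\op},\ca K]$, i.e.\ that $\sigma_T\ca R_F1 \circ TU^T(\overline{\ca R}_G)_1 = U^T(\overline{\ca R}_G)_1 \circ \sigma_T\ca R_H1$ level\-wise. This is the one genuine verification in the proof: it amounts to the statement that $(\overline{\ca R}_G)_1$, which we produced in Construction \ref{const:simplicial-morphism-underlying-TG} as a morphism of simplicial $T$-algebras, has components that are strict $T$-algebra morphisms, which is precisely the naturality of the structure cells $\sigma_T$ with respect to $(\overline{\ca R}_G)_1$. Concretely, using the explicit formula for $(\overline{\ca R}_G)_{1,n}$ from Lemma \ref{lem:explicit-RG} — the composite $TF_!((G^c_n)_{G^*1})$ followed by $TF_!S^n(\varepsilon^G_1)$ — each component is a map of free $T$-algebras of the form $T(\text{something})$, hence automatically strict, so $\sigma_T$-naturality is immediate. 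The main (and only mild) obstacle is thus purely organizational: being careful that the ``$\sigma_T$'' appearing in $a^S = \tn{CoDesc}(\sigma_T\ca R_F1)$ and in the displayed square is the same structure transformation, and that $\tn{CoDesc}$ applied to a commuting square of simplicial objects yields the asserted commuting square of objects — both of which are formal consequences of functoriality of $\tn{CoDesc}$ and of the fact, already used throughout Section \ref{sec:exact-via-codescent}, that $T^G$'s algebra square is by definition the image under $U^T$ (with its $T$-structure) of the morphism $T^G$. Once this is in place the proof is complete, since Proposition \ref{prop:internalisation-as-codescent} identifies $T^G$ itself with $\tn{CoDesc}((\overline{\ca R}_G)_1)$ and the algebra square is obtained by forgetting down to $\ca K$.
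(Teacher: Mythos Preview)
Your proposal is correct and follows essentially the same approach as the paper, which in fact gives no separate proof but simply states that the result follows by ``putting together'' the preceding sentence (that $T^S = \tn{CoDesc}(U^T\ca R_F1)$ and $a^S = \tn{CoDesc}(\sigma_T\ca R_F1)$ via Corollary 4.3.6 of \cite{Weber-CodescCrIntCat}) with Proposition \ref{prop:internalisation-as-codescent}. You have correctly unpacked what this means, including the key observation that the components of $(\overline{\ca R}_G)_1$ are free $T$-algebra maps and hence the simplicial square commutes by naturality of $\mu^T$; the only quibble is that your cross-reference to Section \ref{ssec:alg-left-ext} for ``$U^T$ creates codescent objects'' is misplaced---that fact follows from the hypothesis that $T$ preserves codescent objects, as in \cite{Weber-CodescCrIntCat}.
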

Before proceeding to the proof of Theorem \ref{thm:main}, we point out that there is a more elementary situation in which the exactness of $T^G$ is guaranteed, but which does not require the developments of Sections \ref{ssec:codesc-crossed-dblcat}-\ref{ssec:exactness-of-internalisations}. In particular this case suffices for \cite{BataninBerger-HtyThyOfAlgOfPolyMnd}, and justifies Theorems 5.14 and 5.15 therein.

We denote by $\Cat_{\tn{pb}}$ the 2-category of categories with pullbacks, pullback preserving functors and cartesian natural transformations, and by $\Cat(-) : \Cat_{\tn{pb}} \to \TwoCat$ the 2-functor which sends any category $\ca E$ with pullbacks to the 2-category $\Cat(\ca E)$ of category objects in $\ca E$. Recall that adjunctions of monads and morphisms thereof can be defined in any 2-category. Suppose we are given adjunctions of monads in $\Cat_{\tn{pb}}$ and morphisms thereof as on the left in
\[ \xygraph{
{(\ca C,R')}="p0" [r(2)] {(\ca D,S')}="p1" [dl] {(\ca E,T')}="p2" "p0":"p1"^-{G'}:"p2"^-{F'}:@{<-}"p0"^-{H'}
"p1" [r(3)]
{(\ca M,R)}="p0" [r(2)] {(\ca L,S)}="p1" [dl] {(\ca K,T)}="p2" "p0":"p1"^-{G}:"p2"^-{F}:@{<-}"p0"^-{H}} \]
and we denote the result of applying $\Cat(-)$ to this as on the right. So in particular, the 2-monads $R$, $S$ and $T$ are $\Cat(R')$, $\Cat(S')$ and $\Cat(T')$ respectively.
\begin{thm}\label{thm:exact-internalisations-easy}
In the context just described $T^G:T^R \to T^S$ is exact.
\end{thm}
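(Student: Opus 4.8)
The plan is to deduce Theorem~\ref{thm:exact-internalisations-easy} from Proposition~\ref{prop:int-algsquare-via-codescent} together with the ``easy'' case of exactness coming from Proposition~\ref{prop:exact-pullbacks}(\ref{propcase:ex-pb}), thereby bypassing the machinery of crossed double categories entirely. The key observation is that the context of Theorem~\ref{thm:exact-internalisations-easy} is obtained by applying $\Cat(-)$ to data living in $\Cat_{\tn{pb}}$, and $\Cat(-)$ preserves the relevant pullbacks and limits. First I would note that the 2-monads $R$, $S$, $T$ here are $\Cat(R')$, $\Cat(S')$, $\Cat(T')$ and that these satisfy the conditions to be internalisable: each $\Cat(\ca E)$ for $\ca E$ with pullbacks has all limits and colimits needed, and the codescent objects required for $(-)^{\dagger}_F$ are computed pointwise in $\Cat(\ca E)$, since codescent is a sifted colimit and $\Cat(-)$ applied to a monad in $\Cat_{\tn{pb}}$ automatically preserves sifted colimits, being built from pullback-preserving data. (Alternatively one invokes directly that such monads have rank and preserve internal functors which are bijective on objects, exactly as in the proof of Proposition~\ref{prop:internalisable-poly-adjunctions}.)

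Next I would invoke Proposition~\ref{prop:int-algsquare-via-codescent}: provided $T = \Cat(T')$ preserves codescent objects --- which it does, again since codescent is a sifted colimit and $T'$, being a monad in $\Cat_{\tn{pb}}$, has the property that $\Cat(T')$ preserves sifted colimits --- the algebra square for $T^G$ is $\tn{CoDesc}$ applied to the commutative square of simplicial objects in $\ca K = \Cat(\ca E)$ displayed there, namely
\[ \xygraph{!{0;(3,0):(0,.3333)::} {TU^T\ca R_H1}="p0" [r] {TU^T\ca R_F1}="p1" [d] {U^T\ca R_F1}="p2" [l] {U^T\ca R_H1}="p3" "p0":"p1"^-{TU^T(\overline{\ca R}_G)_1}:"p2"^-{\sigma_T\ca R_F1}:@{<-}"p3"^-{U^T(\overline{\ca R}_G)_1}:@{<-}"p0"^-{\sigma_T\ca R_H1}} \]
Then I would argue that in each simplicial degree $n$ this square is a pullback in $\Cat(\ca E)$, and more precisely of the form to which Proposition~\ref{prop:exact-pullbacks}(\ref{propcase:ex-pb}) applies: using the explicit formula for $(\overline{\ca R}_G)_{1,n}$ from Lemma~\ref{lem:explicit-RG}, namely $TF_!S^n(\varepsilon^G_1) \comp TF_!((G^c_n)_{G^*1})$, one checks that the counit $\varepsilon^G$ and the coherence $G^c_n$ arise, on underlying objects, from a \emph{morphism of monads in $\Cat_{\tn{pb}}$}, so that the relevant naturality squares of $F^c$, $G^c$ are cartesian and the maps involved are pullback-stable (op)fibrations in the appropriate internal sense. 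The point is that $\sigma_T$, being the structure map of the free $T$-algebra, makes $Tf$ an opfibration-like morphism, and discrete-fibration hypotheses on the monad data transfer to each degree.

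The main obstacle --- and the step requiring the most care --- is verifying that the degreewise squares really do satisfy a hypothesis that makes them exact, that codescent of a levelwise-exact square of simplicial objects is exact, and that exactness is preserved under the passage through $\tn{CoDesc}$. For the last point I would want to show that $\tn{CoDesc} : [\Delta^{\op},\Algs T] \to \Algs T$ sends a square which is degreewise a pullback with the opfibration/fibration property to an exact square; this is morally the simplicial analogue of Theorem~\ref{thm:hard-exactness} but in the simpler ``strict'' (non-crossed) setting, where one can hope that the codescent object is computed more transparently --- indeed $U^T$ preserves it and $\sigma_T$ computes the action --- so that one reduces to a statement about colimits of exact squares in $\Cat(\ca E)$, ultimately in $\Cat$. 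I would handle this either by a direct argument that the colimit of the fibre categories $\ca F(\ldots)$ appearing in Lemma~\ref{lem:combinatorial-exactness-Cat-fact-version} remains connected when each term is, using the split opfibration structure to provide the required transition maps, or by citing that this is precisely the specialisation of Theorem~\ref{thm:hard-exactness} obtained by regarding the simplicial objects as (non-crossed) internal categories with trivial opcartesian structure --- whichever route keeps the argument self-contained as the paragraph preceding Theorem~\ref{thm:exact-internalisations-easy} promises. Once the degreewise exactness and its stability under codescent are in hand, the conclusion that $T^G$ is exact in the sense of Definition~\ref{def:exact-T-morphism} follows immediately from Proposition~\ref{prop:int-algsquare-via-codescent}.
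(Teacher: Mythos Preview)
Your proposal identifies the right ingredients --- Proposition~\ref{prop:int-algsquare-via-codescent} and Proposition~\ref{prop:exact-pullbacks}(\ref{propcase:ex-pb}) --- but misses the one observation that makes this theorem ``easy'' and thereby bypasses all questions about codescent preserving exactness. Since $R = \Cat(R')$, $S = \Cat(S')$, $T = \Cat(T')$ and the terminal object $1$ of $\Cat(\ca D)$ is a \emph{discrete} internal category, every component $(\ca R_F1)_n = TF_!S^n1$ of the simplicial objects in the square of Proposition~\ref{prop:int-algsquare-via-codescent} is a discrete object of $\Cat(\ca E)$, i.e.\ lies in the image of $\ca E \hookrightarrow \Cat(\ca E)$. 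Each such simplicial object, being already a category object in $\ca E$, therefore has as its codescent object in $\Cat(\ca E)$ simply \emph{itself}, now read as an object of $\Cat(\ca E)$. Taking codescent here is not a colimit requiring a stability argument; it is mere reinterpretation.

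Thus the algebra square for $T^G$ is literally the square of Proposition~\ref{prop:int-algsquare-via-codescent}, regarded as a square of internal functors in $\ca E$. One then checks directly, using only that $\mu^{T'}$ is a cartesian natural transformation (which is what it means for $T'$ to be a monad in $\Cat_{\tn{pb}}$), that this square is a pullback in $\Cat(\ca E)$ and that the internal functor $\sigma_T\ca R_F1$ is a discrete fibration, hence in particular a fibration in $\Cat(\ca E)$. Proposition~\ref{prop:exact-pullbacks}(\ref{propcase:ex-pb}) then applies immediately. Your entire final paragraph --- about levelwise exactness, its preservation under $\CoDesc$, and possibly specialising Theorem~\ref{thm:hard-exactness} --- addresses a difficulty that does not arise; invoking Theorem~\ref{thm:hard-exactness} would in any case defeat the stated purpose of this result, which is precisely to avoid Sections~\ref{ssec:codesc-crossed-dblcat}--\ref{ssec:exactness-of-internalisations}. (Incidentally, your claim that $\Cat(T')$ preserves sifted colimits is not justified and is not needed: $T$ trivially preserves \emph{these} codescent objects because they are of discrete simplicial objects.)
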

\begin{proof}
Since in this context the simplicial objects appearing in the square of Proposition \ref{prop:int-algsquare-via-codescent} are componentwise discrete category objects, and so taking codescent in this case is simply a matter of interpretting this as a square in $\ca K = \Cat(\ca E)$. Recall that an internal functor $f:X \to Y$ is a discrete fibration iff the square
\[ \xygraph{!{0;(1.5,0):(0,.6667)::} {X_1}="p0" [r] {X_0}="p1" [d] {Y_0}="p2" [l] {Y_1}="p3" "p0":"p1"^-{d_0}:"p2"^-{f_0}:@{<-}"p3"^-{d_0}:@{<-}"p0"^-{f_1}} \]
is a pullback. This is so for $\sigma_T\ca R_F1$ as an internal functor since $\mu^{T'}$ is cartesian natural, and so $\sigma_T\ca R_F1$ is a discrete fibration. Moreover the cartesianness of $\mu^{T'}$ also ensures that the square of Proposition \ref{prop:int-algsquare-via-codescent} is a pullback square in $\ca K$. Thus the result follows by Proposition \ref{prop:exact-pullbacks}.
\end{proof}
Recall in the context of Theorem \ref{thm:main}, one has a commutative triangle of polynomial adjunctions of 2-monads
\[ \xygraph{!{0;(1.5,0):(0,.6667)::} {(\Cat/I,R)}="p0" [r(2)] {(\Cat/J,S)}="p1" [dl] {(\Cat/K,T)}="p2" "p0":"p1"^-{G}:"p2"^-{F}:@{<-}"p0"^-{H}} \]
which gives, by Construction \ref{const:T^G}, the strict morphism of strict $T$-algebras $T^G : T^R \to T^S$. Moreover one assumes that $I$, $J$ and $K$ are discrete and $p_T$ is a discrete opfibration with finite fibres. Theorem \ref{thm:main} says that under these conditions $T^G$ is exact.
\begin{proof}
(\emph{of Theorem \ref{thm:main}}).
The square of Proposition \ref{prop:int-algsquare-via-codescent} lives in $[\Delta^{\op},\Cat/K]$, in the present context, and we denote this square as $\ca S$. The algebra square of $T^G$ is $\CoDesc(\ca S)$ by Proposition \ref{prop:int-algsquare-via-codescent}. Pointwise left Kan extensions, comma objects and codescent objects in $\Cat/K$ are formed fibrewise, and so by Theorem \ref{thm:hard-exactness}, it suffices to show
\begin{enumerate}
\item $\ca S$ is a pullback square in $\CrIntCat {\Cat/K}$,
\label{pfstep:S-pb-CrIntCat}
\item $\sigma_T\ca R_F1$ is a discrete fibration, and
\label{pfstep:dfib}
\item $U^T(\overline{\ca R}_G)_1$ is an objectwise opfibration.
\label{pfstep:objwise-opfib}
\end{enumerate}

(\ref{pfstep:S-pb-CrIntCat}): Since $p_T$ is a discrete opfibration with finite fibres and such functors are pullback stable, $p_S$ and $p_R$ are also discrete opfibrations with finite fibres. By Theorem 4.4.5 of \cite{Weber-PolynomialFunctors} $R$, $S$ and $T$ are opfamilial 2-monads. Since $H_!$ and $F_!$ are of the form $\Sigma_h$ and $\Sigma_f$, they are also opfamilial 2-functors. By Theorem 4.5.1 of \cite{Weber-PolynomialFunctors} $T$ preserves all sifted colimits, and thus in particular codescent objects. The object $G_!1$ of $\Cat/J$ is just $g : I \to J$, which is discrete as an object of $\Cat/J$ since $I$ is a discrete category. We will now see that as a consequence of these formal properties, $\ca S$ lives in $\CrIntCat {\Cat/K}$.

As recalled in Section \ref{ssec:exact-nat} opfamilial 2-functors preserve split opfibrations and morphisms thereof. By Proposition 4.4.1 of \cite{Weber-CodescCrIntCat} the simplicial objects appearing in the square of Proposition \ref{prop:int-algsquare-via-codescent} are category objects. The codescent-relevant part of $\sigma_T\ca R_F1$ is
\[ \xygraph{!{0;(3.5,0):(0,.5)::}
{T^2F_!S^21}="pt0" [r] {T^2F_!S1}="pt1" [r] {T^2F_!1}="pt2"
"pt2":"pt1"|-{T^2F_!\eta^S_1} "pt1":@<1.5ex>"pt2"^-{T\mu^T_{F_!1}T^2(F^c_1)} "pt1":@<-1.5ex>"pt2"_-{T^2F_!(!)} "pt0":@<1.5ex>"pt1"^-{T\mu^T_{F_!S1}T^2(F^c_{S1})} "pt0":"pt1"|-{T^2F_!\mu^S_1} "pt0":@<-1.5ex>"pt1"_-{T^2F_!S(!)}
"pt0" [d] {TF_!S^21}="pb0" [r] {TF_!S1}="pb1" [r] {TF_!1}="pb2"
"pb2":"pb1"|-{TF_!\eta^S_1} "pb1":@<1.5ex>"pb2"^-{\mu^T_{F_!1}T(F^c_1)} "pb1":@<-1.5ex>"pb2"_-{TF_!(!)} "pb0":@<1.5ex>"pb1"^-{\mu^T_{F_!S1}T(F^c_{S1})} "pb0":"pb1"|-{TF_!\mu^S_1} "pb0":@<-1.5ex>"pb1"_-{TF_!S(!)}
"pt0":"pb0"|-{\mu^T_{F_!S^21}} "pt1":"pb1"|-{\mu^T_{F_!S1}} "pt2":"pb2"|-{\mu^T_{F_!1}}} \]
and since every map into a discrete object is a split opfibration with chosen opcartesians exactly the identity 2-cells, $\eta^S_1$ and $\mu^S_1$ are morphisms of split opfibrations over $1$. Since $TF_!$ and $T^2F_!$ are opfamilial they send these to morphisms of split opfibrations over $TF_!1$ and $T^2F_!1$ respectively, thus exhibiting $U^T\ca R_F1$ and $TU^T\ca R_F1$ as crossed internal categories. Since $\mu^T$ is an opfamilial natural transformation, $(\mu^T_{F_!S1},\mu^T_{F_!1}):T^2F_!t_{S1} \to TF_!t_{S1}$ is a morphism of split opfibrations by Proposition 4.3.6 of \cite{Weber-PolynomialFunctors}, and so $\sigma_T\ca R_F1$ is a crossed internal functor. Similarly $\sigma_T\ca R_H1$ is a crossed internal functor.

The codescent-relevant part of $U^T(\overline{\ca R}_G)_1$ is
\[ \xygraph{!{0;(3.5,0):(0,.5)::} {TH_!R^21}="pt0" [r] {TH_!R1}="pt1" [r] {TH_!1}="pt2"
"pt2":"pt1"|-{TH_!\eta^R_{1}} "pt1":@<1.5ex>"pt2"^-{\mu^T_{H_!1}T(H^c_{1})} "pt1":@<-1.5ex>"pt2"_-{TH_!(!)} "pt0":@<1.5ex>"pt1"^-{\mu^T_{H_!R1}T(H^c_{R1})} "pt0":"pt1"|-{TH_!\mu^R_{1}} "pt0":@<-1.5ex>"pt1"_-{TH_!R(!)}
"pt0" [d] {TF_!S^21}="pb0" [r] {TF_!S1}="pb1" [r] {TF_!1}="pb2"
"pb2":"pb1"|-{TF_!\eta^S_1} "pb1":@<1.5ex>"pb2"^-{\mu^T_{F_!1}T(F^c_1)} "pb1":@<-1.5ex>"pb2"_-{TF_!(!)} "pb0":@<1.5ex>"pb1"^-{\mu^T_{F_!S1}T(F^c_{S1})} "pb0":"pb1"|-{TF_!\mu^S_1} "pb0":@<-1.5ex>"pb1"_-{TF_!S(!)}
"pt0":"pb0"|-{TF_!(S^2(\varepsilon^G_1)(G^c_2)_{1})} "pt1":"pb1"|-{TF_!(S(\varepsilon^G_1)G^c_{1})} "pt2":"pb2"|-{TF_!(\varepsilon^G_1)}} \]
and to say that this underlies a crossed internal functor is to say that the square on the left in
\[ \xygraph{{\xybox{\xygraph{!{0;(2.5,0):(0,.4)::} {TH_!R1}="p0" [r] {TF_!S1}="p1" [d] {TF_!1}="p2" [l] {TH_!1}="p3" "p0":"p1"^-{TF_!(S(\varepsilon^G_1)G^c_{1})}:"p2"^-{TF_!(t_{S1})}:@{<-}"p3"^-{TF_!(\varepsilon^G_1)}:@{<-}"p0"^-{TH_!(t_{R1})}}}}
[r(5)]
{\xybox{\xygraph{!{0;(2.5,0):(0,.4)::} {G_!R1}="p0" [r] {S1}="p1" [d] {1}="p2" [l] {G_!1}="p3" "p0":"p1"^-{S(\varepsilon^G_1)G^c_{1}}:"p2"^-{t_{S1}}:@{<-}"p3"^-{\varepsilon^G_1}:@{<-}"p0"^-{G_!(t_{R1})}}}}} \]
underlies a morphism of split opfibrations $TH_!(t_{R1}) \to TF_!(t_{S1})$. Now this square is the result of applying $TF_!$ to the square on the right. Since $G_!1$ is discrete, the chosen $G_!t_{R1}$-opcartesian 2-cells exactly the identities, and so the square on the right in the previous display is a morphism of split opfibrations $G_!(t_{R1}) \to t_{S1}$. Thus since $TF_!$ and $T^2F_!$ are opfamilial, the square on the left in the previous display, and also $T$ of that square, are morphisms of split opfibrations. Thus $U^T(\overline{\ca R}_G)_1$ and $TU^T(\overline{\ca R}_G)_1$ are crossed internal functors, and so $\ca S$ does indeed live in $\CrIntCat {\Cat/K}$. It is a pullback by Lemma \ref{lem:pbs-CrIntCat} and since $\mu^T$ is cartesian.

(\ref{pfstep:dfib}): Follows immediately from the cartesianness of $\mu^T$.

(\ref{pfstep:objwise-opfib}): To see that $(U^T(\overline{\ca R}_G)_1)_0$ is an opfibration, note that it is the result of applying $TF_!$ to the unique morphism $t_{G_!1}:G_!1 \to 1$. Like any map into a discrete object, $t_{G_!1}$ is an opfibration. Since $TF_!$ is opfamilial, and opfamilial 2-functors preserve opfibrations, the result follows.
\end{proof}
%


\end{document}